\def\R{{\mathbb R}}
\def\N{{\mathbb N}}
\def\Z{{\mathbb Z}}
\def\ind{{1\!\!1}}
\def\E{\mathbb{E}}
\def\P{\mathbb{P}}
\def\cal{\mathcal}
\def\supp{{\rm{supp}}}
\def\endsup{M}
\def\theequation{\arabic{section}.\arabic{equation}}
\newcounter{bean}
\newcommand{\benuma}{\setlength{\labelwidth}{.25in}
\begin{list}%
{(\alph{bean})}{\usecounter{bean}}}
\newcommand{\eenuma}{\end{list}}
\newcommand{\beginsec}{\setcounter{equation}{0}}
\newtheorem{theorem}{Theorem}[section]
\newtheorem{remark}[theorem]{Remark}
\newtheorem{lemma}[theorem]{Lemma}
\newtheorem{cor}[theorem]{Corollary}
\newtheorem{defn}[theorem]{Definition}
\newtheorem{ass}{Assumption}
\newtheorem{prop}[theorem]{Proposition}
\newcommand{\sm}{\setminus}
\newcommand{\noi}{\noindent }
\newcommand{\ba}{\begin{array}}
\newcommand{\ea}{\end{array}}
\newcommand{\bea}{\begin{eqnarray}}
\newcommand{\eea}{\end{eqnarray}}
\newcommand{\beas}{\begin{eqnarray*}}
\newcommand{\eeas}{\end{eqnarray*}}
\newcommand{\be}{\begin{equation}}
\newcommand{\ee}{\end{equation}}
\newcommand{\bt}{\begin{theorem}}
\newcommand{\et}{\end{theorem}}
\newcommand{\bc}{\begin{center}}
\newcommand{\ec}{\end{center}}
\newcommand{\ben}{\begin{enumerate}}
\newcommand{\een}{\end{enumerate}}
\newcommand{\lan}{\langle}
\newcommand{\ran}{\rangle}
\newcommand{\ei}{\end{itemize}}
\newcommand{\rear}{\renewcommand{\arraystretch}}
\newcommand{\nrm}[1]{\left\Vert #1 \right\Vert}
\newcommand{\ds}{\displaystyle}
\newcommand{\ve}{\varepsilon}
\def\zeroset{\cal Z}
\newcommand{\inv}{\mbox{inv}}
\newcommand{\ra}{\rightarrow}
\def\vconv{\stackrel{v}{\rightarrow}}
\def\wconv{\stackrel{w}{\rightarrow}}
\def\swconv{\Rightarrow}
\def\mb{{\cal M}_{\leq B} (\R_+)}
\def\eem{{\cal M}(E)}
\def\eemf{{\cal M}_F(E)}
\def\eemb{{\cal M}_{\leq B} (E)}
\def\eemsp{{\cal M}_{\leq 1} (E)}
\def\eemone{{\cal M}_1(E)}
\def\mm{{\cal M}[0,M)}
\def\mmf{{\cal M}_F[0,M)}
\def\mmsp{{\cal M}_{\leq 1} [0,M)}
\def\mmb{{\cal M}_{\leq B} [0,M)}
\def\dspaceh{{\cal D}_{\cal H}[0,\infty)}
\def\dTspaceh{{\cal D}_{\cal H}[0,T]}
\def\incspace{{\cal I}_{\R_+}[0,\infty)}
\def\ecb{{\cal C}_b(E)}
\def\ecc{{\cal C}_c (E)}
\def\eocd{{\cal C}^1(E)}
\def\eocdc{{\cal C}^1_c(E)}
\def\eocdb{{\cal C}^1_b(E)}
\def\cb{{\cal C}_b(\R_+)}
\def\newccp{{\cal C}_c(\R_+^2)}
\def\cc{{\cal C}_c (\R_+)}
\def\newocdcp{{\cal C}^1_c(\R_+^2)}
\def\mnewocdcp{{\cal C}^1_{c,M} (\R_+^2)}
\def\mnewccp{{\cal C}_{c,M} (\R_+^2)}
\def\ocdb{{\cal C}^1_b(\R_+)}
\def\newcb{{\cal C}_b(\R^2)}
\def\cbm{{\cal C}_b[0,M)}
\def\newccpm{{\cal C}_c([0,M)\times\R_+)}
\def\ccm{{\cal C}_c [0,M)}
\def\newocdcpm{{\cal C}^1_c([0,M)\times\R_+)}
\def\ocdcm{{\cal C}^1_c[0,M)}
\def\ocdbm{{\cal C}^1_b[0,M)}
\def\newcbpm{{\cal C}_b([0,M)\times\R_+)}
\def\tnewf{\tilde{\varphi}}
\def\tnewft{\tnewf(\cdot,t)}
\def\tnewfs{\tnewf(\cdot,s)}
\def\dxtnewfs{\tnewf_x(\cdot,s)}
\def\dttnewfs{\tnewf_s(\cdot,s)}
\def\newf{\varphi}
\def\newft{\varphi(\cdot,t)}
\def\newfs{\varphi(\cdot,s)}
\def\dxnewfs{\newf_x(\cdot,s)}
\def\dtnewfs{\newf_s(\cdot,s)}
\def\f1{{\bf 1}}
\def\zerof{{\bf 0}}
\def\newchi{f}
\def\fren{\overline{R}_E^{(N)}}
\def\ren{R_E^{(N)}}
\def\idlen{I^{(N)}}
\def\tdn{\tilde{d}^{(N)}}
\def\wn{w^{(N)}}
\def\vn{v^{(N)}}
\def\fv{\overline{v}}
\def\fw{\overline{w}}
\def\fvn{\overline{v}^{(N)}}
\def\fwn{\fw^{(N)}}
\def\qn{Q^{(N)}}
\def\dn{D^{(N)}}
\def\kn{K^{(N)}}
\def\kninv{\sigma^{(N)}}
\def\fkn{\overline{K}^{(N)}}
\def\en{E^{(N)}}
\def\nun{\nu^{(N)}}
\def\agen{a^{(N)}}
\def\cI{{\cal I}}
\def\measn{\nu^{(N)}}
\def\fqn{\overline{Q}^{(N)}}
\def\fe{\overline{E}}
\def\fen{\overline{E}^{(N)}}
\def\fmeasn{\overline{\nu}^{(N)}}
\def\fmeas{\overline{\nu}}
\def\fmeass{\overline{\nu}_*}
\def\flam{\overline{\lambda}}
\def\fdn{\overline{D}^{(N)}}
\def\resaren{R_{E}^{(N)}}
\def\fidlen{\overline{I}^{(N)}}
\def\xn{X^{(N)}}
\def\ttimen{\tau^{(N)}}
\def\fxn{\overline{X}^{(N)}}
\def\fx{\overline{X}}
\def\fidle{\overline{I}}
\def\fnewmeasn{\tilde{\nu}^{(N)}}
\def\newfk{\overline{Z}}
\def\fbare{\overline{Q}}
\def\fdcomp{\overline{A}}
\def\small{\delta}
\def\fxp{\overline{X}^{(2)}}
\def\fkp{\overline{K}^{(2)}}
\def\fmeasp{\fmeas^{(2)}}
\def\fxo{\overline{X}^{(1)}}
\def\fko{\overline{K}^{(1)}}
\def\fmeaso{\fmeas^{(1)}}
\def\fd{\overline{D}}
\def\fk{\overline{K}}
\def\fsmallk{\overline{\kappa}}
\def\newspace{{\cal S}_0}
\def\lloc{{\cal L}^1_{loc}}
\def\newfmeas{\overline{\mu}}
\def\are{q}
\def\bare{Q}
\def\baren{Q^{(N)}}
\def\fbaren{\overline{Q}^{(N)}}
\def\newdn{D_*^{(N)}}
\def\remn{R^{(N), n}}
\def\ftime{\theta}
\def\stime{\zeta}
\def\fmartn{\overline{M}^{(N)}}
\def\martn{M^{(N)}}
\def\dcompn{A^{(N)}}
\def\fdcompn{\overline{A}^{(N)}}
\def\F{\mathbb{F}}
\def\ttime{\tau}
\def\suli{\sum\limits}
\title[Fluid Limits of Many-Server Queues]{Law of Large Numbers Limits for Many-Server Queues}
\author{Haya Kaspi}
\address{Department of Industrial Engineering and Management \\
Technion, Haifa, Israel} \email{iehaya@techunix.technion.ac.il}
\author{Kavita Ramanan }
\address{Department of Mathematical Sciences \\
Carnegie Mellon University \\
Pittsburgh, PA 15213, USA }
\email{kramanan@math.cmu.edu }
\thanks{Partially supported
by the National Science Foundation under Grants DMS-0406191,
DMI-0323668-0000000965, DMS-0405343 and the Technion Fund for the
Promotion of Research  under Grant 2003941}
\subjclass[2000]{Primary: 60F17, 60K25, 90B22;  Secondary: 60H99, 35D99.}
\keywords{Multi-server queues, GI/G/N queue, fluid limits, strong law of large numbers, measure-valued processes, steady-state distribution, call centers}
\begin{document}

\date{July 30, 2007}

\begin{abstract}
This work considers a many-server queueing system in which
customers with i.i.d., generally distributed service times enter service in the
order of arrival.  
The dynamics of the system is represented in terms of
  a  process that describes
the total number of customers in the system, as well as
a measure-valued process that keeps track of the ages of
customers in service.
Under mild assumptions on the service time distribution, as the
number of servers goes to infinity,
 a  law of large numbers (or fluid) limit is established for this pair of processes.
The  limit is characterised as the unique solution to
a coupled pair of integral equations,
which admits a fairly explicit representation.
As a corollary, the fluid limits of several other functionals of interest, such as the
waiting time, are also obtained.
Furthermore, in the time-homogeneous setting,
 the fluid limit is shown to converge to its equilibrium.
Along the way, some results of independent interest are obtained,
including a continuous mapping result and a maximality property
of the fluid limit.
A motivation for studying these systems is that they arise as models of
computer data systems and call centers.
\end{abstract}

\date{July 30, 2007}

\maketitle

\bigskip
\hrule
\tableofcontents
\hrule

\section{Introduction}
\label{sec-intro}

\subsection{Background and Motivation.}
\label{subs-back}
The main objective of this paper is to obtain functional strong laws of large numbers
 or ``fluid'' approximations to various functionals of the G/GI/N queue,
in the limit as the number of servers tends to infinity.
In fact,  a more general setting is considered that allows
for possibly time-inhomogeneous arrivals.
In order to obtain a Markovian description of the dynamics,  the state
includes a non-negative integer-valued process that represents the  total number of customers in system, as well as  a measure-valued
process that keeps track of the ages of customers in service.
The fluid limit obtained is for this pair of processes and thus
contains more information than just
the limit of the scaled number of customers in system.
In particular, it  also yields a description of the
fluid limits of several other functionals of interest, including the waiting time.
A fairly explicit representation for the fluid limit is obtained, which is then
used to study
 the convergence, as $t \ra \infty$, of the fluid limit to equilibrium
in the time-homogeneous setting. These results are obtained under
mild assumptions on the service distribution, such as the
existence of a density, which are satisfied by most distributions
that arise in applications. While we expect that these conditions
can be relaxed,
the representation of the fluid limit is likely to be more involved in that setting.
Thus,
for ease of exposition, we have restricted ourselves to this generality.

Multiserver queueing systems arise in many applications, and have generally proved to be
more difficult to analyse than single server queues.
Thus it is natural to resort to an  asymptotic analysis in order to gain insight into the behavior
of these systems.
In this work we consider the asymptotic regime introduced by Halfin and Whitt in \cite{halwhi},
in which the number of servers and, proportionally, the arrival rate of customers, tend to infinity.
For the case of Poisson arrivals and exponentially distributed service times,
a central limit theorem for the number of customers in system was obtained in \cite{halwhi}.
For networks of multi-server queues with (possibly time-varying) Poisson arrivals and
exponential services, fluid and diffusion limits for the total number of customers in system  were obtained by
Mandelbaum, Massey and Reiman in \cite{manmasrei}, and these results were later extended to
include the queue length and virtual waiting time processes in \cite{manmasreisto}.
All of these results  were obtained under the assumption of exponential service times.

This work is to a large extent motivated by the fact that
G/GI/N queues arise as models of large-scale
telephone call centers, for which the limiting regime considered here admits the natural interpretation of the
scaling up of the number of servers in response to an analogous scaling up of the arrival rate of
customers (see \cite{brownetal} for a survey of the applications of multi-server models to call centers).
Recent statistical evidence from real call centers presented
in Brown et.\ al.\ \cite{brownetal}
suggests that, in many cases, it may be more appropriate
to model the service times as being   non-exponentially
distributed and, in particular,  according to the lognormal distribution.
This emphasises the need to obtain fluid limits when
the service times are   generally distributed.
With this as motivation,  a deterministic fluid approximation for a G/GI/N queue, with
abandonments  that are possibly  generally distributed,
was proposed by Whitt in \cite{whifluid06}.
However, a general functional strong law of large numbers justifying the fluid approximation
was not obtained in \cite{whifluid06}
(instead, convergence was established for
a discrete-time version of the model, allowing for time-dependent and state-dependent
arrivals).
In this paper, we establish a functional strong law of large numbers limit
 allowing for time-dependent arrivals, but in the absence of abandonments,
and provide an intuitive  characterisation of the limit.
Concurrently with this work, fluid and central limit theorems
for just the number in system were established in the  work of
Reed \cite{reed07} using a clever comparison with a
 $G/GI/\infty$ system.
Here, we take a different approach, involving a measure-valued representation
that leads to a different representation of the fluid limit of the number
in system and also yields the fluid limits of several other functionals
of interest.

One of the challenges in going from exponential to non-exponential
service distributions is that a Markovian description of the
dynamics leads to an infinite-dimensional state. The
measure-valued representation and martingale methods adopted in
this paper provide a convenient framework for the asymptotic
analysis of multi-server queues (see \cite{whi-martsurvey07} for a
recent survey on the use of martingale methods for establishing
heavy-traffic limits of multi-server queues with exponentially
distributed service times). Indeed, we  believe that the framework
developed here can be extended in many ways, for example,  to
include abandonments. The characterisation of the pre-limit
obtained here is also used to
 establish central limit theorems in \cite{kasram2}.
In the context of single-server queueing networks,   recent works that have
used measure-valued processes to study fluid limits
include  \cite{decmoy06}, \cite{dowpuhgro06}, \cite{gropuhwil02}, \cite{grorobzwa07} and
 \cite{growil07}.

The outline of the paper is as follows.
A precise mathematical description of the model,
including the basic  assumptions, is provided in Section \ref{sec-mode}.
 Section \ref{sec-flconj} introduces the fluid equations
and contains a summary of the main results.
  Uniqueness of solutions
to the fluid equations  is established in
 Section \ref{sec-flsol} and  the functional strong law of large numbers limit is proved in
Section \ref{sec-tight}.
Finally,  the large-time or equilibrium behavior
of the fluid limit is described  in Section \ref{sec-longtime}.
In the remainder of this section, we introduce some common notation
used in the paper.

\subsection{Notation and Terminology}
\label{subs-not}

The following notation will be used throughout the paper.
 $\Z_+$ is the set of non-negative integers, $\R$ is set of real numbers and
$\R_+$ the set of non-negative real numbers.
For $a, b \in \R$, $a \vee b$  denotes  the maximum of $a$ and $b$,
$a \wedge b$  the minimum of $a$ and $b$ and the short-hand $a^+$ is used for $a \vee 0$.
Given $A \subset \R$ and $a \in \R$,  $A - a$ equals the set
$\{x \in \R: x + a \in A\}$ and  $\ind_B$ denotes the indicator function of the set $B$
(that is, $\ind_B (x) = 1$ if $x \in B$ and $\ind_B(x) = 0$ otherwise).

\subsubsection{Function and Measure Spaces}
\label{subsub-funmeas}

Given any metric space $E$, $\ecb$ and $\ecc$  are
 the space of bounded,  continuous functions and, respectively,
 the space of continuous real-valued functions with compact support defined on
$E$, while
 $\eocd$ is the space of real-valued,
once  continuously differentiable functions on $E$, and  $\eocdc$
is the subspace of functions in $\eocd$ that have compact support.
The subspace of functions in $\eocd$ that, together with their
first derivatives, are bounded, will be denoted by $\eocdb$. For
$M\le\infty$, let
 ${\cal L}^1([0,M))$ and $\lloc([0,M))$ represent, respectively, the spaces of
integrable and locally integrable functions on $[0,M)$, where for
$M<\infty$ a locally integrable function $f$ on $[0,M)$ satisfies
$\int_{[0,a]}f(x)dx<\infty$ for all $a<M$ . The constant functions
$f \equiv 1$ and $f \equiv 0$ will be represented by the symbols
$\f1$ and $\zerof$, respectively. Given any c\`{a}dl\`{a}g,
real-valued function $\newf$ defined on $E$, we define $\nrm{\newf
}_T \doteq \sup_{s \in [0,T]} |\newf (s)|$ for every $T < \infty$,
and let $\nrm{\newf}_\infty \doteq \sup_{s \in [0,\infty)}
|\newf(s)|$, which could possibly take the value $\infty$.
In addition,
the support of a function $\newf$ is denoted by $\supp(\newf)$.

The space of  Radon measures on a metric space $E$, endowed with the Borel $\sigma$-algebra,
 is denoted by  $\eem$, while
  $\eemf$, $\eemone$ and $\eemsp$ are respectively, the subspaces of finite,
 probability and sub-probability measures in $\eem$.  Also, given $B < \infty$,
$\eemb \subset \eemf$ denotes the space of measures $\mu$ in $\eemf$ such  that $|\mu(E)| \leq B$.
Recall that a Radon measure is one that assigns finite measure to every relatively
compact subset of $\R_+$.
The space $\eem$ is equipped with the vague topology,  i.e.,  a sequence of measures
$\{\mu_n\}$ in $\eem$ is said to converge to $\mu$ in the vague topology (denoted
$\mu_n \vconv \mu$) if and only if for every $\newf \in \ecc$,
\be
\label{w-limit}
\int_{E} \newf(x) \,  \mu_n(dx)  \ra \int_E \newf(x) \, \mu (dx)
 \quad \quad \mbox{ as } n \ra \infty.
\ee
By identifying a Radon measure $\mu \in \eem$ with the mapping on $\ecc$ defined by
\[ \newf \mapsto \int_{E} \newf(x) \, \mu(dx), \]
one can equivalently define a Radon measure on $E$ as a linear mapping
from $\ecc$ into $R$ such that for every compact ${\cal K} \subset E$, there exists
$L_{{\cal K}} < \infty$ such that
\[ \left| \int_{E} \newf(x) \, \mu(dx) \right| \leq L_{{\cal K}} \nrm{\newf}_\infty \quad \quad \forall
\newf \in \ecc  \mbox{ with } \supp (\newf ) \subset {\cal K}. \]
 On  $\eemf$, we will also consider the
weak topology, i.e., a sequence $\{\mu_n\}$ in $\eemf$ is said to 
converge weakly to $\mu$  (denoted $\mu_n \wconv \mu$) if and only
if (\ref{w-limit}) holds for every $\newf \in \ecb$. As is
well-known, $\eem$ and $\eemf$, endowed with the vague and weak
topologies, respectively,
 are Polish spaces.
The symbol $\delta_x$ will be used to denote the measure with unit mass at the point $x$ and,
by some abuse of notation, we will use $\zerof$ to denote the identically zero
Radon measure on $E$.  When $E$ is an interval, say $[0,M)$, for notational conciseness,
we will often write ${\cal M}[0,M)$ instead of ${\cal M}([0,M))$.

We will mostly be interested in the case when
 $E = [0,M)$ and  $E = [0,M) \times \R_+$, for some $M \in (0,\infty]$.
To distinguish these cases, we will usually use $f$ to denote generic  functions
on $[0,M)$  and $\newf$ to denote generic
 functions on $[0,M) \times \R_+$.  By some abuse of notation,
given $f$ on $[0,M)$, we will sometimes
also treat it as a function on $[0,M) \times \R_+$ that is constant in the second
variable.  For any
Borel measurable function $f: [0,M) \ra \R$ that is integrable
with respect to $\xi \in  \mm$, we  often use the short-hand notation
\[ \lan f, \xi \ran \doteq \int_{[0,M)} f(x) \, \xi(dx).
\] 
Also, for ease of notation, given $\xi \in \mm$ and an interval $(a,b) \subset [0,M)$, we will
use $\xi(a,b)$ and $\xi(a)$ to denote $\xi((a,b))$ and $\xi(\{a\})$, respectively.

\subsubsection{Measure-valued Stochastic Processes}

Given a Polish space ${\cal H}$, we denote by $\dTspaceh$
(respectively, $\dspaceh$) the space of ${\cal H}$-valued,
c\`{a}dl\`{a}g functions on $[0,T]$ (respectively, $[0,\infty)$),
and we endow this space with the usual Skorokhod $J_1$-topology
\cite{parbook}. Then $\dTspaceh$ and $\dspaceh$ are also Polish
spaces (see \cite{parbook}). In this work, we will be interested
in ${\cal H}$-valued stochastic processes, where ${\cal H} = \mmf$
for some $M\le\infty$ .
These are random elements that are defined on a probability space $(\Omega, {\cal F}, \P)$ and  take values in  $\dspaceh$,
equipped with the Borel $\sigma$-algebra (generated by open sets under the
Skorokhod $J_1$-topology).
A sequence $\{X_n\}$ of c\`{a}dl\`{a}g, ${\cal H}$-valued  processes, with $X_n$ defined on the probability space $(\Omega_n, {\cal F}_n, \P_n)$,
 is said to converge in distribution
to a c\`{a}dl\`{a}g ${\cal H}$-valued process $X$ defined on  $(\Omega, {\cal F}, \P)$ if, for every bounded, continuous functional
$F:\dspaceh \ra \R$, we have
\[ \lim_{n \ra \infty}  \E_n\left[ F(X_n) \right] = \E \left[ F(X)\right],
\]
where $\E_n$ and $\E$ are the expectation operators with respect to the probability measures
$\P_n$ and $\P$, respectively.
Convergence in distribution of $X_n$ to $X$ will be denoted by $X_n \swconv X$.

\beginsec

\section{Model Dynamics and  Basic Assumptions}
\label{sec-mode}

In Section \ref{subs-modyn} we describe our basic model and state our main assumptions,
and in Section \ref{subs-aux} we introduce some auxiliary processes that
are useful for the study of the dynamics of the model.

\subsection{Description of the Model}
 \label{subs-modyn}

Consider a system with $N$ servers, where arriving
customers are served in a non-idling, First-Come-First-Serve (FCFS) manner, i.e., a newly
arriving customer immediately enters service if there are any idle servers, or,
if all servers are busy, then the customer joins the back of
the queue, and the customer at the head of the queue (if one is present) enters
 service as soon as a server becomes free.
Our results are not sensitive to the exact mechanism used to assign an
arriving customer to an idle server, as long as the non-idling condition
is satisfied (see Remark \ref{rem-assign}). 
 Let $\en$ denote the
cumulative arrival process, with $\en (t)$ representing the total number
of customers that arrive into the system in the time
interval $[0,t]$, and let the service requirements be given by
the i.i.d.\ sequence $\{v_i, i = -N +1, -N+2, \ldots, 0, 1, \ldots\}$, with
common cumulative distribution function $G$.
For $i \in \N$, $v_i$ represents the service requirement of the
$i$th customer to enter service after time $0$, while
$\{v_{i}, i = -N+1, \ldots, 0\}$ represents the service requirements
of customers already in service at time $0$ (if such customers are present),
 ordered according to their arrival times (prior to zero).
When $\en$ is a renewal process, this is simply a  GI/G/N queueuing system.

Consider the process $R_E^{(N)}$ defined, for
$s>0$, by
\be
\label{def-ren}
 R_E^{(N)}(s) \doteq \inf\{t>0:E^{(N)}(s+t)>E^{(N)}(s)\}.
\ee
Note that by our assumption on the finiteness of $E^{(N)}(t)$ for
$t$ finite, $R_E^{(N)}(s)>0$ for $s  > 0$.
The following mild assumptions will be imposed throughout, without explicit mention.
\begin{itemize}
\item
The cumulative arrival process $\en$ is independent of the sequence of service requirements
$\{v_j, j = -N+1, \ldots, \}$;
\item
The process $R_E^{(N)}$ is Markovian with respect to its own natural
filtration (this holds, for example, when $E^{(N)}$ is a renewal process or an
inhomogeneous Poisson process);
 \item
 $G$ has density $g$.
\item
Without loss of generality, we can (and will) assume that the mean service requirement
is $1$:
\be
\label{def-mean1}
\int_{[0,\infty)} \left( 1 - G(x) \right) \, dx = \int_{[0,\infty)} x g(x) \, dx = 1.
\ee
\end{itemize}

The first two assumptions above are very general,  allowing for a large class of
arrival processes.  Note that the third assumption implies, in particular, that
$G(0+) = 0$.  The existence of a density is assumed for convenience, and
is satisfied by a large class of distributions of interest in applications.
The relaxation of this assumption would lead to a more complicated and somewhat less
intuitive representation for the fluid limit and thus, for ease of exposition, we have restricted
ourselves to this generality.
Define the hazard rate
\be
\label{def-h}
h(x) \doteq \dfrac{g(x)}{1 - G(x)}  \quad \quad \mbox{ for } x \in [0,\endsup),
\ee
where $\endsup \doteq \sup \{x \in [0,\infty): G(x) < 1 \}$.
Observe that $h$ is automatically locally integrable on $[0,M)$ since for every
$0 \leq a \leq b < \endsup$,
\[ \int_a^b h(x) \, dx = \ln (1 - G(a)) -\ln (1 - G(b)) < \infty. \]
When additional assumptions on $h$ are needed, they will be mentioned
explicitly in the statements of the results.

The $N$-server model described above can be represented in many ways
(see, for example, representations for GI/G/N queueing systems in
\cite[Chapter XII]{asmbook}).
For our purposes, we will find it convenient to encode the state of the system
in the processes $(\ren,\xn, \measn)$, where
$\ren$ is the residual inter-arrival time process defined in (\ref{def-ren}),
$\xn(t)$ represents the
 number of customers in the system at time $t$ (including those
in service and those in the queue, waiting to enter service) and
$\nun (t)$ is the discrete non-negative Borel measure on $[0,M)$
that has a unit mass at the age of each of the customers in
service at time $t$. Here, the age $a_j^{(N)}$  of  customer $j$
is (for every realisation) the piecewise linear function on
$[0,\infty)$ that is defined to be $0$ till the customer enters
service, then increases linearly while the customer is in service
(representing the amount of time elapsed since  entering service)
and is then constant (equal to the total service requirement)
after the customer departs. Hence, the total number of customers
in service at time $t$ is given by $\lan \f1, \measn_t \ran =
\measn_t ([0,M))$, which is bounded above by $N$ and so $\measn_t
\in {\cal M}_F ([0,M))$ for every $t \in [0,\infty)$. Our results
will be independent of the  particular rule used to assign
customers to stations, but for technical purposes we will  find it
convenient to also introduce the additional ``station process''
$s^{(N)} \doteq (s_j^{(N)}, j \in  \{-N + 1,  \ldots, 0\} \cup
\N)$. For each $t \in [0,\infty)$, if customer $j$ has already
entered service by time $t$,  then
 $s_j^{(N)} (t)$ is equal to the index $i \in \{1, \ldots, N\}$ of the station  at which
customer $j$ receives/received service and  $s_j^{(N)} (t) \doteq
0$ otherwise.
 Finally,
for $t \in [0,\infty)$, let $\tilde{{\cal F}}_t^{(N)}$ be the
$\sigma$-algbera generated by $(\ren(s),  a_j^{(N)}(s), s_j^{(N)}(s), 
j \in \{-N+1, \ldots, 0\}\cup \N, s \in [0,t]\}$ and let
$\{{\cal F}_t^{(N)}\}$ denote the associated completed,
right-continuous filtration. As elaborated in the next section, it
is not hard to see that $(\en,\xn,\measn)$ is ${\cal
F}_t^{(N)}$-adapted (also see Appendix \ref{ap-markov} for an
explicit construction of these processes that shows, in
particular, that they are well-defined). In fact, although we do
not prove or use this property in this paper, it is not hard to
show that $\{(\ren (t), \xn (t), \measn_t), \{{\cal
F}_t^{(N)}\}\}$ is a Markov process.

\subsection{Some auxiliary processes}
\label{subs-aux}

We now introduce the following auxiliary processes that will be useful for the study of the evolution
of the system:
\begin{itemize}
\item
the cumulative departure process $\dn$, where $\dn(t)$ is the cumulative
number of customers that have departed the system in the interval $[0,t]$;
\item
the process $\kn$, where $\kn(t)$ represents the cumulative number of customers that
have entered service in the interval $[0,t]$;
\item
 the age process
$(\agen_j, j = -\measn_0 (\R_+) + 1, \ldots, 0, 1, \ldots )$ where $\agen_j (t)$ is the time that has
elapsed since the $j$th customer entered service, with the convention that $\agen_j(t)$ is set to zero if customer
$j$  has not yet entered service by time $t$ and remains constant once the customer departs;
\item
the idle servers process $\idlen$, where $\idlen(t)$ is the number of idle servers at time $t$.
\end{itemize}

\noi
Simple mass balances show that
\be
\label{def-dn}
\dn  \doteq \xn (0) - \xn + \en
\ee
and
\be
\label{def-kn}
\kn \doteq \lan \f1, \measn \ran - \lan \f1, \measn_0 \ran + \dn.
\ee
Due to the FCFS nature of the service,
observe that $\kn(t)$ is also the highest index of any customer that has entered service.
The age process of  all customers in service increases linearly and so, given the  service requirements of
customers,  the evolution of the age process can be described explicitly as follows: for  $j \leq  \kn (t)$,
\be
\label{def-agejn}
\agen_j (t) =  \left\{
\ba{rl}
\agen_j(s) + t - s  & \mbox{ if }  \agen_j (t) < v_j, \\
v_j & \mbox{ otherwise, }
\ea
\right.
\ee
and
 $\agen_j(t) \doteq 0$ for all $j > \kn (t)$.
The measure $\measn$ can be written in the form \be
\label{def-nun} \measn_t  = \sum\limits_{j = -\lan \f1, \measn_0
\ran   + 1}^{\kn(t)} \delta_{\agen_j(t)} \ind_{\{\agen_j (t) <
v_j\}}. \ee Here $\delta_x$ represents the Dirac mass at the point
$x$. Now, at any time $t$,  the age process of any customer has a
right-derivative that is positive (and equal to one) if and only
if the customer is in service, and has a left-derivative that is
positive and a right-derivative that is zero if and only if it has
just departed. Thus $\dn$ is clearly $\{{\cal
F}_t^{(N)}\}$-adapted and, since $\measn$ can be written
explicitly in terms of the age process as \be \label{def-newnun}
\measn_t \doteq \measn (t)  \doteq \sum\limits_{j = -\lan \f1,
\measn_0 \ran   + 1}^{\kn(t)} \delta_{\agen_j(t)}
\ind_{\left\{\frac{d\agen_j}{dt} (t+) > 0\right\}}, \ee
 $\measn$ is also $\{{\cal F}^{(N)}_t\}$-adapted.
Furthermore,
 since $\lan \f1, \measn_t \ran$ represents the number of customers in service at time $t$, the idle servers process is given
explicitly by the relation
\be
\label{def-idlen}
 \idlen \doteq N - \lan \f1, \measn  \ran.
\ee
The non-idling condition then takes the form
\be
\label{def-nonidling}
\idlen = [N - \xn]^+.
\ee
Indeed, note that this shows that when $\xn (t) < N$,  so that $\idlen (t) > 0$,
 the number in system is equal to the
number in service, and so there is no queue, while if $\xn (t) > N$, so that there is a positive queue,
$\idlen (t) = 0$, indicating that there are no idle servers.
>From the above discussion, it follows immediately that the processes $\xn, \measn, \dn, \kn$ and $\idlen$ are all
$\{{\cal F}_t^{(N)}\}$-adapted.
For an explicit construction of these processes, see
Appendix \ref{ap-markov} and, in particular, Remark \ref{rem-aux}.

\begin{remark}
\label{rem-nuinitm}
{\em If $M < \infty$, then for every $N$, we will always assume that
$\measn_0$ has support in $[0,M)$.   From
(\ref{def-nun}), this automatically implies that
$\measn_t$ also has support in $[0,M)$ for every $t \in [0,\infty)$ and,
moreover, that $\measn \in {\cal D}_{\mmsp}[0,\infty)$. }
\end{remark}

\beginsec

\section{Main Results}
\label{sec-flconj}

We now summarise our main results.
First, in Section \ref{subs-fleqs}, we introduce the so-called
fluid equations, which provide a continuous analog of the discrete model
introduced in Section \ref{sec-mode}.
In Section \ref{subs-mainres} we present our main results, which in
particular show that the fluid equations uniquely characterise the
strong law of large numbers limit of the multi-server system, as the number of
servers goes to infinity.
Lastly, in Section \ref{subs-funcflimit}, we show how our results
 can be used to obtain fluid limits of various other functionals of interest.
This, in particular, illustrates the usefulness of adopting a
measure-valued representation for the state.

\subsection{Fluid  Equations}
\label{subs-fleqs}

Consider the following scaled versions of the basic processes
describing the model. For $N \in \N$, the scaled state descriptor
$(\fren,\fxn,\fmeasn)$ is given by
\be \label{fl-scaling} 
\fren (t) \doteq \ren (t)   \quad  \fxn (t)
\doteq \dfrac{\xn(t)}{N} \quad \fmeasn (t) (B) \doteq
\dfrac{\measn (t) (B)}{N} \ee
 for $t \in [0,\infty)$ and any  Borel subset $B$ of $\R_+$, and observe that
$\fmeasn(t)$ is a sub-probability measure on $[0,\infty)$ for every
$t \in [0,\infty)$.
Analogously, define
\be
\label{fl-scaling2}
\fen  \doteq \dfrac{\en}{N}; \quad\fdn \doteq \dfrac{\dn}{N};  \quad \fkn  \doteq \dfrac{\kn}{N}; \quad \fidlen \doteq \dfrac{\idlen}{N}.
\ee

Let $\incspace$ to be the subset of non-decreasing functions
$f \in {\cal D}_{\R_+}[0,\infty)$ with $f(0) = 0$.  Recall that
$M=\sup \{x \in [0,\infty): G(x) < 1\}$, and define
\be
\label{def-newspace}
\newspace \doteq \left\{(f,x,\mu) \in \incspace \times \R_+ \times \mmsp:
1 - \lan \f1, \mu \ran = [1-x]^+ \right\}.
\ee
$\newspace$ serves as the space of possible input data for the fluid equations.
We make the following convergence assumptions on the primitives of the scaled
sequence of systems.


\begin{ass} {\bf (Initial conditions)}
\label{ass-init}
There exists an $\newspace$-valued random variable $(\fe, \fx (0), \fmeas_0)$
 such that,
as $N \ra \infty$, the following limits hold almost surely:
\begin{enumerate}
\item
$\fen \ra \fe$  \quad in ${\cal D}_{\R_+}[0,\infty)$,
where $\E [\fe(t)]<\infty$ for all $t \in (0, \infty)$;
\item
$\fxn(0) \ra \fx (0)$ \quad in $\R_+$, where $\E[X(0)] < \infty$
\item
 $\fmeasn_0 \ra \fmeas_0$ \quad in $\mmsp$.
\end{enumerate}
\end{ass}

\begin{remark}
\label{rem-assinit}
{\em By the (generalized)  dominated convergence theorem,  conditions (1) and (2) of Assumption \ref{ass-init}
imply that the convergence of $\fen$ to $\fe$ and of $\fxn (0)$ to $\fx (0)$ also holds in
expectation, so that in particular, we have for every $t \in [0,\infty)$, for
all sufficiently large $N$,
$\E[ \fxn (0) + \fen (t) ] < \infty$.
}
\end{remark}

\begin{remark}
\label{rem-skorep}
{\em
If the limits in Assumption \ref{ass-init} hold only in distribution rather than
almost surely, then using the Skorokhod representation theorem in the standard way,
it can be shown that all the stochastic process convergence results in the paper continue to hold. }
\end{remark}

Our  goal is  to identify
 the limit in distribution of  the quantities
$(\fxn, \fmeasn)$, as $N \ra \infty$.
In this section, we first introduce the so-called fluid
equations and provide some intuition as to why the
 limit of any sequence $(\fxn,\fmeasn)$ should be expected
to be a solution to these equations.
In Section \ref{sec-tight}, we provide a rigorous proof of this fact.

\begin{defn} {\bf (Fluid Equations)}
\label{def-fleqns} {\em The c\`{a}dl\`{a}g function $(\fx,
\fmeas)$ defined on $[0,\infty)$ and taking values in $\R_+ \times
\mmsp$ is said to solve the}  fluid equations {\em associated with
$(\fe, \fx (0), \fmeas_0) \in \newspace$
 and  $h \in \lloc [0,M)$ if and only if 
for every} $t \in [0,\infty)$, 
\be
\label{cond-radon}
 \int_0^t  \lan h, \fmeas_s \ran \, ds  < \infty,
\ee
{\em and  the following relations are satisfied: for every $\newf \in \newocdcpm$, }
\begin{eqnarray}
\label{eq-ftmeas}
\ds \lan \newft, \fmeas_t \ran  & = & \ds \lan  \newf(\cdot,0), \fmeas_0 \ran +
\int_0^t \lan \dtnewfs, \fmeas_s \ran \, ds +
\int_0^t \lan \dxnewfs, \fmeas_s  \ran \, ds  \\
 & & \nonumber
\quad  - \ds \int_0^t \lan  h(\cdot) \newfs,  \fmeas_s \ran \, ds+ \int_0^t \newf (0,s) \, d\fk (s),  \\
\label{eq-fx}
\fx (t) &  = & \fx (0) + \fe(t) - \ds \int_0^t \lan  h, \fmeas_s \ran \, ds
\end{eqnarray}
{\em and}
\be
\label{eq-fnonidling}
 1 - \lan \f1, \fmeas_t \ran = [1 - \fx(t)]^+,
\ee
{\em where }
\be
\label{eq-fk}
\fk(t) = \lan \f1, \fmeas_t \ran - \lan \f1, \fmeas_0 \ran + \int_0^t \lan h, \fmeas_s \ran \, ds.
\ee
\end{defn}

We now  provide an informal, intuitive explanation for the form of the fluid equations.
Suppose $(\fx, \fmeas)$ solves the fluid equations associated with some
$(\fe, \fx, \fmeas_0) \in \newspace$.
Then, roughly speaking, for $x \in \R_+$,
 $\fmeas_s (dx)$ represents the amount of mass
(or limiting fraction of customers) whose age lies in the range
$[x,x+dx)$ at time $s$.
Since  $h$ is the hazard rate, $h(x)$
represents the fraction of mass with age $x$ that would depart
from the system at any time.
Thus   $\lan h, \fmeas_s \ran$
represents the departure rate of mass from the fluid system at
time $s$, and
 the process $\fd$ that satisfies
\be
\label{def-fd}
 \fd (t) \doteq  \int_0^t \lan  h, \fmeas_s \ran \, ds \quad \quad \mbox{ for } t \in [0,\infty)
\ee
represents the cumulative amount of departures from the fluid system.
Since $\fe$  is the limiting cumulative arrival rate of mass
into the fluid system in the interval $[0,t]$, a simple mass balance
yields the relation (\ref{eq-fx}) (in analogy with the
pre-limit equation (\ref{def-dn})).
Likewise, (\ref{eq-fnonidling}) and (\ref{eq-fk}) are the fluid  versions of the non-idling condition
(\ref{def-nonidling}) and the mass balance relation (\ref{def-kn}), respectively.

Next, note that the fluid equation (\ref{eq-ftmeas}) implies, in particular,
that for  $f \in \ocdcm$,
\be
\label{eq-fmeas}
\ds \lan f, \fmeas_t \ran  =  \ds \lan  f, \fmeas_0 \ran +
\int_0^t \lan f^\prime, \fmeas_s  \ran \, ds - \ds \int_0^t \lan  f h,  \fmeas_s \ran \, ds + \ds f(0) \fk (t).
\ee
 The difference
$\lan f, \fmeas_t \ran - \lan f, \fmeas_0\ran$ is caused by three different phenomena -- evolution of
the mass in the system, departures and arrivals -- which are
represented by the last three terms on the right-hand side of   (\ref{eq-fmeas}).
Specifically, the  second term on the right-hand side represents the change in $\fmeas$ on account of the
fact that the ages of all customers in service increase at a constant rate $1$,
the third term represents the change due to departures of customers in service,
and  the last term on the right-hand side of (\ref{eq-fmeas}) accounts
for new customers entering service.
Here $\fk(t)$ represents the cumulative amount of mass that
has entered service in the fluid system, and is multiplied by
$f(0)$  because any arriving customer by definition has age $0$ at the time of
entry.

\subsection{Summary of Main Results}
\label{subs-mainres}

Our first result concerns uniqueness of solutions
to the fluid equations, which is established at the end of
Section \ref{subs-cont}.

\begin{theorem}
\label{th-fluniq}
Given any $(\fe, \fx(0), \fmeas_0) \in \newspace$ and $h \in {\cal C}[0,M)$,
 there exists at most one solution $(\fx,\fmeas)$ to the associated fluid equations
(\ref{cond-radon})--(\ref{eq-fnonidling}).
Moreover,
the solution $(\fx,\fmeas)$  satisfies, for every $f \in \cb$,
\be
\label{final-fleqs}
\ds \int_{[0,M)} f(x) \,  \fmeas_t (dx)    =  \ds \int_{[0,M)} f(x+t) \dfrac{1 - G(x+t)}{1 - G(x)} \fmeas_0 (dx)
+ \int_0^t f(t-s) (1 - G(t-s)) d \fk (s),
\ee
where $\fk$ is given by the relation (\ref{eq-fk}).
 Moreover, if $\fe$ is absolutely continuous with $\flam \doteq  d\fe/dt$,
 then
$\fk$ is also absolutely continuous and $\fsmallk \doteq d\fk/dt$ satisfies for a.e.\ $t \in [0,\infty)$,
\be
\label{def-fsmallk}
 \fsmallk (t) \doteq \left\{
\ba{rl}
\flam (t)  & \mbox{ if } \fx(t) < 1 \\
\flam (t) \wedge \lan h, \fmeas_t \ran  & \mbox{ if } \fx (t) = 1 \\
\lan h, \fmeas_t \ran & \mbox{ if } \fx (t) > 1.
\ea
\right.
\ee
Furthermore, if $\fmeas_0$ and $\fe$ are  absolutely continuous, then
$\fmeas_s$ is  absolutely continuous for every $s \in [0,\infty)$.
\end{theorem}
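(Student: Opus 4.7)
My plan is to establish the explicit representation (\ref{final-fleqs}) first, since uniqueness and the two absolute continuity assertions will then follow from it fairly directly. The key device is a judicious choice of test function in the characterising equation (\ref{eq-ftmeas}). Fix $t > 0$ and $f \in \cb$, and consider
\[
\newf (x, s) \Df f(x + t - s)\, \frac{1 - G(x + t - s)}{1 - G(x)}, \qquad x \in [0, M), \ s \in [0, t].
\]
Using $g = h(1 - G)$ and the continuity of $h$, a direct calculation gives the transport identity $\dtnewf + \dxnewf - h\newf \equiv 0$ on the relevant domain (with the natural convention that $\newf \equiv 0$ where $x + t - s \geq M$, since $G = 1$ there). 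Since $\newf$ is not compactly supported, I would approximate it by $\newf^n \Df \newf \cdot \chi_n$, with $\chi_n(x) = \chi(x/n)$ a smooth cutoff, plus a smooth cutoff in $s$; substituting $\newf^n$ into (\ref{eq-ftmeas}) leaves an error term proportional to $\chi_n'/n$ that vanishes in the limit by dominated convergence, since $\fmeas_s$ is a sub-probability measure for every $s$. The three interior integrals then cancel, and only the boundary data remains; noting that $\newf(x, 0) = f(x+t)(1-G(x+t))/(1-G(x))$, $\newf(0, s) = f(t-s)(1-G(t-s))$ (using $G(0) = 0$), and $\newf(x, t) = f(x)$, one recovers precisely (\ref{final-fleqs}).

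For uniqueness, I would take $f \equiv 1$ in (\ref{final-fleqs}) to express $\lan \f1, \fmeas_t \ran$ as an explicit affine functional of $\fk$ plus the initial data; substituting this into (\ref{eq-fnonidling}) and combining with (\ref{eq-fx}) and (\ref{eq-fk}) produces a closed self-consistent equation $\fk = \Psi[\fk]$ driven only by $(\fe, \fx(0), \fmeas_0)$. Given two solutions $\fk_1, \fk_2$, I would bound $\sup_{s \leq T} |\fk_1(s) - \fk_2(s)|$ by a constant times its own time integral on $[0, T]$, using the non-expansiveness of $x \mapsto 1 \wedge x$ and $x \mapsto [1-x]^+$ together with (\ref{cond-radon}) to control the contributions involving $h$, and then invoke Gronwall to conclude $\fk_1 \equiv \fk_2$. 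Substituting back into (\ref{final-fleqs}) forces $\fmeas_1 \equiv \fmeas_2$, and (\ref{eq-fx}) then forces $\fx_1 \equiv \fx_2$.

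If $\fe$ is absolutely continuous, then $\fx$ is absolutely continuous via (\ref{eq-fx}), hence so is $\lan \f1, \fmeas_t \ran = 1 \wedge \fx(t)$ via (\ref{eq-fnonidling}), and so $\fk$ is absolutely continuous by (\ref{eq-fk}). To identify $\fsmallk$, I would partition $(0, \infty)$ by the value of $\fx$. On the open set $\{\fx < 1\}$, one has $\lan \f1, \fmeas \ran = \fx$; differentiating (\ref{eq-fk}) and substituting (\ref{eq-fx}) yields $\fsmallk = \flam - \lan h, \fmeas \ran + \lan h, \fmeas \ran = \flam$. On the open set $\{\fx > 1\}$, $\lan \f1, \fmeas \ran \equiv 1$ is constant, so $\fsmallk = \lan h, \fmeas \ran$. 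Finally, on the level set $\{\fx = 1\}$, a standard fact about absolutely continuous functions gives $\dot{\fx} = 0$ a.e.\ there, forcing $\flam = \lan h, \fmeas \ran$ a.e.\ on this set, so both branches agree and both equal $\flam \wedge \lan h, \fmeas \ran$; combining the three cases yields (\ref{def-fsmallk}).

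Finally, if $\fmeas_0$ is also absolutely continuous with Lebesgue density $\phi_0$, substituting into (\ref{final-fleqs}) and changing variables ($y = x + t$ in the first integral, $u = t - s$ together with $d\fk(s) = \fsmallk(s)\,ds$ from the previous step in the second) yields $\lan f, \fmeas_t \ran = \int_{[0, M)} f(y) \phi_t(y)\, dy$ with
\[
\phi_t(y) \Df \phi_0(y - t)\, \frac{1 - G(y)}{1 - G(y - t)}\, \ind_{\{y \geq t\}} + (1 - G(y))\, \fsmallk(t - y)\, \ind_{\{y < t\}}.
\]
Since this identity holds for every $f \in \cb$, the function $\phi_t$ is the Lebesgue density of $\fmeas_t$, proving absolute continuity. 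The main technical obstacle throughout is the approximation argument in the first step: the test function $\newf$ above is in general neither compactly supported nor bounded for test purposes against $h\, d\fmeas_s$, and one must combine (\ref{cond-radon}) with the sub-probability property of $\fmeas$ to pass cleanly to the limit when $h$ blows up near $M$.
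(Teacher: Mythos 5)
Your proposal is correct, and it takes a genuinely different route from the paper's on the two substantive steps. For the explicit representation \eqref{final-fleqs}, the paper works through Propositions~\ref{prop-pde2} and \ref{prop-pde1}: it introduces the conjugate measure $\tilde\mu_t(dx)=\psi^h(x,t)\,\fmeas_t(dx)$ via the integrating factor $\psi^h$, proves an abstract uniqueness theorem for the resulting ``free'' transport equation $\partial_x+\partial_s=0$, and then transfers back. You instead plug in the single adjoint test function $\newf(x,s)=f(x+t-s)\frac{1-G(x+t-s)}{1-G(x)}$, for which the identity $\dtnewf+\dxnewf-h\newf\equiv 0$ (a direct computation, valid since $g=h(1-G)$ is continuous when $h$ is) makes the three interior integrals in \eqref{eq-ftmeas} cancel, leaving precisely the boundary data \eqref{final-fleqs}. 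This is a tidy method-of-characteristics shortcut that avoids the intermediate measure and the abstract weak-solution uniqueness argument. For the uniqueness of the coupled system, the paper establishes the quantitative continuity estimate of Theorem~\ref{th-flcont} via a stopping-time contradiction argument split into the cases $\fx(\tau)<1$ and $\fx(\tau)\ge 1$; you extract from \eqref{final-fleqs} with $f=\f1$ a closed fixed-point equation $\fk=\Psi[\fk]$ and use non-expansiveness of $x\mapsto 1\wedge x$ plus a renewal-kernel Volterra estimate. What the paper's route buys is the full Lipschitz dependence on $(\fx(0),\fe)$, which is used elsewhere; yours proves only what Theorem~\ref{th-fluniq} asserts, and somewhat more economically.

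Two small points worth tightening. First, for $f\in\cb$ merely continuous, $\newf$ is not $C^1$ and so not an admissible test function regardless of cutoffs; you should run the argument for $f\in\ocdcm$ (for which the $x$-support of $\newf$ is automatically compact in $[0,M)$) and then pass to $f\in\cb$ by a standard approximation and bounded convergence, as the paper does at the end of the proof of Theorem~\ref{th-pde}. Second, the estimate you get is the Volterra inequality $|\Delta\fk(t)|\le 2\int_0^t g(t-s)|\Delta\fk(s)|\,ds$, not a constant-kernel Gronwall bound; since $\sup_{r\le t}|\Delta\fk(r)|\le 2G(t)\sup_{r\le t}|\Delta\fk(r)|$, one first concludes $\Delta\fk\equiv 0$ on a short interval where $2G(t)<1$ and then bootstraps. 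Also, \eqref{cond-radon} does not really enter this final estimate once the $h$-terms have been eliminated by the choice of test function; its role is upstream, in making \eqref{eq-fk} meaningful. These are cosmetic rather than structural issues.
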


It is also possible to consider the case when the residual service
times of the customers already in system is distributed according to another
distribution $\tilde{G}$.  Indeed, our proofs  show that in this case
the relations (\ref{final-fleqs}) and (\ref{def-fsmallk}) continue to hold, with
 $G$ in the first integral on the right-hand side of (\ref{final-fleqs}) replaced
by $\tilde{G}$.
The uniqueness result can, in fact, be shown to
hold for any $h \in \lloc [0,M)$  for which
(\ref{cond-radon}) holds (see Remark \ref{rem-gencont}), but
we defer the proof of this to a subsequent paper.

Our next main result shows that  a
solution to the fluid equations exists and provides an explicit
characterisation of the functional law of large numbers limit of the $N$-server system, as
$N \ra \infty$.

\begin{theorem}
\label{th-conv} Suppose Assumption \ref{ass-init} is satisfied for some
$(\fe, \fx (0), \fmeas_0) \in
\newspace$.
 Then
the sequence $\{(\fxn, \fmeasn)\}$ is relatively compact.  Moreover,
if $h$ is continuous on $[0,M)$ then as $N \ra \infty$,  $(\fxn, \fmeasn)$ converges in
distribution to the unique solution $(\fx, \fmeas)$ of the associated fluid
equations.
On the other hand, if
  $\fe$ and  $\fmeas_0$ are absolutely continuous, and the set of discontinuities
of $h$ in $[0,M)$ is closed and has zero Lebesgue measure,
then every limit point $(\fx, \fmeas)$ of $\{(\fxn, \fmeasn)\}$ satisfies
the fluid equations.
\end{theorem}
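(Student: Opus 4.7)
The strategy is to derive a pre-limit semimartingale decomposition of $\lan \newft, \measn_t \ran$ that mirrors (\ref{eq-ftmeas}), establish tightness of the scaled sequence via martingale moment bounds, pass to the limit along subsequences, and appeal to Theorem \ref{th-fluniq} to promote subsequential to full convergence. The starting point is a point-process computation based on the age representation (\ref{def-nun}): for $\newf \in \newocdcpm$, each age evolves deterministically at unit rate between jumps, customers enter service at the jump times of $\kn$ with contribution $\newf(0,\cdot)$, and the $\{\filtn\}$-compensator of the departure jump sum equals $\int_0^{\cdot} \lan h \newfs, \measn_s \ran ds$, since conditional on a customer surviving to age $x$ the infinitesimal probability of departure in $[s, s+ds]$ is $h(x)\,ds$. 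Summing over customers and dividing by $N$ yields
\begin{equation*}
\lan \newft, \fmeasn_t \ran = \lan \newf(\cdot,0), \fmeasn_0 \ran + \int_0^t \lan \dtnewfs + \dxnewfs, \fmeasn_s \ran ds - \int_0^t \lan h \newfs, \fmeasn_s \ran ds + \int_0^t \newf(0,s)\, d\fkn(s) + \fmartn(t),
\end{equation*}
where $\fmartn$ is an $\{\filtn\}$-martingale with predictable quadratic variation bounded by $N^{-1}\int_0^t \lan h (\newfs)^2, \fmeasn_s \ran ds$.

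For relative compactness of $\{(\fxn, \fmeasn)\}$, compact containment of $\fmeasn$ in $\mmsp$ is automatic since $\lan \f1, \fmeasn_t \ran \le 1$ and $\mmsp$ is compact in the vague topology, while tightness of $\fxn$ in $\dspacer$ follows from (\ref{def-dn}) together with (\ref{def-nonidling}) and Assumption \ref{ass-init}. Temporal regularity of $\lan \newf, \fmeasn\ran$ over a countable convergence-determining subfamily of $\newocdcpm$ is then verified via an Aldous-type criterion: Doob's $L^2$-inequality applied to $\fmartn$ gives $\E[\|\fmartn\|_T^2] = O(N^{-1})$, local integrability of $h$ together with compactness of $\supp(\newf)$ in $[0,M)\times \R_+$ controls the compensator terms on finite time intervals, and the arrival and entry processes $\fen,\fkn$ are handled via their uniform moment bounds from Assumption \ref{ass-init}.

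For identification of limits, I would invoke the Skorokhod representation theorem and work on a probability space where convergence along a chosen subsequence is almost sure in the Skorokhod topology. Doob's inequality forces the martingale term to vanish uniformly on compacts in $L^2$; the drift and entry terms pass through by vague convergence and continuity of $\newf,\dtnewfs,\dxnewfs$; and the non-idling relation (\ref{eq-fnonidling}) follows from (\ref{def-nonidling}) after a cutoff argument near $M$ that upgrades vague to weak convergence of $\fmeasn_t$. The nontrivial ingredient is the nonlinear departure term $\int_0^t \lan h \newfs, \fmeasn_s \ran ds$: when $h$ is continuous on $[0,M)$, one has $h\newfs \in \newccpm$ (since $\supp(\newf) \subset [0,\kappa]\times \R_+$ for some $\kappa < M$ on which $h$ is bounded), so vague convergence suffices directly. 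The uniqueness assertion of Theorem \ref{th-fluniq} then promotes this subsequential limit to convergence in distribution of the entire sequence.

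The main obstacle lies in the remaining case, in which $h$ has a closed discontinuity set of zero Lebesgue measure and continuity of $h\newfs$ is lost. Under the stronger hypothesis that $\fe$ and $\fmeas_0$ are absolutely continuous, I would argue via a direct age-arrival representation (each in-service age at time $s$ equals $s$ minus an entry time distributed according to either $\fmeas_0$ or $d\fe/ds$, both absolutely continuous by hypothesis) that any subsequential limit $\fmeas_s$ is absolutely continuous with respect to Lebesgue measure for every $s \in [0,\infty)$. Consequently $\fmeas_s$ assigns no mass to the discontinuity set of $h$, and combined with the local boundedness of $h$ on its complement, a Portmanteau-type argument delivers the required convergence of $\int_0^t \lan h\newfs, \fmeasn_s \ran ds$. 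The remaining fluid equations (\ref{eq-fx})--(\ref{eq-fk}) then follow directly from the scaled mass balance relations (\ref{def-dn})--(\ref{def-kn}).
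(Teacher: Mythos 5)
Your proposal follows the paper's architecture (prelimit decomposition, tightness, subsequential identification, uniqueness via Theorem \ref{th-fluniq}; absolute continuity of $\fmeas_s$ for the discontinuous-$h$ case, as in Lemmas \ref{abs-nus} and \ref{lem-depart}). However, two steps you present as near-automatic are exactly where the paper does its heaviest lifting, and as written they leave genuine gaps.

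First, vague compactness of $\mmsp$ is not the tightness you need. The fluid equations (\ref{eq-fnonidling}) and (\ref{eq-fk}) involve $\lan \f1, \fmeas_t\ran$, which is not vaguely continuous, so you must rule out mass escaping toward $M$ (or $\infty$) uniformly in $N$ and $t$. Your ``cutoff argument near $M$ that upgrades vague to weak'' is precisely the nontrivial content: the paper proves compact containment in ${\cal M}_1[0,M)$ (Lemma \ref{lem-tight1}) using Assumption \ref{ass-init}(3), the renewal-function bound on departures from Lemma \ref{lem-estimate}, and a Borel--Cantelli/Prohorov construction, then applies Jakubowski's criterion to $\fnewmeasn = \delta_0 \fidlen + \fmeasn$. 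Without the resulting moment estimates (\ref{eq-dcompn})--(\ref{eq-dcompn2}), nothing prevents mass concentrating near $M$. Relatedly, tightness of $\fxn$ via (\ref{def-dn}) requires tightness of $\fdn$, which rests on the renewal-comparison bound of Lemma \ref{lem-estimate}, not merely on the martingale moment bounds.

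Second, the fluid equations (\ref{eq-fx}) and (\ref{eq-fk}) require $\fdn(t) \ra \int_0^t \lan h, \fmeas_s\ran\,ds$, i.e.\ convergence of the departure-rate integral \emph{without} a spatial cutoff. Your observation that $h\newfs \in \newccpm$ for $\newf$ of compact support in $[0,M)$ covers only (\ref{eq-ftmeas}); it does not handle $\lan h, \fmeasn_s\ran$ itself, since $h$ is typically unbounded near $M$ (think Weibull or uniform). The paper controls this through Lemma \ref{lem-fdcompn}(1) (uniform-in-$N$ control of $\E[\int_0^t \int_{[L,M)} h\, d\fmeasn_s\, ds]$ as $L \ra M$) combined with the truncation argument $R_1^{(N)}, R_2^{(N)}, R_3^{(N)}$ in Lemma \ref{lem-depart}. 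These uniform-integrability estimates are the crux of the identification step and are absent from your sketch. (The compensator identification you state is also a nontrivial proof in the paper, Lemma \ref{lem-compensator}, via stopping times and the predictable $\sigma$-field; stating it as an ``infinitesimal hazard'' heuristic leaves the delicate coupling of ages and assignment through FCFS unaddressed.)
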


\noi
The proof of the theorem is given at the end of Section \ref{subs-pf3}.
The key steps in the proof involve showing tightness of the
sequence $\{(\fxn, \fmeasn)\}$, which is
carried out in Section \ref{subs-tight}, characterising the limit points
of the sequence, which is done in Section \ref{subs-pf3},
and establishing uniqueness of solutions to the fluid equations, which
is the subject of Section \ref{sec-flsol}.

The last  main result
describes the long-time behaviour of the unique solution $(\fx, \fmeas)$ to the
fluid equations in the time-homogeneous setting.
Define $\fmeass$ to be the measure on $[0, M)$ that is absolutely continuous
with respect to Lebesgue measure, and has density $1 - G(x)$: for any Borel set $A \subset [0, M)$,
\be
\label{def-meass}
 \fmeass (A) = \int_{A} (1  - G(x)) \, dx.
\ee

\begin{theorem}
\label{th-larget}
Given  $h \in \lloc [0,M)$, $\flam \in
(0,\infty)$ and $(\flam \f1,\fx(0), \fmeas_0) \in
\newspace$, suppose there exists a unique solution
$(\fx, \fmeas)$  to the associated fluid equations.
Then the following two properties are satisfied.
\begin{enumerate}
\item
If $\fx (0) = 0$ then as $t \ra \infty$,
$\fx(t) = \lan \f1, \fmeas_t \ran$ converges monotonically up to $1$ and, if in addition $\flam \leq 1$, then
for every  non-negative function $f \in \cbm$, $\lan f, \fmeas_t\ran$ converges monotonically up to
$\flam \lan f, \fmeass \ran$.
\item
Suppose the density $g$  of the service distribution is directly Riemann integrable and
$\flam \geq 1$.
Then, given any $(\flam \f1, \fx(0), \fmeas_0) \in \newspace$,
for any $f \in \cbm$
\be
\label{eq-ltime}
 \lim_{t \ra \infty} \lan f, \fmeas_t \ran  = \lan f, \fmeass \ran = \int_{[0,\infty)} f(x) \, (1 - G(x)) \, dx.
\ee
In other words,  $\fmeas_t$ converges weakly to $\fmeass$ as $t \ra \infty$.  \\
\end{enumerate}
\end{theorem}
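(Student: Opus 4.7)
The plan is to use the explicit representation (\ref{final-fleqs}) for $\lan f,\fmeas_t\ran$ together with the Skorokhod-type characterisation (\ref{def-fsmallk}) for the derivative $\fsmallk$, reducing both parts to the study of a few scalar quantities and, for Part (2), to an application of the classical key renewal theorem.

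For Part (1), the condition $\fx(0)=0$ combined with (\ref{def-newspace}) forces $\fmeas_0\equiv\zerof$, so the first integral in (\ref{final-fleqs}) disappears. I would first observe that whenever $\fx(s)<1$ the formula (\ref{def-fsmallk}) gives $\fsmallk(s)=\flam$, and substituting this into (\ref{final-fleqs}) yields
\[
\lan f,\fmeas_t\ran = \flam\int_0^t f(u)(1-G(u))\,du,
\]
which is monotone non-decreasing in $t$ and, by monotone convergence, tends to $\flam\lan f,\fmeass\ran$. Taking $f=\f1$ shows $\lan\f1,\fmeas_t\ran = \flam\int_0^t(1-G(u))\,du \le \flam \le 1$ under $\flam\le 1$, so the regime $\fx(s)<1$ persists forever, the representation is valid for all $t$, and $\fx(t)=\lan\f1,\fmeas_t\ran$ increases monotonically to $\flam$. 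This gives both assertions.

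For Part (2), under $\flam\ge 1$ I would first argue that there is a finite $T_0\ge 0$ after which $\fx(t)\ge 1$, so by the non-idling identity (\ref{eq-fnonidling}) one has $\lan\f1,\fmeas_t\ran=1$ and by (\ref{def-fsmallk}) $\fsmallk(t)=\lan h,\fmeas_t\ran$ for $t\ge T_0$. Restarting the clock at $T_0$, I may assume $T_0=0$ and $\lan\f1,\fmeas_0\ran=1$. Applying (\ref{final-fleqs}) with $f=h$ and using $h(\cdot)(1-G(\cdot))=g$ yields
\[
\lan h,\fmeas_t\ran = a(t) + \int_0^t g(t-s)\fsmallk(s)\,ds, \qquad a(t)\doteq\int_{[0,M)}\frac{g(x+t)}{1-G(x)}\,\fmeas_0(dx),
\]
and, since $\fsmallk=\lan h,\fmeas\ran$ in this regime, writing $\psi=\fsmallk$ this is the renewal equation $\psi=a+g*\psi$. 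A Fubini computation combined with $\int_0^\infty g(x+s)\,ds=1-G(x)$ and $\lan\f1,\fmeas_0\ran=1$ gives $\int_0^\infty a(s)\,ds=1$; direct Riemann integrability of $a$ follows from that of $g$ via the dominance $a(t)\le g(t)/(1-G(x_0))$ for any $x_0<M$ containing the support of $\fmeas_0$ (after a truncation argument if needed). Since $g$ has mean $1$ by (\ref{def-mean1}), the key renewal theorem yields $\fsmallk(t)\to 1$. Feeding this back into (\ref{final-fleqs}) for arbitrary $f\in\cbm$: the first integral tends to $0$ by dominated convergence (integrand bounded by $\|f\|_\infty$ and, for each $x<M$, $(1-G(x+t))\to 0$), while the second,
\[
\int_0^t f(u)(1-G(u))\fsmallk(t-u)\,du,
\]
converges to $\int_0^\infty f(u)(1-G(u))\,du=\lan f,\fmeass\ran$, again by dominated convergence using the uniform bound $\fsmallk\le\flam\vee\sup_t \lan h,\fmeas_t\ran<\infty$, the latter supremum being finite by the renewal estimate.

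The main obstacle is the rigorous setup of the overloaded regime for Part (2): verifying that $\fx$ enters $[1,\infty)$ in finite time (which uses $\flam\ge 1$ together with the bound on $\lan h,\fmeas_s\ran$ while in the underloaded phase) and, more delicately, that it cannot leave $[1,\infty)$, so that $\lan\f1,\fmeas_t\ran$ becomes stuck at $1$ and $\fsmallk$ reduces to $\lan h,\fmeas\ran$. A secondary technical point is the direct Riemann integrability and normalisation of the forcing $a(t)$, which are what make the key renewal theorem directly applicable; once those are in hand the rest of the argument is a clean combination of monotone and dominated convergence with the integral representation of Theorem \ref{th-fluniq}.
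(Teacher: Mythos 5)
Your Part~1 argument is essentially the paper's own (the paper routes it through Lemma~\ref{lem-idle}, which derives the identical representation $\lan f,\fmeas_t\ran=\flam\int_0^t f(u)(1-G(u))\,du$ by observing that $\fk=\fe$ while $\lan\f1,\fmeas\ran<1$ and then invoking Theorem~\ref{th-pde}), and the monotone convergence conclusion is the same. One edge case you pass over: if $\flam=1$ and $M<\infty$, then $\tau_1=M<\infty$, the strict inequality $\fx(t)<1$ fails for $t\geq M$, and the ``underloaded regime persists forever'' claim needs a separate check that the system freezes at $\fx\equiv 1$, $\fmeas_t\equiv\fmeass$ for $t\geq M$ (which the paper verifies directly in Lemma~\ref{lem-idle}(2)). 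That is a minor repair.

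Part~2, however, has a genuine gap. Your strategy rests on showing that there is a finite $T_0$ after which $\fx(t)\geq 1$, so that $\lan\f1,\fmeas_t\ran\equiv1$ and $\fsmallk\equiv\lan h,\fmeas_\cdot\ran$ on $[T_0,\infty)$, and you flag the absorption of $[1,\infty)$ as the delicate step. But for $\flam=1$ that step does not merely require care --- it is false in general. The idle fraction $\fidle(t)=1-\lan\f1,\fmeas_t\ran$ tends to $0$ (Lemma~\ref{lem-idle}(2)--(3)) but need not ever reach $0$, and even when it reaches $0$ it need not stay there. The paper's proof is organized around exactly this: it splits into (i) $\fidle(t)>0$ for all large $t$ (which forces $\flam=1$ and $\fsmallk\equiv 1$, so the representation argument goes through without a renewal equation), (ii) $\fidle(t)\equiv 0$ for all large $t$ (your renewal-equation argument, and the only possible case when $\flam>1$), and (iii) the oscillating case, handled by an $\ve$-approximation: one picks $T_\ve$ with $\fidle(T_\ve)=0$ and $\fidle<\ve$ afterward, notes $\nrm{\fk^{T_\ve}-\fd^{T_\ve}}_\infty\leq\ve$, runs the renewal argument for the departure-rate $\fsmallk^{(2)}=\lan h,\fmeas_\cdot\ran$, and transfers the conclusion to $\fsmallk$ via the Lipschitz estimate of Lemma~\ref{lem-elem}. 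Without such a case split, your approach cannot cover $\flam=1$. A secondary flaw: the bound $a(t)\leq g(t)/(1-G(x_0))$ you invoke to get direct Riemann integrability of the forcing requires $g(x+t)\leq g(t)$, i.e.\ monotonicity of $g$, which is not assumed; establishing d.R.i.\ of $a$ needs a different argument.
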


A precise definition of direct Riemann integrability can be found in
\cite[p.\ 154]{asmbook}.  It is satisfied by most distributions of interest.
The proof of Theorem \ref{th-larget} is presented in Section \ref{sec-longtime}.
For the case $\flam < 1$, property (1) of Theorem \ref{th-larget} was stated as
Theorem 7.3 of \cite{whifluid06} without proof.

\begin{remark}
{\em Our main theorems hold for the majority of
distributions that arise in practice, including the exponential,
lognormal, phase type and uniform distributions and, when $\fmeas_0$ and $\fe$ are absolutely
continuous, the Weibull and Pareto distributions.  It does not, however,  cover
the deterministic distribution.}
\end{remark}

\subsection{Fluid Limits of Other Functionals}
\label{subs-funcflimit}

In the last section, we  identified the fluid limit of
the scaled number of customers in system.
In fact, the fluid limit contains a lot more information.
For instance, as a direct consequence of the continuous mapping theorem, Theorem \ref{th-conv} also identifies
the limit, as $N \ra \infty$,  of  the scaled queue length process $\overline{Z}^{(N)}  = Z^{(N)}/N$,
which is the normalised number of customers waiting in queue (and not in service) at any time: we have
\[ \overline{Z}^{(N)} \doteq \fen - \fkn  \Rightarrow \bar{Z} \doteq \fe - \fk.  \]
Below, we identify the fluid limits of other functionals of interest.

\subsubsection{Waiting and Sojourn Time}
\label{subsub-wait}

The  waiting time functional is of particular interest in the context of call centers,
where service targets are often specified in terms of the proportion of calls
that experience a wait of  less than some given level (see, for example, \cite{brownetal}).

Given a non-decreasing $f \in \mathcal{D}[0,\infty)$ with $L_f \doteq \sup_{s \in [0,\infty)} f(s)$,
consider the following inverse functionals that take values in the extended reals:
\be
\label{def-finv2}
\inv [f] (t) \doteq  \inf \left\{s \geq 0: f(s) \geq t \right\}
\ee
for $t \in [0,L_f]$, with the convention that $[0,L_f] = [0,\infty)$ if $L_f = \infty$,
and if $L_f < \infty$ then $\inv [f] (t) \doteq \infty$ for $t > L_f$.
Likewise, let
\be
\label{def-finv1}
  f^{-1} (t) \doteq \sup \left\{ s \geq 0: f(s) \leq t \right\}
\ee
for $t \in [0,L_f)$ and $f^{-1} (t) = \infty$ for $t \geq L_f$.
The waiting time $\wn_j$ (respectively, the sojourn time $\vn_j$) of the
$j$th customer in the $N$th system is defined to be the time elapsed between
arrival into the system and entry into service (respectively, departure from the system).
 These functionals can be written explicitly as
\be
\label{def-wntime}
\wn (j) \doteq  \inv[\kn](j) - \inv [\en] (j)
\ee
and
\be
\label{def-sojntime}
\vn (j) \doteq \inv[\dn](j) - \inv [\en] (j)
\ee
for $j \in [0,L_{\en}]$.
Also, consider the related processes defined on $[0,\infty)$ by
\[
\fwn (t)  \doteq \wn (\en (t))  \quad \quad \mbox{ and } \quad \quad \fvn \doteq \vn (\en(t))
\]
and note that for $t \in [0,\infty)$,
\[
\fwn (t) = \inv [\fkn] (\fen (t))  - \inv [\fen] (\fen(t))
\]
and similarly,
\[  \fvn = \inv [\fdn] (\fen(t)) -
\inv[\fen] (\fen(t)).
\]
Lastly,   let $\fw$ and $\fv$, respectively, be the processes  given by
\be
\label{def-fwv}
\fw(t) \doteq  \fk^{-1} (\fe(t)) - t   \quad \quad \mbox{ and } \quad \quad
\fv (t) \doteq \fd^{-1} (\fe(t)) - t
\ee
for $t \in [0,\infty)$. 
We will say a function $f \in \mathcal{D}[0,\infty)$
is uniformly strictly increasing if it is absolutely continuous and there exists $\theta > 0$ such that
$\dot{f}(t) \geq \theta$ for all $t \in [0,\infty)$.
Note that for any such function $f^{-1} (f(t)) = t$ and $f^{-1}$ is continuous on $[0,\infty)$.
We have the following fluid limit result for the waiting and sojourn times in the system.

\begin{theorem}
\label{th-wvconv}
Suppose the conditions of Theorem \ref{th-conv} hold and
$\fe$ is uniformly strictly increasing.
If, in addition,  $\fk$  is uniformly strictly increasing
then $\wn \Rightarrow \fw$ as $N \ra \infty$.
Likewise, if $\fd$ is uniformly strictly increasing, then $\vn
\Rightarrow \fv$ as $N \ra \infty$.
\end{theorem}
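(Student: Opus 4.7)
The plan is to derive the waiting and sojourn time limits from Theorem \ref{th-conv} via a continuous mapping argument applied to the generalized inverse. From (\ref{def-dn}) and (\ref{def-kn}) in scaled form, $\fdn = \fxn(0) - \fxn + \fen$ and $\fkn = \lan \f1, \fmeasn \ran - \lan \f1, \fmeasn_0 \ran + \fdn$, so Theorem \ref{th-conv} together with Assumption \ref{ass-init} yields joint convergence $(\fen, \fkn, \fdn) \Rightarrow (\fe, \fk, \fd)$, with all three limits continuous in view of (\ref{def-fd}) and (\ref{eq-fk}). By Remark \ref{rem-skorep} one may pass to an almost sure version on a common probability space; and since the $J_1$-limits are continuous, the convergence is automatically uniform on compact subintervals of $[0,\infty)$.

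The key analytic tool is the continuity of the generalized inverse at a strictly increasing continuous limit: if $f_n \to f$ uniformly on $[0,T]$ with $f$ continuous and uniformly strictly increasing on $[0,T]$, then the generalized inverses defined in (\ref{def-finv2}) satisfy $\inv[f_n] \to f^{-1}$ uniformly on $[0, f(T)]$. This elementary real-analysis lemma, applied under the hypotheses that $\fk$ and $\fe$ (respectively $\fd$ and $\fe$) are uniformly strictly increasing, yields $\inv[\fkn] \to \fk^{-1}$, $\inv[\fdn] \to \fd^{-1}$ and $\inv[\fen] \to \fe^{-1}$ uniformly on compacts almost surely. Composing with $\fen \to \fe$ and using uniform continuity of $\fk^{-1}$ and $\fd^{-1}$ on compact intervals, this gives $\inv[\fkn]\circ \fen \to \fk^{-1}\circ \fe$ and $\inv[\fdn]\circ \fen \to \fd^{-1}\circ \fe$ uniformly on compacts a.s.

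It remains to handle the second summand $\inv[\fen](\fen(t))$ appearing in (\ref{def-wntime})--(\ref{def-sojntime}). By right-continuity of $\fen$, this value equals the largest arrival epoch $\le t$, so $|\inv[\fen](\fen(t)) - t|$ is bounded above by the maximal scaled inter-arrival gap of $\fen$ on $[0,t]$; this gap tends to $0$ a.s.\ precisely because $\fe$ is uniformly strictly increasing and $\fen \to \fe$ uniformly on compacts. Combining, $\fwn(t) \to \fk^{-1}(\fe(t)) - t = \fw(t)$ and $\fvn(t) \to \fd^{-1}(\fe(t)) - t = \fv(t)$ uniformly on compacts a.s., which yields $\fwn \Rightarrow \fw$ and $\fvn \Rightarrow \fv$. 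The main technical subtlety I anticipate is precisely this ``discretization'' term $\inv[\fen]\circ \fen$, which is \emph{not} the identity for the prelimit step function $\fen$ and would fail to vanish asymptotically if $\fe$ had a flat region; uniform strict monotonicity of $\fe$ is exactly the hypothesis that rules this out.
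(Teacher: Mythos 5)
Your proof is correct and follows essentially the same route as the paper: Skorokhod representation to upgrade to a.s.\ convergence, a continuity-of-generalized-inverse lemma applied to the uniformly strictly increasing limits $\fe$, $\fk$, $\fd$ (the paper cites Lemma~4.10 of \cite{ramrei1} for exactly this), then a composition/triangle-inequality argument to identify the limits of $\inv[\fkn]\circ\fen$ and $\inv[\fdn]\circ\fen$. The one small point of divergence is the term $\inv[\fen]\circ\fen$: the paper disposes of it by the same composition argument (yielding $\fe^{-1}\circ\fe=\iota$), whereas you argue directly that $|\inv[\fen](\fen(t))-t|$ is dominated by the local inter-arrival gap of $\fen$, which vanishes uniformly because $\fe$ is uniformly strictly increasing and $\fen\to\fe$ u.o.c.; both arguments are valid, and yours makes the ``discretization'' phenomenon more explicit.
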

\begin{proof}
By Assumption \ref{ass-init} and Theorem \ref{th-conv},  it follows that
$\fen \Rightarrow \fe$, $\fkn \Rightarrow \fk$ and $\fdn \Rightarrow \fd$.  Using the
Skorokhod Representation Theorem, we can assume that the convergence  in all three
cases is almost sure.
When combined with the fact that
$\fe$, $\fk$ and $\fd$ are uniformly strictly increasing,  Lemma 4.10 of \cite{ramrei1}
shows that $\inv[f^{(N)}]\ra f^{-1}$ (almost surely, uniformly on compacts) for $f = \fe, \fk$ and $\fd$.
Now, fix $T < \infty$ and $\omega \in \Omega$ such that these limits hold and also fix some $\ve > 0$.
Moreover, let  $N_0 = N_0 (\omega) < \infty$ be such that
for all $N \geq N_0$,
\[ \sup_{s \in [0,\fen(T)]}
 \left[ \inv[f^{(N)}] (s) - f^{-1} (s) \right] \leq \ve
\]
for $f = \fe, \fk, \fd$.
Then we have
\[
\begin{array}{l}
\ds \sup_{t \in [0,T]} |\inv[\fkn](\fen(t)) - \fk^{-1} (\fe (t))| \\
\quad \quad \quad \quad \quad \quad \quad \quad \leq \ds \sup_{t \in [0,T]} |\inv[\fkn](\fen (t)) - \fk^{-1} (\fen (t))| \\
 \ds \quad \quad \quad \quad \quad \quad \quad \quad \quad \quad +
\sup_{t \in [0,T]} |\fk^{-1} (\fen(t)) - \fk^{-1} (\fe(t))|  \\
\ds \quad \quad \quad \quad \quad\quad \quad \quad  \leq \ve + \sup_{t \in [0,T]} |\fk^{-1} (\fen(t)) - \fk^{-1} (\fe(t))|.
\end{array}
\]
The continuity of $\fk^{-1}$  and the fact that a.s., $\fen \ra \fe$ u.o.c.\ as $N \ra \infty$,
 together, ensure  that a.s.\  $|\fk^{-1} (\fen) - \fk^{-1} (\fe)| \ra 0$ u.o.c. as $N \ra \infty$.
So we have
\[ \lim_{N \ra \infty}  \sup_{t \in [0,T]} |\inv[\fkn](\fen(t)) - \fk^{-1} (\fe (t))| \leq \ve. \]
Sending $\ve \ra 0$,  we infer that
$\inv[\fkn] \circ \fen \ra \fk^{-1} \circ \fe$ uniformly on $[0,T]$.
An analogous argument  shows that $\inv[\fen] \circ \fen  \ra \iota$, where $\iota:t \mapsto t$ is the
identity mapping on $[0,\infty)$ and $\inv [\fdn] \circ \fen \ra \fd^{-1} \circ \fe$.
When combined with the definitions of $\fw$ and $\fv$, the theorem follows.
\end{proof}

>From the results in Section \ref{sec-longtime}, it is easy to see that a sufficient condition
for the assumptions of Theorem \ref{th-wvconv} to hold is that
$\fe$ be piecewise linear  with slope strictly bounded away from zero.

\subsubsection{Workload Process }
\label{subsub-workload}

The workload (or unfinished work) process $U^{(N)}$ is defined to be the amount of work
in the $N$th system (including the work of customers waiting in queue and the residual service of
 customers in service):
\[ U^{(N)} (t) = \suli_{j=-\lan \f1, \measn_0 \ran+1}^{\kn(t)} \left( v_j - \agen_j (t) \right)
\ind_{\{\agen_j(t) < v_j\}}  + \suli_{j=\kn(t) + 1}^{\en(t)} v_j.
\]
  Let the scaled workload process $\overline{U}^{(N)}$ be
defined in the usual fashion.  We briefly outline below how the
results and techniques of this paper may be used to characterize
the limit $\overline{U}$ of the sequence of scaled workload
processes $\{\overline{U}^{(N)}\}$.

Let $\eta^{(N)}$ be the measure-valued process (analogous to $\measn$) that represents the residual service  times
(rather than the ages) of customers in service in the $N$th system:  for $t \in [0,\infty)$,
\[ \eta^{(N)}_t \doteq  \suli_{j=-\lan \f1, \measn_0 \ran+1}^{\kn(t)}   \delta_{v_j - \agen_j (t)} \ind_{\{\agen_j(t) < v_j\}} \]
and let $\overline{\eta}^{(N)}$ denote the corresponding scaled quantity.
Fluid equations can be derived for the limit $\overline{\eta}$ of the
sequence $\{\overline{\eta}^{(N)}\}$ in a manner similar to those derived for $\fmeas$ in this paper and,
 under mild assumptions, we believe it can be shown that, as $N \ra \infty$,
 $\overline{\eta}^{(N)} \Rightarrow \overline{\eta}$, where
for every
$f \in  \ccm$
and $t \in [0,\infty)$,
\[ \lan f, \overline{\eta}_t \ran \doteq \int_{[0,M)} \left( \int_0^\infty \dfrac{g(x+u)}{1 - G(x)} f(u) \, du \right)
\fmeas_t (dx).
\]
A completely rigorous proof of this result is beyond the scope of this paper.
However, below we provide a plausible argument to justify the above claim.
Given the age $x$ of any customer that was already in service at time $0$,
the probability that the residual service time of the customer at time
$t$ is greater than $u$ is given by
$(1-G(x+t + u))/(1 - G(x))$.  Thus the density of the residual service
time distribution at time $t$ for a customer that had age $x$ at time $0$ is
$g(x+t+\cdot)/(1-G(x))$.
Likewise, the density of the residual service distribution at time $t$ for a customer
that entered the system at time $0 < s < t$ is $g(t-s+\cdot)$.
 Moreover, given the ages of all customers in service, the residual
service times of customers in service are independent.
Therefore, by a strong law of large numbers reasoning,
one expects that the the limiting residual service measure $\overline{\eta}$  can be written
in terms of the limiting initial age measure $\fmeas_0$  and limiting cumulative entry-into-service process
$\fk$ as follows:  for $f \in \ccm$,
\[\ba{rcl}
\ds \lan f, \overline{\eta}_t \ran & =  & \ds  \int_{[0, M)} \left( \int_0^\infty \dfrac{g(x+t+u)}{1-G(x)} f(u) \, du \right)
\, \fmeas_0 (dx) \\
& & \ds + \int_0^t \left( \int_0^\infty g (t - s + u) f (u) \, du \right) \, d\fk(s).
\ea
\]
The desired result is then obtained by using the representation (\ref{eq2-fmeas})
 to  rewrite the right-hand side above as an integral with respect to $\fmeas_t$.

In turn, since the workload process admits the alternative representation
\[  U^{(N)} (t)  =  \lan x, \eta^{(N)}_t \ran + \suli_{j=\kn(t)+1}^{\en(t) + \xn(0)} v_j, \]
the convergence of $\overline{\eta}^{(N)}$ to $\overline{\eta}$ and the fact that the service times
$\{v_j\}$ are i.i.d.\ with mean $1$
should imply (under mild assumptions that justify the substitution of linear test functions $f$)
the convergence
$\overline{U}^{(N)} \Rightarrow \overline{U}$ as $N \ra \infty$, where
\[ \overline{U} (t)  \doteq  \int_{[0,M)} \left(\int_0^\infty \dfrac{u g(x+u)}{1 - G(x)} \, du \right) \fmeas_t (dx)
+ (\fx (t)  - 1 )^+.
\]

\beginsec

\section{Uniqueness of Solutions to the Fluid Equation}
\label{sec-flsol}

In this section we show that there is at most one solution to the fluid
equation for any given initial condition.
In fact, we will establish two stronger properties of the
 the fluid equation, both of which imply uniqueness.
The first is the continuity of the mapping that takes
$(\fe,\fx(0),\fmeas_0) \in \newspace$
to a corresponding solution $(\fx, \fmeas)$ of the fluid equation, which is
established in Section \ref{subs-cont}.
The second is a maximality property that is established in
Section \ref{subs-max}.
The proofs of  both continuity and maximality rely on identifying the solution to
a certain integral equation, which is carried out
in Section \ref{subs-pde}.
Existence of solutions to the fluid equation
will follow from results  established in the next section (see, in particular,
Theorem \ref{th-tight}).

\subsection{Continuity of the Fluid Equation Map}
\label{subs-cont}

We begin with Theorem \ref{th-pde}, which identifies an implicit relation
that must be satisfied by the processes $\fmeas$ and $\fk$ that satisfy the fluid equations.
This implicit relation, along with the non-idling condition, is then used to
establish continuity of the fluid solution in Theorem \ref{th-flcont}.

\begin{theorem}
\label{th-pde} Given $M, B \in (0,\infty)$, $h \in {\cal C}[0,M)$, $\fmeas_0 \in
\mmb$ and $\newfk \in \incspace$, suppose  $\fmeas \in {\cal
D}_{\mmb}[0,\infty)$ satisfies
 for every   $\newf \in {\cal
C}^1_c([0,M)\times\R_+)$ and $t \in [0,\infty)$,
\be \label{eq-pde} \ba{rcl} \ds \lan
\newft, \fmeas_t \ran  &   = & \ds \lan  \newf (\cdot,0), \fmeas_0
\ran + \int_0^t \lan \dtnewfs, \fmeas_s  \ran \, ds +
\int_0^t \lan \dxnewfs, \fmeas_s  \ran \, ds \\
& & \quad \quad - \ds \int_0^t \lan h(\cdot) \newfs,  \fmeas_s
\ran \, ds + \ds \int_0^t \newf(0,s) \, d \newfk (s). \ea \ee Then
for every  $f \in \cb$, $\fmeas$ satisfies \be \label{eq2-fmeas}
\int_{[0,M)} f (x) \, \fmeas_t (dx)  = \int_{[0,M )} f(x+t)
\dfrac{1 - G(x+t)}{1 - G(x)} \, \fmeas_0 (dx) + \int_0^t f(t-s) (1
- G(t-s)) \, d \newfk (s) \ee
 for all $t \in [0,\infty)$.
\end{theorem}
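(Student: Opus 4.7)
The strategy is to substitute into (\ref{eq-pde}) a test function $\newf$ engineered so that the three middle integrals cancel while the two boundary contributions reproduce precisely the right-hand side of (\ref{eq2-fmeas}). Given terminal datum $f$, I would look for $\newf$ solving the backward transport--killing PDE $\newf_s+\newf_x-h(x)\newf=0$ on $[0,M)\times[0,t]$ with $\newf(\cdot,t)=f$. The method of characteristics along $r\mapsto(x_0+r,s_0+r)$, combined with the identity $\exp(-\int_a^b h(u)\,du)=(1-G(b))/(1-G(a))$ valid for $0\le a\le b<M$, yields the explicit candidate
\[
\newf(x,s)=f(x+t-s)\,\frac{1-G(x+t-s)}{1-G(x)},\qquad (x,s)\in[0,M)\times[0,t].
\]
A direct differentiation (using that $h\in{\cal C}[0,M)$ makes $g=h(1-G)$ continuous, so that $G$ is $C^1$) shows that $\newf$ is $C^1$ and satisfies the PDE, and its boundary traces $\newf(x,0)=f(x+t)(1-G(x+t))/(1-G(x))$ and $\newf(0,s)=f(t-s)(1-G(t-s))$ are exactly the quantities appearing on the right-hand side of (\ref{eq2-fmeas}).

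The obstacle is that this $\newf$ is not a priori in $\newocdcpm$: it is defined only for $s\le t$, and its $x$-support is compact in $[0,M)$ only when $f$ is. I would therefore first establish (\ref{eq2-fmeas}) for $f\in\ocdcm$ with $\supp(f)\subset[0,M_0]$ for some $M_0<M$; then $\newf$ vanishes whenever $x+t-s\notin[0,M_0]$, confining $\supp(\newf)$ to the compact subset $[0,M_0]\times[\max(0,t-M_0),t]$ of $[0,M)\times\R_+$. Multiplying by a $C^1$ cutoff $\chi:\R_+\to[0,1]$ with $\chi\equiv1$ on $[0,t]$ and $\chi\equiv0$ on $[t+\delta,\infty)$, together with any $C^1$ extension of $\newf$ across $s=t$, produces $\tilde\newf=\chi\,\newf\in\newocdcpm$ whose values and partial derivatives coincide with those of $\newf$ throughout $[0,M)\times[0,t]$. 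Since all integrals in (\ref{eq-pde}) are over $s\in[0,t]$, substituting $\tilde\newf$ causes the three middle terms to collapse into $\int_0^t\lan\newf_s+\newf_x-h(\cdot)\newf,\fmeas_s\ran\,ds=0$, and (\ref{eq2-fmeas}) follows immediately for such $f$.

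It then remains to pass from $f\in\ocdcm$ to arbitrary $f\in\cb$. A standard truncation-and-mollification argument produces $f_n\in\ocdcm$ with $\supp(f_n)\subset[0,M-1/n]$, $\|f_n\|_\infty\le\|f\|_\infty$, and $f_n(y)\to f(y)$ at every $y\in[0,M)$. Because the factors $1-G(x+t)$ and $1-G(t-s)$ in (\ref{eq2-fmeas}) annihilate the respective integrands on $\{x+t\ge M\}$ and $\{t-s\ge M\}$, only pointwise convergence on $[0,M)$ is needed; since $\fmeas_0$ and $\fmeas_t$ lie in $\mmb$ and $d\newfk$ induces a finite measure on $[0,t]$, bounded convergence passes the identity to the limit. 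I expect the main technical subtlety to be the cutoff/extension construction that produces a test function in $\newocdcpm$ without disturbing the cancellation of the middle integrals on $[0,t]$; the characteristic computation and the density argument are thereafter routine.
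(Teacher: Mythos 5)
Your proof is correct and lands on the same essential mechanism as the paper — duality with the method of characteristics along $r\mapsto(x_0+r,s_0+r)$ and the exponential factor $\exp(-\int_a^b h)=(1-G(b))/(1-G(a))$ — but the route is organized differently, and the difference is worth recording.

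The paper factors the argument into two propositions. Proposition~\ref{prop-pde1} performs the change of unknown $\newfmeas_t = \psi^h(\cdot,t)\,\fmeas_t$, where $\psi^h(x,t)=\exp(r^h(x,t))$ is essentially the inverse of your survival ratio, which transforms (\ref{eq-pde}) into the pure transport equation (\ref{eq-pde2}) with no $h$-term. Proposition~\ref{prop-pde2} then proves uniqueness for (\ref{eq-pde2}) by showing that the difference of two solutions, viewed as a Radon measure on $[0,M)\times\R_+$, annihilates the range of $\tnewf\mapsto\tnewf_x+\tnewf_s$; the paper constructs the explicit right inverse $\tnewf(x,s)=-\int_0^\infty\tilde\psi(x+u,s+u)\,du$ to conclude this range is all of the compactly supported test functions. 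Your proof collapses these two steps into one: you directly construct the solution $\newf(x,s)=f(x+t-s)(1-G(x+t-s))/(1-G(x))$ of the adjoint/backward equation $\newf_s+\newf_x-h\newf=0$ with terminal datum $f$ at $s=t$, and substitute it after the necessary cutoff. What you gain is directness: no separate uniqueness lemma and no change of unknown, and the cancellation of the middle terms is immediate. What the paper's factorization buys is modularity — the transport-equation uniqueness statement (Proposition~\ref{prop-pde2}) and the multiplicative change of unknown (Proposition~\ref{prop-pde1}) are independent tools, reusable and easier to generalize to $h\in\lloc[0,M)$ as indicated in Remark~\ref{rem-gencont}. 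Both routes rely on continuity of $h$ to make the test functions $C^1$, and both finish with the same truncation-and-bounded-convergence step passing from compactly supported $f$ to $f\in\cb$, using that $1-G$ vanishes beyond $M$.

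One technical point you flag but should not gloss over: the "$C^1$ extension of $\newf$ across $s=t$" must preserve the compact $x$-support, and the naive linear extension $\newf(x,s)=\newf(x,t)+(s-t)\newf_s(x,t-)$ need not be $C^1$ in $x$ unless $f'$ and $h$ are differentiable. The clean fix is either to invoke a Whitney-type $C^1$ extension of $\newf$ from the closed set $[0,M)\times[0,t]$, or, more elementarily, to extend the factors $f$ and $1-G$ to $C^1$ functions with compact support on $(-\epsilon,M)$ (possible because $f$ has compact support in $[0,M_0]$ and $g=h(1-G)$ is continuous on $[0,M)$) and then read the same explicit formula for $s\in(t,t+\delta]$; the spatial support stays inside $[0,M_0+\delta]\subset[0,M)$ for $\delta$ small. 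With that made precise, the argument is complete.
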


The integral equation (\ref{eq-pde}) is simply the
fluid equation (\ref{eq-ftmeas}), with $\fk$ replaced by an arbitrary, non-decreasing,
c\`{a}dl\`{a}g function $\newfk$.
Also,  (\ref{eq2-fmeas}) only depends on the values of $f$ in $[0,M)$ since
$f(u) (1 - G(u)) = 0$ for all $u \geq M$.
Moreover,  (\ref{eq2-fmeas}) explicitly characterises
the deterministic measure-valued process $\fmeas$.
Theorem \ref{th-pde}, and consequently the continuity result,
can be shown to hold more generally for $h \in \lloc [0,M)$
for which (\ref{cond-radon}) is satisfied (see Remark \ref{rem-gencont}), but this
extension is omitted from this paper in the interest of simplicity of exposition.

\begin{remark}
\label{remark-pde}
{\em
The last integral in (\ref{eq2-fmeas}) is, as usual, to be interpreted as a Riemann-Stieltjes integral.
A straightforward integration-by-parts shows that  for every }
$f \in \ocdb$ {\em and} $t \in [0,\infty)$,
{\em this   integral also admits the alternative representation  }
\be
\label{eq-ibp}
\ba{l}
\ds  \int_0^t f(t-s) (1 - G(t-s)) \, d \newfk (s) \\
 \quad \quad \quad \quad \quad =  \ds
f(0) \newfk (t)  + \int_0^t  f^\prime (t-s) (1 - G(t-s))\newfk(s) \, ds  \\
 \ds \quad \quad \quad \quad \quad \quad \quad - \int_0^t  f(t-s) g(t-s)\newfk(s) \, ds.
\ea
\ee
\end{remark}

The proof of Theorem \ref{th-pde} involves PDE techniques and is relegated to
Section \ref{subs-pde}.  As a simple corollary of Theorem \ref{th-pde}, we have the following result.

\begin{cor}
\label{cor-pde}
Let $(\fx, \fmeas)$ be a solution to the fluid equations associated with
$(\fe, \fx(0), \fmeas_0) \in \newspace$ and $h$ that is continuous on  $[0,M)$.
Then the function  $\fk$ defined by (\ref{eq-fk})  satisfies the implicit equation
\be
\label{eq2-fk}
\ba{rcl}
 \fk(t) & = & \ds  \lan \f1, \fmeas_t, \ran - \lan \f1, \fmeas_0 \ran
+ \int_{[0,M)} \dfrac{G(x+t) - G(x)}{1 - G(x)} \, \fmeas_0 (dx) \\
& & \quad  + \ds \int_0^t g(t-s) \fk(s) \, ds
\ea
\ee
 for every $t \in [0,\infty)$.
\end{cor}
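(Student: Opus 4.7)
The proof is a direct unpacking of Theorem \ref{th-pde} applied to the constant test function $f = \f1 \in \cb$. Since $(\fx, \fmeas)$ is assumed to solve the fluid equations associated with $(\fe, \fx(0), \fmeas_0)$ and continuous $h$, relation (\ref{eq-ftmeas}) is precisely the hypothesis (\ref{eq-pde}) of Theorem \ref{th-pde} with $\newfk = \fk$, where $\fk$ is given by (\ref{eq-fk}). The plan is therefore to invoke (\ref{eq2-fmeas}) with $f \equiv \f1$, rearrange, and then integrate by parts on the resulting Riemann--Stieltjes integral.

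Concretely, I would first apply (\ref{eq2-fmeas}) with $f \equiv \f1$ to obtain
\[
\lan \f1, \fmeas_t \ran \;=\; \int_{[0,M)} \dfrac{1 - G(x+t)}{1 - G(x)} \, \fmeas_0(dx) \;+\; \int_0^t (1 - G(t-s)) \, d\fk(s).
\]
Since $\lan \f1, \fmeas_0 \ran = \int_{[0,M)} \tfrac{1-G(x)}{1-G(x)} \fmeas_0(dx)$, subtracting $\lan \f1, \fmeas_0 \ran$ from both sides and combining the two measure integrals over $[0,M)$ gives
\[
\lan \f1, \fmeas_t \ran - \lan \f1, \fmeas_0 \ran + \int_{[0,M)} \dfrac{G(x+t) - G(x)}{1 - G(x)} \, \fmeas_0(dx) \;=\; \int_0^t (1 - G(t-s)) \, d\fk(s).
\]

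Next, I would transform the right-hand side using the integration-by-parts identity (\ref{eq-ibp}) from Remark \ref{remark-pde}, applied with $f \equiv \f1 \in \ocdb$ (so $f^\prime \equiv 0$ and $f(0) = 1$), which yields
\[
\int_0^t (1 - G(t-s)) \, d\fk(s) \;=\; \fk(t) \;-\; \int_0^t g(t-s) \fk(s) \, ds.
\]
Here I implicitly use that $\fk(0) = 0$, which is immediate from (\ref{eq-fk}) evaluated at $t=0$. Substituting this expression back into the previous display produces exactly the relation (\ref{eq2-fk}), completing the proof.

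There is no real obstacle; the only subtlety is keeping the signs straight when combining the two $\fmeas_0$-integrals and verifying that $\f1$ is an admissible test function for both (\ref{eq2-fmeas}) and (\ref{eq-ibp}), which is immediate.
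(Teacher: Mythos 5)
Your computation is correct and follows the same route as the paper: substitute $f=\f1$ into (\ref{eq2-fmeas}), subtract $\lan\f1,\fmeas_0\ran$, and apply (\ref{eq-ibp}). The algebra, including the sign-tracking when combining the two $\fmeas_0$-integrals, is right.

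However, there is a genuine gap at the very first step. You invoke Theorem~\ref{th-pde} with $\newfk=\fk$, but Theorem~\ref{th-pde} requires $\newfk\in\incspace$, i.e.\ that $\newfk$ be non-decreasing (and c\`adl\`ag with $\newfk(0)=0$). You verify $\fk(0)=0$, but you do not verify monotonicity of $\fk$, and this is not a formality: the definition (\ref{eq-fk}) expresses $\fk(t)$ as $\lan\f1,\fmeas_t\ran-\lan\f1,\fmeas_0\ran+\int_0^t\lan h,\fmeas_s\ran\,ds$, which is not obviously non-decreasing since $\lan\f1,\fmeas_t\ran$ can decrease. Monotonicity is what makes the Riemann--Stieltjes integral $\int_0^t f(t-s)(1-G(t-s))\,d\fk(s)$ in (\ref{eq2-fmeas}) and the integration-by-parts identity (\ref{eq-ibp}) meaningful in the first place. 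The paper spends the first half of its proof establishing precisely this fact, splitting into the cases $\fx(t)>1$ and $\fx(t)\le 1$ and using the non-idling condition (\ref{eq-fnonidling}) together with (\ref{eq-fx}): when $\fx(t)>1$ one gets $\lan\f1,\fmeas_t\ran=1$ so $\fk(t)-\fk(s)\ge\int_s^t\lan h,\fmeas_r\ran\,dr\ge 0$, and when $\fx(t)\le 1$ one gets $\fk(t)-\fk(s)=\lan\f1,\fmeas_t\ran-\fx(t)-\lan\f1,\fmeas_s\ran+\fx(s)+\fe(t)-\fe(s)\ge 0$. You need to supply this verification (or an equivalent one) before Theorem~\ref{th-pde} can be applied.
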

\begin{proof}
We first claim that if $(\fx, \fmeas)$ solve the fluid equations, then
$\fk$ defined by (\ref{eq-fk}) must necessarily be non-decreasing (as one would
expect from the interpretation of $\fk$ as the limiting fraction of cumulative entries into service).
In order to justify the claim, fix $0 \leq s \leq t$.
If $\fx (t) > 1$ then the non-idling condition (\ref{eq-fnonidling}) implies that
 $\lan \f1, \fmeas_t \ran = 1$ which, when substituted into
(\ref{eq-fk}), shows that
\[ \fk (t) - \fk(s) = 1 - \lan \f1, \fmeas_s \ran + \int_s^t \lan h, \fmeas_s \ran \, ds
\geq  \int_s^t \lan h, \fmeas_s \ran \, ds \geq 0.
\]
On the other hand, if $\fx (t) \leq 1$ then the non-idling condition (\ref{eq-fnonidling})
shows that  $\lan \f1, \fmeas_t \ran = \fx(t)$ and
$\lan \f1, \fmeas_s \ran \leq \fx (s)$.   Hence
(\ref{eq-fk}), together with (\ref{eq-fx}) and the fact that $\fe$ is non-decreasing,
shows that
\[ \fk (t) - \fk(s)
 = \lan \f1, \fmeas_t \ran - \fx(t) - \lan \f1, \fmeas_s \ran + \fx (s) + \fe (t) - \fe (s)
\geq 0,
\]
which proves the claim.

In addition, by assumption, $\fk$ and $\fmeas$ satisfy
(\ref{eq-ftmeas}) and so Theorem \ref{th-pde} applies, with
$\newfk = \fk$. Substituting $f = \f1 \in \ocdb$ and $\newfk =
\fk$ in (\ref{eq2-fmeas}), we obtain the relation
\[\ba{rcl}
 \ds \int_0^t (1 - G(t-s)) \, d\fk(s) & = & \ds \lan \f1, \fmeas_t \ran - \int_{[0,M)}
\dfrac{1 - G(x+t)}{1 - G(x)} \, \fmeas_0 (dx) \\
& = & \ds \lan \f1, \fmeas_t \ran - \lan \f1, \fmeas_0 \ran +
\int_{[0,M)} \dfrac{G(x+t) - G(x)}{1 - G(x)} \, \fmeas_0 (dx).
\ea
\]
On the other hand, equation (\ref{eq-ibp}) with $\newfk = \fk$ and $f = \f1$
shows that
\[   \ds \int_0^t (1 - G(t-s)) \, d\fk(s) = \fk(t)  - \int_0^t g(t-s) \fk(s) \, ds.
\]
Equating the right-hand sides of the last two displays, we obtain
(\ref{eq2-fk}).
\end{proof}

As an immediate consequence of Theorem \ref{th-pde} and Corollary
\ref{cor-pde}, we obtain the following simple bound.   Given $\mu
\in \mmf$, here $|\mu|$ is used to denote the total variation
measure associated with $\mu$ and $|\mu|_{TV}$ represents the
total variation of $\mu$ on $[0,M)$.

\begin{lemma}
\label{lem-elem} For $i = 1,2$, suppose  $\fmeas_0^{(i)} \in
\mmsp$ and $\newfk^{(i)} \in \incspace$ are given, and suppose
$\fmeas^{(i)} \in {\cal D}_{\mmsp}(\R_+)$ satisfies
(\ref{eq-fmeas}) with $\fk$ replaced by $\newfk^{(i)}$. Then for
every $T < \infty$ and $f \in \ocdb$,
\be
\label{ineq-fmeas}
 \nrm{\lan f, \fmeasp_s \ran - \lan f, \fmeaso_s \ran}_T \leq
\nrm{f}_\infty |\Delta \fmeas_o|_{TV} + \left( 2 \nrm{f}_T +
\nrm{f^\prime}_T \right) \nrm{\Delta \newfk}_T,
\ee
where $\Delta \newfk \doteq \newfk^{(2)} - \newfk^{(1)}$ and $\Delta \fmeas_0
\doteq \fmeasp_0  - \fmeaso_0$.
\end{lemma}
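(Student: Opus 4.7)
The plan is to apply Theorem \ref{th-pde} to each of $\fmeas^{(i)}$ separately (with $\newfk = \newfk^{(i)}$ and $\fmeas_0 = \fmeas_0^{(i)}$), then subtract the two resulting identities. Since $\mmsp \subset \mmb$ for $B=1$, the hypotheses of Theorem \ref{th-pde} are met, and (\ref{eq2-fmeas}) gives, for every $f \in \cb$ and every $t \in [0,T]$,
\[
\lan f, \fmeasp_t \ran - \lan f, \fmeaso_t \ran = \int_{[0,M)} f(x+t) \frac{1-G(x+t)}{1-G(x)}\, (\fmeasp_0 - \fmeaso_0)(dx) + \int_0^t f(t-s)(1-G(t-s))\, d(\Delta \newfk)(s).
\]
The first integral is immediately bounded by $\nrm{f}_\infty |\Delta \fmeas_0|_{TV}$, since the kernel $(1-G(x+t))/(1-G(x))$ lies in $[0,1]$. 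This accounts for the first term on the right-hand side of (\ref{ineq-fmeas}).

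The main work is to bound the second integral by $(2\nrm{f}_T + \nrm{f'}_T)\nrm{\Delta \newfk}_T$. The key observation is that we only control $\nrm{\Delta \newfk}_T$, not its total variation, so we cannot treat $d(\Delta \newfk)$ as a signed measure of controlled mass. Instead, since $f \in \ocdb$, we apply the integration-by-parts formula (\ref{eq-ibp}) (valid for $\Delta \newfk = \newfk^{(2)} - \newfk^{(1)}$ by linearity) to rewrite
\[
\int_0^t f(t-s)(1-G(t-s))\, d(\Delta \newfk)(s) = f(0)\Delta \newfk(t) + \int_0^t f'(t-s)(1-G(t-s))\Delta \newfk(s)\, ds - \int_0^t f(t-s) g(t-s) \Delta \newfk(s)\, ds.
\]
Now each piece admits a pointwise bound: the first is at most $\nrm{f}_T \nrm{\Delta \newfk}_T$; the second is at most $\nrm{f'}_T \nrm{\Delta \newfk}_T \int_0^t (1-G(u))\, du \le \nrm{f'}_T \nrm{\Delta \newfk}_T$ using the mean-one normalization (\ref{def-mean1}); and the third is at most $\nrm{f}_T \nrm{\Delta \newfk}_T \int_0^t g(u)\, du \le \nrm{f}_T \nrm{\Delta \newfk}_T$ since $G \leq 1$. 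Summing these three bounds yields exactly $(2\nrm{f}_T + \nrm{f'}_T) \nrm{\Delta \newfk}_T$.

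Taking the supremum over $s \in [0,T]$ then yields (\ref{ineq-fmeas}). The only subtle point is the justified use of integration by parts when $\Delta \newfk$ is the difference of two non-decreasing functions (hence of bounded variation but not monotone); this follows from applying (\ref{eq-ibp}) to $\newfk^{(1)}$ and $\newfk^{(2)}$ separately and subtracting. Apart from this, every step is a direct estimate, so the main obstacle is purely bookkeeping: one must keep track of the three distinct contributions produced by the integration by parts and exploit the mean-one normalization of $G$ to dispose of the time-integrals cleanly.
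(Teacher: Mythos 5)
Your proof is correct and follows essentially the same route as the paper's: apply (\ref{eq2-fmeas}) and (\ref{eq-ibp}), then bound the three resulting integrals termwise using $\int_0^\infty(1-G)=1$ and $G\leq 1$. The paper's version is slightly terser (it writes $|f(0)|$ where you write $\nrm{f}_T$, which costs nothing in the final estimate) but the decomposition and estimates are identical.
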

\begin{proof}
The relations (\ref{eq2-fmeas}) and (\ref{eq-ibp})  together
imply that for $f \in \ocdb$ and $t \in [0,\infty)$,
\[
\ba{rcl}
 \lan f, \fmeasp_t \ran - \lan f, \fmeaso_t \ran  & = & \ds \int_{[0, M)} f(x+t) \dfrac{1 - G(x+t)}{1 - G(x)} \, \Delta \fmeas_0 (dx) + \ds  f(0) \Delta \newfk (t)  \\
& &  + \ds  \int_0^t f^\prime (t-s) (1 - G(t-s)) \Delta \newfk (s)\, ds \\
& & -\ds \int_0^t  f(t-s)  g(t-s)\Delta \newfk (s)  \, ds. \ea
\]
This implies that for $f \in \ocdb$
and for every $t \in [0,T]$,
\[ \left|\lan f, \fmeasp_t \ran - \lan f, \fmeaso_t \ran \right|
\leq \int_{[0,\infty)} |f(x+t)| |\Delta \fmeas_0|
(dx) + \left( |f(0)| + \nrm{f}_t + \nrm{f^\prime}_t \right) \nrm{
\Delta \newfk}_T
\]
 from  which  (\ref{ineq-fmeas}) follows.
\end{proof}

We now state the main result of this section.
Below,  $\Delta H$ denotes $H^{(2)} - H^{(1)}$ for $H = \fk, \fd, \fe, \fx$ and $\fmeas$.

\begin{theorem} {\bf (Continuity of  Solution Map)}
\label{th-flcont}
Suppose $h$ is continuous on $[0,M)$, and  for $i = 1, 2$,  let
 $(\fx^{(i)}, \fmeas^{(i)})$ be the solution to the fluid equations associated with
$(\fe^{(i)}, \fx^{(i)}(0), \fmeas_0^{(i)}) \in \newspace$ and let
$\fk^{(i)}$ and $\fd^{(i)}$ be defined as in (\ref{eq-fk}) and (\ref{def-fd}), respectively,
with $\fmeas$ replaced by $\fmeas^{(i)}$.
If $\fmeaso_0  = \fmeasp_0$
 then for every $T < \infty$,
\be
\label{ineq-cont0}
\left[ \sup_{t \in [0,T]} \Delta \fk (t) \right] \vee \left[ \sup_{t \in [0,T]} \Delta \fd (t) \right]
\leq \left[ \Delta |\fx (0)| + \sup_{t \in [0,T]} \Delta \fe (t) \right] \vee 0
\ee
and hence
\be
\label{ineq-cont}
 \nrm{\Delta \fk}_T  \vee \nrm{\Delta \fd}_T
\leq |\Delta \fx (0)| + \nrm{\Delta \fe}_T.
\ee
Moreover, for every $T < \infty$ and $f \in \ocdb$,
\be \label{ineq-cont1}
 \nrm{\lan f, \fmeasp_s \ran - \lan f, \fmeaso_s \ran}_T \leq
 \left( 2 \nrm{f}_T + \nrm{f^\prime}_T \right) \left( \Delta \fx (0) + \nrm{\Delta \fe}_T \right).
\ee
\end{theorem}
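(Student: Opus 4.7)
My plan is to reduce (\ref{ineq-cont0}) to a renewal-type integral inequality for $\Delta\fk$ and extract the bound using $\int_0^\infty g(x)\,dx = 1$ together with the fact that $G(t)<1$ for every $t<M$. The starting point is the identity
\[
\fk(t) + (\fx(t)-1)^+ \;=\; \fe(t) + (\fx(0)-1)^+,
\]
valid for every fluid solution. I derive it by substituting $\fd(t)=\fx(0)+\fe(t)-\fx(t)$ (from (\ref{eq-fx})) and $\lan\f1,\fmeas_t\ran = \fx(t)\wedge 1$ (from the non-idling condition (\ref{eq-fnonidling})) into the mass balance (\ref{eq-fk}), and simplifying via $x-(x\wedge 1)=(x-1)^+$. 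Evaluating non-idling at $t=0$ shows that $\fmeaso_0=\fmeasp_0$ forces $\fxo(0)\wedge 1 = \fxp(0)\wedge 1$, which rules out any mixed configuration and hence gives $\Delta(\fx(0)-1)^+ = \Delta\fx(0)$. Subtracting the identity across the two solutions, together with the convolution relation $\Delta\fd(t) = (g*\Delta\fk)(t)$ obtained from Corollary~\ref{cor-pde} (whose $\fmeas_0$-dependent initial integral cancels), yields
\[
\Delta\fk(t) + \Delta(\fx(t)-1)^+ \;=\; \Delta\fe(t) + \Delta\fx(0), \qquad \Delta\fd(t) \;=\; (g*\Delta\fk)(t).
\]

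The next step is a two-way case split at each fixed $t\in[0,T]$, indexed by the sign of $\Delta\fx(t)$. Put $A \doteq \Delta\fx(0) + \sup_{s\le T}\Delta\fe(s)$. When $\fxp(t)\ge\fxo(t)$, monotonicity of $x\mapsto(x-1)^+$ gives $\Delta(\fx(t)-1)^+\ge 0$, so the first identity above yields $\Delta\fk(t)\le A$. When $\fxp(t)\le\fxo(t)$, monotonicity of $x\mapsto x\wedge 1$ gives $\Delta\lan\f1,\fmeas_t\ran\le 0$, and since $\Delta\lan\f1,\fmeas_t\ran = \Delta\fk(t)-\Delta\fd(t)$ by the mass balance, this yields $\Delta\fk(t)\le(g*\Delta\fk)(t)$. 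Either way,
\[
\Delta\fk(t) \;\le\; A \,\vee\, (g*\Delta\fk)(t), \qquad t\in[0,T].
\]
Setting $\gamma(t) \doteq [\Delta\fk(t) - A^+]^+$ and using $A^+(G(t)-1)\le 0$, the pointwise inequality upgrades to $\gamma(t) \le (g*\gamma)(t)$ for every $t$. On $[0,M)$, the bound $\sup_{s\le t}\gamma(s)\le G(t)\sup_{s\le t}\gamma(s)$ together with $G(t)<1$ forces $\gamma\equiv 0$. When $M<\infty$ I iterate: having shown $\gamma\equiv 0$ on $[0,kM]$, for $t\in[kM,(k+1)M)$ the convolution $(g*\gamma)(t)$ picks up values only on $[kM,t]$ and has total mass $G(t-kM)<1$, so the same contraction argument kills $\gamma$ on $[kM,(k+1)M)$. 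This gives $\sup_{t\le T}\Delta\fk(t) \le A^+$, and the bound on $\sup_{t\le T}\Delta\fd(t)$ follows from $\Delta\fd = g*\Delta\fk$.

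The two-sided bound (\ref{ineq-cont}) is then obtained by applying (\ref{ineq-cont0}) with the roles of the two solutions swapped and combining the resulting one-sided estimates, while (\ref{ineq-cont1}) follows by inserting (\ref{ineq-cont}) into Lemma~\ref{lem-elem}, whose $|\Delta\fmeas_0|_{TV}$ term vanishes because $\fmeaso_0 = \fmeasp_0$. The main technical obstacle I anticipate is the iteration across the boundaries $\{kM\}$ when $M<\infty$: the one-shot Gronwall bound loses its strict contraction precisely where $G$ saturates at $1$, so the proof relies on the shift-invariance of the convolution kernel $g$ and stepwise induction on $M$-length intervals to recover a strict contraction factor at each stage.
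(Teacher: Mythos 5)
Your proof is correct and establishes the same bound as the paper, but by a genuinely different route. The paper's proof proceeds by a first-passage-time contradiction: set $\ve = [\Delta\fx(0) + \sup_{s\le T}\Delta\fe(s)]\vee 0$, pick $\delta>0$, define $\tau = \inf\{t: \Delta\fk(t) > \ve+\delta\}$, and show $\tau>T$ by splitting on whether $\fxo(\tau)<1$ (where the non-idling identity alone gives $\Delta\fk(\tau)\le\ve$) or $\fxo(\tau)\ge 1$ (where Corollary~\ref{cor-pde} gives $\Delta\fk(\tau)\le\int_0^\tau g(\tau-s)\Delta\fk(s)\,ds$ and the paper argues this is $<\ve+\delta$); then send $\delta\downarrow 0$. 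You instead derive the global pointwise bound $\Delta\fk(t)\le A\vee(g*\Delta\fk)(t)$ by the same case dichotomy (sign of $\Delta\fx(t)$, rather than whether $\fxo$ is above or below $1$), reduce it to the renewal-type inequality $\gamma\le g*\gamma$ with $\gamma=[\Delta\fk-A^+]^+$, and then kill $\gamma$ by the contraction $G(t)<1$ on $[0,M)$, iterating over $M$-length blocks when $M<\infty$. The ingredients are identical --- the identity $\fk(t)+(\fx(t)-1)^+=\fe(t)+(\fx(0)-1)^+$ from non-idling, the convolution relation $\Delta\fd=g*\Delta\fk$ from Corollary~\ref{cor-pde}, and $G(t)<1$ for $t<M$ --- but your Gronwall/iteration formulation is structurally cleaner in one respect: it handles the regime $\tau\ge M$ (where $G(\tau)=1$ and the one-shot factor $(\ve+\delta)G(\tau)$ loses strictness) by the explicit stepwise induction, whereas the paper's argument at that point relies on the strict inequality $\Delta\fk(s)<\ve+\delta$ for $s<\tau$, which the $\inf\{t:\Delta\fk(t)>\ve+\delta\}$ definition only gives in the form $\le$; that minor imprecision is fixable (e.g.\ by using $\ge$ in the infimum) but your renewal-inequality route sidesteps it entirely. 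The conclusions (\ref{ineq-cont}) by symmetry and (\ref{ineq-cont1}) via Lemma~\ref{lem-elem} with $|\Delta\fmeas_0|_{TV}=0$ are obtained the same way in both proofs.

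One small verification worth recording explicitly in a write-up: the passage from $\Delta\fk\le A\vee(g*\Delta\fk)$ to $\gamma\le g*\gamma$ uses the implication ``if $\gamma(t)>0$ then $\Delta\fk(t)>A^+\ge A$, so the Case~A conclusion $\Delta\fk(t)\le A$ fails, so $\fxp(t)<\fxo(t)$, so Case~B applies and $\Delta\fk(t)\le(g*\Delta\fk)(t)\le A^+G(t)+(g*\gamma)(t)$,'' giving $\gamma(t)\le A^+(G(t)-1)+(g*\gamma)(t)\le(g*\gamma)(t)$. And your observation that $\fmeaso_0=\fmeasp_0$ forces $\fxo(0)\wedge 1=\fxp(0)\wedge 1$, hence $\Delta(\fx(0)-1)^+=\Delta\fx(0)$, is correct: the two cases ``both $\ge 1$'' and ``both $<1$ (hence equal)'' exhaust the possibilities, and a mixed configuration contradicts $\lan\f1,\fmeaso_0\ran=\lan\f1,\fmeasp_0\ran$.
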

\begin{proof}
For $i = 1, 2$, fix $(\fe^{(i)}, \fx^{(i)}(0), \fmeas_0^{(i)}) \in \newspace$ and
$(\fx^{(i)}, \fmeas^{(i)}, \fk^{(i)}, \fd^{(i)})$ as in the
statement of the theorem.
Fix $T < \infty$ and define $\ve \geq 0$ by
\be
\label{def-ve1}
 \ve \doteq \left[ \Delta \fx(0) +   \sup_{t \in [0,T]} \Delta \fe (t)  \right] \vee 0.
\ee Choose $\delta > 0$ and define $\tau \doteq \tau_\delta \doteq
\inf \{ t \geq 0: \Delta \fk (t) >  \ve + \delta \}$. The
right-continuity of $\fko$ and $\fkp$ then imply that \be
\label{implication} \Delta \fk (\tau) \geq \ve + \delta. \ee
 We now show that $\tau  > T$.  Indeed, suppose $\tau \in [0,T]$ and consider the
following two cases. \\
{\em Case 1. } $ \fxo (\tau) < 1$.  In this case, the non-idling condition (\ref{eq-fnonidling})
 implies that
\[ \fxo (\tau) - \lan \f1, \fmeaso_\tau \ran  = 0 \leq \fxp (\tau) - \lan \f1, \fmeasp_\tau \ran. \]
Together with relations (\ref{eq-fk}), (\ref{eq-fx}),
 (\ref{def-ve1}) and the fact that $\Delta \fmeas_0  \equiv 0$,   this implies that
\[
 \Delta \fk (\tau) = \Delta \fe (\tau) +  \Delta \fx (0) - \lan \f1, \Delta \fmeas_0 \ran
 -  \Delta \fx (\tau) + \lan \f1, \Delta \fmeas_\tau\ran
  \leq  \ve,
\]
which contradicts (\ref{implication}).

\noi
{\em Case 2.}   $\fxo (\tau) \geq 1$.
In this case, due to the non-idling condition (\ref{eq-fnonidling}), we have
$\lan \f1, \fmeaso_\tau \ran = 1 \geq \lan \f1, \fmeasp_\tau \ran$.  Along with
Corollary \ref{cor-pde} and the fact that $\Delta \fmeas_0 = 0$, this  implies that
\[
\begin{array}{rcl}
\Delta \fk (\tau) &  = & \ds  \lan \f1, \Delta \fmeas_\tau \ran  - \lan \f1, \Delta \fmeas_0 \ran
 + \int_{[0,M)} \dfrac{G(x+\tau) - G(x)}{1- G(x)}
\, \Delta \fmeas_0 (dx) \\
& & \ds  + \int_0^\tau g(\tau-s) \Delta \fk (s) \, ds\\
& = & \ds \lan \f1,  \Delta \fmeas_\tau \ran   + \int_0^\tau g(\tau-s) \Delta \fk (s) \, ds\\
& \leq & \ds \int_0^\tau g(\tau-s) \Delta \fk (s) \, ds.
\end{array}
\]
We now assert that the right-hand side is {\em strictly} less than
$\ve + \delta$, which contradicts (\ref{implication}).
To see why the assertion holds, note that
if $g(s) = 0$ for a.e.\ $s \in [0,\tau]$, then the right-hand side of the last inequality equals zero,
which is trivially strictly less than $\ve + \delta$.
On the other hand, if $g(s) > 0$ for a set of positive Lebesgue measure in $[0,\tau]$, then the
fact that
$\Delta \fk (s) < \ve + \delta$ for all $s \in [0,\tau)$ shows once again that
\[ \Delta \fk (\tau) \leq \int_0^\tau g(\tau-s) \Delta \fk (s) \, ds < (\ve+ \delta) G(\tau) \leq (\ve +\delta).
\]

In both cases we arrive at  a contradiction, and therefore it must be that $\tau > T$, which means
 that $\Delta \fk (t) \leq  \ve + \delta$ for every $t \in [0,T]$.
Sending $\delta \downarrow 0$, we conclude that
$\Delta \fk (t) \leq  \ve$ for $t \in [0,T]$, as desired.
In turn, using the relations (\ref{eq-fk}), (\ref{def-fd}) and
Corollary \ref{cor-pde},
along with the fact that $\Delta \fmeas_0 \equiv 0$
and $g$ is non-negative,   we obtain for every $t \in [0,T]$,
\[   \Delta \fd (t)  =  \Delta \fk (t) - \lan \f1, \Delta \fmeas_t \ran =  \int_0^t g(t - s) \Delta \fk (s)  \, ds  \leq \ve G(t) \leq \ve.\]
This completes the proof of (\ref{ineq-cont0}), and
relation (\ref{ineq-cont}) follows by symmetry.
Lastly,  since for $i=1,2$, $\fmeas^{(i)}$ and $\fk^{(i)}$ satisfy the fluid equations (by assumption), inequality
(\ref{ineq-cont1})    is a direct consequence of Lemma \ref{lem-elem}
and inequality (\ref{ineq-cont}).
\end{proof}


\noi
{\bf Proof of Theorem \ref{th-fluniq}.}
Let $(\fxo,\fmeaso)$ and $(\fxp,\fmeasp)$ be two solutions
to the fluid equations corresponding to $(\fe, \fx(0), \fmeas_0) \in \newspace$.
Fix $a \in [0,\infty)$ and choose a sequence
of functions $f_n \in \ocdb$, $n \in \N$, such that $f^n \uparrow \ind_{[0,a]}$ pointwise
as $n \ra \infty$.
Then for every $t \in [0,\infty)$ and $n \in \N$,
$\lan f_n, \fmeaso_t \ran = \lan f_n, \fmeasp_t \ran$ due to
relation (\ref{ineq-cont1}).   Sending $n \ra \infty$ and
invoking the monotone convergence theorem, we conclude that
$\fmeaso_t([0,a)) = \fmeasp_t([0,a))$.  Since $a$ and $t$ are arbitrary,
it follows that $\fmeaso = \fmeasp$ and hence,
by (\ref{eq-fx}),
 that $\fxo = \fxp$, thus establishing uniqueness.
The representation (\ref{final-fleqs}) follows immediately from Theorem \ref{th-pde}.

Now,  if $\fe$ is absolutely continuous, then (\ref{eq-fx}) immediately
shows that $\fx$ is also absolutely continuous.
In turn, using (\ref{eq-fnonidling}), (\ref{eq-fk}) and the fact that
 $|[1 - a]^+ -[1 - b]^+| \leq |a - b|$,   it is easy  to see that
$\fk$ is also absolutely continuous.
If $\fx (t) < 1$, then the non-idling condition (\ref{eq-fnonidling}) and the continuity of $\fx$
 show that $\lan \f1, \fmeas_s \ran  = \fx(s)$ for all $s$ in a neighbourhood of $t$.
When combined with (\ref{eq-fx}) and (\ref{eq-fk}), this shows
that $\fsmallk(t) = \flam (t)$. On the other hand, if $\fx(t)
> 1$, then (\ref{eq-fnonidling}) and the continuity of $\fx$ show
that $\lan \f1, \fmeas_s \ran = 1$ for $s$ in a neighbourhood of
$t$. When substituted into (\ref{eq-fk}) this shows that
$\fsmallk(t) = \lan h, \fmeas_t\ran$. Lastly, since $\fx$ and
$\lan \f1, \fmeas\ran$ are absolutely continuous, $d\fx(t)/dt =
d\lan \f1, \fmeas_t\ran /dt = 0$ for a.e.\ $t$ on which $\fx(t) =
\lan \f1, \fmeas_t \ran = 1$ (see, for example, Theorem A.6.3 of
\cite{dupellbook}). When combined with (\ref{eq-fx}) and
(\ref{eq-fk}) this shows that for a.e.\ $t \in [0,\infty)$ such
that $\fx(t) = 1$, 
we have $\fsmallk(t) = \flam(t) = \lan h, \fmeas_t \ran = \flam(t) \wedge \lan h, \fmeas_t \ran$,
 which proves (\ref{def-fsmallk}). Finally, since
$\fk$ is absolutely continuous,  if $\fmeas_0$ is also absolutely
continuous then the representation (\ref{final-fleqs}) immediately
guarantees that $\fmeas_s$ is absolutely continuous for every $s
\in [0,\infty)$.
\qed \\

To close the section,
we state a simple consequence of uniqueness of
the fluid limit that will be used in Section \ref{sec-longtime}.
For this, we  require the following notation:
for any $t \in [0,\infty)$,
\[  \fe^t \doteq \fe (t + \cdot) - \fe (t)  \quad \quad  \fk^t  \doteq \fk(t + \cdot) - \fk(t) \quad \quad
\fx^t \doteq \fx (t +\cdot)  \quad \quad \fmeas^t \doteq \fmeas_{t+\cdot}. \]

\begin{lemma}
\label{lem-sgroup} Fix $h \in \lloc [0,M)$ for which there
exists a unique solution to the fluid equation associated with a
given initial condition $(\fe, \fx (0), \fmeas_0) \in \newspace$
and $h$,
 and let $\fk$ be the associated process that satisfies (\ref{eq-fk}). 
Then for any $t \in [0,\infty)$,  $(\fx^t, \fmeas^t)$ is a solution
to the fluid equation associated with the initial condition  $(\fe^t, \fx^t (0), \fmeas^t_0) \in \newspace$,
 and $\fk^t$ is the corresponding process that satisfies  (\ref{eq-fk}), with  $\fmeas$ replaced by $\fmeas^t$.
\end{lemma}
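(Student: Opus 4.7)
The plan is to verify directly, relation by relation, that the shifted triple $(\fx^t, \fmeas^t, \fk^t)$ satisfies Definition \ref{def-fleqns} together with the defining formula (\ref{eq-fk}), now with $(\fe^t, \fx^t(0), \fmeas^t_0)$ as initial condition. The unifying mechanism for every simple requirement is to apply the corresponding identity for the unshifted solution at the two times $t$ and $t+s$, subtract, and change variable $u = t + r$ on the residual integrals over $[t, t+s]$. Concretely: membership $(\fe^t, \fx^t(0), \fmeas^t_0) \in \newspace$ reduces to the observations that $\fe^t$ is c\`{a}dl\`{a}g non-decreasing with $\fe^t(0) = 0$, $\fmeas^t_0 = \fmeas_t \in \mmsp$, and the non-idling constraint built into $\newspace$ is (\ref{eq-fnonidling}) for the unshifted solution at time $t$; $\int_0^s \lan h, \fmeas^t_r \ran \, dr = \int_t^{t+s} \lan h, \fmeas_u \ran \, du < \infty$ gives (\ref{cond-radon}); subtracting (\ref{eq-fx}) at $t$ and $t+s$ gives (\ref{eq-fx}) for $\fx^t$; (\ref{eq-fnonidling}) for the shifted pair at time $s$ is (\ref{eq-fnonidling}) for the unshifted pair at time $t+s$; and subtracting (\ref{eq-fk}) at $t$ and $t+s$ yields the shifted formula for $\fk^t$ and $\fmeas^t$.

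The nontrivial step is (\ref{eq-ftmeas}) for $(\fmeas^t, \fk^t)$ with test functions $\psi \in \newocdcpm$. I would extend each such $\psi$ to a function $\tilde\psi \in \newocdcpm$ with $\tilde\psi(x, u) = \psi(x, u - t)$ for every $u \geq t$, apply (\ref{eq-ftmeas}) for $(\fmeas, \fk)$ to $\tilde\psi$ at the times $t+s$ and $t$, and subtract. The initial-condition term $\lan \tilde\psi(\cdot, 0), \fmeas_0 \ran$ and all integrals over $[0, t]$ (whose integrands depend on the arbitrary behavior of $\tilde\psi$ there) cancel, leaving a difference whose LHS is $\lan \psi(\cdot, s), \fmeas_{t+s} \ran - \lan \psi(\cdot, 0), \fmeas_t \ran$ and whose RHS contains the integrals over $[t, t+s]$ of $\lan \tilde\psi_u, \fmeas_u \ran$, $\lan \tilde\psi_x, \fmeas_u \ran$, $\lan h \tilde\psi, \fmeas_u \ran$ and $\tilde\psi(0, u) \, d\fk(u)$. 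Because $\tilde\psi(x, u) = \psi(x, u - t)$ on $[t, t+s]$, the substitution $u = t + r$ converts these into precisely the RHS of (\ref{eq-ftmeas}) for $(\fmeas^t, \fk^t)$ at time $s$ with test function $\psi$.

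The one step requiring real care is producing the extension $\tilde\psi$ within $\newocdcpm$, since a $C^1$ function of two variables need not admit a $C^1$ extension via naive linear extrapolation in $u$ (such a formula would invoke the mixed partial $\partial_x \psi_u$, which need not exist). A clean workaround is to first mollify $\psi$ to a $C^\infty$ function $\psi^\epsilon$, for which an explicit smooth extension $\tilde\psi^\epsilon$ is readily constructed, carry out the subtract-and-substitute argument for $\psi^\epsilon$, and then send $\epsilon \to 0$, using continuity of both sides of the target identity under $C^1$-uniform convergence of test functions on compacts. This completes the verification of the shifted fluid equations.
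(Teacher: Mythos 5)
The paper declares the proof ``elementary'' and omits it, so your direct verification is exactly what is intended, and all the routine checks you list are correct: membership of $(\fe^t, \fx^t(0), \fmeas^t_0)$ in $\newspace$ is the unshifted non-idling relation (\ref{eq-fnonidling}) at time $t$; (\ref{cond-radon}), (\ref{eq-fx}), (\ref{eq-fnonidling}) and (\ref{eq-fk}) for the shifted triple all follow by evaluating the corresponding unshifted identities at $t$ and $t+s$, subtracting, and substituting $u = t + r$. The subtract-and-translate argument for (\ref{eq-ftmeas}) via an extended test function $\tilde\psi$ agreeing with $\psi(\cdot, \cdot - t)$ on $[0,M)\times[t,\infty)$ is likewise the right mechanism, and you are right that a naive first-order Taylor extrapolation in the time variable is not legitimate for a merely $C^1$ test function because it invokes a mixed second derivative.

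However, the mollify-then-pass-to-the-limit workaround is more machinery than this point requires, and it also has its own wrinkle: mollifying near the boundary $\{u=0\}$ of $[0,M)\times\R_+$ itself presupposes some extension of $\psi$ (or a one-sided mollifier), so without care the mollification step smuggles in the very issue it is meant to resolve. A cleaner and entirely elementary fix is the standard $C^1$ reflection: for $\psi \in \newocdcpm$ define
\[
\tilde\psi(x,u) \;\doteq\;
\begin{cases}
\psi(x,\,u-t), & u \geq t,\\[0.3em]
2\psi(x,0) - \psi(x,\,t-u), & 0 \leq u < t.
\end{cases}
\]
The two branches agree at $u=t$ together with both first partials (the left-hand $x$-partial is $2\psi_x(x,0)-\psi_x(x,0)=\psi_x(x,0)$ and the left-hand $u$-partial is $\psi_u(x,0)$, matching the right-hand limits), the formula involves only the first partials of $\psi$, which exist and are continuous by hypothesis, and if $\supp(\psi)\subset[0,a]\times[0,b]$ then $\supp(\tilde\psi)\subset[0,a]\times[0,t+b]$, so $\tilde\psi\in\newocdcpm$. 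Substituting $\tilde\psi$ in (\ref{eq-ftmeas}) at times $t$ and $t+s$ and subtracting now gives the shifted transport equation exactly as you describe, with no limiting argument needed. With that replacement the proof is complete and matches the approach the paper has in mind.
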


\noi
The proof  of the lemma is elementary, involving just a rewriting of the fluid equations, and is thus omitted. \\

\subsection{A Maximality Property of the Fluid Solution}
\label{subs-max}

In this section we establish a result of independent interest.
This  result is not used in the rest of the paper, and can thus be
safely skipped without loss of continuity.
Specifically, we show that the non-idling property (\ref{eq-fnonidling})
implies a  certain maximality property for solutions to the fluid equations.
In particular, this result provides an alternative   proof of uniqueness
of solutions to the fluid limit that is different from the one using
continuity of the solution map given in the last section.

Fix $h$ that is continuous on $[0,M)$ and
$(\fe, \fx(0), \fmeas_0) \in {\cal S}_0$.
Suppose that $(\fx, \fmeas)$ solve
 the corresponding fluid equations (\ref{eq-ftmeas})--(\ref{eq-fnonidling}),
and let $\fk$ and $\fd$ be the associated processes, as defined in
(\ref{eq-fk}) and (\ref{def-fd}), respectively.
Also, let $(\fx^\prime, \fmeas^\prime)$ be any process taking values in
$\R_+ \times \mmsp$ that satisfy the fluid equations,
(\ref{eq-ftmeas}) and (\ref{eq-fx}), and the relation \be
\label{max-con} \lan \f1, \fmeas_t^\prime \ran \leq \fx^\prime (t)
\quad \mbox{ for } t \in [0,\infty). \ee Here, $\fx^\prime$ and
$\fmeas^\prime$, respectively, represent the total number of
(fluid) customers in system and the distribution of ages of
(fluid) customers in service under any given feasible assignment
of customers to servers that does not necessarily satisfy the
non-idling condition (\ref{eq-fnonidling}). For
 $t \in [0,\infty)$,
let $\fk^\prime(t)$ and $\fd^\prime(t)$, respectively,
be the corresponding processes
 representing the cumulative entry into service and cumulative departures from the system,
as defined by the right-hand sides of (\ref{eq-fk}) and (\ref{def-fd}), respectively,
 but with $\fmeas$ replaced
by $\fmeas^\prime$. 
Then we have the following intuitive result that shows that the non-idling condition
(\ref{eq-fnonidling}) ensures that the cumulative entry into service and
cumulative departures from the system are maximised.

\begin{lemma}
\label{lem-max}
For every $t \in [0,\infty)$,  $\fk(t)\ge \fk^\prime (t)$ and $\fd(t)\ge \fd^\prime (t)$.
\end{lemma}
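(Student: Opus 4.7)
My plan is to combine the renewal-type integral representation derived in Corollary \ref{cor-pde} with the first-exit technique used in the proof of Theorem \ref{th-flcont}. The essential new ingredient is that the non-idling condition (\ref{eq-fnonidling}) is the only place where the two solutions differ, and it is precisely what will supply the correct sign at the candidate contradiction time.

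I would first apply Corollary \ref{cor-pde} to $(\fx,\fmeas)$ and, in parallel, apply Theorem \ref{th-pde} with the choice $\newfk = \fk^\prime$ to $(\fx^\prime,\fmeas^\prime)$. Both applications are legitimate because both pairs satisfy (\ref{eq-ftmeas}) with the common initial datum $(\fe,\fx(0),\fmeas_0) \in \newspace$ and because $\fk^\prime \in \incspace$ by its interpretation as the cumulative entry into service. Writing $\Delta H \doteq H - H^\prime$ for $H \in \{\fk,\fd,\fmeas\}$ and $\phi(t) \doteq \lan \f1, \Delta \fmeas_t \ran$, subtracting the two copies of (\ref{eq2-fk}) yields the Volterra identity
\[
\Delta \fk(t) \;=\; \phi(t) + \int_0^t g(t-s)\,\Delta \fk(s)\,ds,
\]
and subtracting the definitions (\ref{eq-fk}) and (\ref{def-fd}) gives $\Delta \fd = \Delta \fk - \phi$, hence $\Delta \fd = g * \Delta \fk$. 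Consequently, as soon as $\Delta \fk \geq 0$ has been established, non-negativity of $g$ forces $\Delta \fd \geq 0$ for free, so the entire lemma reduces to proving $\fk \geq \fk^\prime$.

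To prove $\Delta \fk \geq 0$, I fix $\delta > 0$ and set $\tau \doteq \inf\{t \geq 0: \Delta \fk(t) \leq -\delta\}$, aiming for a contradiction assuming $\tau < \infty$. Right-continuity of $\fk$ and $\fk^\prime$ gives $\Delta \fk(\tau) \leq -\delta$ while $\Delta \fk(s) > -\delta$ for $s < \tau$, and the mass balance $\fx - \fx^\prime = -\Delta \fd = \phi - \Delta \fk$ obtained from (\ref{eq-fx}) yields $\fx(\tau) - \fx^\prime(\tau) \geq \phi(\tau) + \delta$. I then split on the value of $\fx(\tau)$ using the non-idling condition (\ref{eq-fnonidling}). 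If $\fx(\tau) \leq 1$, then $\lan \f1, \fmeas_\tau \ran = \fx(\tau)$, so $\phi(\tau) = \fx(\tau) - \lan \f1, \fmeas^\prime_\tau \ran$; substituting yields $\lan \f1, \fmeas^\prime_\tau \ran \geq \fx^\prime(\tau) + \delta$, a direct contradiction with the feasibility constraint (\ref{max-con}). If instead $\fx(\tau) > 1$, then $\lan \f1, \fmeas_\tau \ran = 1$ and $\lan \f1, \fmeas^\prime_\tau \ran \leq 1$ give $\phi(\tau) \geq 0$; plugging this into the Volterra identity and repeating verbatim the ``$g \equiv 0$ a.e.\ on $[0,\tau]$ versus $g \not\equiv 0$'' dichotomy from the last lines of the proof of Theorem \ref{th-flcont} produces the strict bound $\Delta \fk(\tau) > -\delta$, once again contradicting the definition of $\tau$.

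The main technical point I expect to need care with is the first case above: the contradiction materialises only after simultaneously invoking the Volterra identity, the mass-balance relation between $\fx - \fx^\prime$ and $\Delta \fk - \phi$, and the constraint (\ref{max-con}), so the inequality chain must be set up exactly right. Once that is handled, the second case is essentially a direct appeal to a convolution estimate already performed in the paper; sending $\delta \downarrow 0$ then yields $\Delta \fk \geq 0$ on $[0,\infty)$, and the identity $\Delta \fd = g * \Delta \fk$ finishes the proof.
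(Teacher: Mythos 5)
Your proposal is correct and follows essentially the same strategy as the paper's proof: a first-exit-time argument on $\Delta\fk$, a case split on whether $\fx(\tau)$ is at or below versus above capacity, the non-idling condition paired with (\ref{max-con}) in the first case, the renewal/convolution identity from Corollary \ref{cor-pde} in the second, and finally $\Delta\fd = g*\Delta\fk$ to transfer the conclusion from $\fk$ to $\fd$. The only differences are cosmetic — you isolate the Volterra identity up front and close Case 1 by contradicting (\ref{max-con}) directly rather than the definition of the stopping time — but the ingredients and structure match the paper's argument.
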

\begin{proof} 
We shall argue by contradiction to prove the lemma.
Fix $\ve>0$ and let
$$T=\inf\{t:\fk^\prime(t)\ge\fk(t)+\ve\}.$$
Suppose $T < \infty$.  Then we consider the following
two mutually exhaustive cases.

\noi
\emph{Case 1.} $\fx_T<1$.  In this case, (\ref{eq-fnonidling}) implies that
$\fx_T = \lan \f1, \fmeas_T \ran$ which, along with (\ref{eq-fx})
and (\ref{eq-fk}), shows that
\[ \fk (T) = \fx (0) - \lan \f1, \fmeas_0 \ran + \fe (T).
\]
On the other hand,  (\ref{eq-fx}),
 (\ref{eq-fk}) and (\ref{max-con}), when combined, show that for every $t \in [0,\infty)$,
\[ \fk^\prime(t) = \lan \f1, \fmeas_t^\prime \ran - \fx^\prime (t) + \fx(0) -  \lan \f1, \fmeas_0 \ran + \fe(t)
\leq \fx(0) - \lan \f1, \fmeas_0 \ran + \fe (t).
\]
The last two equations imply that $\fk^\prime(T)\leq \fk(T)$, which contradicts the definition of
$T$.  \\

\noi
 \emph{Case 2.}  $\fx_T\ge 1$.  In this case, (\ref{eq-fnonidling}) shows that
$\lan \f1, \fmeas_T \ran = 1$.
Since the pair $\fmeas$ and $\fk$, as well as the pair $\fmeas^\prime$ and $\fk^\prime$,
satisfy the fluid equation (\ref{eq-ftmeas}), Corollary \ref{cor-pde} and
 (\ref{eq-fk}) show that 
\begin{eqnarray*} \fd (T)&=& \int_0^T \lan h,\fmeas_s\ran \, ds \\
&=&\int_{[0,\infty)} \dfrac{G(x+T)-G(x)}{1-G(x)} \, \fmeas_0(dx)
+ \int_0^T g(T-s) \fk (s) ds\\
&>&\int_{[0,\infty)} \dfrac{G(x+T)-G(x)}{1-G(x)} \, \fmeas_0(dx)
+ \int_0^T g(T-s) \left( \fk^\prime(s)-\ve\right)\, ds \\
&=&\int_{[0,\infty)} \dfrac{G(x+T)-G(x)}{1-G(x)}\, \fmeas_0(dx)+\int_0^T
g(T-s)\fk^\prime(s) \, ds- \ve G(T)\\
&=& \fd^\prime (T)-\ve G(T),
\end{eqnarray*}
from which we conclude  that $\fd^\prime (T) < \fd (T) + \ve$
(To be precise, the strict inequality above holds only if there exists a subset of $[0,T]$ of
positive Lebesgue measure, on which $g$ is strictly positive -- but, as in {\em Case 2} of the
proof of Theorem \ref{th-flcont}, it is easy to see that the conclusion continues to hold even
when $g$ is a.e.\ zero on $[0,T]$).
When combined with (\ref{eq-fk}), we see that
\begin{eqnarray*}
 \fk (T) - \fk^\prime (T) & = & \lan \f1, \fmeas_T \ran - \lan \f1, \fmeas_T^\prime \ran
+ \fd(T) - \fd^\prime (T)  \\
& = & 1 - \lan \f1, \fmeas^\prime_T \ran + \fd (T) - \fd^\prime (T) \\
& \geq &  - \ve,
\end{eqnarray*}
which again contradicts the definition of $T$.

Thus we have shown that $T = \infty$ or, equivalently,
that $\fk (t) \geq \fk^\prime (t) - \ve$ for every $t \in [0,\infty)$ and $\ve > 0$.
Sending $\ve \ra 0$, we conclude that $\fk (t) \geq \fk^\prime (t)$ for $t \in [0,\infty)$.
In turn, when combined with (\ref{eq-fx}), Corollary \ref{cor-pde} and the fact that
$\fmeas_0^\prime  = \fmeas_0$, we see that for every $t \in [0,\infty)$, we have
\[ \fd (t) - \fd^\prime (t) = \int_0^t g (t-s) \left( \fk (s) - \fk^\prime(s) \right) \, ds \geq 0.
\]
which concludes the proof of the lemma.
\end{proof}

\begin{remark}
\label{rem-max} {\em A similar maximality property (in terms of a
stochastic, rather than pathwise, ordering) is satisfied by
``pre-limit'' processes describing GI/G/N queues (see, for
example, \cite[Theorem 1.2 of Chapter XII]{asmbook}). It is also
worthwhile to note the connection between Lemma \ref{lem-max} and
a minimality property  associated with the one-dimensional
reflection map that is used to characterize single-server queues.
In the latter case,  the so-called complementarity condition plays
the role of the non-idling condition here, and  ensures
 minimality of the associated constraining term (see, for instance, \cite{harbook}).
}
\end{remark}

\subsection{Analysis of the Integral Equation (\ref{eq-pde})}
\label{subs-pde}

Theorem \ref{th-pde} essentially states that  the unique (weak) solution to the
integral equation (\ref{eq-pde}) is  given by (\ref{eq2-fmeas}).
When $M = \infty$, our proof of the theorem consists of two  main steps, involving
 the following simpler integral equation for $\overline{\mu} \in {\cal D}_{\mb}[0,\infty)$:
for every $\tnewf \in \newocdcp$,
\be
\label{eq-pde2}
\ba{rcl} \ds
\lan \tnewft, \newfmeas_t \ran & = & \ds \lan \tnewf(\cdot,0),
\newfmeas_0 \ran +
\int_0^t \lan \tilde{\newf}_s (\cdot, s), \newfmeas_s \ran \, ds
+ \int_0^t \lan \tilde{\newf}_x (\cdot, s), \newfmeas_s \ran \, ds \\
& & \ds +\int_{[0,t]}  \tnewf(0,s) \, d \newfk (s)
\ea
\ee
for all $t \in [0,\infty)$.
In  Proposition \ref{prop-pde2} we first show that the simpler integral
equation (\ref{eq-pde2}) has a unique solution, and identify it explicitly.
Then, in Proposition \ref{prop-pde1}, we show that there is a one-to-one correspondence
between solutions $\fmeas$ to the integral equation (\ref{eq-pde}) and
solutions $\newfmeas$ to the simpler equation (\ref{eq-pde2}).
The proof for the case $M < \infty$ follows essentially the same line of reasoning,
although it uses the following,  slightly more involved, notation: let
\[  \mnewocdcp \doteq \left\{ \tnewf \in \newocdcp: \supp (\tnewf) \subset [0,M) \times [0,\infty) \right\} \]
and
\[ \mnewccp \doteq \left \{ \tnewf \in \newccp: \supp (\tnewf) \subset [0,M) \times [0,\infty) \right\}. \]
The reader may find it useful to first read the proofs under the supposition that $M = \infty$, in which
case $\mnewocdcp = \newocdcp$ and $\mnewccp = \newccp$.

\begin{prop}
\label{prop-pde2} For any $B, M \in (0, \infty)$, given $\newfmeas_0 \in \mb$ and $\newfk \in
\incspace$,
a measure-valued function  $\newfmeas \in {\cal
D}_{\mb}[0,\infty)$ satisfies (\ref{eq-pde2}) for every $\tnewf \in \mnewocdcp$
if and only if for every $\tnewf \in \mnewccp$,
\be
\label{eq-newfmeas}
\lan \tnewft, \newfmeas_t \ran =
\int_{[0,\infty)} \tnewf(x+t,t) \, \newfmeas_0 (dx) +
\int_0^t \tnewf (t-s,s) \, d \newfk (s)  \quad \quad \forall  t \in [0,\infty).
\ee
\end{prop}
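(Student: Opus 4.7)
The plan is to treat the integral equation (\ref{eq-pde2}) as a weak formulation of the transport equation $\partial_s \newfmeas + \partial_x \newfmeas = 0$ with initial condition $\newfmeas_0$ and flux $d\newfk$ injected at the boundary $x=0$, and to exploit the fact that mass travels along characteristics $\{x-s = \text{const}\}$. Both directions of the equivalence then follow by pairing the representation against suitable test functions and integrating the total derivative along characteristics.

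For the sufficiency direction, I would assume that $\newfmeas$ is defined by (\ref{eq-newfmeas}) and verify each term in the right-hand side of (\ref{eq-pde2}). The key observation is that (\ref{eq-newfmeas}) applied with the time-$s$ test functions $\tnewf_s(\cdot,s)$ and $\tnewf_x(\cdot,s)$ yields explicit expressions for $\lan \tnewf_s(\cdot,s), \newfmeas_s \ran$ and $\lan \tnewf_x(\cdot,s), \newfmeas_s \ran$. Adding them, using the total-derivative identities
\[
\frac{d}{ds} \tnewf(x+s, s) = \tnewf_x(x+s,s) + \tnewf_s(x+s,s), \qquad \frac{d}{ds} \tnewf(s-u,s) = \tnewf_x(s-u,s) + \tnewf_s(s-u,s),
\]
and applying Fubini to interchange the $s$- and $u$-integrals in the $d\newfk$ part, the intermediate quantities telescope and combine with $\lan \tnewf(\cdot,0),\newfmeas_0\ran$ and $\int_0^t \tnewf(0,s)\,d\newfk(s)$ to reproduce the LHS of (\ref{eq-pde2}).

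For the necessity direction, since $\newfmeas_t$ is a Radon measure on $[0,M)$, it suffices to identify $\lan \psi, \newfmeas_t\ran$ for every $\psi \in \ccm$. Fix such a $\psi$ and a target time $T \in [0,\infty)$, and introduce the characteristic test function $\tnewf^\ve(x,s) := \psi(x + (T-s))\,\chi_\ve(s)$, where $\chi_\ve$ is a smooth cutoff equal to $1$ on $[0,T]$ and supported in $[0,T+\ve]$. This gives $\tnewf^\ve \in \mnewocdcp$ (after an approximation by smooth $\psi$) with $\tnewf^\ve_x + \tnewf^\ve_s = 0$ on $[0,T]\times[0,M)$, so the two middle integrals in (\ref{eq-pde2}) cancel and one obtains
\[
\lan \psi, \newfmeas_T \ran = \int \psi(x+T)\,\newfmeas_0(dx) + \int_0^T \psi(T-s)\,d\newfk(s),
\]
after letting $\ve \downarrow 0$. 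This is (\ref{eq-newfmeas}) for test functions depending only on $x$, and the full statement follows by Fubini applied slice-by-slice in the time variable.

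The main technical obstacle is ensuring that the characteristic test functions $\tnewf^\ve$ have compact support inside $[0,M) \times [0,\infty)$ when $M < \infty$: the shift $x \mapsto x + (T-s)$ can push the support out of $[0,M)$ for small $s$. This is handled by first approximating $\psi$ by smooth functions of compact support in $[0,M)$, choosing the cutoff so that $\psi(x+(T-s))$ vanishes whenever $x + (T-s) \geq M$, and controlling the resulting error terms via the assumed uniform bound $\newfmeas_s \in \mmb$. A parallel smoothing of $\psi$ into $\eocdc$ handles the passage from $\mnewocdcp$ to $\mnewccp$ in the final statement via dominated convergence on both sides of (\ref{eq-newfmeas}).
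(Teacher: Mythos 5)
Your ``if'' direction is essentially the paper's: pair the explicit formula against $\tilde\varphi_s+\tilde\varphi_x$ at each time slice, interchange with Fubini, and telescope the total derivatives along the characteristics; note that you write the second total-derivative identity with time argument $s$ (that is, $\tfrac{d}{ds}\tilde\varphi(s-u,s)=\tilde\varphi_x(s-u,s)+\tilde\varphi_s(s-u,s)$), which is precisely what makes the telescoping close.

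The ``only if'' direction is genuinely different from the paper's. The paper argues by uniqueness: it forms the difference $\overline{\eta}^M=\newfmeas^{(1)}-\newfmeas^{(2)}$ of two putative solutions, packages $\tilde\varphi\mapsto\int_0^\infty\lan\tilde\varphi(\cdot,s),\overline{\eta}^M_s\ran\,ds$ as a Radon measure on $[0,M)\times\R_+$, sends $t\to\infty$ in (\ref{eta-pde}) to obtain $\overline{\eta}^M(\tilde\varphi_x+\tilde\varphi_s)=0$, and then solves the dual transport equation $\tilde\varphi_x+\tilde\varphi_s=\psi$ in closed form to force $\overline{\eta}^M\equiv 0$. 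You instead read the formula off directly: insert $\psi(x+T-s)\chi_\ve(s)$ into (\ref{eq-pde2}) at $t=T$, and the transport integral dies because $\tilde\varphi_x+\tilde\varphi_s\equiv 0$ on $[0,T]$. This is more constructive and avoids the space-time-measure and dual-PDE machinery; what it buys is directness, and what it costs is some additional bookkeeping on the support and regularity of the characteristic test function, which the paper's route sidesteps.

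Two small points to tighten. First, for $s\in(T,T+\ve]$ (where $\chi_\ve$ is transitioning to zero) the argument $x+T-s$ becomes negative near $x=0$; since functions in $\ccm$ need not vanish at the origin, you must first extend $\psi$ to a ${\cal C}^1$ function on $(-\ve,M)$ before forming the test function, so that the result lies in $\mnewocdcp$. This is always possible, and the extension only enters outside $[0,T]$, so it is harmless — but it is a necessary step, not just the support-in-$x$ issue you mention. Second, ``Fubini applied slice-by-slice'' is not the right mechanism for passing from $x$-only $\psi$ to a general $\tilde\varphi\in\mnewccp$: the clean argument is simply that the $x$-only identity determines the measure $\newfmeas_t$ on $[0,M)$ completely (cf.\ Remark~\ref{rem-f}), after which the time-dependent statement follows by evaluating against the slice $\tilde\varphi(\cdot,t)$ at each fixed $t$.
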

\begin{remark}
\label{rem-f}
{\em Given any function $f \in \cc$ that has $\supp (f) \subset [0,M)$, we can
identify $f$ with a function $\tnewf \in \mnewccp$
that is only a function of $x$.  So (\ref{eq-newfmeas}) implies, in particular, that
\[ \lan f, \newfmeas_t \ran = \int_{[0,\infty)} f(x+t)\, \fmeas_0 (dx) + \int_0^t f(t-s) \, d \newfk (s)
 \quad \quad \forall  t \in [0,\infty). \]
Thus Proposition \ref{prop-pde2} shows that
if (\ref{eq-pde2}) holds for all $\tnewf \in \mnewocdcp$
then the restriction of the measure $\newfmeas_t$ to  $[0,M)$, for every $t \in [0,\infty)$, is completely
determined.
}
\end{remark}

\noi {\em Proof of Proposition \ref{prop-pde2}. } Suppose $\newfmeas =
(\newfmeas_t,t \in [0,\infty))$ satisfies (\ref{eq-newfmeas}).
  We first show that then (\ref{eq-pde2}) is satisfied for any
$\tnewf \in \mnewocdcp$ and $t = T < \infty$.
Indeed, since $\tnewf_s +
\tnewf_x$ lies in $\mnewccp$,
substituting $t =s$ and $\tnewf =
\tnewf_s + \tnewf_x$ into (\ref{eq-newfmeas}),  integrating over
$s$ in $[0,T)$ and applying Fubini's theorem, we obtain 
\[
\ba{l}
\ds \int_0^T \lan \tnewf_s (\cdot, s) + \tnewf_x (\cdot, s), \newfmeas_s \ran \, ds \\
 \quad \quad \quad  =   \ds
\int_0^T \left( \int_{[0,\infty)} \left( \tnewf_s (x+s,s) + \tnewf_x (x+s,s) \right) \newfmeas_0 (dx) \right) \, ds  \\
\quad  \quad \quad \quad \quad +\ds \int_0^T \left(\int_0^s \left( \tnewf_s(s-u,u) + \tnewf_x (s-u,u) \right) \, d\newfk(u) \right) \, ds \\
\quad \quad \quad = \ds  \int_{[0,\infty)} \left( \int_0^T \left( \tnewf_s (x+s,s) + \tnewf_x (x+s,s) \right) \, ds \right) \, \newfmeas_0 (dx) \\
\quad \quad \quad \quad \quad \ds + \int_0^T \left( \int_u^T \tnewf_s  \left( (s - u,u) + \tnewf_x (s-u,u) \right) \, ds \right) \, d \newfk (u) \\
\quad \quad \quad  =  \ds \int_{[0,\infty)} \left( \tnewf(x +T,T) - \tnewf(x,0) \right) \,
\newfmeas_0 (dx) + \int_0^T \left( \tnewf(T-u,u) - \tnewf (0,u) \right) \, d \newfk (u).
\ea
\]
Another application of (\ref{eq-newfmeas}), with $t = T$,  then yields the equality
\[  \int_0^T \lan \tnewf_s(\cdot, s) + \tnewf_x (\cdot, s), \newfmeas_s \ran \, ds
 = \lan \tnewf (\cdot, T), \newfmeas_T \ran - \lan \tnewf (\cdot,0), \newfmeas_0 \ran -
\int_0^T \tnewf(0,u) d\newfk(u),
\]
which proves that $\tnewf$ and $\newfmeas$ satisfy (\ref{eq-pde2}), thereby establishing
the ``if'' part of the proposition.

To prove the converse,  suppose $\newfmeas^{(1)}$ and $\newfmeas^{(2)}$
are two measure-valued functions that satisfy  (\ref{eq-pde2}) for all $\tnewf \in \mnewocdcp$,
define $\overline{\eta}
\doteq \newfmeas^{(1)} - \newfmeas^{(2)}$ and
let $\overline{\eta}^M = \{\overline{\eta}^M_t, t \in [0,\infty)\}$,
where $\overline{\eta}^M_t$ is the restriction of $\overline{\eta}_t$ to $[0,M)$.
In order to prove the ``only if'' part of the proposition, it suffices
to show that $\overline{\eta}^M \equiv 0$ because this implies that
the equation (\ref{eq-pde2}) (holding for all $\tnewf \in \mnewocdcp$)
  uniquely determines the family of functions
$\{ \lan \tnewf (\cdot, t), \newfmeas_t \ran, t \in [0,\infty)\}$, for $\tnewf \in \mnewccp$,
which must equal the right-hand side of (\ref{eq-newfmeas}) due to the argument in the previous paragraph.

Due to (\ref{eq-pde2}), for every $\tnewf \in \newocdcpm$, we know that for $t \in [0,\infty)$,
\be
\label{eta-pde}
  \lan \tnewf (\cdot, t), \overline{\eta}^M_t  \ran
 = \int_0^t \lan \tnewf_x (\cdot, s) + \tnewf_s (\cdot, s), \overline{\eta}^M_s \ran \, ds
\ee
and we also have $\eta_0 \equiv 0$ and $\eta (\{0\}, t ) = 0$ for all $t \in [0,\infty)$.
Now observe that $\overline{\eta}^M$ induces the following linear functional on $\newocdcpm$:
\[  \overline{\eta}^M (\cdot): \tnewf \mapsto \int_0^\infty \lan \tnewf, \overline{\eta}^M_s \ran \, ds, \]
with
\[ |\overline{\eta}^M (\tnewf)|  \leq 2 B \nrm{\tnewf}_\infty \]
since for $i = 1, 2$, $\overline{\mu}^{(1)}_s$ has total measure no greater
than $B$ for every $s \in [0,\infty)$.
Thus $\overline{\eta}^M$ can be viewed as a Radon (in fact finite)
measure on $[0,M) \times \R_+$ (see the comments
in Section \ref{subsub-funmeas}).
Sending $t \ra \infty$ in (\ref{eta-pde}), the left-hand side goes to zero since
$\tnewf$ has compact support, and so we have
\[\overline{\eta}^M (\tnewf_x + \tnewf_s) =  \int_0^\infty \lan \tnewf_x (\cdot, s) + \tnewf_s (\cdot, s), \overline{\eta}^M_s \ran \, ds
= 0 \quad \quad \forall \tnewf \in \newocdcpm \]
(which is equivalent to saying that
$\overline{\eta}^M$ is a weak solution to the
linear, homogeneous PDE  $\partial \overline{\eta}^M/\partial x + \partial \overline{\eta}^M/\partial s = 0$). 
It is straightforward to see that  $\overline{\eta}^M \equiv 0$ is
the unique Radon measure satisfying
the above relation.  Specifically, for any $\psi \in \newocdcpm$,
if we let  $\tilde{\psi}$ denote the natural extension  of $\psi$ to $\R_+^2$:
\[ \tilde{\psi} (x,t) = \left\{
\ba{rl}
 \psi(x,t)  & \mbox{ if } (x,t) \in [0,M) \times \R_+ \\
0 & \mbox{ if } x \geq M
\ea
\right.
\]
then it is easy to verify that the function
\[ \tnewf (x,t) = - \int_0^\infty  \tilde{\psi} (x+s, t+s) \, ds \quad \quad \mbox{ for } (x,t) \in [0,M) \times \R_+ \]
lies in $\newocdcpm$ and satisfies (in the classical sense)
the PDE $\tnewf_x + \tnewf_s = \psi$.  This shows that
$\overline{\eta}^M (\psi) = 0$ for all $\psi \in \newocdcpm$.
A  standard
approximation argument can then be used to show that in fact $\overline{\eta}^M (\psi) = 0$ for
all $\psi \in \newccpm$, which is equivalent to the assertion  $\overline{\eta}^M \equiv 0$.
 \qed \\

In order to  state the
 correspondence between solutions to the integral equation (\ref{eq-pde}) of interest and solutions to
the simpler integral equation (\ref{eq-pde2}),
 we will need the following notation.
Given $h \in \lloc[0,M)$,  let
 \be \label{def-psih} \psi^h (x,t)
\doteq \ds \exp (r^h(x,t)) \ee for $(x,t) \in [0,M)\times\R_+$,
where \be \label{def-r}
   r^h (x,t) \doteq  \left\{
\ba{rl}
\ds -\int_{x-t}^x h(u) \, du   & \ds \mbox{ if }  0 \leq t \leq x,  \\
\ds -\int_{0}^x h(u) \, du   & \ds \mbox{ if } 0 \leq x \leq t.
\ea \right. \ee Note that then $r^h$ and $\psi^h$ are continuous,
locally bounded functions on $[0,M)\times\R_+$.
 Hence, for every $t \in [0,\infty)$ and any non-negative Radon measure $\xi$ on $[0,M)$,
 $\psi^h (\cdot,t) \xi$ defines a non-negative Radon measure on $[0, M)$.

\begin{prop}
\label{prop-pde1} Given $B, M <\infty$,    $h \in {\cal C} [0,M)$, $\fmeas_0 \in
\mmb$ and $\newfk \in \incspace$, suppose $\psi^h$ is
the function defined in (\ref{def-psih}).
If $\fmeas \in {\cal D}_{\mmb}[0,\infty)$ satisfies (\ref{eq-pde}) for
all $\newf \in \newocdcpm$ then the measure-valued function
$\newfmeas \in {\cal D}_{\mb}[0,\infty)$ defined by
\be
\label{corr-numu}
\newfmeas_t (A) \doteq \int_{A \cap [0,M)}
\psi^h (x,t) \, \fmeas_t (dx)   \quad \quad \forall \mbox{  Borel } A \subset \R_+ \mbox{ and } t \in [0,\infty)
\ee
satisfies
equation (\ref{eq-pde2}) for all $\tnewf \in \mnewocdcp$. 
Conversely, if $\newfmeas \in {\cal D}_{\mb}[0,\infty)$ satisfies equation (\ref{eq-pde2})
for all $\tnewf \in \mnewocdcp$, then
the measure-valued function $\fmeas \in {\cal D}_{\mmb}[0,\infty)$ defined by
\be
\label{corr-munu}
\fmeas_t (A) \doteq \int_{A}
\dfrac{1}{\psi^h (x,t)} \, \newfmeas_t (dx)   \quad \quad \forall \mbox{  Borel } A \subset [0,M) \mbox{ and } t \in [0,\infty)
\ee
satisfies (\ref{eq-pde}) for all $\newf \in \newocdcpm$.
\end{prop}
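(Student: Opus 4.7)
The plan is to use $\psi^h$ as an integrating factor that absorbs the hazard term $-\int_0^t \lan h\,\newfs, \fmeas_s\ran\,ds$ appearing in (\ref{eq-pde}) but not in (\ref{eq-pde2}). The starting observation is that on each of the two open regions $\{0 \leq t < x\}$ and $\{0 \leq x < t\}$, a direct calculation from (\ref{def-r}) gives $r^h_x + r^h_t = -h$, so $\psi^h$ is $C^1$ there and satisfies the pointwise identity $\psi^h_x + \psi^h_t + h\,\psi^h = 0$ together with $\psi^h(\cdot,0) \equiv 1$ and $\psi^h(0,\cdot) \equiv 1$. Continuity of $\psi^h$ across the diagonal $\{x=t\}$ is immediate, although its first partials have a jump there of magnitude $h(0)$.

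For the forward direction, given $\tnewf \in \mnewocdcp$, the natural candidate to feed into (\ref{eq-pde}) is the function $\newf$ defined so that pairing $\newf$ against $\fmeas_s$ reproduces the pairing of $\tnewf$ against $\newfmeas_s$ under the Radon--Nikodym relation (\ref{corr-numu}). Using the PDE identity for $\psi^h$ above, a short algebraic calculation yields the key cancellation
\[
\newf_x + \newf_t - h\,\newf \;=\; \frac{\tnewf_x + \tnewf_t}{\psi^h},
\]
while the identities $\psi^h(\cdot,0)\equiv 1$ and $\psi^h(0,\cdot)\equiv 1$ ensure $\newf(\cdot,0)=\tnewf(\cdot,0)$ and $\newf(0,s)=\tnewf(0,s)$, so that the initial term $\lan \newf(\cdot,0), \fmeas_0\ran$ and the boundary term $\int_0^t \newf(0,s)\,d\newfk(s)$ are preserved. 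Substituting into (\ref{eq-pde}) and converting each pairing against $\fmeas_s$ to one against $\newfmeas_s$ by (\ref{corr-numu}) then yields precisely (\ref{eq-pde2}) for $\tnewf$ against $\newfmeas$. The reverse implication is entirely symmetric, starting from $\newf \in \newocdcpm$ and using the inverse change of measure (\ref{corr-munu}).

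The main obstacle is regularity: because $\psi^h$ is only continuous, not $C^1$, across $\{x=t\}$ when $h(0) \ne 0$, the formal test function obtained by multiplying or dividing $\tnewf$ by $\psi^h$ need not lie in $\newocdcpm$, and so cannot be plugged directly into (\ref{eq-pde}). I would handle this by approximation: replace $h$ by a sequence $h^{(n)} \in C^1[0,M)$ converging to $h$ locally uniformly, form the corresponding globally $C^1$ factors $\psi^{h^{(n)}}$, carry out the identity at the level of the approximants (noting that the support condition $\supp\subset [0,M)\times\R_+$ is inherited, since $\psi^{h^{(n)}}$ is continuous and bounded away from zero on compact sets), and then pass to the limit via dominated convergence in the finite-measure pairings $\lan\,\cdot\,,\fmeas_s\ran$, using the local uniform convergence of $\psi^{h^{(n)}}$ and of its first partials on each open region. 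An alternative route, which bypasses smoothing altogether, is to combine Proposition \ref{prop-pde2} with a direct verification: use the explicit representation (\ref{eq-newfmeas}) to write out $\newfmeas_t$, apply the change of variables (\ref{corr-munu}), and check via Fubini and the integration-by-parts formula (\ref{eq-ibp}) from Remark \ref{remark-pde} that the resulting $\fmeas$ satisfies (\ref{eq-pde}).
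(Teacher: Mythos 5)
Your strategy is exactly the paper's: treat $\psi^h$ as an integrating factor, substitute $\newf = \tnewf\psi^h$ into (\ref{eq-pde}), and convert pairings against $\fmeas_s$ to pairings against $\newfmeas_s$ via (\ref{corr-numu}). However, the displayed cancellation is off. Since $\newfmeas_s = \psi^h(\cdot,s)\fmeas_s$ and $\newf = \tnewf\psi^h$, what is needed is
\[
\newf_x + \newf_t - h\newf = (\tnewf_x + \tnewf_t)\,\psi^h,
\]
not $(\tnewf_x+\tnewf_t)/\psi^h$, and this cancellation requires $\psi^h_x + \psi^h_t = +h\,\psi^h$, the opposite sign from the $\psi^h_x+\psi^h_t+h\psi^h=0$ you derived from (\ref{def-r}). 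Your derivative computation from (\ref{def-r}) as written is correct; the contradiction you should have flagged is that (\ref{def-r}) carries a sign misprint (the minus should be absent, as one checks against (\ref{psi-pde}) and against the displayed formula for $1/\psi^h$ appearing in the proof of Theorem \ref{th-pde}). With $\newf=\tnewf\psi^h$ and $\psi^h_x+\psi^h_t=-h\psi^h$ the leftover term is $-2h\tnewf\psi^h$, not zero; your formula only comes out if one (inconsistently) takes $\newf=\tnewf/\psi^h$, which does not match (\ref{corr-numu}).

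The regularity concern is genuine and well spotted: the first partials of $\psi^h$ jump by $\pm h(0)\psi^h$ across $\{x=t\}$, so $\psi^h$ is not $C^1$ there unless $h(0)=0$, and the paper's assertion that $\psi$ is $C^1$ glosses over this. But your proposed remedy does not fix it. Replacing $h$ by smooth $h^{(n)}\to h$ locally uniformly does not make $\psi^{h^{(n)}}$ globally $C^1$: the kink on $\{x=t\}$ has magnitude $h^{(n)}(0)$, which converges to $h(0)\neq0$, so the approximants have exactly the same defect. The obstruction is a compatibility condition between the data $\psi(\cdot,0)\equiv 1$ and $\psi(0,\cdot)\equiv 1$ along the characteristic through the origin, not the smoothness of $h$. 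A fix that does work: note that although $\psi^h_x$ and $\psi^h_t$ individually jump across the diagonal, their sum extends continuously (to $h\psi^h$) on all of $[0,M)\times\R_+$, and (\ref{eq-pde}) only pairs against this combination; so mollify the kinked test function $\newf=\tnewf\psi^h$ itself to get $\newf^{(n)}\in\newocdcpm$ with $\newf^{(n)}\to\newf$ and $\newf^{(n)}_x+\newf^{(n)}_t\to\newf_x+\newf_t$ uniformly on compacts, apply (\ref{eq-pde}) to $\newf^{(n)}$, and pass to the limit. Your alternative route (verify (\ref{eq-pde}) directly from (\ref{eq-newfmeas}) via (\ref{corr-munu}), Fubini, and (\ref{eq-ibp})) only yields one implication — that the explicit formula gives a solution — and not the full two-way correspondence asserted in the proposition, so it cannot by itself substitute for the argument.
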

\begin{proof}
 Fix $B, M$, $h$ as in the statement of the proposition and let $\psi = \psi^h$.
Since $h$ is continuous,
it can be easily verified that $\psi$ lies in ${\cal C}_c^1[0,M)$
and satisfies the partial differential equation
\be
\label{psi-pde}
 \psi_t + \psi_x   = h \psi
\ee
on $[0,M) \times \R_+$.
Now, suppose $\fmeas \in {\cal D}_{\mmb}[0,\infty)$ satisfies (\ref{eq-pde}) for
all $\newf \in \newocdcpm$.  Choose $\tnewf \in \mnewocdcp$ and define $\newf \doteq \tnewf  \psi$  on $[0,M) \times \R_+$.  Then
$\newf \in  \newocdcpm$, and so substituting the test function $\newf$ into
(\ref{eq-pde}), we obtain  
\[
\ba{rcl}
\ds \lan \tnewf(\cdot,t) \psi (\cdot, t), \fmeas_t \ran
& = & \ds  \lan \tnewf (\cdot,0), \fmeas_0 \ran + \int_0^t \lan \dttnewfs \psi(\cdot,s), \fmeas_s \ran \, ds \\
& & \quad \ds + \int_0^t \lan \dxtnewfs \psi(\cdot,s), \fmeas_s \ran \, ds  + \int_0^t  \tnewf(0,s) \psi(0,s),  d\newfk(s) \\
& & \ds + \int_0^t \lan \tnewfs\left[ \psi_s(\cdot,s) +
\psi_x(\cdot,s) - h(\cdot) \psi(\cdot,s) \right], \fmeas_s \ran \,
ds \ea
\]
Due to (\ref{psi-pde}), the last term in the above equation is identically zero.
Substituting the relation  (\ref{corr-numu}) into the above equation,
and noting that $\fmeas_0 = \newfmeas_0$ and
$\psi (0,\cdot) = \f1$,
we conclude that (\ref{eq-pde2}) holds.
Since $\tnewf \in \mnewocdcp$ is arbitrary, this
proves the first assertion.

The converse can be deduced in a similar manner and so only an outline of the proof is provided.
Suppose (\ref{eq-pde2}) is satisfied for all $\tnewf \in \mnewocdcp$,
fix  $\newf \in \newocdcpm$, and let  $\hat{\newf}$ and $\hat{\psi}^{-1}$ denote the extensions
of $\newf$ and $\hat{\psi}^{-1}$, respectively, to $\mnewocdcp$ obtained by setting the functions
 equal to zero for $x \geq M$.
Then, using (\ref{eq-pde2}) to study the action of $\newfmeas$ on the test function
$\tnewf = \psi^{-1} \hat{\newf} \in \mnewocdcp$,
the fact that $\psi^{-1}$ satisfies the PDE (\ref{psi-pde}) on $[0,M) \times \R_+$,
 but with $h$ replaced by $-h$,
and the relation (\ref{corr-munu}), one can  conclude that (\ref{eq-pde}) is satisfied
for all $\newf \in \newocdcpm$.
\end{proof}

We are now in a position  to wrap up the proof of Theorem \ref{th-pde}. \\

\noi
{\em Proof of Theorem \ref{th-pde}. }
If $\fmeas \in {\cal D}_{\mmb} [0,\infty)$
satisfies the equation (\ref{eq-pde}) for all $\newf \in \newocdcpm$, then
Proposition \ref{prop-pde1} shows that the measure-valued function
$\newfmeas \in {\cal D}_{\mb} [0,\infty)$ defined by (\ref{corr-numu})
 satisfies (\ref{eq-pde2}) for all $\tnewf \in \mnewocdcp$.
In turn,  Proposition \ref{prop-pde2} shows that then
$\newfmeas$ satisfies the explicit relation
(\ref{eq-newfmeas}) for all $\tnewf \in \mnewocdcp$.
Since $\fmeas_0 = \newfmeas_0$ and
$\fmeas$ can be recovered from $\newfmeas$ using (\ref{corr-munu}),
 it follows that
 there exists a unique solution $\fmeas$ to (\ref{eq-pde}) that satisfies,
for every $\newf \in \newccpm$ and $t \in [0,\infty)$,
\[
\ba{rcl}
\ds \lan \newft, \fmeas_t^M \ran   =     \lan \newft (1/ \psi^h(\cdot,t)), \newfmeas_t \ran
 & = & \ds \int_{[0,\infty)} \newf(x+t,t) \left( 1/\psi^h (x+t,t)\right) \fmeas_0 (dx) \\
&& \quad \ds +
\int_0^t \newf (t-s,s) (1/\psi^h (t-s,s))  \, d \newfk (s).
\ea
\]
When $h$ is the hazard rate defined in (\ref{def-h}),
elementary calculations show that
\[ \dfrac{1}{\psi^h(x,t)} =
\left\{
\ba{rl}
\ds \dfrac{1-G(x)}{1-G(x-t)} & \ds \mbox{ if } 0\leq t \leq x < M,  \\
\ds  1 - G(x)    & \ds \mbox{ if } 0\leq  x \leq t \wedge M.
\ea
\right.
\]
Substituting the last relation and $\newf (x,t) = f(x)$ for some
$f \in \ccm$ into the previous
equation, we obtain (\ref{eq2-fmeas}) for $f \in \ccm$.
Since $(1 - G(\cdot + t))(1 - G(\cdot))$ and $(1 - G(\cdot))$ are
bounded,
the relation (\ref{eq2-fmeas}) can then be established for $f \in \cb$ by
approximating $f$ by a sequence $f_n \in \ccm$, using the fact that
$f_n$ satisfies (\ref{eq2-fmeas}), then taking limits as $n \ra \infty$ and
 invoking the bounded convergence theorem to justify the interchange
of limits and integration.
\qed \\

\begin{remark}
\label{rem-gencont} {\em  The only place where the continuity of
the hazard rate $h$ is used to prove continuity of the fluid solution map is in the
representation given in Theorem \ref{th-pde}. In turn, continuity
of $h$ is used in the proof of Theorem \ref{th-pde} only in
Proposition \ref{prop-pde1}. However,  Proposition \ref{prop-pde1}
can be shown to hold for any $h \in \lloc [0,M)$ for which
(\ref{cond-radon}) holds.  The proof of the generalization
involves some rather technical PDE approximation arguments that
would involve the introduction of some more notation. In the
interest of conciseness and clarity of exposition, we omit this
generalisation from this paper.  }
\end{remark}

\beginsec

\section{Functional Law of Large Numbers Limit}
\label{sec-tight}

The main objective of this section is to show that, under
suitable assumptions, the sequence
$\{(\fxn,\fmeasn)\}$ converges to a process that solves the fluid equations.
In particular, this establishes existence of solutions to the fluid equations.
First, in Section \ref{subs-prelimit} we provide a useful description of the
evolution of the state $(\fxn, \fmeasn)$ of the $N$-server model.
Then, in Section \ref{subs-prelim}, we introduce a family of martingales that
are used in Section \ref{subs-tight} to establish tightness of the
sequence $\{(\measn, \xn), N \in \N\}$.
Finally, in Section \ref{subs-pf3}, we provide the
proof of Theorem \ref{th-conv}.

\subsection{A Characterisation of the Pre-limit Processes}
\label{subs-prelimit}

The dynamics of the $N$-server model was described in Section \ref{subs-modyn} and
certain auxiliary
processes were introduced in Section \ref{subs-aux}.
In this section, we provide
a more succinct and convenient description of the state dynamics,
which takes a form similar to that of the fluid equations.

Throughout the section, suppose  $\ren$ and initial conditions
$\xn (0) \in \R_+$ and $\measn_0 \in \mmsp$ are given,
and let $\en$, $\xn$ and $\measn$ be the associated
state processes, as described in Section \ref{subs-modyn}. For any
measurable function $\newf$ on $[0,M)\times\R_+$, consider the
sequence of processes $\{\bare^{(N)}_\newf, N \in \N\}$ taking
values in $\R_+$, given by
 \be
\label{def-baren}
   \bare^{(N)}_\newf (t) \doteq  \sum_{s \in [0,t]} \sum_{j=-\lan \f1, \measn_0 \ran  + 1}^{\kn (t)}
\ind_{\{d\agen_j (s-)/dt  >0, \agen_j (s) = v_j \}}
\newf(\agen_j (s),s), \ee where $\kn$ and $\agen_j$ are defined by
the relations (\ref{def-agejn}) and  (\ref{def-kn}).
Alternatively, we could  replace the set $\{d\agen_j (s-)/dt  >0,
\agen_j (s) = v_j\}$ in the indicator function in
(\ref{def-baren}) by the set $\{d\agen_j (s-)/dt > 0,
d\agen_j (s+)/dt = 0\}$, which immediately shows that
$\bare^{(N)}_\newf$ is ${\cal F}_t^{(N)}$-adapted. Moreover, when
$\newf$ is bounded, it is easy to see that for each $t \in
[0,\infty)$, $\E[\bare^{(N)}_\newf (t)] < \infty$  if $\E[ \xn(0)
+ \en (t)] < \infty$. Indeed,   for any $t \in [0,\infty)$,
\[ \E[|\bare^{(N)}_\newf (t)|] \leq
\nrm{\newf}_\infty  \E[\xn (0)  + \kn (t)] \leq \nrm{\newf}_\infty \E[\xn (0) + \en (t)].
\]
Also, from the relations (\ref{def-kn})--(\ref{def-nun}), it is easy to
see that $\bare^{(N)}_{\f1}$ is equal to the cumulative departure process $\dn$
defined in (\ref{def-dn}).

We now state the main result of this section.   Recall that for $s \in [0,\infty)$, $\nun_s$ represents $\nun(s)$ and
$\lan \newf(\cdot +h,s), \nun_s \ran$ is used as a short form for
$\int_{[0, M)} \newf(x+h,s) \, \nun_s (dx)$.

\begin{theorem}
\label{th-prelimit}
 The processes $(\en, \xn,
\measn)$ satisfy a.s.\ the following coupled set of equations: for
$\newf \in \newocdcpm$ and  $t \in [0,\infty)$,
\begin{eqnarray}
\label{eqn-prelimit1}
 \left\lan \newft, \nun_{t} \right\ran
& = &  \left\lan \newf(\cdot, 0), \nun_{0} \right\ran   +  \ds \int_{0}^t \left\lan \dxnewfs + \dtnewfs, \nun_s \right\ran \ ds  \\
\nonumber
& & \quad \quad \ds - \bare^{(N)}_\newf (t) + \int_0^t \newf(0,u) d\kn (u) ,  \\
\label{eqn-prelimit2}
\quad \quad \quad  \xn (t) & = &  \xn(0) + \en (t) - \bare^{(N)}_{\f1} (t) , \\
\label{comp-prelimit}
& & N - \left\lan 1, \measn_t \right\ran  = [N - \xn (t) ]^+,
\end{eqnarray}
where $\kn$ satisfies (\ref{def-kn}) and  $\bare^{(N)}_\newf$  is the process
defined in (\ref{def-baren}).
\end{theorem}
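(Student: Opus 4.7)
The plan is to derive the three equations separately, with the main work going into (\ref{eqn-prelimit1}); the remaining two are essentially restatements of definitions already established.

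For (\ref{eqn-prelimit2}), I would simply combine (\ref{def-dn}) with the observation that $D^{(N)} = Q^{(N)}_{\mathbf{1}}$: indeed, summing the indicator in (\ref{def-baren}) with $\varphi \equiv \mathbf{1}$ counts, for each customer $j \leq K^{(N)}(t)$, the unique time $s$ (if any) at which $a^{(N)}_j$ reaches $v_j$, which is exactly the customer's departure time. Equation (\ref{comp-prelimit}) is then a direct rewriting of the non-idling relation (\ref{def-nonidling}) together with (\ref{def-idlen}).

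The heart of the argument is (\ref{eqn-prelimit1}), which I would prove by a pathwise decomposition on a fixed sample point. Starting from the explicit expression (\ref{def-nun}), write
\[
\langle \varphi(\cdot,t), \nu^{(N)}_t \rangle \;=\; \sum_{j = -\langle \mathbf{1},\nu^{(N)}_0\rangle +1}^{K^{(N)}(t)} \varphi\bigl(a^{(N)}_j(t), t\bigr)\,\mathbf{1}_{\{a^{(N)}_j(t) < v_j\}}.
\]
Denote by $\sigma_j$ the (random) entry-into-service time of customer $j$ and by $\tau_j = \sigma_j + v_j$ the corresponding departure time, so that $t \mapsto \varphi(a^{(N)}_j(t),t)\mathbf{1}_{\{a^{(N)}_j(t)<v_j\}}$ is continuous on $[\sigma_j, \tau_j)$ and vanishes outside this interval. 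Since $\varphi \in \newocdcpm$ has compact support, in any finite time window only finitely many customers can contribute to the sum, and the count $K^{(N)}(t)$ is a.s.\ finite because $E^{(N)}(t)$ is. Thus all exchanges of sums and limits are harmless.

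For each individual customer contribution $\varphi_j(t) \doteq \varphi(a^{(N)}_j(t),t)\mathbf{1}_{\{a^{(N)}_j(t)<v_j\}}$, the fundamental theorem of calculus applied on $(\sigma_j,\tau_j)$ — where $\dot a^{(N)}_j \equiv 1$ — together with the jumps $+\varphi(0,\sigma_j)$ at entry and $-\varphi(v_j,\tau_j)=-\varphi(a^{(N)}_j(\tau_j),\tau_j)$ at departure, yields
\[
\varphi_j(t) - \varphi_j(0) \;=\; \int_{0}^{t}\bigl[\varphi_x + \varphi_s\bigr]\bigl(a^{(N)}_j(s),s\bigr)\mathbf{1}_{\{\sigma_j \le s < \tau_j\}}\,ds \;+\; \varphi(0,\sigma_j)\mathbf{1}_{\{\sigma_j \le t\}} \;-\; \varphi(v_j,\tau_j)\mathbf{1}_{\{\tau_j \le t\}}.
\]
Summing over $j$ from $-\langle\mathbf{1},\nu^{(N)}_0\rangle+1$ to $K^{(N)}(t)$, the integral term collapses to $\int_0^t \langle \varphi_x+\varphi_s,\nu^{(N)}_s\rangle\,ds$ (by definition of $\nu^{(N)}_s$), the entry-time sum becomes $\int_0^t \varphi(0,u)\,dK^{(N)}(u)$ because $K^{(N)}$ is the pure-jump counting process of entry times, and the departure-time sum is precisely $Q^{(N)}_\varphi(t)$ by definition (\ref{def-baren}). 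This yields (\ref{eqn-prelimit1}).

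The main obstacle is purely bookkeeping: making precise that the discontinuities of $s\mapsto \varphi_j(s)$ occur only at $\sigma_j$ and $\tau_j$, that the entry and departure events of distinct customers occur at distinct times (so the jump contributions are unambiguous), and that interchanging the finite sum over $j$ with the integral and with the jump-counting measures is legitimate — all of which follow from the compact support of $\varphi$, the a.s.\ finiteness of $E^{(N)}(t)$, and the fact that customer ages are piecewise linear with slope $0$ or $1$ and jumps only at well-defined entry/departure instants, as recorded explicitly in (\ref{def-agejn}) and (\ref{def-newnun}).
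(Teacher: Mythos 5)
Your proposal is correct and takes a genuinely different route from the paper. The paper proves (\ref{eqn-prelimit1}) by a telescoping/Riemann-sum argument in time: it partitions $[0,t]$ with mesh $1/n$, decomposes each increment $\lan\newf(\cdot,s),\measn_{s+1/n}\ran - \lan\newf(\cdot,s),\measn_s\ran$ into an ``age-shift'' piece ${\cal I}_1$ and a ``new-entries'' piece ${\cal I}_2$, accumulates an explicit error term $\remn_\newf$ and a discretized departure functional $\bare^{(N),\delta}_\newf$, and then passes $n\ra\infty$ via two technical lemmas (\ref{lem-baren}) and (\ref{lem-prelim1}). You instead decompose pathwise and \emph{per customer}: you single out $\varphi_j(t)$, note it is piecewise $C^1$ in $t$ with jumps only at the entry time $\sigma_j$ and departure time $\tau_j$, apply the fundamental theorem of calculus on $(\sigma_j,\tau_j)$ using $\dot a_j^{(N)}\equiv 1$, and sum over the (finitely many, given $t$) relevant $j$. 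This bypasses the discretization error analysis entirely and is more elementary; the price is that it treats each sample path individually, whereas the paper's increment-wise decomposition is better set up for the next stage of the program, since $\bare^{(N),\delta}_\newf$ and $\remn_\newf$ reappear almost verbatim in the martingale analysis of Section 5.2.

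One point should be cleaned up. As written, $\varphi_j(t) \doteq \newf(\agen_j(t),t)\ind_{\{\agen_j(t)<v_j\}}$ does \emph{not} vanish for $t<\sigma_j$: there $\agen_j(t)=0<v_j$ so $\varphi_j(t)=\newf(0,t)$, which is generically nonzero, and the sum $\sum_j\varphi_j(t)$ over all $j\ge 1$ would diverge. The indicator $\ind_{\{j\leq\kn(t)\}}$ (equivalently $\ind_{\{\sigma_j\le t\}}$) is implicit in (\ref{def-nun}) through the $t$-dependent upper summation limit, and must be inserted into your definition of $\varphi_j$, i.e.\ $\varphi_j(t)\doteq\newf(\agen_j(t),t)\ind_{\{\agen_j(t)<v_j\}}\ind_{\{\sigma_j\le t\}}$, so that $\varphi_j$ is indeed supported on $[\sigma_j,\tau_j)$, right-continuous, and has precisely the two jump times you identify. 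With this repair the per-customer FTC identity and the final summation are exactly as you describe, and equations (\ref{eqn-prelimit2}) and (\ref{comp-prelimit}) follow, as you note, directly from (\ref{def-dn}), (\ref{def-idlen}), (\ref{def-nonidling}) and the identification $\bare^{(N)}_{\f1}=\dn$. Also note that the finiteness of the sum over $j$ comes from $\kn(t)<\infty$ a.s., not from the compact support of $\newf$; the compact support is what gives uniform boundedness of $\newf,\newf_x,\newf_s$, needed for the FTC step and for the legitimacy of the term-by-term differentiation.
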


The rest of this section is devoted to the proof of this theorem.
We start with the simple observation that for any $\newf \in
\newocdcpm$, due to the right-continuity of $\measn$ we have   for
any $t \in [0,\infty)$,
 \be \label{measn-1}
\ba{l}
\ds \left \lan \newft, \measn_{t} \right\ran - \left\lan \newf(\cdot,0), \measn_0 \right\ran \\
\quad \quad \quad \quad  \ds =  \ds  \lim_{ \small \ra 0} \sum\limits_{k=0}^{\lfloor t/\small \rfloor} \left[
 \left\lan \newf (\cdot,(k+1) \small), \measn_{(k+1) \small} \right\ran -
\left\lan \newf,(\cdot,k \small),  \measn_{k \small} \right\ran
\right]. \ea \ee In order to compute the increments on the
right-hand side of the last equation,
 we observe that for $\newf \in \newocdcpm$,
 $\small > 0$ and $s \in [0,\infty)$, we can write
\rear{1.8}
 \be
\label{eqn-new}
\ba{l} \ds \left\lan \newf(\cdot,s+\small),
\nun_{s+\small} \right\ran - \lan \newf(\cdot,s), \nun_s \ran \\
\quad \quad = \ds \lan \newf(\cdot,s+\small) - \newf (\cdot,s),
\nun_{s+\small} \ran + \lan \newfs, \nun_{s+\small} \ran - \lan
\newfs, \nun_s \ran.
\ea
\ee
 We will treat the first term  and the
last two terms on the right-hand side of (\ref{eqn-new}) separately.
Summing the first term, with $s =k/n$ and $\delta = 1/n$,
over $k = 0, \ldots, \lfloor nt \rfloor$, we obtain
\be
\label{term1}
\ba{l}
\ds \suli_{k=0}^{\lfloor nt \rfloor} \left \lan \newf \left(\cdot,\frac{(k+1)}{n} \right)
- \newf \left(\cdot, \frac{k}{n} \right), \measn_{(k+1)/n} \right \ran  \\
\quad \quad \quad  \quad \quad \quad =  \ds \suli_{k=0}^{\lfloor
nt \rfloor} \dfrac{1}{n} \left\lan\dfrac{ \newf
\left(\cdot,\frac{(k+1)}{n} \right) - \newf \left(\cdot,
\frac{k}{n} \right)}{1/n}, \measn_{(k+1)/n} \right \ran. \ea \ee
Since $\newf \in \newocdcpm$, we have
\[ \sup_{k = 0, \ldots, \lfloor nt \rfloor} \left| \dfrac{\newf \left(\cdot,\frac{(k+1)}{n} \right)
- \newf \left(\cdot, \frac{k}{n} \right)}{1/n} - \newf_t \left(\cdot, \frac{k+1}{n} \right) \right|
=  O \left(\frac{1}{n}\right).
\]
Since $\measn$ has total mass no greater than $N$, and the mapping
$s  \mapsto \lan \newf_t (\cdot, s), \measn_s \ran$  is right-continuous,
 taking limits as $n \ra \infty$ in (\ref{term1}),
we obtain the corrresponding Riemann integral:
\be
\label{lim-term1}
 \lim_{n \ra \infty} \sum_{k=0}^{\lfloor nt \rfloor} \left \lan \newf \left(\cdot,\frac{(k+1)}{n} \right)
- \newf \left(\cdot, \frac{k}{n} \right), \measn_{(k+1)/n} \right \ran
= \int_0^t \lan \newf_t (\cdot, s), \measn_s \ran \, ds.
\ee

In order to simplify the last two terms on the right-hand side of (\ref{eqn-new}), for
$\newf \in \newocdcpm$,  $\small \in (0, M)$ and $s \in [0,\infty)$,
first observe that we can  write \rear{1.8} \be \label{eqn-0} \lan
\newf(\cdot,s), \nun_{s+\small} \ran = {\cal I}_1 + {\cal
 I}_2
\ee
where
 \[
{\cal I}_1  \doteq \ds \int_{[\small, M)} \newf(x,s) \,
\nun_{s+\small } (dx) \quad \quad \mbox{ and } \quad \quad
 \cI_2 \doteq   \ds  \int_{[0,\small)} \newf(x,s) \, \nun_{s+\small} (dx).
\]
We begin by rewriting ${\cal I}_1$ in terms of quantities that are known at time $s$.
For $x \geq \delta$,  customers in service with age equal to $x$
at time $s+\small$ are precisely those
 customers that were already in service at time $s$ with age equal to
 $x-\small \geq 0$ and that, in addition, did not depart the system in the interval $[s,s+\small ]$.
Since the age of a customer already in service increases linearly
with rate $1$ (see (\ref{def-agejn})), using the representation for $\measn$ given in
(\ref{def-nun}), we have
\[
\ba{rcl} \ds \cI_1 & =  & \ds
\sum_{j = - \lan \f1, \measn_0 \ran + 1}^{\kn (s+ \small )}  \newf(\agen_j (s+ \small ),s) \ind_{\{\delta \leq \agen_j (s+ \small) < v_j\}}  \\
& = & \ds \suli_{j = - \lan \f1, \measn_0 \ran + 1}^{\kn (s)}  \newf(\agen_j (s) + \small,s ) \ind_{\{\agen_j (s)+ \small  < v_j\}} \\
& = &  \ds \suli_{j = -\lan \f1, \measn_0 \ran  + 1}^{\kn (s)}  \newf(\agen_j (s) + \small,s ) \ind_{\{\agen_j (s) < v_j\}} \\
& & - \ds  \suli_{j = - \lan \f1, \measn_0 \ran + 1}^{\kn (s)}
\newf(\agen_j (s) + \small,s ) \ind_{\{\agen_j (s) < v_j \leq
\agen_j(s) + \small  \}}. \ea
\]
(Here, and in what follows below, we always assume wlog that $\delta$ is
sufficiently small so that the range of the first argument of $\newf$ falls within
$[0,M)$, ensuring that all quantities  are well-defined.)
Substituting  the definition of $\measn_s$ into the last
expression,  this can be rewritten as
\be \label{eqn-i1}
\cI_1   =  \ds \left \lan \newf(\cdot+ \small,s ), \nun_{s}
\right\ran   - \are_{\newf}^{(N)}(s,\small),
 \ee
 where for
$\newf \in \mathcal{C}^1_c([0,M)\times\R_+)$, $s \in [0,\infty)$
and $\small> 0$, we define
\be \label{def-mf} \are^{(N)}_{\newf}
(s,\small) \doteq \suli_{j = - \lan \f1, \measn_0 \ran + 1}^{\kn
(s)}
\newf(\agen_j (s) + \small,s)
 \ind_{\{\agen_j (s) < v_j \leq \agen_j(s) + \small \}}.
\ee

 We now expand ${\cal I}_2$. Since $\newf \in
\mathcal{C}^1_c([0,M)\times\R_+)$,  for every $x \in (0,\small)$
and $s\in[0,T]$ there exists $y_{x,s} \in [0,x] \subset
[0,\delta]$ such that $\newf(x,s) - \newf(0,s) = x
 \newf_x (y_{x,s},s)$, where $\newf_x$ is the partial derivative
 of $\newf$ with respect to $x$.  As a result, we can rewrite ${\cal I}_2$ as
\[
\ba{rcl} \ds {\cal I}_2 &  =  & \ds  \int_{[0,\small)} \newf(0,s)
\, \measn_{s+\small} (dx) +
 \int_{[0,\small)} (\newf(x,s) - \newf(0,s)) \, \measn_{s+\small} (dx) \\
& = & \ds
  \newf(0,s) \measn_{s+ \small} [0,\small)  + \int_{[0,\small)} x \newf_x (y_{x,s},s) \, \measn_{s + \small} (dx).
\ea
\]
For any $0 \leq r \leq s < \infty$, let $\kn (r,s)$ denote the number of customers that entered
service in the period $(r,s]$ and let
$\newdn (r, s)$ denote the number of customers that  both
entered service and departed the system in the period $(r,s]$.
Note that $\newdn (r,s)$ admits the  explicit representation
\be
\label{eq-newdn}
 \newdn (r,s) =
\suli_{j = \kn (r) + 1}^{\kn (s)}
\ind_{\{\agen_j (s) = v_j\}}. 
\ee
Also, note that $\measn_{s+ \small} [0,\small)$ is the number of customers in service
that have age less than $\delta$ at time $s + \small$.  These customers
must therefore have entered service in the interval $(s,s+\small]$ and not
yet departed by time $s + \small$.  Therefore, we can write
\[ \measn_{s+ \small} [0,\small) =
\kn (s,s+\small)  - \newdn (s,s+\small). \]
Combining the last three expressions, we obtain
\be \label{eqn-i2}
\ba{rcl}
{\cal I}_2 &  = &   \ds \newf(0,s) \kn (s,s+\small)
-
 \newf(0,s) \newdn (s, s+\small) \\
& & \quad \quad \ds  + \int_{[0,\small)} x \newf_x (y_{x,s},s) \,
\measn_{s + \small} (dx).
\ea
 \ee

Substituting  equations (\ref{eqn-i1}) and  (\ref{eqn-i2})   into
(\ref{eqn-0}),  with $s = k/n$ and $\delta = 1/n$, we obtain for
$\newf \in {\cal C}^1([0,M)\times\R_+)$, \rear{1.8}
\[
\ba{l}
\ds \left\lan \newf\left(\cdot, \frac{k}{n} \right), \measn_{(k+1)/n} \right\ran -
\left\lan \newf \left(\cdot, \frac{k}{n} \right), \measn_{k/n} \right\ran \\
\quad \quad \quad
 =   \ds \left\lan \newf\left(\cdot + \dfrac{1}{n},\dfrac{k}{n}\right) -
 \newf\left(\cdot,\frac{k}{n}\right), \measn_{k/n}\right\ran
- \are_\newf \left(\frac{k}{n}, \frac{1}{n}\right) \\
 \quad \quad \quad \quad \quad \ds
+ \newf\left(0,\frac{k}{n}\right)  \kn \left(\frac{k}{n},
 \frac{k+1}{n} \right)  - \newf\left(0,\frac{k}{n}\right) \newdn \left(\frac{k}{n},\frac{k+1}{n} \right) \\
\quad \quad \quad \quad \quad \ds
 +  \int_{\left[0,\frac{1}{n}\right)} x  \newf_x\left(y_{x,\frac{k}{n}},\frac{k}{n}\right)
 \, \measn_{(k+1)/n} (dx).
\ea
\]
Fix $t \in [0,\infty)$ and define $t^n \doteq (\lfloor nt \rfloor
+ 1)/n$.
Summing the last expression over $k = 1, \ldots, \lfloor n t \rfloor$, we obtain
\be
\label{eqn-i123}
\ba{l}
\sum\limits_{k=0}^{\lfloor n t
\rfloor} \left[ \left\lan \newf\left(\cdot, \frac{k}{n}\right), \measn_{(k+1)/n}
\right \ran -
\left\lan \newf\left(\cdot,\frac{k}{n}\right), \measn_{k/n} \right \ran \right] \\
\quad \quad \quad \quad \quad =
 \ds \sum\limits_{k=0}^{\lfloor nt
\rfloor} \dfrac{1}{n} \left\lan \dfrac{\newf(\cdot +
1/n,\frac{k}{n}) -\newf\left(\cdot,\frac{k}{n}\right)}{1/n}, \measn_{k/n} \right \ran -
\bare^{(N),1/n}_{\newf}(t)
\\ \quad \quad \quad \quad \quad \quad \quad \ds + \sum_{k=0}^{\lfloor n t \rfloor}\newf\left(0,\frac{k}{n}\right) \kn \left(\frac{k}
{n},\frac{k+1}{n}\right) -\remn_{\newf }(t), \ea \ee where, for
conciseness, for  $\newf \in \newocdcpm$, $N \in \N$ and $t \in
[0,\infty)$, we set \be \label{def-barensmall}
 \bare_{\newf}^{(N), \small} (t) \doteq
\sum_{k=0}^{\lfloor  t/\small \rfloor} \are_{\newf}^{(N)} \left( k
\small,\small \right) \quad \quad \mbox{ for } \small > 0
 \ee
and, for $n \in \N$,
 \be \label{def-remn} \ba{rcl}
 \remn_{\newf}(t,n) & \doteq & \ds \ds   \sum_{k=0}^{\lfloor n t \rfloor}\newf \left(0,\frac{k}{n} \right)
\newdn \left(\frac{k}{n},\frac{k+1}{n}\right) \\
& & \ds - \sum_{k=0}^{\lfloor n t \rfloor} \int_{[0,1/n)} x
\newf_x \left(y_{x,\frac{k}{n}},\frac{k}{n}\right) \, \measn_{(k+1)/n} (dx).
\ea
\ee

Since $\newf \in \newocdcpm$, $\measn [0,M) \leq N$,  and both
the mapping $s \mapsto \lan \newf_x (\cdot, s), \measn_s \ran$ and
the process $\kn$ have right-continuous paths,  it is clear that
the first and third sums on the right-hand side of
(\ref{eqn-i123}) converge pathwise,
 as $n \ra \infty$, to the corresponding Riemann-Stieltjes integrals:
\be
\label{lim-term2}
\lim_{n \ra \infty}
\sum\limits_{k=0}^{\lfloor nt
\rfloor} \dfrac{1}{n} \left\lan \dfrac{\newf(\cdot +
1/n,\frac{k}{n}) -\newf\left(\cdot,\frac{k}{n}\right)}{1/n}, \measn_{k/n} \right \ran
= \int_0^t \left \lan \newf_x (\cdot, s ), \measn_s \right \ran \, ds
\ee
and, likewise,
\be
\label{lim-term3}
\lim_{n \ra \infty}
\sum_{k=0}^{\lfloor n t \rfloor}\newf\left(0,\frac{k}{n}\right) \kn \left(\frac{k}
{n},\frac{k+1}{n}\right) = \int_{[0,t]} \newf (0, s) \, dK^{(N)} (s).
\ee
The next two results identify the limits of the remaining terms,
$\bare_{\newf}^{(N),1/n}$ and $\remn$, as  $n \ra \infty$.
Recall the definition of the process
 $\bare^{(N)}_{\newf}$ given in (\ref{def-baren}).

\begin{lemma}
\label{lem-baren} For every $N \in \N$, $t \in [0,\infty)$ and
$\newf \in \newocdcpm$, $ \bare_{\newf}^{(N),\small}(t)$ converges
a.s.\ to $\bare^{(N)}_{\newf}(t)$ as $\delta \ra 0$.
\end{lemma}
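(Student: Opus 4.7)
The plan is to re-express both $\bare^{(N)}_\newf(t)$ and $\bare^{(N),\small}_\newf(t)$ as sums indexed by individual customers and then compare them term-by-term. Write $e_j$ for the time at which customer $j$ entered service (with the convention $e_j \doteq -\agen_j(0) \leq 0$ for the initial customers) and $s_j \doteq e_j + v_j$ for its departure time. Since the indicator $\ind_{\{d\agen_j(s-)/dt > 0,\, \agen_j(s) = v_j\}}$ is nonzero only at $s = s_j$, the definition (\ref{def-baren}) collapses to
\[
\bare^{(N)}_\newf(t) = \sum_j \newf(v_j, s_j)\, \ind_{\{s_j \in (0, t]\}}.
\]
On the other hand, inspection of (\ref{def-mf}) shows that customer $j$ contributes to $\are^{(N)}_\newf(k\small, \small)$ iff $e_j \leq k\small$ and $s_j \in (k\small, (k+1)\small]$. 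Letting $k_j = k_j(\small)$ denote the unique integer with $s_j \in (k_j\small, (k_j+1)\small]$ and $t_\small \doteq (\lfloor t/\small \rfloor + 1)\small \in [t, t+\small)$, summing (\ref{def-barensmall}) over $k \in \{0, \ldots, \lfloor t/\small \rfloor\}$ yields
\[
\bare^{(N),\small}_\newf(t) = \sum_j \newf\bigl(k_j\small - e_j + \small,\; k_j\small\bigr)\, \ind_{\{e_j \leq k_j\small,\; s_j \in (0, t_\small]\}}.
\]

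Next, I would split the difference $\bare^{(N),\small}_\newf(t) - \bare^{(N)}_\newf(t)$ into three pieces: $T_1(\small)$ sums $\newf(k_j\small - e_j + \small, k_j\small) - \newf(v_j, s_j)$ over the customers common to both (those with $s_j \in (0,t]$ and $e_j \leq k_j\small$); $T_2(\small)$ sums $-\newf(v_j, s_j)$ over customers with $s_j \in (0,t]$ but $e_j > k_j\small$; and $T_3(\small)$ sums $\newf(k_j\small - e_j + \small, k_j\small)$ over customers with $s_j \in (t, t_\small]$ and $e_j \leq k_j\small$. A case check on each customer $j$ (either $s_j \in (0,t]$, or $s_j \in (t, t_\small]$, or neither; combined with whether $e_j \leq k_j\small$) shows these three pieces account for the entire symmetric difference.

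Finally, I would argue that each of $T_1, T_2, T_3$ vanishes almost surely as $\small \ra 0$. Fix $\omega$ in the full-measure set on which $\en(t+1) < \infty$ and every $v_j$ is strictly positive (the latter follows from $G(0+) = 0$). Let $J \doteq \{j : s_j \in (0, t+1]\}$, which is then finite, and set $v_\star \doteq \min_{j \in J} v_j > 0$. For $\small < v_\star$ one has $T_2 \equiv 0$, because $e_j > k_j\small$ forces $v_j = s_j - e_j < s_j - k_j\small \leq \small < v_\star$, contradicting $j \in J$. Since $J$ contains only finitely many $j$ with $s_j > t$, there is a smallest such $s_j$, strictly greater than $t$; once $\small$ is below this gap, $(t, t_\small]$ contains no $s_j$ and $T_3 \equiv 0$. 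For $T_1$, the first argument of $\newf$ satisfies $k_j\small - e_j + \small = v_j + ((k_j+1)\small - s_j) \in [v_j, v_j + \small)$ and the second lies in $[s_j - \small, s_j)$, so continuity of $\newf$ together with $|J| < \infty$ gives $T_1 \ra 0$. The main subtlety is the careful bookkeeping of which customers appear in each sum near the right endpoint $t$ and the treatment of customers whose service requirement is comparable to $\small$; once the three indicator regions above are identified, the argument is essentially a Riemann-sum computation with vanishing boundary corrections.
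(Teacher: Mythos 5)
Your proposal is correct and takes essentially the same route as the paper: both re-express $\bare^{(N),\small}_\newf(t)$ as a sum over customers (the paper via an interchange of summation with starting index $\tau^\small(j)$, you via the unique $k_j$), compare term-by-term with $\bare^{(N)}_\newf(t)$ using smoothness of $\newf$, and observe the comparison holds over an a.s.\ finite index set. Your $T_1/T_2/T_3$ bookkeeping is a somewhat more explicit treatment of the boundary corrections that the paper absorbs into its ``$+\,O(\small)$'' and its appeal to the c\`adl\`ag property of $\kn$ when passing to the limit.
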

\begin{proof}
Fix $N \in \N$, $t \in [0,\infty)$ and $\newf \in \newocdcpm$, and
let $L <
 \infty$ be such that $\sup_{s \in [0,t],y\in[0,M)} |\newf_x(y,s)+\newf_t(y,s)| \leq L$.
For any $\small > 0$, and $j = -\lan \f1, \measn_0 \ran + 1, \ldots, 0$, define
$\tau^\small (j) = 0$ and for $j = 1, 2, \ldots, $ define
\[ \tau^\small (j) \doteq \inf \{ k \in \N: \agen_j (k \small + \ve) > 0
\quad \forall \ve > 0\}. \]
 Observe that $\tau^\small (j) \delta$
represents the first time after the $j$th customer enters service
that lies on the $\small$-lattice $\{k\small, k = 0, 1, \ldots,
\lfloor t/\small \rfloor \}$ (the introduction of $\ve$ in the
definition was necessary to ensure that $\tau^\small (j) = k$ if
the $j$th customer arrives precisely at $k \small$, and thus has
age $0$ at that time). Then for any $\small > 0$, a simple
interchange of summation shows that
\[
\ba{rcl} \bare_{\newf}^{(N), \delta} (t) & = &
\suli_{k=0}^{\lfloor t/\small \rfloor} \suli_{j = - \lan \f1,
\measn_0 \ran + 1}^{\kn (k \small)} \newf(\agen_j (k \small) +
\small,k \small) \ind_{\{ \agen_j (k \small) < v_j
\leq \agen_j ( k \small) + \small  \}} \\
& = & \suli_{j = - \lan \f1, \measn_0 \ran + 1}^{\kn (\lfloor
t/\small \rfloor \small)} \suli_{k = \tau^\delta (j) }^{\lfloor
t/\small \rfloor}\newf( \agen_j (k \small) + \small, k \small)
\ind_{\{ \agen_j (k \small) < v_j \leq \agen_j ( k \small) +
\small  \}}.
\ea
\]
However, when $\agen_j (k \small) < v_j \leq \agen_j (k \small) +
 \small$, we have
\[ \sup_{s \in [k \small, (k+1) \small]} \left|\newf (\agen_j (k \small) + \small,k \small) - \newf(v_j,s) \right| \leq L \delta \]
and we also  know that there exists a (unique) $s \in (k\delta, (k+1) \delta]$ such that
$da_j^{(N)} (s-)/dt  > 0$ and $a_j^{(N)} (s) = v_j$ (i.e.,  $s$ is the  unique time at which the
customer departs the system).
Hence, we can write
\[ \bare_{\newf}^{(N), \delta} (t) =
\suli_{j = - \lan \f1, \measn_0 \ran + 1}^{\kn(\lfloor t/\small
\rfloor \small)}  \sum_{s \in [0, (\lfloor t/\small \rfloor + 1)
\small]}\newf(v_j,s) \ind_{\{ d\agen_j (s-)/dt  > 0,  \agen_j(s) = v_j\}}  + O(\small).
\]
 Sending $ \small \ra 0$, since $\kn$ is c\`{a}dl\`{a}g, we see
 that $\bare_{\newf}^{(N), \small}(t)$ converges  to the quantity
\[
 \suli_{j = - \lan \f1, \measn_0 \ran + 1}^{\kn( t-)}  \sum_{s \in
[0, t]} \newf(\agen_j(s),s) \ind_{\{  d\agen_j (s-)/dt  > 0,    \agen_j (s) = v_j\}} =
\bare_\newf^{(N)}(t).\]
 The last  equality follows by replacing $\kn
(t-)$  by $\kn (t)$, which is justified (even though we need not
have $\kn (t-) = \kn(t)$) because
  every $v_j$ is a.s. strictly positive (since $G(0+) = 0$),
and so if customer $j$ enters service precisely at time $t$, then
$\ind_{\{\agen_j(s) = v_j\}} = 0$ for every $s \in [0,t]$.
\end{proof}

We now  show that a.s.,
$\remn (t) = O(1/n)$ uniformly for $t$ in compact sets.

\begin{lemma}
\label{lem-prelim1}
Almost surely, for every $T \in [0,\infty)$
  and  every   $\newf \in \newocdcpm$,
 \be
\label{remn-limit} \lim_{n \ra \infty} \sup_{t \in [0,T]}
\remn_{\newf} (t) = 0. \ee
\end{lemma}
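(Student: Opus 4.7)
The plan is to bound the two sums defining $\remn_\newf(t,n)$ separately and show each tends to zero uniformly in $t \in [0,T]$ on a set of full probability. Throughout, fix $T < \infty$ and $\newf \in \newocdcpm$, and set $L \doteq \nrm{\newf_x}_\infty + \nrm{\newf}_\infty < \infty$.

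First I would handle the second sum (the one with the integral). Since $|x| < 1/n$ on the domain of integration, each summand is bounded in absolute value by $(L/n)\measn_{(k+1)/n}[0,1/n)$. But any customer contributing to $\measn_{(k+1)/n}[0,1/n)$ must have age strictly less than $1/n$ at time $(k+1)/n$, hence must have entered service in the interval $(k/n,(k+1)/n]$, giving the bound $\measn_{(k+1)/n}[0,1/n) \le \kn((k+1)/n) - \kn(k/n)$. Summing over $k = 0, \ldots, \lfloor nt\rfloor$ and using the telescoping nature of $\kn$, the second sum is bounded in absolute value by $(L/n)\kn((\lfloor nt\rfloor+1)/n) \leq (L/n)\kn(T+1)$ uniformly in $t \in [0,T]$. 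Since $\kn(T+1) \leq \xn(0) + \en(T+1) < \infty$ almost surely, this bound vanishes as $n \to \infty$.

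Next I would handle the first sum involving $\newdn(k/n,(k+1)/n)$. A customer $j$ contributing to $\newdn(k/n,(k+1)/n)$ both enters service and departs within this interval of length $1/n$, so necessarily $v_j \le 1/n$. Summing over $k$ with $k/n \leq T$, the total number of such events is at most the number of indices $j \in \{-\lan \f1,\measn_0\ran +1, \ldots, \kn(T+1)\}$ with $v_j \le 1/n$. Since $G(0+) = 0$ by assumption, every $v_j$ is almost surely strictly positive; as this index set is finite (its cardinality is bounded by the a.s.\ finite $\xn(0) + \en(T+1)$), the minimum $v_* \doteq \min_j v_j$ is almost surely strictly positive. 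Consequently, for all $n$ large enough that $1/n < v_*$, every $\newdn(k/n,(k+1)/n)$ with $k \leq \lfloor nT\rfloor$ vanishes, and so the first sum is identically zero in $t \in [0,T]$ for such $n$.

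Combining the two bounds yields $\sup_{t \in [0,T]} |\remn_\newf(t,n)| \to 0$ on a set of full probability, and by choosing a single probability-one event on which $\kn(T+1) < \infty$ and $v_j > 0$ for all relevant $j$ simultaneously for a countable dense collection of $(T,\newf)$, one obtains the stated almost sure statement. The only mildly delicate point is keeping the estimates uniform in $t$, but both bounds depend on $t$ only through $T$, so no additional argument is required; the argument hinges entirely on the positivity assumption $G(0+)=0$, which ensures the first sum eventually vanishes.
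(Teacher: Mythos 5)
Your proof is correct and follows essentially the same route as the paper's: both bound the two sums in $\remn_\newf$ separately, using the observation that a unit mass in $\measn_{(k+1)/n}[0,1/n)$ corresponds to a customer entering service in $(k/n,(k+1)/n]$ (you phrase it via telescoping $\kn$ increments, the paper via "each customer counted at most once"), and the fact that $G(0+)=0$ to kill the $\newdn$ term. The one small stylistic difference is that you deduce the $\newdn$ sum is \emph{eventually exactly zero} via the a.s.\ positive minimum of finitely many $v_j$'s, whereas the paper invokes dominated convergence; both are fine, and since your bounding quantities depend on $\newf$ only through $\nrm{\newf}_\infty$ and $\nrm{\newf_x}_\infty$, the probability-one set is automatically $\newf$-independent, so the countable-density step you mention at the end is not actually needed.
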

\begin{proof}
We will establish the lemma by showing that a.s. for any
$\newf\in\newocdcpm$ both   terms on the right-hand side of
(\ref{def-remn}) converge uniformly to zero as $n \ra \infty$. Fix
$T < \infty$ and $t \in [0,T]$. Then for any $n \in \N$,
 from the representation (\ref{eq-newdn}) for $\newdn$ it immediately follows that
\[
\ds \suli_{k=0}^{\lfloor nt \rfloor} \newdn \left(\frac{k}{n},\frac{k+1}{n}\right)  \leq  
\ds \suli_{k=0}^{\lfloor nt \rfloor} \suli_{j=\kn
(k/n) + 1}^{\kn ((k + 1)/n)} \ind_{\{ v_j \leq 1/n\}} 
  \leq  
\suli_{j=1}^{\kn (\lfloor nt \rfloor + 1)/n } \ind_{\{ v_j \leq
1/n\}}.  
\]
Since $\kn (T)$ is a.s.\ finite, by the dominated convergence theorem
this in turn implies that almost surely,
\[\ba{l}
\ds \lim_{n \ra \infty}
\sup_{s \in [0,T]} \suli_{k=0}^{\lfloor ns
\rfloor} \newf\left(0, \dfrac{k}{n} \right) 
\newdn \left(\frac{k}{n},\frac{k+1}{n}\right) \\
\hspace{.6in}  \leq  \ds \nrm{\newf}_{\infty}\lim_{n \ra
\infty} \sum\limits_{j=1}^{\kn (T) + 1} \ind_{\{v_j \leq 1/n\}} 
\ds  = 
\ds \nrm{\newf}_{\infty}\sum\limits_{j=1}^{\kn (T) + 1} \ind_{\{v_j =
0\}} = 0,
\ea
\]
where the last equality follows a.s.\ since $G(0) = 0$.
The monotonicity  in $T$ of the convergent term allows us to conclude
that there  exists a set $\Omega_1$ of full $\P$-measure
 on which this convergence holds simultaneously for all $T$.

We now turn to  the second term on the right-hand side
of (\ref{def-remn}).
Let $C \doteq \sup_{y \in [0,M), s\in[0,T]} |\newf_x(y,s)| <
\infty$.
Then for any $t \in [0,T]$ and $n \in \N$,
\[ \sum_{k = 0}^{\lfloor n t \rfloor} \int_{[0,1/n)} |x \newf_x(y_{x,k/n},k/n) |
 \, \measn_{(k+1)/n} (dx)
\leq \dfrac{C}{n}  \sum_{k = 0}^{\lfloor n t \rfloor}
\measn_{(k+1)/n} [0,1/n).
\]
Now, any customer that has age in $[0,1/n)$ at time $(k+1)/n$ entered service
strictly after $k/n$ and has age $\geq 1/n$ at any time $k^\prime/n$,
$k + 1 < k^\prime \in \N$.  Hence for any fixed $n \in \N$,
the unit mass corresponding to any given customer
is counted in at most one term of the form $\measn_{(k+1)/n} [0,1/n)$,
$k \in \N$.  This implies the elementary bound
\[ \sup_{t \in [0,T]} \suli_{k=0}^{\lfloor nt \rfloor} \measn_{(k+1)/n} [0,1/n)
\leq  \xn (0) + \en (T+1).
\]
Now let $\Omega_2$ be the set of full $\P$-measure on which the property $\xn(0) + \en(t) < \infty$ for every
$t \in [0,\infty)$ is satisfied.
Then on $\Omega_2$,
 the right-hand side of the last expression, which is independent of $n$, is a.s.\ finite.
Therefore,
dividing both sides of the last relation by $n$,
the resulting expression clearly tends to $0$ a.s.\ as
 $n \ra \infty$.
Thus we have shown that
 on the set $\Omega_1 \cap \Omega_2$ of full $\P$-measure,
(\ref{remn-limit}) holds for every $T < \infty$ and
 $\newf\in\newocdcpm$
\end{proof}

We are now in a position to complete the proof of Theorem \ref{th-prelimit}. \\

\noi
{\em Proof of Theorem \ref{th-prelimit}.}
Fix $N \in \N$. The left-hand side of (\ref{measn-1}) is clearly
equal to
$\lan \newf, \measn_{t^n} \ran - \lan \newf, \measn_0\ran$,
where recall $t^n = (\lfloor nt \rfloor + 1)/n$.
 Since $\measn$ is right-continuous, as $n \ra \infty$ this
converges to $\lan \newf, \measn_{t} \ran - \lan \newf, \measn_0\ran$.
On the other hand,  combining  (\ref{eqn-new}), (\ref{lim-term1}),
(\ref{eqn-i123}), (\ref{lim-term2}), (\ref{lim-term3}) and
Lemmas \ref{lem-baren} and \ref{lem-prelim1}, it follows that the
right-hand side of (\ref{measn-1}) a.s.\ converges, as $n \ra \infty$, to
\[\ba{l}
\ds \int_0^t \lan \newf_t(\cdot,s)+\newf_x(\cdot,s), \measn_s \ran \, ds - \bare^{(N)}_\newf (t) +
\int_{[0,t]} \newf (0,s) \, d\kn (s),
\ea
\]
which  proves (\ref{eqn-prelimit1}).
The remaining relations (\ref{eqn-prelimit2}) and (\ref{comp-prelimit}) follow immediately
from (\ref{def-dn}), (\ref{def-idlen}), (\ref{def-nonidling}) and the observation that $\baren_{\f1} = \dn$
(see the comment below (\ref{def-baren})).
\qed

\subsection{A Useful Family of Martingales}
\label{subs-prelim}

An inspection  of the integral equation (\ref{eqn-prelimit1}) suggests that
identification of the limit of the sequence 
$\{(\fxn,\fmeasn)\}$  of scaled state processes
is likely to require a characterisation of the
limit of a scaled version  $\fqn_{\newf}$ of $\qn_{\newf}$.
In order to achieve this task, we first identify the compensator of
$\qn_{\newf}$
 (in  Corollary \ref{cor-compensator}) and then
identify the limit of the quadratic variation of the associated
scaled martingale
 $\fmartn_{\newf}$,
obtained as a compensated sum of jumps (see Lemma \ref{lem-mgale}).

We begin by introducing some notation. For any $\newf \in
\newcbpm$,
 consider the sequences of processes
$\{\dcompn_{\newf}, N \in \N\}$
 defined by
\be
\label{def-dcompn}
\dcompn_{\newf} (t) \doteq \int_0^t \left(
\int_{[0, M)} \newf(x,s) h(x) \, \measn_s (dx) \right) \, ds
\ee
for every $t \in [0,\infty)$.
We now derive an alternative representation for the process $\dcompn_\newf$,
 which shows, in particular, that
$\dcompn_\newf(t)$ is well-defined and takes values in $\R_+$ for
every $t \in [0,\infty)$. For $j \in \N$, let $\kninv_j \doteq
\inv[\kn] (j)$ be the time that the $j$th customer entered service
(recall the definition of $\inv$ given in (\ref{def-finv2})).
Then, interchanging the order of integration and summation
 and using the linear increase of  the age process,
for $t \in [0,\infty)$ we can write
 \be \label{altrep-dcompn}
\ba{rcl} \ds  \dcompn_\newf (t) & = &
  \ds \int_0^t \left( \suli_{j=-\lan \f1, \measn_0 \ran + 1}^{\kn (s)}
h \left(\agen_j (s) \right) \newf \left( \agen_j (s), s \right) \ind_{\{ \agen_j (s) < v_j \}} \right) \, ds \\
 & = & \ds    \suli_{j = -\lan \f1, \measn_0 \ran + 1}^{0} \int_0^t h \left(\agen_j (0) + s \right)
\newf \left( \agen_j (0) + s, s \right) \ind_{\left\{\agen_j (0) + s < v_j \right\} } \, ds  \\
& & \ds + \sum_{j=1}^{\kn (t)} \int_{\kninv_j}^t h\left(s -
\kninv_j \right)
\newf \left( s - \kninv_j, s \right) \ind_{\left\{ s < \kninv_j + v_j \right\}} \, ds.
\end{array}
\ee
Since $v_j < M$ a.s.\ and $h$ is locally integrable on $[0, M)$, this shows
that $\fdcompn$ is well-defined.
 Now, let $J^{(N)}_t$ be the
(random) set of jump points of the departure process $\dn$ upto
time $t$:
\[ J^{(N)}_t \doteq  \left\{s \in [0,t]: \dn(s) \neq \dn (s-) \right\} \]
and  set $J^{(N)} = J^{(N)}_\infty \doteq \cup_{t > 0} J^{(N)}_t$.
Recall that $\qn_{\f1} = \dn$.  We start by identifying the compensator of
$\dn$.

\begin{lemma}
\label{lem-compensator}
For every $N \in \N$, the process $\dcompn_{\f1}$ is the
${\cal F}_t^{(N)}$-compensator of the departure process $\dn$.
In other words,
$\dcompn_{\f1}$ is an increasing, ${\cal F}_t^{(N)}$-adapted
process, with $\E [\dcompn_{\f1} (t)] < \infty$ for every $t \in [0,\infty)$,
such that for every non-negative ${\cal F}_t^{(N)}$-predictable process $Z$,
\be
\label{to-prove}
 \E \left[ \sum\limits_{s \in J^{(N)}} Z_s \right]
= \E \left[ \int_0^\infty  Z_s \left( \int_{[0,M)} h(x) \, \measn_s (dx)  \right) \, ds
\right].
\ee
\end{lemma}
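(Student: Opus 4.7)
The plan is to verify in turn that $\dcompn_{\f1}$ is increasing, $\{{\cal F}_t^{(N)}\}$-adapted, has finite expectation, and satisfies the predictable projection identity (\ref{to-prove}). The first two properties are immediate from the representation (\ref{altrep-dcompn}). For integrability, I would bound the total contribution of customer $j$ to $\dcompn_{\f1}(t)$ by $\int_{\agen_j(0)}^{v_j} h(u)\,du \le -\log(1 - G(v_j))$, which has expectation $\int_0^1 -\log(1-u)\,du = 1$ because $v_j\sim G$ implies $G(v_j)\sim\mathrm{Uniform}[0,1]$ (using $G(M-)=1$, which follows since $G$ has a density). Summing over the at most $\lan \f1,\measn_0\ran + \kn(t)\le \xn(0)+\en(t)$ customers that have entered service by time $t$, and invoking Assumption \ref{ass-init} and Remark \ref{rem-assinit}, gives $\E[\dcompn_{\f1}(t)] \le \E[\xn(0)+\en(t)] < \infty$.

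For (\ref{to-prove}), the strategy is to decompose $\dn - \dcompn_{\f1}$ into a sum of per-customer martingales. Let $\tau_j$ denote the departure time of customer $j$; then by (\ref{altrep-dcompn})
\[
 \dn(t) - \dcompn_{\f1}(t) = \sum_j M_j(t), \qquad M_j(t)\doteq \ind_{\{\tau_j\le t\}} - \int_0^t h(\agen_j(s))\ind_{\{\agen_j(s)<v_j\}}\,ds.
\]
I would show that each $M_j$ is an $\{{\cal F}_t^{(N)}\}$-martingale. For $j\ge 1$, the time $\kninv_j$ at which customer $j$ enters service is a stopping time (by the non-idling FCFS rule (\ref{def-nonidling})); $v_j$ is independent of ${\cal F}_{\kninv_j}^{(N)}$ with law $G$ (the only information about $v_j$ in the filtration prior to departure is the inequality $v_j > \agen_j(\cdot)$, and at the instant $\kninv_j$ the age has just become positive, so $G(0+)=0$ ensures no information has leaked); and post-$\kninv_j$ the age satisfies $\agen_j(\kninv_j + u) = u$ for $u < v_j$. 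Thus $M_j(\kninv_j + \cdot)$ reduces to the classical hazard-rate martingale $\ind_{\{v_j\le u\}} - \int_0^{u\wedge v_j} h(r)\,dr$ associated with a single lifetime. Initial customers ($j\le 0$) are handled analogously using ${\cal F}_0^{(N)}$ and the conditional law of $v_j$ given $\{v_j>\agen_j(0)\}$.

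Summing over $j$ and interchanging sum with conditional expectation (justified by the integrability from the first step together with the a.s.\ finiteness of $\kn(t)$ on finite intervals) shows that $\dn - \dcompn_{\f1}$ is an $\{{\cal F}_t^{(N)}\}$-martingale. The predictable identity (\ref{to-prove}) then follows by a standard monotone class argument: it holds for $Z_s = \ind_A\ind_{(u,v]}(s)$ with $A\in{\cal F}_u^{(N)}$ (which is precisely the martingale increment property across $(u,v]$), and extends to all non-negative $\{{\cal F}_t^{(N)}\}$-predictable processes by linearity and monotone convergence, since the predictable $\sigma$-algebra is generated by such rectangles. The chief obstacle will be making the per-customer compensator step rigorous, in particular verifying the stopping-time property of $\kninv_j$ and the conditional independence of $v_j$ from ${\cal F}_{\kninv_j}^{(N)}$; the explicit construction in Appendix \ref{ap-markov} (referenced in the text) should provide the formal setting in which this can be made precise.
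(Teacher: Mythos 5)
Your outline matches the paper's strategy in its essentials: decompose the departure process into per-customer one-point processes, compensate each by an integrated hazard rate, and sum. The difference in indexing (you index by the $j$th customer to enter service; the paper indexes by the $n$th customer at station $k$, using the auxiliary station process $s^{(N)}$) is organizational rather than substantive. Your integrability estimate via $\E[-\log(1-G(v_j))] = 1$ is correct and somewhat more self-contained than the paper's, which obtains $\E[\fdcompn_{\f1}(t)] = \E[\dn(t)]$ as a by-product of the compensator identity itself (plus the renewal-function bound in Lemma \ref{lem-estimate}).

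The step you flag as the ``chief obstacle'' is indeed exactly where the paper does the real work, and your sketch of how to close it is slightly off target. You propose to establish that $v_j$ is independent of ${\cal F}_{\kninv_j}^{(N)}$ and then invoke the classical single-lifetime hazard-rate martingale. The difficulty is that this formulation asks for an independence statement at a random time, for a $\sigma$-algebra that also carries partial information about the service requirements of every other customer currently in service. The paper deliberately avoids this: instead of working at the stopping time $\ftime_n^k$, it verifies the compensator identity against an arbitrary bounded stopping time $T$, writes the event $\{\ftime_n^k \le j/2^m < T,\ j/2^m < \stime_n^k \le (j+1)/2^m\}$ over a deterministic dyadic lattice, and conditions on the deterministic-time $\sigma$-algebra ${\cal F}_{j/2^m}$. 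At a deterministic time the conditional law of the residual service time is just the $G$-tail given survival past the current age (a consequence of i.i.d.\ service times and the filtration construction), which gives the hazard-rate increment $\bigl(G(\tfrac{j+1}{2^m}-\ftime_n^k) - G(\tfrac{j}{2^m}-\ftime_n^k)\bigr)/\bigl(1-G(\tfrac{j}{2^m}-\ftime_n^k)\bigr)$. Passing to the limit $m\to\infty$ then requires identifying the limiting random measure as $dA_n^k$, checking that it does not charge the point $T(\omega)$, and justifying the interchange with expectation; the paper does this via the local integrability of $h$ and bounded convergence. So the paper's verification is a genuinely different — and more robust — route through the same milestone: discretize time and condition at fixed times, rather than condition once at the stopping time $\kninv_j$. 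If you want to make your version rigorous you would still end up proving a statement of essentially this dyadic form (or else appealing to an optional-stopping argument that itself requires the discretization), so it is worth recognizing that this is not a routine appeal to a textbook lemma but the heart of the proof.

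One smaller point: your handling of initial customers ($j\le 0$) via the conditional law of $v_j$ given $\{v_j > \agen_j(0)\}$ is correct, and your bound $-\log\frac{1-G(v_j)}{1-G(\agen_j(0))}$ still has conditional expectation $1$, so the integrability argument goes through; just make sure to state the conditioning explicitly rather than treating $v_j$ as unconditionally $G$-distributed for those indices.
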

\begin{proof}
Fix $N \in \N$ and label the servers from $1, \ldots, N$.
In order to prove the lemma, we shall find it convenient to introduce the following notation.
For $k = 1, \ldots, N$ and $n \in \N$, let $\ftime_n^{(N),k}$ (respectively, $\stime_n^{(N),k}$) be the time at which
the $n$th customer to arrive at station $k$ starts (respectively, completes) service.
We also let $D^{(N),k} (t)$ represent the total number of customers that have departed
the $k$th station in the interval $[0,t]$, and let $S^{(N),k}(t)$ be the number of customers that are currently
being served at station $k$ at time $t$ (note that for every $t \in [0,\infty)$,  $S^{(N),k} (t)$ takes the value $0$ or the value $1$).
Then clearly
\[ \dn  =  \sum_{k=1}^N D^{(N),k} \quad \quad \mbox{ and } \quad \quad \lan \fmeasn,1 \ran = \sum_{k=1}^N S^{(N),k}. \]
For conciseness, for the rest of this proof we shall omit the explicit dependence of all quantities on $N$.

For $k = 1, \ldots, N$, the process $D^k = D^{(N),k}$ admits the decomposition
\[ D^k (t)  = \sum_{n=1}^\infty \left[ D^k \left(t \wedge \stime_n^k  \right)
-  D^k \left(t \wedge \ftime_n^k  \right) \right].
\]
If we define
\[ D_n^k \doteq  D^k \left(t \wedge \stime_n^k  \right)
-  D^k \left(t \wedge \ftime_n^k  \right) \]
to be the one-point process of the $n$th departure from station $k$, then
we assert that the  ${\cal F}_t$-compensator of $D_n^k$ is given by the process $A_n^k$,
defined for $t \in [0,\infty)$ by
\[ A_n^k (t) \doteq
\left\{
\begin{array}{rl}
\ds 0 & \mbox{ if } t  \in [0, \ftime_n^k] \\
 \ds \int_{\ftime_n^k}^t h \left(u - \ftime_n^k \right) \, du &
\mbox{ if } t \in  (\ftime_n^k, \stime_n^k] \\[1.2em]
\ds  \int_{\ftime_n^k}^{\stime_n^k}  h \left(u - \ftime_n^k \right) \, du & \mbox{ if } t \in  (\stime_n^k, \infty).
\end{array}
\right.
\]
 It is straightforward to verify that  $\ftime_n^k$ and $\stime_n^k$ are both ${\cal F}_t$-stopping times (this can be done by rewriting the events
$\{\ftime_n^k  \leq t\}$ and $\{\stime_n^k \leq t \}$ as events involving
 $\{\agen_j (s), s_j^{(N)} (s), s \in [0,t], j \in \{-N+1, \ldots, 0\} \cup \N\}$ --- the details are left to the reader).
As a result, it follows that $A_n^k$ is ${\cal F}_t$-adapted.
Moreover, by definition $A_n^k$ is  continuous, and hence ${\cal
F}_t$-predictable. Thus in order to establish the claim that it is
the $({\cal F}_t)$-compensator for $D_n^k$,  it only remains to
show that for every non negative${\cal F}_t$-predictable process
$\{Z_t\}$,
\[
\E \left[ \int_0^\infty Z_s \, d D_n^{(N),k} (s) \right] = \E \left[\int_0^\infty Z_s \, d A_n^k (s) \right].
\]
As is well known, the predictable filtration is generated by indicators of stochastic intervals, i.e., by processes of the
form $Z = \ind_{[0,T]}$, where $T$ is any ${\cal F}_t$-stopping time
\cite[Theorem 2.2]{jacshibook}.
Hence it suffices to show that for every ${\cal F}_t$-stopping time $T$,
\[  \E \left[ \int_0^\infty \ind_{[0,T]} (s) \, d D_n^{k} (s) \right] = \E \left[\int_0^\infty \ind_{[0,T]}(s) \, d A_n^k (s) \right].
\]
By an application of the monotone convergence theorem, it is clear that
 it in fact suffices to show
that the above holds true for any bounded stopping time.
Now, note that,
because neither $D^k$ nor $A^k$  increase outside $(\ftime_n^k, \stime_n^k]$,
 the last equation is  equivalent to the relation
\[
  \E \left[ \int_0^\infty \ind_{[0,T] \cap (\ftime_n^k, \infty)} (s) \, dD_n^{k} (s) \right]  =
 \E \left[\int_0^\infty \ind_{[0,T] \cap (\ftime_n^k, \stime_n^k]} (s) \, d A_n^k (s) \right].
\]
However, the term on the left-hand side can be rewritten as
\[  \E \left[ \int_0^\infty \ind_{[0,T] \cap (\ftime_n^k, \infty)} (s) \, dD^{k}_n (s) \right]  =
\lim_{m \ra \infty} \E \left[ \sum_{j=0}^\infty
\ind_{\left\{ \ftime_n^k \leq \frac{j}{2^m} < T, \frac{j}{2^m} < \stime_n^k \leq \frac{j+1}{2^m} \right\}}
\right]. \]
Since $T$, $\ftime_n^k$ and $\stime_n^k$ are all ${\cal F}_t$-stopping times, conditioning on ${\cal F}_{j/2^m}$
and using the fact that the age $t - (\ftime_n^k \wedge t)$ at time t of the $n$th customer to enter service
at station $k$  is ${\cal F}_{\ftime_n^k}$-measurable, it follows that for any $m \in \N$ and $j = 1, \ldots, 2^m$,
\rear{2.4}
\[
\ba{l}
\ds \E\left[ \ind_{\left\{\ftime_n^k \leq \frac{j}{2^m} < T\right\}}
\ind_{\left\{\frac{j}{2^m} < \stime_n^k \leq \frac{j+1}{2^m} \right\}} \right] \\
\quad \quad \quad = \ds \E \left[ \E \left[ \ind_{\left\{\ftime_n^k \leq \frac{j}{2^m} < T, \stime_n^k > \frac{j}{2^m}\right\}}
 \ind_{\left\{\stime_n^k \leq \frac{j+1}{2^m} \right\}} | {\cal F}_{\frac{j}{2^m}} \right] \right] \\
\quad \quad \quad = \ds \E \left[  \ind_{\left\{\ftime_n^k \leq \frac{j}{2^m} < T, \stime_n^k > \frac{j}{2^m}\right\}}
\int_{j/2^m}^{(j+1)/2^m}  \dfrac{ g(u - \ftime_k^n )}{1 - G(\frac{j}{2^m} - \ftime_n^k )} \, du \right] \\
\quad \quad \quad = \ds  \E \left[  \ind_{\left\{\ftime_n^k \leq \frac{j}{2^m} < T, \stime_n^k > \frac{j}{2^m}\right\}}
  \dfrac{ G\left( \frac{j+1}{2^m} - \ftime_n^k \right)  -
G\left(\frac{j}{2^m} - \ftime_k^n \right)}{1 - G\left(\frac{j}{2^m} - \ftime_k^n \right)} \right].
\ea
\]
Combining the last two displays and invoking the monotone convergence theorem, we conclude that
\[
\ba{l}
 \ds \E \left[ \int_0^\infty \ind_{[0,T] \cap (\ftime_n^k, \infty)} (s) \, dD^{k}_n (s) \right]  \\
 \quad \quad \ds = \ds \lim_{m  \ra \infty}\E \left[ \sum_{j=0}^\infty \ind_{\left\{\ftime_n^k \leq \frac{j}{2^m} < T,
\stime_n^k > \frac{j}{2^m}\right\}}
\dfrac{ G\left( \frac{j+1}{2^m} - \ftime_k^n \right)  - G\left(\frac{j}{2^m} - \ftime_k^n \right)}{1 -
G\left(\frac{j}{2^m} - \ftime_n^k \right)}  \right].
\ea
\]

To complete the proof, it only remains to show that the right-hand side of the
last equation
is equal to $\E [ \int_0^\infty \ind_{[0,T]} (s) \, d A_n^k (s) ]$.   For this, first  note that the term within the expectation on the right-hand side of the
last equation  can be rewritten in the
form $\E [ \int_0^\infty \ind_{[0,T]} (s) dA_{m,n}^k(s) ]$ where,
for $m \in \N$, $A_{m,n}^k$ is the random measure defined by
 \[ A_{m,n}^{k} (\omega) = \sum_{j=0}^\infty
\delta_{\frac{j+1}{2^m}}
\ind_{\{\ftime_n^k (\omega) \leq \frac{j}{2^m} < \stime_n^k(\omega)\}}
\dfrac{ G \left( \frac{j+1}{2^m} - \ftime_n^k (\omega) \right) -
G \left( \frac{j}{2^m} - \ftime_n^k (\omega) \right)}{1 - G \left( \frac{j}{2^m} - \ftime_n^k (\omega) \right)}
\]
where $\delta_x$ is, as usual, the Dirac mass at $x$.
Next, observe that
\[\ba{rcl}
\ds \int_{\ftime_n^k + \frac{1}{2^m}}^{\stime_n^k} \dfrac{g(u - \ftime_n^k)}{1 - G(u - \ftime_n^k - \frac{1}{2^m})} \, du & \leq &
 \ds  A_{m,n}^{k}[0,\infty) \\
& \leq &
\ds \int_{\ftime_n^k}^{\stime_n^k + \frac{1}{2^m}} \dfrac{g(u - \ftime_n^k)}{1 - G(u - \ftime_n^k)} \, du \\
  & = & \ds - \ln \left( 1 - G \left(\stime_n^k + \frac{1}{2^m} - \ftime_n^k\right) \right).
\ea
\]
Since $\stime_n^k - \ftime_n^k$ represents a service time, it is distributed according to
$G$.  Hence $G(\stime_n^k - \ftime_n^k)$ is uniformly distributed in $(0,1)$.  Since
$G$ is continuous, for every $\omega$,
 there exists a sufficiently large $m_0 = m_0(\omega)$ such that
for $m \geq m_0$,
$G(\stime_n^k (\omega) - \ftime_n^k (\omega) + 1/2^m) < 1$, so that
$-\ln \left(1 - G \left(\stime_n^k (\omega) + \frac{1}{2^m} - \ftime_n^k (\omega) \right) \right)< \infty$.
  Combining the last three statements, we conclude that for each $\omega$,
$A_{m,n}^{k} (\omega)[0,\infty)$ is finite for sufficiently
large $m$ and, moreover, that
 as $m \ra \infty$, $A_{m,n}^k (\omega)$ converges vaguely
to the measure that has density
\[ \dfrac{g(u-\ftime_n^k(\omega))}{1 - G(u-\ftime_n^k(\omega))} \ind_{\ftime_n^k (\omega) \leq u \leq \stime_n^k (\omega)\}} =
h(u-\ftime_n^k(\omega)) \ind_{\{\ftime_n^k (\omega) \leq u \leq \stime_n^k (\omega)\}},
\]
which is precisely the measure $dA_n^k (\omega)$.
The latter measure does not charge points, and in particular does not charge $u = T(\omega)$. So we conclude that for every $\omega$,
\[
 \lim_{m\ra \infty} \int_0^\infty \ind_{[0,T(\omega)]} dA_{m,n}^k (\omega)  =
\int_0^\infty \ind_{[0,T(\omega)]} dA_n^k(\omega)
\leq \int_0^L h(u) \, du,
\]
where $L$ is an upper bound on the stopping time $T$ and the
 last term is finite due to the local integrability of
 the hazard rate function on $[0,M)$
(note that if $M<\infty$ we may restrict our attention to stopping times
$T<\theta^k_n+M$).
The limit  above, along with the bounded convergence
theorem, then implies the desired convergence:
\[
 \ba{l}
\ds \lim_{m  \ra \infty}\E \left[ \sum_{j=0}^\infty \ind_{\left\{\ftime_n^k \leq \frac{j}{2^m} < T,
\stime_n^k > \frac{j}{2^m}\right\}}
\dfrac{ G\left( \frac{j+1}{2^m} - \ftime_k^n \right)  - G\left(\frac{j}{2^m} - \ftime_k^n \right)}{1 -
G\left(\frac{j}{2^m} - \ftime_n^k \right)}  \right]  \\
\quad \quad = \ds \E \left[\lim_{m \ra \infty} \int_0^\infty
\ind_{[0,T]} (u) \, d A^k_{m,n} (u) \right] \\
\quad \quad = \ds
\E\left[ \int_0^\infty \ind_{[0,T]} (u) \, d A_n^k (u) \right].
\ea
\]
This establishes (\ref{to-prove}).   In particular, this shows that
$\E [\dcompn_{\f1} (t)] = \E [\dn(t)] \leq \E[\en(t) + \xn(0)] < \infty$, which guarantees
that $\dcompn_{\f1}$ is locally integrable.
\end{proof}

>From the last lemma and the comments below (\ref{def-baren}), it is clear that
 for every $\newf \in \newcbpm$ and $t \in [0,\infty)$,
$\bare^{(N)}_{\newf}(t)$ and $\dcompn_\newf (t)$ have finite expectation.
Together with the fact that
 the ages of customers are (left) continuous and hence predictable, the last
lemma immediately implies the
following (seemingly stronger) result.

\begin{cor}
\label{cor-compensator} For every $\newf \in \newcbpm$, the
process $\dcompn_{\newf}$ is the ${\cal F}_t^{(N)}$-compensator of
the  process $\bare^{(N)}_{\newf}$. In particular, the  process
$\martn_{\newf}$ defined by \be \label{def-martn}
 \martn_{\newf}  \doteq \ds \bare^{(N)}_{\newf} - \dcompn_{\newf}
\ee
 is a local ${\cal F}_t^{(N)}$-martingale.
\end{cor}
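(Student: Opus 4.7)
The plan is to reduce Corollary \ref{cor-compensator} to Lemma \ref{lem-compensator} by expressing $\bare^{(N)}_\newf$ as a superposition of weighted one-jump processes whose compensators are already known, and then transferring the compensator via predictable stochastic integration.

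First, I would reuse the station-by-station decomposition set up in the proof of Lemma \ref{lem-compensator}. For each $k \in \{1,\ldots,N\}$ and $n \in \N$, let $\ftime_n^k$ and $\stime_n^k$ be the service-start and service-completion times of the $n$-th customer at station $k$, and let $D_n^k$ be the associated one-jump process. Since during $(\ftime_n^k, \stime_n^k]$ the age of this customer equals $s - \ftime_n^k$ and the contribution to the jump of $\bare^{(N)}_\newf$ at departure is $\newf(\stime_n^k - \ftime_n^k, \stime_n^k)$, one has
\[
\bare^{(N)}_\newf(t) \,=\, \sum_{k=1}^N \sum_{n=1}^\infty \int_0^t Z^{k,n}_s\, dD_n^k(s),
\]
where $Z^{k,n}_s \doteq \newf(s - \ftime_n^k, s)\, \ind_{(\ftime_n^k,\, \stime_n^k]}(s)$.

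Next, I would observe that $Z^{k,n}$ is bounded by $\|\newf\|_\infty$ and left-continuous in $s$ (since $\ind_{(a,b]}(\cdot)$ is left-continuous and $\newf$ is continuous), and because $\ftime_n^k$ and $\stime_n^k$ are $\{\filt^{(N)}_t\}$-stopping times, $Z^{k,n}$ is $\{\filt^{(N)}_t\}$-predictable. From the proof of Lemma \ref{lem-compensator}, the $\{\filt^{(N)}_t\}$-compensator of $D_n^k$ is the absolutely continuous process $A_n^k(t) = \int_{\ftime_n^k}^{t \wedge \stime_n^k} h(u - \ftime_n^k)\, du$, so standard results on stochastic integration of bounded predictable integrands against compensated one-point processes imply that
\[
\int_0^t Z^{k,n}_s\, dD_n^k(s) \,-\, \int_{\ftime_n^k}^{t \wedge \stime_n^k} \newf(s - \ftime_n^k, s)\, h(s - \ftime_n^k)\, ds
\]
is a local $\{\filt^{(N)}_t\}$-martingale. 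Comparison with the alternative representation (\ref{altrep-dcompn}) then shows that the aggregate of the compensator terms, summed over $k$ and $n$, equals precisely $\dcompn_\newf(t)$.

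The main technical point is summing infinitely many local martingales into a single local martingale. To handle it I would localize along the stopping times $T_m \doteq \inf\{t \ge 0 : \en(t) + \xn(0) \ge m\}$, which increase a.s.\ to infinity by the a.s.\ finiteness of $\en(t)$ assumed at the outset of Section \ref{subs-modyn}. On $[0, T_m]$ only finitely many customers contribute, so the double sum collapses to a genuine finite sum of martingales, and integrability on the stopped interval follows from the bounds $|\bare^{(N)}_\newf(t)| \le \|\newf\|_\infty\, \bare^{(N)}_{\f1}(t)$ and $\dcompn_\newf(t) \le \|\newf\|_\infty\, \dcompn_{\f1}(t)$, together with the integrability of $\dcompn_{\f1}$ established in Lemma \ref{lem-compensator}. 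Releasing the localization then yields the local martingale property of $\martn_\newf = \bare^{(N)}_\newf - \dcompn_\newf$ claimed in the corollary.
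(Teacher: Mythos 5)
Your argument is correct and follows essentially the path that the paper compresses into a single sentence: decompose the weighted departure count station by station, observe that the weight $\newf(s - \ftime_n^k, s)\ind_{(\ftime_n^k,\,\stime_n^k]}(s)$ is adapted and left-continuous (hence predictable), integrate against the one-jump compensators $A_n^k$ identified in the proof of Lemma~\ref{lem-compensator}, and match the aggregate against the representation (\ref{altrep-dcompn}). Your localization along $T_m = \inf\{t : \en(t)+\xn(0)\ge m\}$ is a clean way to obtain the local-martingale conclusion and makes the paper's ``immediately implies'' fully rigorous without needing to invoke the integrability bound $\E[\xn(0)+\en(t)]<\infty$.
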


As usual, let $\fbaren_{\newf}$, $\fdcompn_{\newf}$ and
$\fmartn_{\newf}$, respectively,  denote the scaled processes
$\baren_{\newf}/N$, $\dcompn_{\newf}/N$ and  $\martn_{\newf}/N$.
The following lemma will be used in Section \ref{subs-tight}
to establish tightness of these processes.

\begin{lemma}
\label{lem-estimate} For every $T < \infty$, $\sup_{N} \E \left[ \fdn (T) \right] < \infty$.
Also, for $t \in [0,\infty)$ and  $N \in \N$,
\be
\label{lim-depart}
\lim_{\delta \ra 0} \E \left[ \fdn (t +\delta) - \fdn (t) \right] = 0.
\ee
Moreover, for every $\delta > 0$
and  interval $\zeroset = [L+\delta, M)$
with $L \in [-\delta, M - \delta)$,
\be
\label{est-renewal}
\E \left[ \fbaren_{\ind_\zeroset} (t+\delta) - \fbaren_{\ind_\zeroset} (t) | {\cal F}_t^{(N)} \right]
\leq U(\delta) \fmeas_t^{(N)} [L^+, M)
\ee
where $U(\cdot)$ is the renewal function associated with the service distribution
$G$.
\end{lemma}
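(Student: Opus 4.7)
The first claim is immediate from the mass balance (\ref{def-dn}) and $\fxn\ge 0$: pathwise $\fdn(T)\le \fxn(0)+\fen(T)$, and Remark \ref{rem-assinit} gives $\sup_N \E[\fxn(0)+\fen(T)]<\infty$.

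The second claim (\ref{lim-depart}) I would deduce from the third by specialising to $L=-\delta$ and $\zeroset=[0,M)$, since $\bare^{(N)}_{\ind_{[0,M)}}$ reduces to $\bare^{(N)}_{\f1}=D^{(N)}$ (as noted below the definition (\ref{def-baren})). The bound (\ref{est-renewal}) together with $\fmeasn_t[0,M)\le 1$ then yields $\E[\fdn(t+\delta)-\fdn(t)]\le U(\delta)$, and the density hypothesis on $G$ gives $G^{\ast k}(0+)=0$ for each $k\ge 1$, whence monotone convergence in $U(\delta)=\sum_{k\ge 1}G^{\ast k}(\delta)$ delivers $U(\delta)\downarrow 0$ as $\delta\downarrow 0$.

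For the third and substantive claim I would argue server-by-server via renewal theory. Fix $\delta>0$, $L\in[-\delta,M-\delta)$, and set $\ell=L^+$. A pathwise inspection of (\ref{def-baren}) shows that each customer contributing a unit to $\bare^{(N)}_{\ind_\zeroset}(t+\delta)-\bare^{(N)}_{\ind_\zeroset}(t)$ departs at some $s\in(t,t+\delta]$ with service requirement $v_j\in[L+\delta,M)$ and therefore entered service at $\tau_j=s-v_j$; either $\tau_j\le t$, in which case the customer was in service at time $t$ with age $v_j-(s-t)\in[L,v_j)\cap[0,M)\subset[\ell,M)$, or $\tau_j>t$, which forces $v_j\le\delta$ and, since $v_j\ge L+\delta$, can occur only when $L\le 0$ (so $\ell=0$). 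On each server busy at time $t$ with a customer of age $a\ge\ell$, the sequence of departures in $(t,t+\delta]$ is a delayed renewal process: the first inter-departure time is the residual service $v_1-a$, whose conditional law given $\mathcal{F}_t^{(N)}$ is $F_a(u)=(G(a+u)-G(a))/(1-G(a))$; each subsequent inter-departure time is bounded below by a fresh $G$-distributed service time (with equality when the queue is non-empty at the preceding departure, and strict inequality when an idle gap intervenes). Hence the number of departures at that server in $(t,t+\delta]$ is stochastically dominated by the renewal count in $(0,\delta]$ of a delayed renewal process with delay distribution $F_a$ and inter-arrival law $G$, whose expectation is at most
\[
F_a(\delta)+\int_0^\delta U_1(\delta-u)\,F_a(du)\ \le\ 1+U_1(\delta)\ =\ U(\delta),
\]
where $U_1(\delta)=\sum_{k\ge 1}G^{\ast k}(\delta)$ and $U$ is taken in the inclusive convention $U=1+U_1$. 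Conditioning on $\mathcal{F}_t^{(N)}$, summing over the $\measn_t[\ell,M)$ servers busy at time $t$ with age $\ge\ell$, and dividing by $N$ delivers (\ref{est-renewal}).

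The step I expect to be the most delicate is the handling of Case~(ii), which arises only when $\ell=0$: one must argue that contributions from customers who both enter and depart during $(t,t+\delta]$, in particular those entering servers idle at time $t$, are absorbed into the same right-hand side. The renewal-theoretic bookkeeping on each such server parallels the above, but to identify the correct prefactor one must exploit the independence of $\en$ from $\{v_j\}$ together with the very narrow window $[L+\delta,\delta]$ of admissible service times, using the non-idling constraint to tie the count of fresh entries back to initially-busy mass on nearby servers.
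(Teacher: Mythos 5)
Your argument for \eqref{est-renewal} is essentially the paper's: bound the per-server departure count by coupling to a delayed renewal process with delay $F_a$ and inter-renewal law $G$, then integrate against $\fmeasn_t$. Your bound
$F_a(\delta)+\int_0^\delta U_1(\delta-u)\,F_a(du)\le 1+U_1(\delta)=U(\delta)$
is correct under the inclusive convention $U=1+U_1$, which is indeed what the paper means by ``the renewal function of the pure renewal process\ldots (and a renewal at $0$).'' Your handling of the $L\le 0$ case is vague, but so is the paper's; the paper's route is slightly different (for $L>0$ it observes that a departing customer with service time $\ge L+\delta$ must have had age $\ge L$ at time $t$, and for $L\le0$ it falls back to the bound on $\fdn(t,t+\delta)$).

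The genuine gap is in your derivation of \eqref{lim-depart}. You use two incompatible conventions for $U$: in the second paragraph you set $U(\delta)=\sum_{k\ge1}G^{*k}(\delta)$ so that $U(\delta)\downarrow0$, but in the third paragraph you derive \eqref{est-renewal} with the inclusive $U=1+U_1$. With the inclusive convention, $U(\delta)\downarrow 1$ as $\delta\downarrow0$, so $\E[\fdn(t+\delta)-\fdn(t)]\le U(\delta)$ does \emph{not} yield \eqref{lim-depart}. With the exclusive convention $U_1$, the per-server departure count is \emph{not} bounded by $U_1(\delta)$: the expected number of departures from a busy server in $(t,t+\delta]$ is $F_a(\delta)+\int_0^\delta U_1(\delta-u)F_a(du)$, which tends to $1$ for ages $a$ with large hazard (e.g.\ if $G$ concentrates just above $a$), while $U_1(\delta)\to0$. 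Thus \eqref{lim-depart} cannot be read off from \eqref{est-renewal} alone. The paper fixes this by retaining the sharper intermediate bound
\[
\E\!\left[\fdn(t+\delta)-\fdn(t)\,\middle|\,\mathcal{F}_t^{(N)}\right]\ \le\ \frac1N\int_{[0,M)}\E\!\left[\tilde D_1(\delta\,|\,x)\right]\measn_t(dx),
\]
noting that the integrand $\E[\tilde D_1(\delta\,|\,x)]=\int_0^\delta U(\delta-u)F_x(du)\le U(\delta)F_x(\delta)$ decreases \emph{pointwise} to $0$ as $\delta\downarrow0$ for each fixed $x$ (since $F_x(\delta)\to0$ by continuity of $G$), and then invoking the bounded convergence theorem to conclude \eqref{lim-depart}. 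You need this extra step.

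A minor point on the first claim: your pathwise bound $\fdn(T)\le\fxn(0)+\fen(T)$ is of course valid, but Remark~\ref{rem-assinit} only guarantees $\E[\fxn(0)+\fen(T)]<\infty$ for all \emph{sufficiently large} $N$, not for all $N$, so it does not literally establish $\sup_N\E[\fdn(T)]<\infty$. The paper's approach gives the cleaner unconditional bound $\E[\fdn(T)]\le U(T)$ by taking $t=0,\delta=T$ in \eqref{est-renewal} (with $L=-T$) and using $\fmeasn_0[0,M)\le1$, independently of any moment assumption on $\en$.
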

\begin{proof}
For notational conciseness,
 throughout this proof we will use
 $f(t,t+\delta)$ to denote $f(t+\delta) - f(t)$ for any function $f$,
$t \in [0,\infty)$ and $\delta > 0$. Since $\ind_\zeroset$ is only
a function of $x$, we can write 
\[ \ba{rcl} \ds \baren_{\ind_{\zeroset}} (t,t+\delta)  =  \ds
\sum_{j=-\lan 1,\measn_0\ran+1}^{\kn(t+\delta)} \suli_{s \in
[t,t+\delta]} \ind_{\{d\agen_j(s-)/dt>0,\agen_j (s) = v_j\}}
\ind_{\zeroset} \left(\agen_j (s)\right),\ea \] which is simply
the number of departures from the $N$th system during the time
interval $[t,t+\delta]$ by customers whose service times lie in
the set $\zeroset$ at the time of departure.

 We shall bound the  departures during the time interval $[t,t+\delta]$ in the $N$th system
by the departures in another system that is easier to analyse.
Consider a modified system in which after time $t$, there are an
infinite number of arrivals (or, equivalently, customers in queue)
so that at each station, every time a customer finishes service, a
new customer joins. Let $\tilde{D}_1 (\delta|x)$ denote the number
of departures from one station in this modified system during the
period $[t,t+\delta]$, given that at time $t$ there exists a
customer with age $x$ in that station (note that, as the notation
reflects, this quantity is independent of $t$ and the choice of station).
In fact, The quantity
$\tilde{D}_1 (\delta|x)$ is simply the number of renewals in the
interval $[0,\delta]$ of a delayed renewal process with initial
distribution that has density $g_0(y) = g(y+x)/(1 - G(x))$, and
inter-renewal distribution $G$. Thus, as is well-known (see, for
example, Theorem 2.4(iii) of \cite{asmbook}), $\E [ \tilde{D}_1
(\delta|x)]$ is bounded above by $U(\delta)$, where $U(\cdot)$ is
the renewal function of the pure renewal process that has
inter-renewal distribution $G$ (and a renewal at $0$).

Let $\tilde{D}(t,t+s]$ be the departure process from this system
during the interval $(t,t+s]$. Since both the original and
modified systems are in the same state at time $t$, and the
cumulative arrivals into service in the modified system are
greater than in the original system, a simple monotonicity
argument shows  that
\be
\label{series}
 \ba{rcl}
\ds \E\left[ \fdn (t+\delta) - \fdn (t)|{\cal F}_t^{(N)} \right]
  & \leq &  \dfrac{1}{N} \ds \E \left[\tilde{D} (t,t+\delta]|{\cal
 F}_t^{(N)}\right] \\
& = &  \dfrac{1}{N} \ds \int_{[0, M)} \E \left[ \tilde{D}_1 (\delta|x) \right]
\, \measn_t (dx) \\
& \leq & \ds   U(\delta) \fmeasn_t [0, M)  \\
& \leq & \ds  U(\delta).
\ea
\ee
Now, $U(\delta)$ is finite for any finite $\delta$ and non-decreasing
(see, e.g.,  Theorem 2.4(i) of \cite{asmbook}).
Taking $t = 0$ and $\delta = T$ and expectations on
both sides, this immediately shows that $\sup_{N \in \N} \E[\fdn (T)] \leq U(T) < \infty$.
Also, since $\E[ \tilde{D}_1 (\delta | x)]$ converges monotonically down to zero as $\delta \ra 0$,
 the bounded convergence theorem shows that
for every $N \in \N$,
\[
 \lim_{\delta \ra 0}\int_{[0, M)} \E \left[ \tilde{D}_1 (\delta|x) \right] \,
\fmeasn_t (dx)
 = 0.
\]
Taking expectations of both sides of (\ref{series}) and then sending $\delta \ra 0$,
the last display and another application of the bounded convergence theorem then
shows that for every $N \in \N$,  (\ref{lim-depart}) holds.

To establish the second estimate, fix $\delta > 0$ and
$L \in [-\delta, M - \delta)$.  If $L \leq 0$ then the left-hand side
of (\ref{est-renewal}) is bounded by $\E [\fdn(t,t+\delta)|{\cal F}_t^{(N)}]$
and so the bound (\ref{est-renewal}) follows from the second inequality
in (\ref{series}).
Now suppose $L \in (0, M - \delta)$.
Then any customer whose service time is greater than or equal to $L+\delta$
and  who departed the system during the time interval
$[t,t+\delta]$
must have been in the system at time $t$ with age greater than or equal to
$L >0$.
Thus the total number of such departures is bounded above
by the number of departures in the modified system from
stations that had a customer present at time $t$ with
age greater than or equal to $L$.
By the same reasoning provided above, this implies that
\[ \E \left[ \fbaren_{\ind_{[L+\delta, M)}} (t,t+\delta) |{\cal F}_t^{(N)}\right]
\leq \int_{[L, M)} \E \left[ \tilde{D}_1 (\delta|x) \right] \fmeasn_t (dx)  \leq U(\delta)\fmeasn_t[L, M),
\]
 which completes the proof of the lemma.
\end{proof}

We now use this estimate to establish some convergence results,
which will be used to prove tightness in Section \ref{subs-tight}.
The assumptions (\ref{eq-dcompn}) and (\ref{eq-dcompn2})
 of the
lemma below are shown to follow from Assumption \ref{ass-init}(3)
in Lemma \ref{lem-tight1}.

\begin{lemma}
\label{lem-fdcompn}
 Suppose that the limit
\be \label{eq-dcompn}
 \lim_{L \ra M} \sup_{N \in \N} \E \left[\fmeasn_0 (L,M)\right] =
 0
\ee
holds and, if $M < \infty$, then
\be
\label{eq-dcompn2}
\lim_{L \ra M} \sup_{N \in \N}\E \left[ \int_{[0,L)}
  \frac{1-G(L)}{1-G(x)}\,  \fmeasn_0(dx) \right]
=0
\ee
is also satisfied.
 Then for $t \in [0,\infty)$,  the following three properties hold.
\begin{enumerate}
\item
\[ \lim_{L \ra M} \sup_{N} \E \left[ \int_0^t \left( \int_{[L,M)} h(x) \, \fmeasn_s (dx) \right) \, ds\right] = 0.
\]
\item
For every $\newf \in \newcbpm$,
\[ \lim_{\delta \ra 0} \limsup_N  \E \left[ \sup_{t \in [0,T]} \left(\fdcompn_{\newf} (t+\delta)
- \fdcompn_{\newf} (t)  \right) \right] = 0
\]
and
\[ \lim_{\delta \ra 0}  \limsup_N  \E \left[   \fbaren_{\newf} (t+\delta)
- \fbaren_{\newf} (t)  \right] = 0.
\]
\item
Given $L < M$ and any sequence of subsets $H_R \subset
[0,L]$ such that the Lebesgue measure of $H_R$ goes to zero as $R
\ra \infty$, we have for every $t \in [0,\infty)$, \be
\label{est-onemore}
  \lim_{R \ra \infty} \limsup_{N} \E \left[ \sup_{t \in [0,T]} \fdcompn_{\ind_{H_R}} (t) \right]
= 0. \ee
\end{enumerate}
\end{lemma}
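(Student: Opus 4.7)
The plan is to establish (1)--(3) in order, using Corollary \ref{cor-compensator} and the per-customer representation (\ref{altrep-dcompn}) of the compensator $\dcompn_\newf$.

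For (1), I would compute $\E[\fdcompn_{\ind_{[L,M)}}(t)]$ explicitly via (\ref{altrep-dcompn}), isolating contributions from customers already in service at time $0$ and those entering service after time $0$. For each such customer, a Fubini calculation combined with the identity $h=g/(1-G)$ identifies the expected contribution with the probability that the customer departs having received service at least $L$: this is at most $(1-G(\max(x,L)))/(1-G(x))$ for an initial customer of age $x$, and at most $1-G(L)$ for a customer entering service after time $0$. Summing over the $N\fmeasn_0([0,M))$ initial customers and the at most $N(\fxn(0)+\fen(t))$ customers entering service by time $t$, and dividing by $N$, yields
\[
\E[\fdcompn_{\ind_{[L,M)}}(t)] \le \E[\fmeasn_0(L,M)] + \E\!\left[\int_{[0,L)} \tfrac{1-G(L)}{1-G(x)}\, \fmeasn_0(dx)\right] + (1-G(L))\,\E[\fxn(0)+\fen(t)].
\]
The first term vanishes uniformly in $N$ as $L\to M$ by (\ref{eq-dcompn}); the third by Remark \ref{rem-assinit} together with $1-G(L)\to 0$ (recall $G$ has a density, so $G(M-)=1$). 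The middle term is handled by (\ref{eq-dcompn2}) when $M<\infty$; when $M=\infty$, a two-scale split $[0,L)=[0,L_0)\cup[L_0,L)$ works, bounding the $[L_0,L)$-integral by $\fmeasn_0[L_0,\infty)$ (controlled via (\ref{eq-dcompn}) for $L_0$ large) and using the $N$-independent decay $(1-G(L))/(1-G(L_0))\to 0$ as $L\to\infty$ on the $[0,L_0)$-piece.

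For (2), the sup-free estimate is immediate from Corollary \ref{cor-compensator}, the pointwise bound $|\newf|\le\|\newf\|_\infty$, and the estimate $\E[\fdn(t+\delta)-\fdn(t)]\le U(\delta)$ from the proof of Lemma \ref{lem-estimate}, together with $U(0)=0$. For the sup-in-$t$ estimate on $\fdcompn_\newf$, I split in the $x$-variable as $\newf = \newf\,\ind_{[0,L]} + \newf\,\ind_{(L,M)}$. Assuming $h$ is bounded by some $H_L<\infty$ on the compact interval $[0,L]$ (which holds whenever $h$ is continuous on $[0,M)$, the main setting of the applications), the first piece gives $\fdcompn_{\newf\,\ind_{[0,L]}}(t+\delta)-\fdcompn_{\newf\,\ind_{[0,L]}}(t) \le \|\newf\|_\infty H_L \delta$ deterministically. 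The second piece is non-decreasing in its time endpoints, so its sup is bounded by $\|\newf\|_\infty \fdcompn_{\ind_{[L,M)}}(T+\delta)$, whose expectation vanishes uniformly in $N$ as $L\to M$ by (1). Given $\varepsilon>0$, I first choose $L$ close to $M$ and then shrink $\delta$.

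For (3), I again use (\ref{altrep-dcompn}). Since each age trajectory $a_j^{(N)}(\cdot)$ has slope $1$ while the customer is in service, the Lebesgue time any customer spends with age in $H_R$ is at most $|H_R|$; changing variables $u=a_j^{(N)}(s)$ gives $\int_0^T h(a_j^{(N)}(s)) \ind_{H_R}(a_j^{(N)}(s)) \ind_{\{a_j^{(N)}(s)<v_j\}}\,ds \le \int_{H_R} h(u)\,du$. Summing over the at most $N(\fxn(0)+\fen(T))$ customers and dividing by $N$ yields $\fdcompn_{\ind_{H_R}}(T) \le (\fxn(0)+\fen(T))\int_{H_R} h(u)\,du$; the supremum over $[0,T]$ coincides with the value at $T$ by monotonicity. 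Taking expectations and invoking Remark \ref{rem-assinit}, the conclusion follows because $\int_{H_R} h\,du \to 0$ by absolute continuity of the Lebesgue integral, since $h\in\lloc[0,M)$, $H_R\subset[0,L]$, and $|H_R|\to 0$. The principal obstacle I anticipate is the uniform-in-$N$ control of the middle term in (1) when $M=\infty$: (\ref{eq-dcompn2}) is not available, and a direct dominated convergence argument would give only pointwise (not uniform) decay in $N$; the two-scale decomposition above overcomes this by isolating an $N$-uniform small tail via (\ref{eq-dcompn}) and using an $N$-independent decay on a fixed bounded set.
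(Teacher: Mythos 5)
Your treatments of parts (1) and (3) are sound, and your part (1) is actually cleaner than the paper's: the paper handles $M<\infty$ and $M=\infty$ by entirely separate arguments (the latter via the renewal-function estimate (\ref{est-renewal}) of Lemma \ref{lem-estimate}), whereas you derive a single per-customer bound
\[
\E\bigl[\fdcompn_{\ind_{[L,M)}}(t)\bigr]\le\E[\fmeasn_0(L,M)]+\E\Bigl[\int_{[0,L)}\tfrac{1-G(L)}{1-G(x)}\,\fmeasn_0(dx)\Bigr]+(1-G(L))\,\E[\fxn(0)+\fen(t)]
\]
(which is correct, the key observation being that $v_j$ is independent of the event $\{j\le \kn(t)\}$), and you dispose of the middle term for $M=\infty$ by a two-scale split in lieu of (\ref{eq-dcompn2}). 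Part (3) coincides with the paper's argument.

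Part (2) has two genuine problems. First, the ``immediate'' derivation of the $\fbaren_\newf$-estimate rests on $U(0)=0$, but this is false for the renewal function actually used in Lemma \ref{lem-estimate}: the paper defines $U$ as the renewal function of a renewal process ``with a renewal at $0$'', so $U(\delta)\ge 1$ for all $\delta\ge 0$ and $U(\delta)\not\to 0$ as $\delta\downarrow 0$. The bound $\E[\fdn(t,t+\delta)]\le U(\delta)$ is thus useless for extracting decay in $\delta$; indeed this is precisely why (\ref{lim-depart}) is stated and proved only pointwise in $N$ (via $\E[\tilde D_1(\delta|x)]\downarrow 0$ and bounded convergence), not uniformly. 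The correct route, which the paper takes, is to derive the second display of part (2) from the first via Corollary \ref{cor-compensator} and $|\fdcompn_\newf(t,t+\delta)|\le\|\newf\|_\infty\,\fdcompn_\f1(t,t+\delta)$; your argument should be reorganised accordingly.

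Second, your proof of the sup-in-$t$ display introduces the unstated restriction that $h$ be bounded on every compact $[0,L]\subset[0,M)$, via the deterministic bound $\|\newf\|_\infty H_L\delta$. The lemma is asserted for $h\in\lloc[0,M)$, which is the hypothesis needed to cover the second branch of Theorem \ref{th-conv} (absolutely continuous $\fe,\fmeas_0$, $h$ a.e.\ continuous), and a locally integrable hazard rate need not be locally bounded. The paper's proof avoids this: using the representation (\ref{altrep-dcompn}) it bounds the $[0,L]$-piece by a customer count times $C_L(\delta)\doteq\sup_{u\in[0,L]}\int_u^{u+\delta}h(r)\,dr$, and $C_L(\delta)\to 0$ as $\delta\to 0$ by uniform continuity of $u\mapsto\int_0^u h$ on $[0,L+1]$ — a consequence of local integrability alone. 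Replacing your $H_L\delta$ by this $C_L(\delta)$-type bound closes the gap and recovers the full generality of the statement.
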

\begin{proof}
As in the last two proofs,
 throughout this proof we will use
 $f(t,t+\delta)$ to denote $f(t+\delta) - f(t)$ for any function $f$,
$t \in [0,\infty)$ and $\delta > 0$.  We shall divide the
 proof of the first property into two cases.  \\
{\em Case 1.} $M = \infty$.   We start by proving a preliminary result,
 (\ref{limit1}) below.
 Due to the linear
increase of the ages of customers, the assumed limit
(\ref{eq-dcompn}) automatically implies that for every $s \in
[0,\infty)$,
\be \label{nut-ineq1}
   \lim_{L \ra \infty} \sup_{N \in \N} \E \left[\fmeasn_s [L+s,\infty)\right]  = 0.
\ee   For $L, \Delta, s \in [0,\infty)$, let  $\newchi_{L,\Delta,s}
\in \cb$ be such that
\[ \ind_{[2L+\Delta+s,\infty)} \leq \newchi_{L,\Delta,s} \leq \ind_{[L+\Delta+s,\infty)}.
\]
Then by Lemma \ref{lem-compensator} and (\ref{est-renewal}), we
have for every $N \in \N$,
\[
\ba{rcl} \ds  \E \left[ \fdcompn_{\newchi_{L,\Delta, s}}
(s,s+\Delta)|{\cal F}_s^{(N)} \right] &  = & \ds
\E \left[ \fbaren_{\newchi_{L,\Delta, s}} (s,s+\Delta) |{\cal F}^{(N)}_s \right]   \\
& \leq & \ds \E \left[ \fbaren_{\ind_{[L+\Delta + s,\infty)}} (s,s+\Delta) |{\cal F}_s^{(N)} \right]   \\
& \leq & \ds U(\Delta) \fmeasn_s [L+s,\infty) \ea
\]
Taking expectations of both sides,  we see that \be \label{limit1}
  \E \left[ \fdcompn_{\newchi_{L,\Delta,s}} (s,s+\Delta) \right] \leq
U(\Delta) \E \left[ \fmeasn_s [L+s,\infty)\right]. \ee
 We  now show how the first property (in the case $M = \infty$) follows
from the the above estimate.
 Fix $t \in [0,\infty)$,  choose $L > t$ and let
$\tilde{L} \doteq (L - t)/2$. Then we have
\begin{eqnarray*}
 \E \left[ \int_0^t \left( \int_{[L,\infty)} h(x) \, \fmeasn_s (dx) \right) \, ds  \right] & = & \E \left[ \int_0^t \left( \int_{[2\tilde{L}+t,\infty)} h(x) \, \fmeasn_s (dx) \right) \, ds  \right] \\
& \leq &   \E \left[ \fdcompn_{\newchi_{\tilde{L},t,0}} (t) \right] \\
& \leq &  U(t) \E\left[\fmeasn_0 [\tilde{L},\infty)\right],
\end{eqnarray*}
where
 the last inequality is justified by the estimate (\ref{limit1}),
with $L,\Delta$ and $s$ replaced by $\tilde{L}, t$ and $0$,
respectively. Taking the supremum of both sides over $N$, and then
sending  $L \ra \infty$ (in which case
 $\tilde{L} \ra \infty$),  the relation
  (\ref{eq-dcompn}) ensures that property (1) holds for the case $M=\infty$. \\
{\em Case 2.}   $M<\infty.$   In this case, for $L < M$,
 it is not hard to infer that
\[
\begin{array}{l}
\ds \E \left[ \int_0^t \left( \int_{[L,M)} h(x) \, \fmeasn_s (dx) \right) \, ds
  \right]  \\
  \le
  \ds \E\left[\fmeasn_0(L,M)\right]
+ \E \left[ \int_{[0,L]}\left( \frac{1-G(L)}{1-G(x)} \right) \, \fmeasn_0(dx) \right]
 \ds + (G(M)-G(L)) \E\left[\fen(t)\right],
  \end{array}
\]
  where the last inequality uses the independence of the
  arrival process and the service requirements of the customers.
  Taking the supremum of both sides over $N$ and then sending $L\ra
  M$,  (\ref{eq-dcompn}), (\ref{eq-dcompn2}) and
Assumption 1(1)
ensure that   property (1) is satisfied.

We now turn to the proof of property (2).
 This property holds for all $\newf \in \newcbpm$ if and only
if it holds for $\newf = \f1$.
Moreover,  to establish the property for
$\newf = \f1$, we claim that it suffices to show that
 for every $L < M$,
\be
\label{claim-fdcompn}
 \lim_{\delta \ra 0} \limsup_{N} \E
\left[ \fdcompn_{\ind_{[0,L]}} (t,t+\delta) \right] = 0.
\ee
To see why this is true, observe that for any $L < M$, we
have
\[
\ba{rcl} \E\left[ \fdcompn_\f1 (t,t +\delta) \right] & = & \ds
\E \left[ \int_t^{t+\delta} \left( \int_{[0,L)}  h(x) \, \fmeasn_s (dx) \right) \, ds \right] \\
& & \quad \quad \ds + \E \left[ \int_t^{t+\delta} \left(  \int_{[L, M)}
h(x) \, \fmeasn_s (dx)\right) \, ds \right]. \ea
\]
Taking $\limsup$ over $N$, and then sending $\delta \ra 0$,
(\ref{claim-fdcompn}) implies that the first term vanishes.
Subsequently  sending $L \ra M$,  the second term goes to
zero by property (1), thereby yielding the first relation of property (2).
The second relation then follows trivially from the first on
account of
 Corollary \ref{cor-compensator}.

In order to establish (\ref{claim-fdcompn}), for $\delta > 0$  we
first rewrite
 $\dcompn_{\ind_{[0,L]}} (t,t+\delta)$,
 in a manner analogous to (\ref{altrep-dcompn}),
as
\begin{eqnarray*}
\dcompn_{\ind_{[0,L]}} (t,t+\delta) &  = &  \suli_{-\lan \f1,
\measn_0 \ran + 1}^0 \int_t^{t+\delta} h \left(\agen_j (0) + s
\right) \ind_{\left\{ \agen_j (0) + s \leq L\right\}} \, ds
 \\
& & \ds \quad  \quad + \sum_{j=1}^{\kn(t+\delta)}
\int_t^{t+\delta} h \left( s - \kninv_j \right) \ind_{\left\{ s- \kninv_j \leq L \right\}} \, ds \\
& \leq &   \left[ N + \en (t+\delta) \right] \sup_{u \in [0,L]}
\int_{u}^{u+\delta} h(r) \, dr.
 \end{eqnarray*}
Since $L < M$, the local integrability of $h$ on $[0,M)$ implies that
\[ C_L (\delta) \doteq \sup_{u \in [0,L]} \int_{u}^{u+\delta} h(r) \, dr \ra 0
\quad \mbox{ as } \quad  \delta \ra 0.
\]
Therefore, dividing the previous relation by $N$, taking the
supremum over $t \in [0,T]$,  then taking limits first as $N \ra \infty$, and then $\delta \ra 0$
and invoking Assumption \ref{ass-init}(1) we see that, a.s.,
\[ \lim_{\delta \ra 0}
\limsup_{N \ra \infty} \sup_{t \in [0,T]} \fdcompn_{\ind_{[0,L]}}
(t,t+\delta) \leq \lim_{\delta \ra 0} \left[1 + \fe(T) \right] C_L
(\delta)  = 0.
\]
Since $\E [\fe(T)] < \infty$ by Assumption \ref{ass-init}(1)
an application of the dominated convergence theorem then yields
(\ref{claim-fdcompn}).

The proof of the last property of the lemma is similar to that of
(\ref{claim-fdcompn}). Indeed, using the representation
(\ref{altrep-dcompn}) and, arguing as above, we can conclude that
for any $T < \infty$,
\[ \limsup_{N} \sup_{t \in [0,T]}
\E \left[ \fdcompn_{\ind_{H_R}} (t)\right]  \leq  \left( 1 +
\E[\fe(T)] \right)  \int_{H_R} h(r) dr. \] Since the sets $H_R$
are all contained in a compact set in $[0,M)$, taking limits as $R
\ra \infty$, the right-hand side vanishes due to the local
integrability of $h$.
\end{proof}

We now show that the quadratic variation process
 $\lan \fmartn_\newf \ran$ converges to zero, as $N \ra \infty$.
$\fmartn_\newf$ has a well-defined predictable quadratic variation
process because the fact that  $\fmartn_{\newf}$ is a compensated
sum of jumps with bounded jump sizes automatically implies that it
is locally square integrable (see, for example,  (4.1) on p.\ 38
of \cite{jacshibook}).

\begin{lemma}
\label{lem-mgale} For every $\newf \in \newcbpm$ and $t \in
[0,\infty)$,
 \be \label{lim-qv}
 \lim_{N\ra\infty}\E \left[ \lan \fmartn_{\newf} \ran (t)\right] = 0. \ee
 Consequently, $\fmartn_\newf \Rightarrow \zerof$ as $N
\ra \infty$.
\end{lemma}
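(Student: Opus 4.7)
The plan is to identify the predictable quadratic variation $\lan \fmartn_\newf \ran$ explicitly, bound its expectation by something of order $1/N$, and then invoke Doob's inequality to upgrade the resulting $L^2$-estimate to the claimed convergence in distribution.

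First I would establish that the predictable quadratic variation of the (unscaled) local martingale $\martn_\newf = \bare^{(N)}_\newf - \dcompn_\newf$ equals $\dcompn_{\newf^2}$. Since $\martn_\newf$ is a compensated sum of jumps with jumps of size $\newf(\agen_j(s),s)$ occurring at the departure times (which are totally inaccessible, since the hazard rate is locally integrable and $G$ admits a density) and since $\dcompn_{\f1}$ is continuous, the standard formula from the general theory of point processes (e.g.\ \cite{jacshibook}, Ch.\ II) yields
\[
\lan \martn_\newf \ran(t) \;=\; \dcompn_{\newf^2}(t) \;=\; \int_0^t \lan \newf^2(\cdot,s) h(\cdot), \measn_s \ran\, ds.
\]
Indeed, the ${\cal F}_t^{(N)}$-compensator of $\bare^{(N)}_{\newf^2}$ was identified in Corollary \ref{cor-compensator} as $\dcompn_{\newf^2}$, and on the other hand the compensator of $[\martn_\newf]_t = \sum_{s\le t} (\Delta \martn_\newf(s))^2 = \bare^{(N)}_{\newf^2}(t)$ is precisely $\lan \martn_\newf \ran(t)$ by definition.

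Rescaling, I would then use $\fmartn_\newf = \martn_\newf/N$, which gives
\[
\lan \fmartn_\newf \ran(t) \;=\; \frac{1}{N^2}\, \dcompn_{\newf^2}(t) \;=\; \frac{1}{N}\, \fdcompn_{\newf^2}(t).
\]
Since $\newf^2 \le \nrm{\newf}_\infty^2 \f1$, we have $\fdcompn_{\newf^2}(t) \le \nrm{\newf}_\infty^2 \fdcompn_{\f1}(t)$, and by Corollary \ref{cor-compensator} together with Lemma \ref{lem-estimate},
\[
\E\bigl[\fdcompn_{\f1}(t)\bigr] \;=\; \E\bigl[\fdn(t)\bigr] \;\le\; U(t) \;<\; \infty,
\]
uniformly in $N$. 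Consequently,
\[
\E\bigl[\lan \fmartn_\newf \ran(t)\bigr] \;\le\; \frac{\nrm{\newf}_\infty^2\, U(t)}{N} \;\longrightarrow\; 0
\]
as $N \ra \infty$, which proves (\ref{lim-qv}).

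For the second assertion, I would apply Doob's $L^2$-maximal inequality to the square-integrable martingale $\fmartn_\newf$: for every $T < \infty$,
\[
\E\Bigl[\sup_{s\in[0,T]} \bigl(\fmartn_\newf(s)\bigr)^2\Bigr] \;\le\; 4\, \E\bigl[\bigl(\fmartn_\newf(T)\bigr)^2\bigr] \;=\; 4\, \E\bigl[\lan \fmartn_\newf \ran(T)\bigr] \;\longrightarrow\; 0.
\]
Hence $\fmartn_\newf \to \zerof$ uniformly on compact sets in probability, which implies the convergence in distribution $\fmartn_\newf \Rightarrow \zerof$ in $\dspaceh$ (with $\reg = \R$). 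The main delicate point in executing this plan is simply the careful justification in the first step that $\lan \martn_\newf \ran = \dcompn_{\newf^2}$; everything afterwards is a routine combination of the uniform-in-$N$ integrability bound from Lemma \ref{lem-estimate} with Doob's inequality.
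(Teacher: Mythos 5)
Your proposal is correct and follows essentially the same route as the paper: identify $\lan \fmartn_\newf \ran = N^{-1}\fdcompn_{\newf^2}$, bound it by $\nrm{\newf}_\infty^2 \fdcompn_{\f1}/N$, use the uniform-in-$N$ bound $\E[\fdcompn_{\f1}(t)] \leq U(t)$ from Lemma \ref{lem-estimate}, and then apply Doob's inequality. The only cosmetic difference is that you invoke Doob's $L^2$-maximal inequality (using that a local martingale with integrable $\lan \cdot \ran$ is a genuine $L^2$-martingale) whereas the paper passes directly to a probability bound $\P(\sup_{[0,T]}\fmartn_\newf > \lambda) \leq \E[\lan \fmartn_\newf \ran(T)]/\lambda^2$; both yield u.c.p.\ convergence to $\zerof$.
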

\begin{proof}
 Since the compensator $\dcompn_{\newf}$ is continuous, the martingale $\martn_{\newf}$ does not
have any predictable jumps, and so by Proposition II.2.29 of
\cite[page 82]{jacshibook},  the predictable quadratic variation
of the martingale is given by
\[ \lan \martn_{\newf}\ran (t)  =
\int_0^t \left( \int_{[0, M)} \newf^2(x,s) h(x) \, \measn_s
(dx) \right) \, ds. \]
 This means that the scaled process
$\fmartn_\newf$ has predictable quadratic variation
\[ \lan \fmartn_{\newf} \ran (t) = \dfrac{1}{N^2} \lan \martn_{\newf} \ran (t)  =
\dfrac{1}{N} \left[ \int_0^t \left( \int_{[0, M)} \newf^2(x,s)
h(x) \, \fmeasn_s (dx) \right) \, ds \right],
\]
which implies that for any $t \in [0,\infty)$,
\[
 \lan  \fmartn_{\newf} \ran (t)
 \leq \dfrac{\nrm{\newf}_\infty^2}{N}
 \fdcompn_{\f1} (t). \]
Since $\E[\fdcompn_{\f1} (t) ] = \E [ \fdn (t)]$ by
Lemma  \ref{lem-compensator},  the estimate in
(\ref{series}) (with $s$ and $t$ there replace by $t$ and $0$, respectively)
shows that $\sup_{N\in\N}
\E[\fdcompn_{\f1}(t)] \le U(t) < \infty$, where $U(\cdot)$ is the renewal function
associated with the service distribution $G$.
Thus taking first expectations and then limits as $N \ra \infty$
in the last display, we obtain  the first assertion of the lemma.
In order to show that $\fmartn_\newf \Rightarrow
\zerof$ as $N \ra \infty$, we note that by Doob's Lemma, for
any $\lambda>0$
\[ \P\left(\sup_{s\in [0, T]}\fmartn_{\newf}>\lambda\right)\le \dfrac{\E[\lan
\fmartn_{\newf}\ran (T)]}{\lambda^2},\]
which converges to $0$ as $N\ra\infty$ by the first assertion.
Since this is true for all $\lambda > 0$, this
 completes the proof of the lemma.
\end{proof}

\subsection{Proof of Relative Compactness.}
\label{subs-tight}

We now  establish the relative compactness of the sequence of
scaled state processes $\{(\fxn, \fmeasn), N \in \N\}$, as well as
of several of the auxiliary processes. For this, it will prove
convenient to use Kurtz' criteria for relative compactness of
processes $\{Y^{(N)}\}$ with sample paths in ${\cal D}_{\R_+}
[0,\infty)$.


\noi {\em Kurtz' criteria. }
\begin{enumerate}
\item[K1.] For every rational $t \geq 0$,
 \be
\label{K1}
  \lim_{R \ra \infty} \sup_{N} \P ( Y^{(N)} (t) > R) = 0;
\ee
\item[K2.]
For each $t > 0$, there exists $\beta > 0$ such that \be
\label{K2}
 \lim_{\delta \ra 0} \sup_{N} \E \left[ \left|Y^{(N)} (t+\delta) - Y^{(N)} (t)\right|^\beta \right] = 0.
\ee
\end{enumerate}
The sufficiency of these conditions for relative compactness
follows from Theorem 3.8.6 of \cite{ethkurbook} (condition K1
corresponds to condition (a) of Theorem 3.7.2 in \cite{ethkurbook}
and condition K2 follows from condition (b) of Theorem 3.8.6 and
Remark 3.8.7 in \cite{ethkurbook}).

\begin{lemma}
\label{tight-xn} Suppose  Assumption \ref{ass-init} holds. Then
the sequences  $\{\fxn\}$ and  $\{\lan \f1, \fmeasn \ran\}$ and,
for every $\newf \in \newcbpm$, the sequences $\{\fbaren_\newf\}$,
$\{\fdcompn_\newf\}$ and $\{\fdn\}$ are relatively compact in
${\cal D}_{\R_+} [0,\infty)$.
\end{lemma}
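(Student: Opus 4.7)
The plan is to verify Kurtz's criteria K1 and K2 for each of the five sequences, taking $\beta=1$ in K2 throughout, by reducing everything to controls on $\fxn(0)+\fen$ and on the cumulative departure process $\fdn$. The key ingredients are the mass balance (\ref{def-dn}), the non-idling identity (\ref{comp-prelimit}), the conditional bound (\ref{series}) underlying Lemma \ref{lem-estimate}, and the compensator identity of Lemma \ref{lem-compensator}.

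For K1, the mass balance gives $\fxn(t)\le \fxn(0)+\fen(t)$, which is bounded in probability at each $t$ by Assumption \ref{ass-init}(1),(2), while the non-idling identity yields $\lan \f1,\fmeasn_t\ran=\fxn(t)\wedge 1\le 1$ deterministically. Taking expectations in (\ref{series}) gives $\sup_N \E[\fdn(t)]\le U(t)<\infty$, so Markov handles $\fdn$; since $\fbaren_{\f1}=\fdn$, the pathwise bound $|\fbaren_\newf|\le \nrm{\newf}_\infty \fdn$ transfers K1 to $\fbaren_\newf$, and Lemma \ref{lem-compensator} yields $\E[|\fdcompn_\newf(t)|]\le \nrm{\newf}_\infty \E[\fdn(t)]\le \nrm{\newf}_\infty U(t)$, covering $\fdcompn_\newf$.

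For K2, the same conditional bound (\ref{series}) gives the stronger statement $\sup_N \E[\fdn(t+\delta)-\fdn(t)]\le U(\delta)\to 0$ as $\delta\downarrow 0$, since $G(0+)=0$ makes the renewal function vanish at zero. The pathwise estimate $|\fbaren_\newf(t+\delta)-\fbaren_\newf(t)|\le \nrm{\newf}_\infty(\fdn(t+\delta)-\fdn(t))$, together with its analogue for $\fdcompn_\newf$ coming from the integral representation (\ref{def-dcompn}) and Lemma \ref{lem-compensator}, reduces K2 for $\fbaren_\newf$ and $\fdcompn_\newf$ to the $\fdn$ case. The mass balance then gives $|\fxn(t+\delta)-\fxn(t)|\le (\fen(t+\delta)-\fen(t))+(\fdn(t+\delta)-\fdn(t))$, reducing K2 for $\fxn$ to showing $\lim_{\delta\to 0}\sup_N \E[\fen(t+\delta)-\fen(t)]=0$; K2 for $\lan \f1,\fmeasn\ran$ then follows from K2 for $\fxn$ via the $1$-Lipschitz map $x\mapsto x\wedge 1$.

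The main obstacle is the uniform-in-$N$ modulus for $\fen$, since Assumption \ref{ass-init}(1) supplies only almost sure Skorokhod convergence $\fen\to \fe$ rather than a quantitative modulus. The plan is to combine this with the $L^1$-convergence at continuity points of $\fe$ guaranteed by Remark \ref{rem-assinit}: for $t$ and small $\delta>0$ such that both $t$ and $t+\delta$ are continuity points of $\fe$, a.s.\ convergence together with uniform integrability of $\{\fen(T)\}_N$ yields $\limsup_{N\to\infty}\E[\fen(t+\delta)-\fen(t)]\le \E[\fe(t+\delta)-\fe(t)]$, and the right-hand side tends to zero as $\delta\downarrow 0$ by right-continuity of $\fe$ and dominated convergence with the $L^1$ dominator $\fe(t+1)$. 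Finitely many small-$N$ indices are then handled individually by right-continuity of each c\`{a}dl\`{a}g $\fen$, yielding the required uniform modulus bound and completing the verification of both Kurtz conditions.
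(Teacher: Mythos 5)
There are two genuine gaps, and both concern the K2 verification.

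First, the claim that the renewal-function bound from (\ref{series}) yields $\sup_N\E[\fdn(t+\delta)-\fdn(t)]\le U(\delta)\to 0$ as $\delta\downarrow 0$ is incorrect: the bound $\E[\tilde{D}_1(\delta\,|\,x)]\le U(\delta)$ used in (\ref{series}) is only valid when $U$ is the renewal function of the pure renewal process \emph{with a renewal counted at $0$} (this is why one can dominate the delayed renewal count, where the first departure can occur arbitrarily quickly when $x$ is close to $M$), and hence $U(\delta)\to U(0)=1$, not $0$. Thus (\ref{series}) gives only a uniform \emph{boundedness} (K1-type) control, not a uniform modulus. The paper obtains the uniform-in-$N$ modulus for $\fdcompn_\newf$, $\fbaren_\newf$ and $\fdn$ via Lemma \ref{lem-fdcompn}(2), which requires the local integrability of $h$ together with the tail controls (\ref{eq-dcompn})--(\ref{eq-dcompn2}) on the initial age measures; that nontrivial argument cannot be shortcut by the renewal-function bound alone.

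Second, and more fundamentally, the proposed reduction of K2 for $\{\fxn\}$ to $\lim_{\delta\to 0}\sup_N\E[\fen(t+\delta)-\fen(t)]=0$ is doomed: this limit is false in general under Assumption \ref{ass-init}. If $\fe$ has a jump of size $a>0$ at some $t$ and the jump times of the $\fen$ drift toward $t$ from above (e.g.\ $\fen=\fe(\cdot-1/N)$), then for every $\delta>0$ one has $\sup_N\E[\fen(t+\delta)-\fen(t)]\ge a$, so the limit cannot be $0$. Your patch of restricting to continuity points of $\fe$ does not cover $t$ itself being a jump point of $\fe$ (at such $t$, Skorokhod convergence does not even give $\fen(t)\to\fe(t)$), whereas K2 as stated requires an estimate for every $t>0$. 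This is precisely why the paper abandons K2 for $\{\fxn\}$ and $\{\lan\f1,\fmeasn\ran\}$ and instead combines K1 with the Skorokhod modulus-of-continuity criterion of Ethier--Kurtz (Theorem 3.7.2(b)): the pathwise increment bounds $|\fxn(t)-\fxn(s)|\le|\fen(t)-\fen(s)|\vee|\fdn(t)-\fdn(s)|$ and the non-idling inequality transfer the Skorokhod modulus $w'$ from $\fen$ and $\fdn$ (both already relatively compact) to $\fxn$ and $\lan\f1,\fmeasn\ran$. That substitution is essential because the scaled arrival process may converge to a limit with jumps.
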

\begin{proof}
By Corollary \ref{cor-compensator} and Lemma \ref{lem-estimate},
we know that for every $\newf \in \newcbpm$ and
 $t \in [0,T]$,
\[  \sup_{N} \E[\fdcompn_\newf (T)] = \sup_{N} \E[ \fbaren_\newf (T)] \leq  \nrm{\newf}_\infty \sup_{N}
\E [\fdn (T)] < \infty.
\]
This is easily seen to imply that $\{\fdcompn_\newf\}$ and
$\{\fbaren_\newf\}$ satisfy condition K1. In addition, from Lemma
\ref{lem-fdcompn}(2) it follows that for every $\newf \in \newcb$,
$\{\fdcompn_\newf\}$ and $\{\fbaren_\newf\}$ satisfy criterion K2
(with $\beta=1$) , and thus they are relatively compact.
Since $\fdn = \fbaren_{\f1}$, this also implies that
$\fdn$  satisfies conditions K1 and K2 (the latter with
 $\beta=1$).

By Assumption \ref{ass-init},
 the sequences $\{\fen\}$ and $\{\fxn (0)\}$ are relatively compact.
Therefore, by  Theorem 3.7.2 of \cite{ethkurbook},
 they satisfy (\ref{K1}).
 The elementary bound
 \[  \lan \f1, \fmeasn_t \ran  \leq \fxn (t) \leq \fxn(0) + \fen (t), \]
then shows that the sequences $\{\fxn\}$ and $\{\lan \f1,
\fmeasn\ran\}$ also satisfy condition K1. To complete the proof of
relative compactness of these sequences, we will use a slightly
different set of criteria, namely K1 above and condition (b) of
Theorem 3.7.2 of \cite{ethkurbook}, which is expressed in terms of
the modulus of continuity  $w^\prime (f, \delta, T)$ of a function
$f$ (see (3.6.2) of \cite{ethkurbook} for a precise definition).
Now,  for every $0 \leq s \leq t < \infty$, from
(\ref{eqn-prelimit2}) it is clear that
\[  \left|\fxn (t) - \fxn(s)\right| \leq  \left| \fen (t) - \fen (s) \right| \vee  \left| \fdn (t) - \fdn (s) \right|
\]
and the complementarity condition (\ref{comp-prelimit})  shows
that
\[ \left| \lan \f1, \fmeasn_s \ran  - \lan \f1, \fmeasn_r \ran \right| \leq
\left| \left[1 - \fxn_s \right]^+ - \left[1 - \fxn_r \right]^+
\right| \leq \left|\fxn_s - \fxn_r \right|.
\] From this it is easy to see that for every $N \in \N$, $\delta > 0$ and $T < \infty$,
\[ w^\prime (\lan \f1, \fmeasn\ran, \delta, T) \vee w^\prime (\fxn, \delta, T) \leq w^\prime (\fen, \delta, T) \vee w^\prime (\fdn, \delta, T). \]
The relative compactness of $\{\fxn\}$ and $\{\lan \f1,
\fmeasn\ran\}$ is then a direct consequence of the  above
estimate, the relative compactness of  $\{\fen\}$ and $\{\fdn\}$
and
 Theorem  3.7.2 of \cite{ethkurbook}.
\end{proof}

Next, we show that the sequence of measure-valued processes $\{\fmeasn\}$ is
tight. We shall first establish  tightness of the measure-valued
process $\{\fnewmeasn\}$, defined by \be \label{def-newmeas}
 \fnewmeasn  \doteq \delta_0 \fidlen + \fmeasn,
\ee where $\fidlen = 1 - \lan \f1, \fmeasn\ran$ is the process
that keeps track of  the fraction of idle servers in the $N$th
system. In other words, for every Borel set $A \subset [0, M)$
and $t \in [0,\infty)$, we have
\[ \fnewmeasn_t (A) = \ind_{A} (0)\fidlen (t) + \fmeasn_t(A).  \]
The reason for introducing $\fnewmeasn$ is that now for every $t$,
$\fnewmeasn_t$ is a probability measure
(as opposed to a sub-probability measure),
and so $\fnewmeasn$  is a random element taking
values in ${\cal D}_{{\cal M}_1[0,M)} [0,\infty)$.
 Thus, in order to prove tightness of the sequence $\{\fnewmeasn\}$,
 we can apply Jakubowski's criteria for tightness of measures
on ${\cal D}_{{\cal M}_1[0,M)} [0,\infty)$,
which are summarised below (see, e.g., \cite{jak86} or Theorem 3.6.4 of \cite{dawbook} for the case $M = \infty$;  the conditions listed below for the general case $M < \infty$ follow by analogy).  \\

\noi {\em Jakubowski's Criteria.} A sequence $\{\fnewmeasn\}$ of
${\cal D}_{{\cal M}_1[0,M)} [0,\infty)$-valued random elements
defined on $(\Omega, {\cal F}, \P)$ is tight if and only if the
following two conditions are satisfied.
\begin{enumerate}
\item[J1.]
For each $T > 0$ and $\eta > 0$ there exists a compact set
${\cal K}_{T,\eta} \subset {\cal M}_1 [0,M)$ such that for all $N \in
\N$,
\[\P \left( \fnewmeasn_t \in {\cal K}_{T,\eta} \mbox{ for all } t \in [0,T] \right) > 1 - \eta. \]
This is referred to as the {\em compact containment condition}.
\item[J2.]
There exists a family $\F$ of real continuous functions $F$ on
${\cal M}_1[0,M)$ that separates points in ${\cal M}_1[0,M)$ and
is closed under addition such that
 $\{\fnewmeasn\}$ is $\F$-weakly tight, i.e., for every $F \in \F$,
the sequence  $\{F(\fnewmeasn_s), s \in [0,\infty)\}$   is tight
in ${\cal D}_{\R}[0,\infty)$.
\end{enumerate}

\begin{lemma}
\label{lem-tight1} Suppose Assumption \ref{ass-init} holds. Then
the sequence  $\{\fnewmeasn\}$ satisfies Jakubowski's criterion
J1. Moreover, the limits (\ref{eq-dcompn}) and (\ref{eq-dcompn2})
hold.
\end{lemma}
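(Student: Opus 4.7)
My plan is to first establish the two integrability limits (\ref{eq-dcompn}) and (\ref{eq-dcompn2}) from Assumption \ref{ass-init}(3), and then combine (\ref{eq-dcompn}) with a simple pathwise bound to verify Jakubowski's compact containment condition J1.

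For (\ref{eq-dcompn}): since $[L, M)$ is a closed subset of $[0, M)$ for every $L < M$, and $\fmeasn_0 \to \fmeas_0$ weakly almost surely by Assumption \ref{ass-init}(3), the Portmanteau theorem yields $\limsup_N \fmeasn_0([L, M)) \leq \fmeas_0([L, M))$ almost surely. Because $\fmeasn_0([L, M)) \leq 1$, the reverse Fatou lemma gives $\limsup_N \E[\fmeasn_0([L, M))] \leq \E[\fmeas_0([L, M))]$, and the right-hand side decreases to $0$ as $L \uparrow M$ by continuity from above for the a.s.\ finite sub-probability measure $\fmeas_0$ combined with bounded convergence. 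Since, by the same continuity argument, $\E[\fmeasn_0([L, M))] \to 0$ as $L \uparrow M$ for each fixed $N$, enlarging $L$ to control the finitely many small $N$ not covered by the lim sup bound yields (\ref{eq-dcompn}). The limit (\ref{eq-dcompn2}) then follows by splitting the integral at $L - \epsilon$:
\[
\int_{[0,L)} \frac{1-G(L)}{1-G(x)} \, \fmeasn_0(dx) \leq \frac{1-G(L)}{1-G(L-\epsilon)} + \fmeasn_0([L-\epsilon, M));
\]
the first term tends to $0$ as $L \to M$ with $\epsilon$ fixed (since $G(L)\to 1$ while $G(L-\epsilon) < 1$), and the $\sup_N$-expectation of the second term is controlled by (\ref{eq-dcompn}) after sending $\epsilon \to 0$.

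For J1, the key observation is the pathwise bound
\[
\sup_{t \in [0, T]} \fnewmeasn_t([L, M)) \leq \fmeasn_0([L - T, M)) \quad \text{for every } L > T,
\]
which holds because $\fnewmeasn_t$ and $\fmeasn_t$ agree on $[L, M)$ for $L > 0$, and any customer in service at some time $t \leq T$ with age at least $L > T$ must have already been in service at time $0$ (a customer admitted after time $0$ would have age at most $t \leq T < L$), with age at time $0$ at least $L - t \geq L - T$. Given $T > 0$ and $\eta > 0$, (\ref{eq-dcompn}) lets me select, for each $k \in \N$, an $L_k < M$ with $\sup_N \E[\fmeasn_0([L_k - T, M))] \leq \eta/(k \, 2^k)$, so that Markov's inequality yields $\P(\sup_{t \in [0, T]} \fnewmeasn_t([L_k, M)) > 1/k) \leq \eta / 2^k$. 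Setting $\mathcal{K}_{T,\eta} \doteq \{\mu \in \mmone : \mu([L_k, M)) \leq 1/k \text{ for every } k \in \N\}$ yields a tight, closed, hence (by Prokhorov's theorem) compact family of probability measures, and the event $\{\fnewmeasn_t \in \mathcal{K}_{T,\eta} \text{ for all } t \in [0, T]\}$ has probability at least $1 - \sum_{k} \eta/2^k = 1 - \eta$, verifying J1.

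The main obstacle, I expect, is obtaining the uniform-in-$N$ control in the proof of (\ref{eq-dcompn}): almost sure weak convergence only provides pointwise convergence of $\fmeasn_0([L, M))$ along continuity points of $\fmeas_0$, so one must carefully combine the Portmanteau lim sup bound (valid for all closed sets, and thus handling large $N$) with a separate monotone-convergence argument for each small $N$. Once (\ref{eq-dcompn}) is in hand, both (\ref{eq-dcompn2}) and J1 are essentially deterministic reductions.
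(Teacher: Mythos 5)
Your derivations of (\ref{eq-dcompn}) and (\ref{eq-dcompn2}) are correct, and in fact more direct than the paper's: you obtain (\ref{eq-dcompn}) from Portmanteau, reverse Fatou, and continuity from above (handling the finitely many small $N$ separately), whereas when $M=\infty$ the paper reaches (\ref{eq-dcompn}) only as a byproduct of first constructing the compact containment sets via Prohorov's theorem and Borel--Cantelli. Your splitting argument for (\ref{eq-dcompn2}) is also a clean way to make rigorous what the paper asserts rather tersely. Likewise, the pathwise age-shift bound $\sup_{t\in[0,T]}\fnewmeasn_t([L,M))\leq\fmeasn_0([L-T,M))$ for $L>T$ is correct, and the J1 argument you build on it is a valid and simpler alternative to the paper's when $M=\infty$.

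However, your proof of J1 breaks down entirely when $M<\infty$, and that case is part of the lemma. Feeding the pathwise bound into (\ref{eq-dcompn}) requires $L-T\ra M$, which you can arrange when $M=\infty$ by sending $L\ra\infty$; but when $M<\infty$ every admissible $L<M$ gives $L-T<M-T$, so $\sup_N\E[\fmeasn_0([L-T,M))]$ is not driven to zero by (\ref{eq-dcompn}) (and if $T\geq M$ there is no $L<M$ with $L>T$ at all, so the bound is vacuous). Consequently the selection of $L_k<M$ with $\sup_N\E[\fmeasn_0([L_k-T,M))]\leq\eta/(k2^k)$ that your argument requires is impossible for large $k$. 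The paper's Case 2 uses a fundamentally different mechanism: because service times are a.s.\ strictly less than $M$, any customer with age in $(L,M)$ at some $t\leq T$ must depart by time $T+M$ with age still in $(L,M)$, so $\P(\sup_{t\leq T}\fnewmeasn_t(L,M)>\ve)\leq\E[\fdcompn_{\ind_{(L,M)}}(T+M)]/\ve$, which is made small via Lemma~\ref{lem-fdcompn}(1) --- and this is precisely why (\ref{eq-dcompn2}) and the independence of arrivals and service requirements are needed. The idea you are missing is that near-$M$ tightness when $M<\infty$ is controlled by the departure process (customers with near-maximal service time are a vanishing fraction, $1-G(L)\ra 0$), not by the initial ages alone. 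A further minor point: your $\mathcal{K}_{T,\eta}=\{\mu:\mu([L_k,M))\leq 1/k\ \forall k\}$ is tight but not closed in the weak topology, since $\mu\mapsto\mu([L_k,M))$ is only upper semicontinuous for the closed set $[L_k,M)$; take its closure before invoking Prohorov (as the paper does) or define $\mathcal{K}_{T,\eta}$ using the open intervals $(L_k,M)$.
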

\begin{proof}
We split the proof into two cases, depending on $M$.   \\
{\em Case 1.}  $M = \infty$. By Assumption \ref{ass-init}(3) and
the complementarity condition (\ref{comp-prelimit}), there exists
a set $\tilde{\Omega}$ of measure $1$ such that for all $\omega
\in \tilde{\Omega}$, $\fmeasn_0(\omega)$ converges weakly,  as $N
\ra \infty$, to a sub-probability measure $\fmeas_0 (\omega)$,
which in turn implies that $\fidlen (0) (\omega)$ converges to
$\fidle(0)(\omega) = 1 - \lan \f1, \fmeas_0 (\omega) \ran$. Fix
$\omega \in \tilde{\Omega}$. Then by Prohorov's theorem (see
Section 3.2 of \cite{ethkurbook}), the sequence $\{ \fnewmeasn_0
(\omega), N \in \N\}$ must be tight, and hence for every $\ve > 0$
there exists a constant $r(\omega, \ve) < \infty$ such that
\[ \fnewmeasn_0 (\omega) \left((r(\omega, \ve), \infty)\right) < \ve  \quad \mbox{ for all } N \in \N. \]
Since $r(\omega, 1/n) < \infty$ for every $\omega \in
\tilde{\Omega}$ and $n \in \N$, there exists a sequence $r(n)$
that converges to $\infty$ as $n \ra \infty$ such that $\P(\omega:
r (\omega, 1/n ) > r(n)) \leq 2^{-n}$. If we define $A_n \doteq
\{\omega: r(\omega, 1/n) > r(n)\}$,  then by the Borel-Cantelli
lemma, almost surely $A_n$ occurs only finitely often.
Furthermore, $\P\left( \cup_{n \geq N_0} A_n \right) \leq
2^{-N_0}$  for every $N_0 \in \N$. Now fix $T < \infty$, and note
that since the age process of each customer in service increases
linearly, for every $n \in \N$ and $t \in [0, T]$,
\[\ba{rcl}
 \ds \left\{ \omega: \fnewmeasn_0 (\omega) (r(n), \infty) \leq  \frac{1}{n} \right\}
& \subseteq & \ds \left\{ \omega: \fnewmeasn_0 (\omega) (r(n) - t + T, \infty) \leq \frac{1}{n} \right\} \\
& \subseteq & \ds  \left\{  \omega: \fnewmeasn_t (\omega) (r(n) +
T, \infty) \leq \frac{1}{n} \right\}. \ea
\]

Thus given $\eta > 0$, now  define
\[ {\cal K}_{\eta, T} \doteq  \left\{ \mu \in {\cal M}_1(\R_+):
\mu \left( r(n) + T, \infty \right) \leq \frac{1}{n} \mbox{ for
all } n > N_0 (\eta) \right\},
\]
where we choose $N_0 (\eta) \doteq -\lceil \ln \eta/\ln 2 \rceil$
so that  $2^{-N_0} < \eta$. Then observe that
\[ \inf_{C \subset \R_+:C compact} \sup_{\mu \in {\cal K}_{\eta,T}}
\mu (C^c) \leq \inf_{n > N_0(\eta)} \sup_{\mu \in {\cal
K}_{\eta,T}} \mu (r(n) + T, \infty) = 0.  \] Therefore, another
application of  Prohorov's theorem shows that ${\cal K}_{\eta, T}$
is a relatively compact subset of ${\cal M}_1(\R_+)$ (equipped
with the Prohorove metric). Let $\overline{{\cal K}}_{\eta,T}$ be
its closure in the Prohorov metric.
 Then for every $N \in \N$,
\[ \ba{l}
\ds \P \left(\fnewmeasn_t \in \overline{{\cal K}}_{\eta,T} \mbox{ for every } t \in [0,T] \right)  \\
\quad \quad  \quad \quad \ds \geq \P \left(\fnewmeasn_0 (r(n),
\infty) \leq \frac{1}{n} \mbox{ for every } n > N_0(\eta) \right)
 \geq  \ds 1 - 2^{-N_0} \geq 1 - \eta,
\ea
\]
which proves the compact containment condition when $M = \infty$.

In addition, this also shows that (\ref{eq-dcompn}) holds.
 Indeed, if $\eta > 0$ and $N_1 (\eta) \in \N$
satisfies $N_1(\eta) \geq N_0 (\eta) \vee [1/\eta]$, then the last
display implies that for every $K \geq  r (N_1 (\eta))$,
\[ \inf_{N \in \N} \P \left( \fnewmeasn_0 [K,\infty) \leq \dfrac{1}{N_1 (\eta)} \right) \geq 1 - \eta,\]
which in turn shows that for every for every $K \geq  r (N_1
(\eta))$,
\[ \sup_{N \in \N} \E \left[ \fmeasn_0 [K,\infty) \right] \leq \dfrac{1}{N_1 (\eta)} + \eta. \]
The result then follows by letting first $K \ra \infty$ and then
$\eta \ra 0$.

\noi
{\em Case 2.}  $M < \infty$.  We start by establishing (\ref{eq-dcompn}) and (\ref{eq-dcompn2})
using an argument similar to that used in Case 1 to prove (\ref{eq-dcompn}).
The almost sure weak convergence of
$\fmeasn_0$ to $\fmeas_0$  in  ${\cal M}_{\leq 1}[0,M)$
 implies that the sequence $(r(n))$ considered in Case 1 can be taken
strictly smaller than $M$ and converging to $M$. Defining
$N_1(\eta)$ and $K$ as in Case 1,  we see that for $K \geq r(N_1 (\eta))$,
\[ \sup_{N \in \N} \E \left[ \fmeasn_0 [K,M) \right] \leq \dfrac{1}{N_1 (\eta)} + \eta,  \]
and the result  follows as above by sending first $K\ra M$ and then
$\eta\ra 0$. The limit (\ref{eq-dcompn2}) follows from the
weak convergence of $\fmeasn_0$ to $\fmeas_0$ and the fact that
$(1-G(L))/(1-G(x))$ is bounded (by $1$) and continuous on
$[0,L)$.

It only remains to show that the compact containment condition is
satisfied when  $M<\infty$.  For this, we
 need to show that for every $\ve>0$, $\eta>0$ we can find $L(\ve)<M$
so that
\[ \inf_{N} \P\left(\fnewmeasn_t [0, L(\ve) ]>1-\ve \quad \mbox {for
every}\quad t\in [0,T]\right)>1-\eta.
\]
 However, for any $L < M$, we have
\begin{eqnarray*}
\P\left(\fnewmeasn_t(L,M)>\ve\quad \mbox{for some}\quad t\in[0,T]\right)
& \le &  \P\left(\fqn_{\ind_{(L,M)}}(T+M)>\ve \right) \\
& \le & \dfrac{\E[\fqn_{\ind_{(L,M)}}(T+M)]}{\ve} \\
& = &  \dfrac{\E[\fdcompn_{\ind_{(L,M)}}(T+M)]}{\ve}
\end{eqnarray*}
where the last equality follows from Corollary \ref{cor-compensator}.
Using now  Lemma
\ref{lem-fdcompn}(1) (which is justified since we have already established
(\ref{eq-dcompn}) and (\ref{eq-dcompn2})),
 one can find $L(\ve)$
close enough to $M$ to make the supremum over $N$  of the
right-hand side above smaller than
$\eta$, thus yielding the desired result.
\end{proof}

zz

\begin{lemma}
\label{lem-tight2} Suppose Assumption \ref{ass-init} is satisfied.
Then for every $f \in \ocdbm$, the sequence $\{\lan f,
\fnewmeasn_\cdot \ran\}$ of ${\cal D}_{\R} [0,\infty)$-valued
random variables is tight.
\end{lemma}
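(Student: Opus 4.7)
The proof proceeds by verifying Kurtz's criteria K1 and K2 for each real-valued process $\{\lan f, \fnewmeasn_\cdot \ran\}$. Since $\fnewmeasn_t$ is a probability measure on $[0,M)$ by construction, $|\lan f, \fnewmeasn_t \ran| \leq \nrm{f}_\infty$ uniformly in $t$ and $N$, so K1 holds trivially. For K2, the plan is to combine a pathwise mass-balance bound with the uniform-in-$N$ control of the scaled departure process provided by Lemma \ref{lem-fdcompn}(2).

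To exploit the fact that mass enters the system at age zero, I first use the complementarity relation (\ref{comp-prelimit}) to write
\[
 \lan f, \fnewmeasn_t \ran = f(0) + \lan g, \fmeasn_t \ran, \qquad g \doteq f - f(0)\f1,
\]
so that $g \in \ocdbm$ with $g(0) = 0$, $\nrm{g'}_\infty = \nrm{f'}_\infty$ and $\nrm{g}_\infty \leq 2\nrm{f}_\infty$. It suffices to bound the increments of $\lan g, \fmeasn_\cdot \ran$ in expectation, uniformly in $N$. A direct sample-path accounting of the change in $\lan g, \fmeasn_\cdot \ran$ over the time interval $[t, t+\delta]$ decomposes it into three parts: aging of customers in service throughout, departures of customers in service at time $t$, and new entries into service still in service at time $t+\delta$ (the remaining class, customers who enter and depart within the same interval, contribute nothing to either endpoint). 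Since each of the at most $N$ customers ages by at most $\delta$, the aging contribution is bounded by $\nrm{g'}_\infty \delta$; each departure removes mass $g(\agen_j(\tau))/N$ with $|g(\agen_j(\tau))| \leq \nrm{g}_\infty$, giving total at most $\nrm{g}_\infty (\fdn(t+\delta) - \fdn(t))$; and each new entry still in service at time $t+\delta$ contributes $g(\agen_j(t+\delta))/N$ with $|g(\agen_j(t+\delta))| = |g(\agen_j(t+\delta)) - g(0)| \leq \nrm{g'}_\infty \delta$, giving total at most $\nrm{g'}_\infty \delta \bigl(\fkn(t+\delta) - \fkn(t)\bigr)$. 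Thus
\[
 \bigl| \lan g, \fmeasn_{t+\delta}\ran - \lan g, \fmeasn_t\ran \bigr| \leq \nrm{g'}_\infty \delta \bigl(1 + \fkn(t+\delta) - \fkn(t)\bigr) + \nrm{g}_\infty \bigl(\fdn(t+\delta) - \fdn(t)\bigr).
\]

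Taking expectations and then $\limsup_N$ followed by $\delta \downarrow 0$: the term $\nrm{g'}_\infty \delta$ is deterministic and vanishes; the term involving $\delta \cdot (\fkn(t+\delta)-\fkn(t))$ vanishes because $\E[\fkn(T)] \leq \E[\fxn(0) + \fen(T)]$ is bounded uniformly in $N$ by Remark \ref{rem-assinit}, so the expectation is $O(\delta)$; and the departure-increment term vanishes thanks to the uniform-in-$N$ modulus of continuity for $\fdn$ provided by Lemma \ref{lem-fdcompn}(2) applied with $\newf = \f1$. This verifies K2 with $\beta = 1$ (in the form allowed by Remark 3.8.7 of \cite{ethkurbook}). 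The main substantive ingredient is thus Lemma \ref{lem-fdcompn}(2); the reason for the preliminary change of variables $f \mapsto g$ is precisely to guarantee the stronger $O(\delta)$ bound on the entry term (rather than a bound proportional to the entry count itself, which would force a separate argument involving the arrival increments).
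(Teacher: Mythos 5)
Your proof is correct, and it takes a genuinely different route from the paper's. The paper proves this lemma by appealing to the integral identity of Theorem \ref{th-prelimit}: it substitutes the time-independent test function $f$ into (\ref{eqn-prelimit1}), normalises, and rewrites the result (using the identity $\fkn + \fidlen - \fidlen(0) = \fdn$) as a sum of four terms, namely two deterministic-increment integrals $\int_0^\cdot\lan f', \fnewmeasn_s\ran \,ds$ and $f'(0)\int_0^\cdot\fidlen(s)\,ds$, plus $-\fbaren_f$ and $f(0)\fbaren_\f1$; it then invokes Lemma \ref{tight-xn} for the last two (which itself rests on Lemma \ref{lem-fdcompn}(2)). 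You instead bypass Theorem \ref{th-prelimit} entirely and do a direct pathwise mass-balance over $[t,t+\delta]$, classifying customers by whether they are in service at $t$, at $t+\delta$, at both, or at neither. The trick of replacing $f$ by $g = f - f(0)\f1$ so that $g(0)=0$ is exactly what makes the new-entry contribution $O(\delta)$ rather than $O(\fkn(t+\delta)-\fkn(t))$, and it is the pathwise analogue of the paper's passage from $\fmeasn$ to $\fnewmeasn$. Both arguments ultimately hinge on the same substantive ingredient — the uniform departure-increment control of Lemma \ref{lem-fdcompn}(2) — but you only need the $\newf = \f1$ case, whereas the paper needs it for $\newf = f$ and $\newf = \f1$ via Lemma \ref{tight-xn}. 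Your bound $\fkn(t+\delta)\le\fxn(0)+\fen(t+\delta)$, combined with Remark \ref{rem-assinit}, correctly controls the entry count in expectation. The $\limsup_N$ versus $\sup_N$ gap in K2 is the same one already implicit in the paper's use of Lemma \ref{lem-fdcompn}(2), and is harmless since tightness only concerns the tail of the sequence. Overall your version is more elementary (it does not pass through the prelimit integral equation) and is arguably cleaner; the paper's version has the minor advantage of exhibiting the limit equation's structure at the same time as establishing tightness.
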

\begin{proof}  Let $f \in \ocdbm$. By Prohorov's theorem, it suffices to
establish relative compactness of the sequence $\{\lan f,
\fnewmeasn_\cdot\ran\}$.
 First substituting $\newf = f$ (interpreting $f$ as a function on $[0,M) \times \R_+$ that depends only
on the first variable) in the equation (\ref{eqn-prelimit1})
satisfied by the prelimit,  then dividing the equation by $N$  and
using the definition of $\fnewmeasn$
we obtain  for any $t \in [0,\infty)$,
\[
\ba{rcl} \ds \left\lan f, \fnewmeasn_{t} \right\ran -  \left\lan
f, \fnewmeasn_{0} \right\ran \ds & = & \left\lan f, \fmeasn_{t}
\right\ran -  \left\lan f, \fmeasn_{0} \right\ran \ds
+ f(0) [\fidlen (t) - \fidlen (0)] \\
& = &  \ds \int_0^t  \lan f^\prime, \fnewmeasn_{s} \ran \, ds  -
f^\prime (0) \int_0^t \fidlen (s) \, ds   \\
& & \quad \ds -\fbaren_f (t)
+  f(0) \left[ \fkn (t) + \fidlen (t) - \fidlen (0)\right] \\
& = &  \ds \int_0^t  \lan f^\prime, \fnewmeasn_{s} \ran \, ds  -
f^\prime (0) \int_0^t \fidlen (s) \, ds  \\
& & \ds \quad  -\fbaren_f (t) + f(0) \fbaren_\f1 (t),  \\
\ea
\]
where the last equality uses the relation $\fkn + \fidlen  -
\fidlen (0) = \fdn = \fbaren_\f1$, which follows from
(\ref{def-idlen}) and (\ref{def-kn}).
Thus to show that $\{\lan f, \fmeasn\ran\}$ is relatively compact,
it suffices to show that $\{ \lan f, \fmeasn_0 \ran \}$ and the
sequences associated with each of the four terms on the right-hand
side of the last display is relatively compact. Lemma
\ref{tight-xn} immediately guarantees this for the last two terms
and
 Assumption \ref{ass-init}(3) ensures this for  $\{ \lan f, \fmeasn_0\ran\}$.
In addition, since $\fmeasn$ is a sub-probability measure and
$\fidlen  (s) \in [0,1]$ for all $s$,   for every $N \in \N$, the
first two terms are uniformly bounded by $\nrm{f^\prime}_\infty t$
and, moreover,
\[ \left[\int_t^{t+u} \left|\lan f^\prime, \fnewmeasn_{s} \ran\right| \, ds\right] \vee \left[ f^\prime (0) \int_t^{t+u} \fidlen (s) \, ds \right] \leq  \nrm{f^\prime}_\infty u.
\]
This verifies Kurtz' criteria K1 and K2 (with $\beta = 1)$, and
thus establishes relative compactness of the sequences associated
with the remaining two terms.
 \end{proof}

We are now ready to state the main tightness result.

\begin{theorem}
\label{th-tight} Suppose Assumption \ref{ass-init} is satisfied .
Then the sequence \newline $\{(\fxn, \fmeasn, \fdcompn_\f1,
\fbaren_\f1), N \in \N\}$ is relatively compact. 
\end{theorem}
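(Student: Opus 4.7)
The plan is to assemble the theorem from the three lemmas already proved in this section, using the standard fact that relative compactness of a sequence in a finite product of Polish spaces is equivalent to coordinate-wise relative compactness (see, e.g., Problem 3.22 of \cite{ethkurbook}). Lemma \ref{tight-xn} already gives relative compactness of $\{\fxn\}$, $\{\fdcompn_\f1\}$ and $\{\fbaren_\f1\}$ in $\mathcal{D}_{\R_+}[0,\infty)$. Thus the only component that requires additional work is the measure-valued sequence $\{\fmeasn\}$, and it is here that I would first establish tightness of the auxiliary sequence $\{\fnewmeasn\}$ defined in (\ref{def-newmeas}), which takes values in $\mathcal{D}_{\mmone}[0,\infty)$.

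To prove that $\{\fnewmeasn\}$ is tight I would verify Jakubowski's two criteria. Criterion J1 (compact containment) is exactly the content of Lemma \ref{lem-tight1}. For criterion J2, the natural candidate for the separating family is
\[
 \mathcal{F} \doteq \left\{ F_f : f \in \ocdbm \right\}, \qquad F_f(\mu) \doteq \lan f, \mu \ran.
\]
Each $F_f$ is continuous on $\mmone$ with respect to the weak topology, $\mathcal{F}$ is closed under addition (since $F_{f_1}+F_{f_2}=F_{f_1+f_2}$ and $\ocdbm$ is a vector space), and it separates points of $\mmone$ by a standard approximation argument (every $C_c$ function on $[0,M)$ can be approximated uniformly by a sequence in $\ocdbm$, and $C_c$ separates Radon probability measures). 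Tightness of $\{F_f(\fnewmeasn)\} = \{\lan f,\fnewmeasn\ran\}$ in $\mathcal{D}_{\R}[0,\infty)$ for each such $f$ is precisely the content of Lemma \ref{lem-tight2}. Hence Jakubowski's theorem yields tightness, and thus relative compactness by Prohorov, of $\{\fnewmeasn\}$ in $\mathcal{D}_{\mmone}[0,\infty)$.

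Next I would deduce relative compactness of $\{\fmeasn\}$ in $\mathcal{D}_{\mmsp}[0,\infty)$ from that of $\{\fnewmeasn\}$ together with $\{\fxn\}$. The two are linked by
\[
\fmeasn_t = \fnewmeasn_t - \delta_0\,[1 - \fxn(t)]^+,
\]
which follows from (\ref{def-newmeas}) and the scaled non-idling condition. Since the product sequence $\{(\fxn,\fnewmeasn)\}$ is relatively compact and the map $(x,\mu) \mapsto \mu - \delta_0 [1-x]^+$ is continuous from $\R_+ \times \mmone$ to $\mmsp$ (continuity of $x \mapsto [1-x]^+\delta_0$ into $\mmsp$ is elementary in the vague topology), an application of the continuous mapping theorem on $\mathcal{D}$-space, combined with the fact that the limit points of $\fxn$ are continuous at times where those of $\fnewmeasn$ jump (a point that deserves a brief sanity check but is clear since both pre-limit processes share the same jump times and $[1-\fxn(t)]^+$ inherits c\`{a}dl\`{a}g paths), yields relative compactness of $\{\fmeasn\}$.

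Finally I would combine the four coordinate-wise relative compactness statements to conclude relative compactness of the product sequence $\{(\fxn,\fmeasn,\fdcompn_\f1,\fbaren_\f1)\}$ in the product Skorokhod space. The main obstacle is really the verification of criterion J2 with a family that both separates points \emph{and} is covered by Lemma \ref{lem-tight2}; the rest is bookkeeping that leverages the work already done. One subtlety to watch is that Lemma \ref{lem-tight2} is stated for $f \in \ocdbm$ rather than for $\ocdcm$, so no truncation argument is needed to pass from the test-function class used for tightness to a point-separating subclass.
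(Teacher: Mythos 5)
Your proposal follows the same route as the paper's proof: Lemma \ref{tight-xn} handles the three real-valued coordinates; tightness of $\{\fnewmeasn\}$ is obtained from Jakubowski's criteria, with J1 supplied by Lemma \ref{lem-tight1} and J2 by Lemma \ref{lem-tight2} via exactly the same separating family $\F = \{ \mu \mapsto \lan f,\mu\ran : f \in \ocdbm\}$; and $\{\fmeasn\}$ is then recovered by removing the idle-server mass at the origin. The only difference is cosmetic: the paper simply writes $\fmeasn = \fnewmeasn - \fidlen\delta_0$ and asserts tightness of $\{\fmeasn\}$ since $\fidlen$ is tight by Lemma \ref{tight-xn}, whereas you package the same observation as a continuous-mapping argument on the pair $(\fxn,\fnewmeasn)$ and flag the shared-jump-times subtlety explicitly, which is a slightly more careful rendering of the same idea.
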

\begin{proof}
Relative compactness of the sequences $\{\fxn\}$, $\{ \fdcompn_\f1
\}$ and $\{\baren_\f1\}$ was established in Lemma \ref{tight-xn}.
Consider the family of real-valued functions on ${\cal M}_1[0,M)$
\[ \F \doteq \{F: \exists f \in \ocdbm \mbox{ such that } F(\mu) = \lan f, \mu \ran \quad \forall \mu \in {\cal M}_1[0,M)\}. \]
Then every function in $\F$ is clearly continuous with respect to
the weak topology on ${\cal M}_1[0,M)$  and the class $\F$ is trivially
closed with respect to addition. Moreover, $\F$ clearly separates
points in ${\cal M}_1[0,M)$. 
Then tightness of the sequence $\{\fnewmeasn\}$ follows from
Jakubowski's criteria, Lemma \ref{lem-tight1} and Lemma
\ref{lem-tight2}. On the other hand, $\fmeasn = \fnewmeasn -
\fidlen \delta_{0}$ and $\fidlen = 1-\lan \f1, \fmeasn \ran$ is
tight by Lemma \ref{tight-xn}. Thus the sequence $\{\fmeasn\}$ is
also tight. By Prohorov's theorem, this implies $\{ \fmeasn \}$ is
relatively compact.
\end{proof}

\subsection{Characterisation of Subsequential Limits}
\label{subs-pf3}

The main result of this section is the following theorem.

\begin{theorem}
\label{th-sublimit} Suppose Assumption \ref{ass-init} is
satisfied and $(\fx,\fmeas)$  is the limit of any subsequence of
 $\{(\fxn,\fmeasn)\}$.
If either (i) $h$ is continuous on $[0,M)$ or
(ii) $\fe$ and $\fmeas_0$ are absolutely continuous and
the set of discontinuities of $h$ in $[0,M)$ is a closed set having zero
Lebesgue measure,
then $(\fx, \fmeas)$ satisfies the fluid equations.
\end{theorem}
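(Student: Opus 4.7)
The plan is to pass to the limit in each of the three relations of the pre-limit characterisation (\ref{eqn-prelimit1})--(\ref{comp-prelimit}), exploiting the relative compactness from Theorem \ref{th-tight} and the vanishing of the jump martingales from Lemma \ref{lem-mgale}. By Theorem \ref{th-tight}, fix a subsequential limit $(\fx,\fmeas,\fdcomp,\fbare)$ of $\{(\fxn,\fmeasn,\fdcompn_{\f1},\fbaren_{\f1})\}$ and apply Skorokhod representation (cf.\ Remark \ref{rem-skorep}) to realise the convergence almost surely. Lemma \ref{lem-mgale} gives $\fbaren_{\f1}-\fdcompn_{\f1}=\fmartn_{\f1}\Rightarrow\zerof$, so $\fbare=\fdcomp$. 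Since the prelimits $\fxn,\fdcompn_{\f1},\fbaren_{\f1}$ have jumps of size $O(1/N)$, their limits are continuous and a.s.\ Skorokhod convergence upgrades to u.o.c.\ convergence; likewise, since each $\fmeasn_t$ is a sum of atoms of mass $1/N$, the limit $\fmeas$ is continuous in $t$ in the vague topology. Passing to the limit in the mass balance (\ref{eqn-prelimit2}) gives $\fx=\fx(0)+\fe-\fdcomp$, and the non-idling relation (\ref{comp-prelimit}) yields (\ref{eq-fnonidling}) by continuity of $x\mapsto[1-x]^+$. Defining $\fk$ as the a.s.\ limit of $\fkn=\lan\f1,\fmeasn\ran-\lan\f1,\fmeasn_0\ran+\fdn$, relation (\ref{eq-fk}) follows once $\fdcomp$ is identified as $\int_0^{\cdot}\lan h,\fmeas_s\ran\,ds$.

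The core step, and the one that drives everything else, is to prove that for every $\newf\in\newocdcpm$,
\[
\fdcompn_{\newf}(t)\;=\;\int_0^t\lan h(\cdot)\newfs,\fmeasn_s\ran\,ds\;\longrightarrow\;\int_0^t\lan h(\cdot)\newfs,\fmeas_s\ran\,ds\qquad\text{a.s.\ as }N\to\infty.
\]
Under hypothesis (i), choose $L<M$ with $\mathrm{supp}(\newf)\subset[0,L]\times\R_+$; since $h\in{\cal C}[0,M)$, the product $h\newf$ is bounded and continuous on $[0,L]\times\R_+$. The a.s.\ vague convergence $\fmeasn_s\vconv\fmeas_s$, the continuity of $s\mapsto\fmeas_s$, and a smooth cutoff of $\ind_{[0,L]}$ together yield $\lan h\newfs,\fmeasn_s\ran\to\lan h\newfs,\fmeas_s\ran$ uniformly on $[0,T]$, and bounded convergence gives the claim. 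Inserting this and the vanishing of $\fmartn^{(N)}_\newf$ (Lemma \ref{lem-mgale}) into (\ref{eqn-prelimit1}), and noting that all other terms pass to the limit by continuity of the relevant functionals of $(\fmeasn,\fkn)$, produces the measure-valued equation (\ref{eq-ftmeas}); taking $\newf\equiv\f1$ via compactly supported approximation, with tail control from Lemma \ref{lem-fdcompn}(1), recovers (\ref{eq-fx}) and verifies (\ref{cond-radon}).

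The hard part is case (ii), where $h$ is continuous only off a closed null set $H\subset[0,M)$: vague convergence cannot be applied directly to the discontinuous integrand $h\newf$ unless $\fmeas_s$ does not charge $H$. The plan is to use Lemma \ref{lem-fdcompn}(3), which is available because Assumption \ref{ass-init}(3) (and Lemma \ref{lem-tight1}) ensure (\ref{eq-dcompn})--(\ref{eq-dcompn2}). Cover $H$ by an open neighbourhood $H_R\subset[0,L]$ whose Lebesgue measure tends to zero; that lemma gives $\limsup_N\E[\sup_{t\in[0,T]}\fdcompn^{(N)}_{\ind_{H_R}}(t)]\to 0$, controlling the contribution of $h\newf$ on $H_R$ at the prelimit level. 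Replace $h$ by a continuous function $h_R$ agreeing with $h$ on $[0,L]\setminus H_R$ (extended across the components of $H_R$); the argument from case (i) then applies to $h_R\newf$, while the discrepancies between $h_R\newf$ and $h\newf$ on both the prelimit and limit sides are bounded uniformly in $N$ by the above estimate (the limit side via Fatou). Sending $R\to\infty$ simultaneously identifies $\fdcomp_{\newf}(t)=\int_0^t\lan h\newfs,\fmeas_s\ran\,ds$ and shows that $\fmeas_s(H)=0$ for a.e.\ $s\in[0,T]$; the remainder of the argument then proceeds as in case (i), yielding (\ref{eq-ftmeas}) and closing the fluid equations.
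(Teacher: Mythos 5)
Your architecture — Skorokhod realisation, Lemma \ref{lem-mgale} to annihilate $\fmartn_\newf$, identification of the limit compensator, passage to the limit in (\ref{eqn-prelimit1})--(\ref{comp-prelimit}) — matches the paper's, and case (i) is handled correctly: for $\newf\in\newocdcpm$ and $h$ continuous, $h\newfs$ is bounded continuous with compact support, so the almost-sure weak convergence of $\fmeasn_s$ to $\fmeas_s$ together with bounded convergence give the required identification; the paper reaches the same point via Lemma \ref{lem-depart}, whose hypotheses hold vacuously when $h$ is continuous.

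Case (ii), however, has a genuine gap. You replace $h$ by a continuous Tietze extension $h_R$ of $h|_{[0,L]\setminus H_R}$ and assert that the discrepancy between $h\newf$ and $h_R\newf$ is controlled by Lemma \ref{lem-fdcompn}(3). That lemma bounds only $\fdcompn_{\ind_{H_R}}=\int_0^{\cdot}\lan h\ind_{H_R},\fmeasn_s\ran\,ds$, which carries the $h$-weight. The discrepancy on $H_R$ splits as $\int_0^t\lan |h-h_R|\ind_{H_R}\newfs,\fmeasn_s\ran\,ds\le\nrm{\newf}_\infty\bigl(\fdcompn_{\ind_{H_R}}(t)+\int_0^t\lan h_R\ind_{H_R},\fmeasn_s\ran\,ds\bigr)$, and the second term is not controlled: it is at most $\nrm{h_R}_\infty\int_0^t\fmeasn_s(H_R)\,ds$, but there is no estimate on the unweighted mass $\fmeasn_s(H_R)$ (the prelimit ages may well concentrate on $H_R$), and $\nrm{h_R}_\infty=\sup_{[0,L]\setminus H_R}h$ typically grows without bound as $R\to\infty$ when $h$ is unbounded. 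Nor can one generally arrange $h_R\le h$ on $H_R$: $h$ may vanish in the interior of a component of $H_R$ while being strictly positive at its endpoints, which a continuous $h_R$ matching $h$ there must respect.

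The paper's Lemma \ref{lem-depart} circumvents this by using the \emph{discontinuous} truncation $h_C=h\ind_{\{h<C\}}$, which gives $h_C\le h$ and $h-h_C=h\ind_{\{h\ge C\}}$ automatically, so the error is exactly of the form controlled by Lemma \ref{lem-fdcompn}(3) with $H_R=[0,L]\cap\{h\ge C\}$. Passing to the limit in $\lan h_C,\fmeasn_s\ran$ is then done by Portmanteau, which requires $\fmeas_s$ to charge neither the discontinuity set of $h$ nor $\partial\{h<C\}$. Supplying this is precisely the role of Lemma \ref{abs-nus} (absolute continuity of $\fmeas_0$ and $\fe$ propagates to $\fmeas_s$) together with Lemma \ref{lem-applem} ($\partial\{h<C\}$ is Lebesgue-null when the discontinuity set is closed and Lebesgue-null). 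Your proposal has no analogue of Lemma \ref{abs-nus}; and the claim that sending $R\to\infty$ ``shows that $\fmeas_s(H)=0$ for a.e.\ $s$'' is not a consequence of your argument — in the paper's proof that fact is an input, established independently, not a by-product.
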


The rest of the section is devoted to the proof of this theorem.
Let $(\fe, \fx(0), \fmeas_0)$ be the  ${\cal S}_0$-valued random
variable that satisfies Assumption \ref{ass-init}. For simplicity
of notation, we denote the subsequence again by $N$. By Assumption
\ref{ass-init} and Theorem \ref{th-tight}, the associated sequence
\[ \{(\fen, \fxn(0), \fmeasn_0, \fxn, \fmeasn, \fdcompn_\f1, \fbaren_\f1)\}
\]  is
relatively compact.
 Hence it converges in distribution, along
a subsequence that we denote again by $N$, to a limit $(\fe,
\fx(0), \fmeas_0, \fx, \fmeas, \fdcomp_\f1, \fbare_\f1)$. We now
identify some  properties of the limit that will be used to prove
Theorem \ref{th-sublimit}. Appealing to the Skorokhod
representation theorem, we will somewhat abuse notation and assume
that $(\fxn,\fmeasn,  \fdcompn_\f1, \fbaren_\f1)$ converges almost
surely to $(\fx,\fmeas,\fdcomp_\f1,\fbare_\f1)$. Then, we
immediately have $\lan \f1, \fmeasn \ran \wconv \lan \f1,
\fmeas\ran$, and by (\ref{def-nonidling}), (\ref{def-dn}),
(\ref{def-kn}) and the fact that $\fmartn_\f1\Rightarrow \zerof$,
we infer that the non-idling condition (\ref{eq-fnonidling}) holds, that
\[  \fx = \fx (0) + \fe - \fbare_\f1 = \fx (0) + \fe - \fdcomp_\f1 \]
and that
\[ \fk = \lan \f1, \fmeas_{\cdot} \ran - \lan \f1, \fmeas_0 \ran
+ \fdcomp_\f1.
\]
Comparing the last two relations with equations (\ref{eq-fx}) and (\ref{eq-fk}),
it is clear that  in order to  prove that $(\fx, \fmeas)$
satisfies the fluid equations, one needs to show that
$\fbare_\f1 = \fdcomp_\f1 = \fd$, where $\fd$ is defined in terms
of $\fmeas$ via (\ref{def-fd}).
 If $h$ is continuous and uniformly bounded on $[0, M)$
(as is the case, for example, for the lognormal distribution),
then
  this is a simple consequence of the (almost sure)
weak convergence of $\fmeasn$ to $\fmeas$. However, additional
work is required to justify this convergence for general $h$ that
is only a.e.\ continuous on $[0,M)$.
As shown in the next two lemmas,
this is possible under some addditional, but still very mild,
assumptions on the initial conditions.

\begin{lemma}
\label{abs-nus} If $\fmeas_0$ and $\fe$ are absolutely continuous,
then so is $\fmeas_s$ for every $s \in [0,\infty)$.
\end{lemma}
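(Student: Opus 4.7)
The plan is to derive the representation (\ref{eq2-fmeas}) for the subsequential limit $\fmeas$ and then use a change of variables to conclude absolute continuity. The key point is that (\ref{eq2-fmeas}) displays $\fmeas_t$ as a sum of the pushforward under $x \mapsto x+t$ of (a multiple of) $\fmeas_0$ and the pushforward under $s \mapsto t-s$ of (a multiple of) $d\fk$, so AC of $\fmeas_0$ and $\fk$ translates directly into AC of $\fmeas_t$.

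First I would pass to the limit in the prelimit equation (\ref{eqn-prelimit1}) to show that $(\fmeas, \fk)$ satisfies the integral equation (\ref{eq-pde}) for suitable test functions. The martingale piece $\fmartn_\newf$ vanishes by Lemma \ref{lem-mgale}. The compensator $\fdcompn_\newf(t) = \int_0^t \lan h\newf(\cdot,s), \fmeasn_s\ran\,ds$ converges to $\int_0^t \lan h\newf(\cdot,s), \fmeas_s\ran\,ds$ whenever $h\newf$ is bounded and continuous on its compact support; this is automatic when $h \in {\cal C}[0,M)$, and when $h$ has a closed discontinuity set $Z \subset [0,M)$ of zero Lebesgue measure it holds for any $\newf \in \newocdcpm$ with $\supp(\newf) \cap (Z \times \R_+) = \emptyset$. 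Combined with Propositions \ref{prop-pde1} and \ref{prop-pde2} (applied away from $Z$), this yields, for $f \in \ccm$ with $\supp(f) \cap Z = \emptyset$,
\[
\lan f, \fmeas_t\ran = \int_{[0,M)} f(x+t)\,\tfrac{1-G(x+t)}{1-G(x)}\,\fmeas_0(dx) + \int_0^t f(t-s)(1-G(t-s))\,d\fk(s).
\]

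Second, I would establish that $\fk$ is absolutely continuous. Combining the limiting identities $\fx = \fx(0) + \fe - \fdcomp_\f1$, $\fk = \lan\f1, \fmeas_\cdot\ran - \lan\f1, \fmeas_0\ran + \fdcomp_\f1$, and the non-idling condition gives $\fk(t) = \fe(t) + \fx(0) - \lan\f1, \fmeas_0\ran - [\fx(t) - 1]^+$. Since $[\,\cdot\, - 1]^+$ is $1$-Lipschitz, absolute continuity of $\fk$ reduces to that of $\fdcomp_\f1$. The latter follows by transferring to the limit the uniform renewal bound $\E[\fdcompn_\f1(b) - \fdcompn_\f1(a) \mid {\cal F}_a^{(N)}] \leq U(b-a)$ from Lemma \ref{lem-estimate}: together with the pathwise monotonicity of $\fdcomp_\f1$ and the fact that the prelimit $\fdcompn_\f1$ is AC with density $\lan h, \fmeasn_s\ran$ whose increments satisfy the equicontinuity from Lemma \ref{lem-fdcompn}(2), this yields AC of $\fdcomp_\f1$ and hence of $\fk$.

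Third, writing $\fmeas_0(dx) = m_0(x)\,dx$ and $d\fk(s) = \fsmallk(s)\,ds$, the changes of variables $y = x+t$ and $u = t-s$ transform the representation into
\[
\lan f,\fmeas_t\ran = \int_t^M f(y)\,\tfrac{1-G(y)}{1-G(y-t)}\, m_0(y-t)\,dy + \int_0^t f(u)(1-G(u))\fsmallk(t-u)\,du,
\]
exhibiting $\fmeas_t$ as absolutely continuous on $[0,M) \setminus Z$ with explicit density; since $Z$ has zero Lebesgue measure and the density above does not charge $Z$, $\fmeas_t$ is absolutely continuous on all of $[0,M)$. The main obstacle is step two: the uniform bound of Lemma \ref{lem-estimate} gives equicontinuity of $\{\fdcompn_\f1\}$, but strict AC of the limit $\fdcomp_\f1$ requires ruling out singular increments and is the delicate part of the argument.
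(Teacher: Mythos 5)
Your proposal takes a genuinely different route from the paper, and it has a real gap at its core.

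The paper's proof of Lemma \ref{abs-nus} is a direct pathwise argument: it establishes absolute continuity of $\fk$ (via absolute continuity of $\fdcomp_\f1$, extracted from the proof of Lemma \ref{lem-fdcompn}(2)), and then bounds $\fmeasn_s(a_i,b_i)$ for a disjoint collection of intervals by $\fmeasn_0(a_i-s,b_i-s)$ (when $a_i\ge s$) or by $\fkn(s-b_i,s-a_i)$ (when $a_i<s$), passes to the limit with Portmanteau, and reads off the absolute-continuity modulus from those of $\fmeas_0$ and $\fk$. At no point does it invoke the explicit representation (\ref{eq2-fmeas}); that representation is deliberately avoided here because it has only been established for $h\in\mathcal{C}[0,M)$, whereas Lemma \ref{abs-nus} is needed precisely for the case where $h$ is not continuous.

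This is where your proposal fails. You want to derive (\ref{eq2-fmeas}), at least for $f$ supported away from the discontinuity set $Z$, by "applying Propositions \ref{prop-pde1} and \ref{prop-pde2} away from $Z$." This does not go through. Proposition \ref{prop-pde1}'s transformation $\newf\mapsto\tnewf\psi^h$ requires $\psi^h$ to be continuously differentiable, but $\partial_x\psi^h(x,t)$ involves $h(x-t)$, so $\psi^h$ has corners wherever $x-t\in Z$, not just where $x\in Z$ — restricting the test function's support away from $Z$ in the $x$-variable does not rescue the $C^1$ class. Proposition \ref{prop-pde2} requires (\ref{eq-pde2}) to hold for the \emph{entire} class $\mnewocdcp$ (its proof solves the transport PDE backwards for an arbitrary $\psi$), so it cannot be run on a restricted family of test functions. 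The extension of Theorem \ref{th-pde} to $h\in\lloc[0,M)$ is exactly what Remark \ref{rem-gencont} says requires additional technical PDE approximation arguments that the paper chooses not to carry out; your proof quietly assumes it. There is also a secondary gap in your final step: even if (\ref{eq2-fmeas}) held for all $f\in\ccm$ supported away from $Z$, this determines $\fmeas_t$ only on $[0,M)\setminus Z$ and says nothing about $\fmeas_t(Z)$; "the density does not charge $Z$" is not enough — you would need a separate argument that $\fmeas_t$ puts no mass on the Lebesgue-null set $Z$, which your setup does not provide. Your treatment of the absolute continuity of $\fdcomp_\f1$ (and hence of $\fk$) is in the right spirit — it really does come out of the structure of the bound inside the proof of Lemma \ref{lem-fdcompn}(2), which controls increments by integrals of $h$ over translated sets — but the $U(b-a)$ renewal bound alone is not usable (as $U(0+)\ge1$); this you correctly flag as delicate, and the paper handles it by appealing to that structure rather than the renewal bound. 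The paper's pathwise bound is considerably more elementary and bypasses all of these issues.
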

\begin{proof}
Using the fact that $\fdcompn \ra \fdcomp$ and Fatou's lemma, the
proof of the second assertion of Lemma \ref{lem-fdcompn},
immediately shows that $\fdcomp_1$ is absolutely continuous.
For simplicity, we will assume that $M = \infty$ for the
rest of the proof.  The case $M < \infty$ is analogous and left to the
reader.  Since
$\fe$ is absolutely continuous by assumption, the two equations
preceding the statement of the lemma allow us to deduce that $\fx$ and $\fk$ are also absolutely
continuous. Fix $T < \infty$, and  let $C_\ve$ denote the set of
collections of finite disjoint
 intervals
$(a_i,b_i) \subset [0,T], i = 1, \ldots, n$,  such that
$\sum_{i=1}^n (b_i - a_i) \leq \ve$.  Given $\delta > 0$, choose
$\ve > 0$ small enough so that
\[  \sup_{\{(a_i,b_i)\} \in C_\ve}
\left[ \fmeas_0 \left(\cup_{i=1}^n [a_i,b_i] \right)\right] \vee
\left[ \sup_{s \in [0,T]} \sum_{i =1}^n \fk (s-b_i, s-a_i) \right]
 < \dfrac{\delta}{2}.
\]
Now, consider a particular collection $\{(a_i,b_i)\} \in C_\ve$,
and define $J_1 = \{i \in \{1, \ldots, n\}: a_i - s \geq 0 \}$ and
let $J_2 = \{1, \ldots, n\} \sm J_1$.  Then for any $s \in [0,T]$,
note that  we have
\begin{eqnarray*}
 \sum_{i=1}^n \fmeasn_s (a_i, b_i)
& = &  \sum_{i\in J_1} \fmeasn_s (a_i, b_i)  + \sum_{i \in J_2} \fmeasn_s (a_i,b_i) \\
& \leq & \sum_{i \in J_1} \fmeasn_0 (a_i - s, b_i-s) + \sum_{i \in
J_2}
\fmeasn_{s-a_i} (0,b_i - a_i) \\
& \leq &  \sum_{i \in J_1} \fmeasn_0 (a_i - s, b_i-s)+ \sum_{i \in
J_2} \fkn (s- b_i, s- a_i ).
\end{eqnarray*}
Taking limits as $N \ra \infty$ and then  the max over collections
in $C_\ve$, and using the fact that $\fkn \ra \fk$, Assumption
\ref{ass-init}(3) and Portmanteau's theorem, we see that
\begin{eqnarray*}
 \fmeas_s (\cup_{i=1}^n (a_i, b_i))  & \leq &  \liminf_{N}
 \fmeasn_s(\cup_{i=1}^n (a_i, b_i)) \\
& =  &
\liminf_{N} \sum_{i=1}^n \fmeasn_s (a_i, b_i)  \\
& \leq & \limsup_{N}  \sum_{i\in J_1} \fmeasn_0 [a_i-s, b_i-s] +
\sum_{i \in J_2} \lim_{N}  \fkn (s- b_i, s- a_i) \\
& \leq & \fmeas_0 \left(\cup_{i \in J_1} [a_i-s,b_i-s]\right) +
\sum_{i \in J_2}   \fk (s- b_i,s-a_i).
\end{eqnarray*}
Taking the supremum over all collections of intervals in $C_\ve$,
we conclude that for every $\delta > 0$ there exists $\ve > 0$
such that
\[ \sup_{\{(a_i,b_i)\} \in C_\ve\}} \sum_{i=1}^n \fmeas_s (a_i, b_i)
\leq \delta, \] which shows that $\fmeas_s$ is absolutely
continuous.
\end{proof}

\begin{lemma}
\label{lem-depart}
Suppose that for a.e.\ $s \in [0,\infty)$, $h$ is $\fmeas_s$-a.e.
 continuous on $[0, M)$, and for every $L < M$, there is a sufficiently large
 $C < \infty$  such that the boundary of the set
$\{x \in [0,L]: h(x) < C\}$ has zero $\fmeas_s$ measure.
 Then
\[ \fdcomp_\f1 = \fd =  \int_0^{\cdot} \lan  h, \fmeas_s \ran \, ds.\]
\end{lemma}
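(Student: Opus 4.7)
The plan is to show, along the Skorokhod-coupled a.s.\ convergent subsequence, that $\fdcompn_\f1(t) = \int_0^t \lan h, \fmeasn_s\ran \, ds$ converges to $\int_0^t \lan h, \fmeas_s\ran \, ds$ for every continuity point $t$ of the (monotone nondecreasing) limit $\fdcomp_\f1$. Since these continuity points are dense and the right-hand side is continuous in $t$, this identifies $\fdcomp_\f1$ with $\fd$ everywhere. The central device is a two-sided truncation of $h$: horizontally, by cutting off near $M$, and vertically, by cutting off where $h$ is large. For the bounded, truncated part I apply Portmanteau's theorem plus bounded convergence to conclude pointwise convergence of the inner integrals; the two truncation errors will be controlled uniformly in $N$ via Lemma \ref{lem-fdcompn}(1) and Lemma \ref{lem-fdcompn}(3) respectively.

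In detail, fix $t$ and $\varepsilon > 0$. Lemma \ref{lem-fdcompn}(1) yields $L < M$ with $\sup_N \E\int_0^t \int_{[L,M)} h(x) \, \fmeasn_s(dx)\, ds < \varepsilon$. Since $h$ is locally integrable on $[0,L]$, the sets $H_C \doteq \{x \in [0,L]: h(x) \geq C\}$ have Lebesgue measure vanishing as $C \to \infty$, so Lemma \ref{lem-fdcompn}(3) furnishes $C$ with $\sup_N \E\int_0^t \int_{H_C} h(x)\, \fmeasn_s(dx)\, ds < \varepsilon$. By the hypothesis, at the cost of further enlarging $C$, we can arrange that the boundary of $\{x \in [0,L]: h(x) < C\}$ has zero $\fmeas_s$-measure for a.e.\ $s$ (for each such $s$ only a countable set of levels $C$ can charge that boundary, so almost every large $C$ works). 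Set $\chi(x) \doteq h(x) \ind_{[0,L]}(x) \ind_{\{h(x) < C\}}$; this is bounded by $C$ and, for a.e.\ $s$, continuous at $\fmeas_s$-almost every $x$, since $h$ is $\fmeas_s$-a.e.\ continuous by assumption and the indicator $\ind_{\{h < C\}}$ is continuous off the just-controlled boundary.

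Because $\fmeasn \to \fmeas$ a.s.\ in $\mathcal{D}_{\mmsp}[0,\infty)$, one has $\fmeasn_s \to \fmeas_s$ weakly for every $s$ in the co-countable set of continuity points of $s \mapsto \fmeas_s$; Portmanteau's theorem then gives $\lan \chi, \fmeasn_s\ran \to \lan \chi, \fmeas_s\ran$ for a.e.\ $s \in [0,t]$, and bounded convergence (with uniform bound $C$) delivers
\[ \int_0^t \lan \chi, \fmeasn_s\ran\, ds \longrightarrow \int_0^t \lan \chi, \fmeas_s\ran\, ds \quad \mbox{a.s.} \]
Combining this with the two uniform truncation estimates (the second passed to the limit via Fatou's lemma) yields $\limsup_N \E \left| \int_0^t \lan h, \fmeasn_s\ran\, ds - \int_0^t \lan h, \fmeas_s\ran\, ds \right| \leq 4\varepsilon$, and sending $\varepsilon \downarrow 0$ concludes the proof. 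The principal obstacle is the vertical truncation: $h$ may blow up near the right endpoint of its support, and the hypothesis on the boundary of $\{h < C\}$ is precisely what is needed to make $\chi$ a legitimate test function for Portmanteau's theorem, while Lemma \ref{lem-fdcompn}(3) is what converts a Lebesgue-measure-small set $H_C$ into a quantity small in expectation against $h(x)\, \fmeasn_s(dx)\, ds$.
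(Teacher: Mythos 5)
Your proposal is correct and follows essentially the same route as the paper's proof: the same two-sided truncation of $h$ (horizontally near $M$, vertically above $C$), the same use of Lemma \ref{lem-fdcompn}(1) and \ref{lem-fdcompn}(3) to control the two truncation errors uniformly in $N$, the same invocation of Portmanteau for the bounded truncated part, and Fatou to transfer the tail estimates to $\fmeas$. Your remark that only countably many levels $C$ can charge the boundary for each fixed $s$ is a cleaner way of making explicit something the paper's argument leaves implicit (namely, that $C$ can be taken simultaneously large enough to make $H_C$ small and to preserve the zero-boundary-measure property).
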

\begin{proof}
Fix $T \in (0,\infty)$.  Then for any $L < M$, we have the
elementary bound
\be
\label{basic}
\sup_{t \in [0,T]}
\left|\fdcompn_\f1 (t) - \fd (t) \right| 
\leq    R_1^{(N)} (L) + R_2^{(N)} (L) + R_3^{(N)}(L), \ee where
\begin{eqnarray*}
 R_1^{(N)} (L) & \doteq &  \int_0^T \left( \int_{(L,M)}
h(x) \fmeasn_s (dx) \right) \, ds;  \\
 R_2^{(N)} (L) & \doteq  &     \int_0^T \left|\left( \int_{[0,L]} h(x) \fmeasn_s (dx) \right)
- \left( \int_{[0,L]} h(x) \fmeas_s (dx) \right) \right| \, ds;   \\
R_3^{(N)} (L) & \doteq  & \int_0^T \left( \int_{(L, M)} h(x)
\fmeas_s (dx) \right) \, ds.
\end{eqnarray*}
Now by  Lemma \ref{lem-fdcompn}(1), we know that \be \label{know1}
 \lim_{L \ra M} \sup_{N} \E\left[ R_1^{(N)} (L)\right] = 0.
\ee On the other hand, since $\fmeasn_s
\wconv \fmeas_s$ for a.e. $s \in [0,t]$,
 by Theorem A.3.12 of \cite{dupellbook} we know
that
\[ \lan \ind_{(L,M)} h, \fmeas_s \ran \leq
\liminf_{N \ra \infty} \lan \ind_{(L,M)}  h, \fmeasn_s \ran. \]
 Combining
this with Fatou's lemma, (\ref{know1}), and the fact that
$R_3^{(N)}$ is independent of $N$, we conclude that
\be
\label{know2} \lim_{L \ra M}\E\left[ R_3^{(N)} (L) \right] \leq
\lim_{L \ra M} \liminf_{N \ra \infty} \E [R_1^{(N)} (L)] = 0.
 \ee

Now fix $\ve > 0$, and let $L < M$ be such that
\[ \limsup_{N} \E\left[ R_1^{(N)} (L)\right] \leq \dfrac{\ve}{2} \quad \quad
\mbox{ and }  \quad \quad
 \limsup_{N} \E \left[R_3^{(N)} (L) \right] \leq \dfrac{\ve}{2}.
\]
For this fixed $L < \infty$, choose $C < \infty$  large with
$\fmeas_s \left(\partial \{h < C\}\right)=0$, for
a.e. $s\in[0,T].$ Define now
\[ h_C(x)=\left\{\begin{array}{ll}
                  h(x)  &\mbox{if $h(x)<C$}\\
                  0     &\mbox{if $h(x)\ge C$}
                  \end{array}
                  \right. \]
 Then $h_C$ is bounded by $C$ and so  for a.e.\
 $s\in [0,T]$, the fact that $\fmeas_s^{(N)} \wconv \fmeas_s$ and Portmanteau's theorem imply
\[ \lim_{N \ra \infty} \left| \int_{[0,L]} h_C(x) \fmeasn_s (dx)
-  \int_{[0,L]} h_C(x) \fmeas_s (dx) \right| = 0.
\]
On the other hand, we have
\[  \int_{[0,L]}\left(h(x)-h_C(x)\right)\fmeasn_s(dx) =
 \int_{[0,L]\cap\{h\ge
C\}} h(x)\fmeasn_s(dx)  \]
and, likewise,
\[\int_{[0,L]}\left(h(x)-h_C(x)\right)\fmeas_s(dx)
=\int_{[0,L]\cap\{h\ge C\}}
h(x)\fmeas_s(dx). \]
Combining the last three relations and the bounded convergence theorem, we obtain
\[
\begin{array}{l}
\ds \limsup_N R_2^{(N)} (L)  \\
\quad \quad \ds
\le \sup_N \int_0^T\int_{[0,L]\cap\{h\ge C\}}h(x)\fmeasn_s(dx)ds
+\int_0^T\int_{[0,L]\cap\{h\ge C\}}h(x)\fmeas_s(dx)ds.
\end{array}
\]
Given any $\delta > 0$, the expectations of both terms on the
right-hand side are bounded by $\delta/2$ for all sufficiently
large $C$ --  the bound for the first term holding by an
application of relation (\ref{est-onemore}) of Lemma
\ref{lem-fdcompn}, while that for the second being a result of
Fatou's lemma applied to this relation. Since $\delta > 0$ is
arbitrary, this implies that
\[ \limsup_{N\ra\infty}\E \left[R^{(N)}_2 (L)\right] = 0. \]

Thus we have shown that
\[ \lim_{L \ra M} \limsup_{N}
\E[ R_1^{(N)} (L) +  R_2^{(N)}(L) +   R_3^{(N)} (L) ] = 0.  \]
When combined with the bound (\ref{basic}), Fatou's lemma
 and the fact that $\fdcompn \ra \fdcomp$, this implies that
\[
\E \left[ \sup_{t \in [0,T]} |\fdcomp (t) - \fd (t) | \right]
\leq \limsup_{N} \E \left[  \sup_{t \in [0,T]}|\fdcompn (t) - \fd (t) | \right]
= 0,
\]
which proves the lemma.
\end{proof}

Combining the above results, we now have a
 proof of the main result of this section. \\
\noi {\em Proof of Theorem \ref{th-sublimit}.} The conditions of
Lemma \ref{lem-depart} hold trivially if $h$ is continuous on
$[0, M)$.  On the other hand,  if $\fe$ and $\fmeas_0$ are
absolutely continuous, then due to Lemma \ref{abs-nus} the first
condition of Lemma \ref{lem-depart} is satisfied if
 $h$ is a.e.\ continuous on $[0,M)$.  If, in addition,
the set of discontinuities of $h$ are closed then the second condition of
Lemma \ref{lem-depart} is also satisfied due to Lemma \ref{lem-applem}.
 In either of
these situations, Lemma \ref{lem-depart} and the discussion
immediately prior to Lemma \ref{abs-nus} shows that $\fdn \ra \fd$
and (\ref{eq-fx}), (\ref{eq-fk}) and (\ref{eq-fnonidling}) are
satisfied. It only remains to establish (\ref{eq-ftmeas}). Since
both $y\ra\newf(y,s)$ and $y\ra
\newf_x(y,s)+\newf_t(y,s)$ are bounded and continuous, Assumption
\ref{ass-init}(3)  guarantees that
  $\lan
\newf(\cdot,0), \fmeasn_0 \ran  \Rightarrow \lan \newf(\cdot,0), \fmeas_0\ran$ and
the convergence of $\fmeasn$ to $\fmeas$ guarantees that for a.e.\
$s \in [0,t]$, $\lan \newf_x(\cdot,s)+\newf_t(\cdot,s), \fmeasn_s
\ran \Rightarrow \lan \newf_x(\cdot,s)+\newf_t(\cdot,s), \fmeas_s
\ran$. Thus, dividing (\ref{eqn-prelimit1}) by $N$, letting $N \ra
\infty$ and invoking the bounded convergence theorem, we conclude
that $\fmeas$ satisfies (\ref{eq-ftmeas}) for all $t \in
[0,\infty)$ such that $\fmeasn_t \Rightarrow \fmeas_t$.  Using
right-continuity of both sides, we then extend the result to all
$t \in [0,\infty)$. Since we used the Skorokhod Representation
Theorem,  the almost sure convergence asserted in the limits above
must be replaced by convergence in distribution. This shows the
desired result that $(\fx,\fmeas)$ satisfies the fluid equations.
\qed \\

We now obtain Theorem \ref{th-conv} as an immediate corollary. \\

\noi
{\em Proof of Theorem \ref{th-conv}. }
The first statement  of Theorem \ref{th-conv} is a direct consequence of Theorem \ref{th-tight}.
The remainder of the theorem follows from Theorem \ref{th-sublimit},
 the uniqueness of solutions to the fluid equations established in Theorem \ref{th-fluniq}
and the usual standard argument by contradiction that shows that the original
sequence converges to the solution of the fluid limit whenever the latter is unique.
\qed \\

\beginsec

\section{Convergence of the Fluid Limit to Equilibrium}
\label{sec-longtime}

In this section we characterise the behaviour of the unique solution $(\fx,\fmeas)$ to the
fluid equations, as $t \ra \infty$.

\begin{lemma}
\label{lem-idle}
Suppose $\fe$ is an absolutely continuous, strictly increasing function with
$d\fe/dt = \flam(\cdot)$ and  $(\fx, \fmeas)$ is the unique solution
to the fluid equations associated with the initial condition
$(\fe, 0, \zerof) \in \newspace$ and $h$.  If
\be
\label{def-tau1}
\tau_1 \doteq \inf \left\{t > 0: \int_0^t (1 - G(t-s)) \flam (s) \, ds = 1 \right\}
\ee
then we have the following properties.
\begin{enumerate}
\item
As $t \ra \tau_1$,
\[ \lan \f1, \fmeas_t \ran = \fx (t) \ra \int_0^{\tau_1} (1 - G(t-s)) \flam (s) \, ds, \]
and for every  function $f \in \cb$,
\[ \lan f, \fmeas_t \ran \ra
\int_{0}^{\tau_1} f(t-s) (1 - G(t-s)) \flam(s) \, ds. \]
\item
Suppose $\flam$ is constant.  Then, as $t \ra \infty$,
$\lan \f1, \fmeas_t \ran \ra \flam \wedge 1$ monotonically and, if $\flam \leq 1$, then
$\fmeas_t \wconv [\flam \wedge 1] \fmeass$ monotonically,
 where  $\fmeass$ is the
probability measure on $[0,\infty)$ with density $1-G(x)$, as defined in (\ref{def-meass}).
\item
Suppose
 $(\fe^\prime,\fx^\prime,\fmeas^\prime)$ is the solution to the fluid equations associated with any
other initial condition  
 $ (\fe^\prime, \fx^\prime(0), \fmeas_0^\prime)
\in \newspace$. If $\fe^\prime = \fe$ then for all $t \in
[0,\tau_1)$, \be \label{ineq-fidle} \lan \f1, \fmeas_t^\prime \ran
\geq \lan \f1, \fmeas_t \ran. \ee
\end{enumerate}
\end{lemma}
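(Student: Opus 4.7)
The plan is to first establish that on $[0,\tau_1)$ the system is in the subcritical regime, where one can exploit the explicit representation in Theorem~\ref{th-fluniq}. Since $\fx(0)=0$ and $\fx$ is continuous, $\fx<1$ in a right-neighbourhood of $0$; where this holds, \eqref{eq-fnonidling} gives $\lan\f1,\fmeas_s\ran=\fx(s)$ and the first case of \eqref{def-fsmallk} gives $\fsmallk=\flam$, hence $\fk=\fe$. A continuity/bootstrap argument together with the definition of $\tau_1$ extends this to all of $[0,\tau_1)$. Substituting $\fmeas_0=\zerof$ and $d\fk(s)=\flam(s)\,ds$ into \eqref{final-fleqs} then yields the clean identity
\[
\lan f,\fmeas_t\ran \;=\; \int_0^t f(t-s)\bigl(1-G(t-s)\bigr)\flam(s)\,ds, \qquad t\in[0,\tau_1),
\]
from which both claims of part~(1) follow by letting $t\uparrow\tau_1$: the $f\equiv\f1$ statement is immediate from the definition of $\tau_1$, and the general $f\in\cb$ case follows by dominated convergence.

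\textbf{Part (2).} Take $\flam$ constant. When $\flam\le 1$, the mean-$1$ normalisation $\int_0^\infty(1-G(u))\,du=1$ forces $\tau_1=\infty$, and the identity above specialises to $\lan f,\fmeas_t\ran=\flam\int_0^t f(u)(1-G(u))\,du$, which for non-negative $f$ is monotone in $t$ and converges by monotone convergence to $\flam\lan f,\fmeass\ran$, since $\fmeass$ has density $1-G$. When $\flam>1$, $\tau_1<\infty$ and $\lan\f1,\fmeas_{\tau_1}\ran=1$; to extend beyond $\tau_1$ I would invoke the semigroup property (Lemma~\ref{lem-sgroup}) to restart at $\tau_1$, and then argue from \eqref{def-fsmallk} and \eqref{eq-fnonidling} that $\fx(t)\ge 1$ for every $t\ge\tau_1$ (any strict drop below $1$ would force $\fsmallk=\flam>1$ with arrivals outpacing departures, producing a contradiction), pinning $\lan\f1,\fmeas_t\ran$ at $1$ monotonically.

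\textbf{Part (3) and the main obstacle.} I would case-split on non-idling. If $\fx'(t)\ge 1$, then $\lan\f1,\fmeas'_t\ran=1>\fx(t)=\lan\f1,\fmeas_t\ran$ and \eqref{ineq-fidle} is immediate. Otherwise $\fx'(t)<1$; set $\sigma\doteq\sup\{u\in[0,t]:\fx'(u)\ge 1\}\vee 0$, so on $(\sigma,t]$ the primed system is subcritical with $\fsmallk'\equiv\flam$. Shifting the initial time to $\sigma$ via Lemma~\ref{lem-sgroup} and invoking \eqref{final-fleqs} yields
\[
\lan\f1,\fmeas'_t\ran \;=\; \int_{[0,M)}\!\frac{1-G(x+t-\sigma)}{1-G(x)}\,\fmeas'_\sigma(dx) \;+\; \int_\sigma^t (1-G(t-u))\flam(u)\,du,
\]
and subtracting the identity from part~(1) reduces \eqref{ineq-fidle} to the weighted comparison
\[
\int_{[0,M)}\!\frac{1-G(x+t-\sigma)}{1-G(x)}\,\fmeas'_\sigma(dx) \;\ge\; \int_0^\sigma (1-G(t-u))\flam(u)\,du .
\]
For $\sigma=0$ the right side vanishes and the inequality is trivial. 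For $\sigma>0$ the main obstacle is the following: the total-mass identity $\lan\f1,\fmeas'_\sigma\ran=1$ (non-idling at a critical time) bounds the left side above by $1$, while the right side is bounded above by $\fx(\sigma)<1$, so neither bound alone closes the gap. To resolve this I plan to expand $\fmeas'_\sigma$ by a further application of \eqref{final-fleqs}, rewriting the left side as $\int(1-G(x+t))/(1-G(x))\,\fmeas'_0(dx)+\int_0^\sigma(1-G(t-s))\,d\fk'(s)$, and then combine the identity at time $\sigma$ with the pointwise monotonicity $1-G(t-\cdot)\le 1-G(\sigma-\cdot)$ on $[0,\sigma]$; handling the general case in which $\fx'$ alternates between subcritical and (super)critical regimes will likely require induction on the successive critical times of $\fx'$.
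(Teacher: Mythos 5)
Parts (1) and (2) are essentially correct and follow the paper's approach; the paper likewise shows $\fk=\fe$ up to the first hitting time of $\lan\f1,\fmeas\ran=1$, identifies that time with $\tau_1$, and, for $\flam\geq 1$, verifies directly (via uniqueness) that the constant profile $\fmeas_t=\fmeass$ and $\fx(t)=1+(\flam-1)(t-\tau_1)$ solves the equations for $t\geq\tau_1$, which is cleaner than your restart/contradiction sketch. One small point: when $\flam=1$ and $M<\infty$ you have $\tau_1=M<\infty$, so the assertion that $\flam\leq 1$ forces $\tau_1=\infty$ is not quite right; the paper handles $\flam=1$ together with $\flam>1$ and treats separately the subcase where $1-G$ has support all of $[0,\infty)$.

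For part (3) you have identified, but not closed, the real gap, and the inductive programme you sketch is not the right tool. The paper avoids any case analysis on the sign of $\fx'-1$ by establishing the stronger inequality $\fx'(t)\geq\fx(t)$ for all $t\in(0,\tau_1)$ in one step, from which \eqref{ineq-fidle} follows immediately (if $\fx'(t)\geq 1$ then $\lan\f1,\fmeas'_t\ran=1\geq\lan\f1,\fmeas_t\ran$; otherwise $\lan\f1,\fmeas'_t\ran=\fx'(t)\geq\fx(t)=\lan\f1,\fmeas_t\ran$). The key observation you are missing is an a priori global bound on $\fk'$: from the mass-balance relations \eqref{eq-fx}, \eqref{eq-fk} and the non-idling condition \eqref{eq-fnonidling}, one gets $\fk'(t)=\lan\f1,\fmeas'_t\ran-\fx'(t)+\fx'(0)-\lan\f1,\fmeas'_0\ran+\fe(t)\leq\bigl(\fx'(0)-\lan\f1,\fmeas'_0\ran\bigr)^{+}+\fe(t)$, valid for all $t$ with no restriction to a subcritical interval. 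Substituting this upper bound into the expression for $\fx'(t)$ obtained from Corollary~\ref{cor-pde} (i.e., into $\fx'(t)=\fx'(0)+\fe(t)-\int\frac{G(x+t)-G(x)}{1-G(x)}\fmeas'_0(dx)-\int_0^t g(t-s)\fk'(s)\,ds$), and using the non-idling identity $(\fx'(0)-\lan\f1,\fmeas'_0\ran)^{+}+\lan\f1,\fmeas'_0\ran=\fx'(0)$, one finds $\fx'(t)\geq\fe(t)-\int_0^t\fe(s)g(t-s)\,ds=\fx(t)$, the last equality following from the part-(1) identities $\fk=\fe$ on $[0,\tau_1)$ and \eqref{eq2-fk}. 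This single estimate replaces your sequence of regime switches and the proposed induction on critical times, which in any case would need an argument that the critical times do not accumulate; no such argument is given or obviously available.
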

\begin{proof}
Let $(\fx, \fmeas)$ be the unique solution to the fluid equations associated with
the initial conditions $(\fe,0,\zerof)$, and
define
\[ \tau \doteq \inf \{t > 0: \lan \f1, \fmeas_t \ran  = 1 \}. \]
Then $\tau > 0$ by continuity and for $t \in (0,\tau)$, $\lan \f1, \fmeas_t \ran < 1$.
The non-idling property (\ref{eq-fnonidling})  then implies that $\fx(t) = \lan \f1, \fmeas_t \ran < 1$.
Combining relations (\ref{eq-fx}) and (\ref{eq-fk}), we see that $\fk(t) = \fe(t)$ for
$t \in [0,\tau)$.
Since $\fmeas$ and $\fk$ satisfy the fluid equation (\ref{eq-fmeas}) and $\fmeas_0 \equiv 0$,
Theorem \ref{th-pde} shows that for any $f \in \cb$ and $t \in [0,\tau)$,
\[ \lan f, \fmeas_t \ran = \int_0^t f(t-s) (1 - G(t-s)) \, d\fe(s) = \int_{0}^{t} f(t-s) (1 - G(t-s)) \flam(s) \, ds.  \]
Substituting $f = \f1$, the left-hand side above is equal to $\fx (t) = \lan \f1, \fmeas_t \ran$, from which it
is easy to see that  $\tau = \tau_1$.
Since the integrand on the right-hand side is continuous in $t$, this proves property (1).

Now, consider the time-homogeneous setting where $\flam(\cdot) \equiv \flam$ is constant.
When $\flam < 1$ or $\flam = 1$ and the support of $1 - G$ is $[0,\infty)$,
 the results in (1) show that $\tau_1 = \infty$ and $\fmeas_t \wconv \flam \fmeass$ monotonically as $t \ra \infty$. 
 If
$\flam \geq 1$, then it is easy to verify directly that the unique
fluid solution satisfies for every $t \geq \tau_1$,
\[ \fx (t) = 1 + (\flam - 1)(t- \tau_1) \quad \quad \mbox{ and }
\quad \quad \fmeas_t = \fmeass.
\]
Indeed, this is a simple consequence of the fact that with this
definition  for $t \geq \tau_1$, $\fx (t) \geq 1$ and $\lan \f1,
\fmeas_t \ran = \f1 = \f1 \wedge \flam$. This completes the proof
of property 2.

For the last property we first show that for every $t\in
(0,\tau_1),$ $\fx^\prime (t)\ge \fx(t).$ To this end we use (3.6),
(3.8) and the non-idling condition (3.7) to deduce that for every
$t,$ $\fk^\prime(t)\le (\fx^\prime(0)-\lan
1,\fmeas_0^\prime\ran)^+ +\fe(t).$
It now follows from
(\ref{eq-fk}) and (\ref{eq2-fmeas}) that
 \begin{eqnarray*}
\fx^\prime(t) & = & \fx^\prime(0)+\fe(t)-\int_{[0,M)}
\frac{G(x+t)-G(x)}{1-G(x)}\fmeas^\prime_0(dx)-\int_0^t
g(t-s)\fk^\prime(s)ds \\&\ge &
\fx^\prime(0)+\fe(t)-\int_{[0,M)}
\frac{G(x+t)-G(x)}{1-G(x)}\fmeas^\prime_0(dx) \\
& & \quad -\int_0^t
g(t-s)((\fx^\prime(0)-\lan 1,\fmeas^\prime_0 \ran )^++\fe(s))ds\\
&\ge & \fx^\prime(0)+\fe(t)-\lan 1,\fmeas^\prime_0\ran -
\int_0^t\fe(s)g(t-s)ds-(\fx^\prime(0) -\lan 1,\fmeas^\prime_0\ran)^+G(t)\\
&\ge&
\fx^\prime(0)+\fe(t)-\int_0^t\fe(s)g(t-s)ds-(\fx^\prime(0)-\lan
1,\fmeas^\prime_0\ran)^+ -\lan 1,\fmeas^\prime_0\ran.
\end{eqnarray*}
We now recall that the
non-idling condition implies that $(\fx^\prime(0)-\lan
1,\fmeas^\prime_0\ran)^++\lan 1,\fmeas^\prime_0\ran
=\fx^\prime(0)$ and therefore $ \fx^\prime(t)\ge
\fe(t)-\int_0^t\fe(s)g(t-s)ds=\fx(t),$ where the last equality
follows by combining relations (\ref{eq-fx}), (\ref{eq-fk}),
(\ref{eq2-fk}) and the fact that, due to (\ref{def-fsmallk}),
$\fk(s)=\fe(s)$ for all $s\in[0,\tau_1)$. To prove our result we
note that if $\fx^\prime(t)\ge 1$ then $\lan 1,\fmeas^\prime_t
\ran=1\ge \lan 1,\fmeas_t\ran$, whereas if $\fx^\prime(t)< 1$ then
$\lan 1,\fmeas^\prime_t\ran=\fx^\prime(t)\ge \fx(t)=\lan
1,\fmeas_t\ran$ which proves the last part of our lemma.
\end{proof}

\noi
{\bf Proof of Theorem \ref{th-larget}. }
The first statement of Theorem \ref{th-larget} follows from properties (1) and (2) of Lemma \ref{lem-idle}.
For the second statement of the theorem,
consider an arbitrary initial condition of the form
$(\flam \f1,\fx(0), \fmeas_0) \in \newspace$ for $\flam \in (0,\infty)$, let
$(\fx, \fmeas)$  be the unique solution to the associated fluid equations and let
$\fk$ and $\fd$ be the related processes defined in
(\ref{eq-fk}) and (\ref{def-fd}), respectively.
Since $\fe = \flam \f1$ is absolutely continuous, by Theorem \ref{th-fluniq}
 $\fk$ is also absolutely continuous.
Let $\fsmallk$  denote the
derivative of $\fk$ and
also define
\be
\label{def-fidle}
\fidle (t)  \doteq 1 -\lan \f1, \fmeas_t \ran  \quad \quad \mbox{ for } t \in [0,\infty).
\ee
Then Lemma \ref{lem-idle} shows that $\fidle (t) \ra 0$ as $t \ra \infty$.
We now consider the following three mutually exhaustive cases.  From the proof of Lemma \ref{lem-idle}
it is clear that if $\flam > 1$ then only Case 2 is possible.  \\
{\em Case 1.} There exists $T_0 \in (0,\infty)$ such that $\fidle (t) > 0$ for all $t \geq T_0$. \\
In this case, $\flam \equiv 1$ and by (\ref{def-fsmallk}), it follows that
 $\fsmallk (t) = 1$ for $t \geq T_0$.
As a result, by Lemma \ref{lem-sgroup} and equation (\ref{eq2-fmeas}) of Theorem \ref{th-pde},
  for any $u \geq 0$ and $t = T_0 + u$, we have
\[ \lan f, \fmeas_t \ran = \int_{[0,M)} f(x+u) \dfrac{1 - G(x+u)}{1 - G(x)} \, \fmeas_{T_0} (dx)
+ \int_0^u f(u-s)(1 - G(u-s))  \, ds.
\]
Since $f$ is uniformly bounded on $[0,\infty)$ and for every $x \in (0,\infty)$,
$(1 - G(x+u))/(1 - G(x)) \ra 0$ as $u \ra \infty$, by the bounded convergence theorem,
the first term converges to zero as $u \ra \infty$.
On the other hand, the second term trivially
converges to $\int_0^\infty f(x) (1 - G(x))  \,  dx = \lan f, \fmeass \ran$ as $u \ra \infty$, which
completes the proof of  the theorem in this case.  \\

\noi
{\em Case 2. } There exists $T_0 < \infty$ such that $\fidle (t) = 0$ for all $t \geq T_0$.   \\
In this case for all $u \geq 0$, $\lan \f1, \fmeas_{T_0 + u}\ran
= \lan \f1, \fmeas_{T_0}\ran = 1$.
Together with Corollary \ref{cor-pde} and Lemma \ref{lem-sgroup}, this shows that
for every $u \geq 0$,
\[
 \begin{array}{l}
\fk (T_0 + u) - \fk (T_0)  \\
\quad \quad \quad  =  \ds   \int_{[0, M)} \dfrac{G(x+u) - G(x)}{1 - G(x)} \, \fmeas_{T_0} (dx)
+ \int_{0}^u g(u-s) \left( \fk (T_0 + s) - \fk (T_0) \right) \, ds \\
\quad \quad \quad  =  \ds  \int_{[0, M)} \dfrac{G(x+u) - G(x)}{1 - G(x)} \, \fmeas_{T_0} (dx)
 + \int_0^u  G(u-s) \fsmallk (T_0 + s) \, ds,
 \end{array}
\]
where the last equality follows by integration by parts.
Differentiating the last equation with respect to $u$, we obtain the renewal equation
\[ \begin{array}{rcl}
\fsmallk (T_0 + u) &  = & \ds \int_{[0, M)} \dfrac{g(x+u)}{1 - G(x)} \fmeas_{T_0} (dx)
 + \int_0^u g (u-s) \fsmallk (T_0 + s) \, ds.
\end{array}
\]
Since $g$ is directly Riemann integrable by assumption, an application of
the key renewal theorem (see \cite{asmbook}) and Fubini's theorem yields the relation
\[\ba{rcl} \ds
 \lim_{u \ra \infty} \fsmallk (T_0 + u) & = & \ds
 \int_0^\infty \left( \int_{[0,M)} \dfrac{g(x+u)}{1 - G(x)} \, \fmeas_{T_0} (dx) \right)\, du \\
& = &  \ds \int_{[0, M)} \left( \int_0^\infty \dfrac{g(x+u)}{1 - G(x)} \, du \right)\fmeas_{T_0} (dx)
= \ds \fmeas_{T_0}([0,M))  =   1.
\end{array}
\]
Now fix $\ve > 0$ and choose $T_0^\prime < \infty$ such that $|\fsmallk (T_0^\prime + u) - 1| \leq \ve$ for
every $u \geq 0$.
Then  (\ref{eq2-fmeas})  and another application of Lemma \ref{lem-sgroup} show that  for any $f \in \cb$
and any $t = T_0^\prime + u$ with $u \geq 0$,  we have
\[
\lan f, \fmeas_{t} \ran  =
\int_{[0, M)} f(x+u) \dfrac{1 - G(x+u)}{1 - G(x)} \, \fmeas_{T_0^\prime} (dx)
 + \int_0^u  f(u-s) (1 - G(u-s)) \fsmallk (T_0^\prime + s)  \, ds.
\]
Sending $u \ra \infty$, and thus $t \ra \infty$, since $1 - G(x+t) \ra 0$,
$\fmeas_{T_0}$ is a probability measure on $[0,M)$,  and $f$ is uniformly bounded on $\R_+$,
the first term on the right-hand side converges to $0$ by the bounded convergence theorem.
 On the other hand, since $1 - G$ is a probability density, $f$ is uniformly
bounded on $[0,\infty)$, and $|\fsmallk (T_0^\prime + s) - 1| \leq \ve$ for all $s \geq 0$,
 another application of the bounded convergence theorem shows that
\[  \left|\lim_{t \ra \infty} \lan f, \fmeas_{t} \ran  - \int_0^\infty f(x) (1 - G(x)) \, dx \right|  = O (\ve).
\]
Sending $\ve \ra 0$ proves the theorem in this case.  \\

\noi
{\em Case 3.} For every $T < \infty$ there exists $s, t \geq T$ such that  $\fidle (s) > 0$ and
$\fidle (t) = 0$.  \\
In this case, for every $\ve > 0$ there exists $T_\ve > 0$ such that $\fidle({T_\ve}) = 0$ (or, equivalently,
$\lan \f1, \fmeas_{T_\ve}\ran = 1$) and $\fidle (t) < \ve$ for all $t \geq T_\ve$.  The latter assertion
follows from Lemma \ref{lem-idle}(3) and the fact that $\lan \f1, \fmeas_t \ran \ra 1$ as
$t \ra \infty$, as proved in Lemma \ref{lem-idle}(2).
For $u \in [0,\infty)$, define
\[ \fko (u) \doteq \fk (T_\ve + u) - \fk (T_\ve) \quad \quad \mbox{ and } \quad \quad \fkp (u) \doteq \fd (T_\ve + u) - \fd (T_\ve),
\]
and note that by relations (\ref{eq-fk}) and (\ref{def-fd}), it follows that
\be
\label{ve-bound}
  \sup_{u \in [0,\infty)} \nrm{\fko - \fkp}_u \leq \ve.
\ee
Now, for $i = 1, 2$, let $\fsmallk^{(i)}$ denote the derivatives of $\fk^{(i)}$, respectively,
and set $\fmeaso_u \doteq \fmeas_{T_\ve +u}$.
Then  equation (\ref{eq2-fmeas}), with $\newfk = \fk$,
 and Lemma \ref{lem-sgroup} imply that for $f \in \ocdb$,
\[
\ba{rcl}
\ds \lan f, \fmeaso_{u} \ran & = &  \ds \int_{[0, M)} f(x+u)\dfrac{1 - G(x+u)}{1 - G(x)}  \, \fmeas_{T_\ve} (dx) \\
 & & \ds + \int_0^u   f(t-s) (1 - G(u-s))  \fsmallk^{(1)} (T_\ve + s)\, ds.
\ea
\]
Next, we define $\fsmallk^{(2)} (\cdot)\doteq \lan h, \fmeas_\cdot \ran$ to be the departure
rate and let $\fmeasp$ be the measure on $[0, \infty)$ that satisfies the
last equation for every $f \in \cb$, but with  $\fsmallk^{(1)}$ replaced by
$\fsmallk^{(2)}$.
Also, note that  Corollary \ref{cor-pde}, relation (\ref{eq-fk}) and Lemma \ref{lem-sgroup}
imply that for $u \geq 0$
\[ \fk^{(2)} (T_\ve + u) - \fk^{(2)} (T_\ve) =
\int_{[0, M)} \dfrac{G(x+u) - G(x)}{1-G(x)} \, \fmeas_{T_\ve} (dx)
+ \int_0^u G(u-s) \fsmallk^{(2)} (T_\ve + s) \, ds.
\]
The same arguments as in Case 2  can then be used to show that
$\fsmallk^{(2)}$ satisfies the renewal equation
\[ \begin{array}{rcl}
\ds  \fsmallk^{(2)} (T_\ve + u) &  = & \ds \int_{[0, M)} \dfrac{g(x+u)}{1 - G(x)} \fmeas_{T_\ve} (dx)
 + \int_0^u g (u-s) \fsmallk^{(2)} (T_\ve + s) \, ds,
\end{array}
\]
and hence  that, for
every $f \in \cb$,
$\fsmallk^{(2)} (t) \ra 1$ and
$\lan f, \fmeas^{(2)}_t \ran  \ra \lan f, \fmeass \ran$ as $t \ra \infty$.
On the other hand,  comparing the equations for
$\lan f,\fmeas^{(i)} \ran$, $i = 1, 2$, given above, using Lemma \ref{lem-elem}
and the bound (\ref{ve-bound}), we have for every $f \in \ocdb$,
\[ \nrm{\lan f, \fmeaso_{\cdot} \ran - \lan f, \fmeasp_{\cdot} \ran}_\infty \leq
\left(2\nrm{f}_\infty + \nrm{f^\prime}_\infty \right) \ve. \]
Since $\ve \ra 0$, this proves the property for Case 3, and thus concludes the proof of the theorem.

\begin{appendix}

\renewcommand{\theequation}{A1.\arabic{equation}}

\beginsec

\section{Explicit Construction of the State Process}
\label{ap-markov}

In this section, we provide an explicit pathwise construction of the state
process
$(\ren, \xn, \measn)$ and auxiliary processes described in Sections \ref{subs-modyn} and \ref{subs-aux}.

Suppose that we are given the sequence of service requirements
$\{v_j, j \in \{-N+1, \ldots, 0\} \cup \N\}$ and initial conditions
 $\measn_0$ and $\xn (0)$ satisfying
\[  N - \lan \f1, \fmeasn_0 \ran = [N - \xn(0)]^+, \]
 as well as the  residual inter-arrival process
$\ren$ that takes values in $\mathcal{D}_{\R_+} [0,\infty)$. Here,
$\measn_0$ is a sum of  $\lan \f1, \measn_0\ran \leq N$ Dirac
masses.  Let $\agen_j (0)$, $j = -\lan \f1, \measn_0 \ran + 1$,
$-\lan \f1, \measn_0 \ran + 2, \ldots, 0$ be the positions of the
Dirac masses, ordered in increasing order. Then $\agen_j$
represents the age of the $j$th customer and so  we can  assume,
without loss of generality, that each $\agen_j < v_j$ and hence
write
\[  \measn_0 = \sum_{j=-\lan \f1, \measn_0 \ran + 1}^0 \delta_{\agen_j(0)}
= \sum_{j=-\lan \f1, \measn_0 \ran + 1}^0 \delta_{\agen_j(0)}  \ind_{\{\agen_j(0) < v_j\}}.
\]
The cumulative arrival process $\en$ is easily reconstructed from
$\ren$: $\en(t)$ is simply the number of jumps of $\ren$ in
$[0,t]$ and is thus adapted with respect to the filtration
generated by $\ren$. The evolution of the processes $\xn$ and
$\measn$ can then  be described recursively as follows. Define
$\ttime_0 \doteq \ttimen_0 \doteq 0, \dn(0) \doteq 0,  \kn(0)
\doteq 0.$
Since the ages of customers that have already entered service increase linearly with time while
in service,
 for $l \doteq 0$ and
 $j \in \{-\lan \measn_0, \f1\ran  + 1, -\lan \measn_0, \f1 \ran +2,
\ldots, \kn (\ttime_l) \}$,  we define
 \be
\label{def2-agen}
\agen_j(t) \doteq
\left\{
\begin{array}{rl}
\agen_j(\ttime_l) + t - \ttime_l & \mbox{ if } \agen_j(t) < v_j \\
v_j  & \mbox{ otherwise. }
\end{array}
\right.
\ee
Also, we define
\be
\label{def-tdn}
 \tdn_{l} (t) \doteq
 \sum\limits_{j=-\lan \measn_0, \f1 \ran + 1}^{ \kn (\ttime_l)} \ind_{\{\agen_j (t) \geq v_j\}}
 \quad \mbox{ for } t \in [\ttime_l,\infty).
\ee
The process $\tdn_{l}$, defined on $[\ttime_l, \infty)$,
represents the cumulative departure process, assuming there are no
new arrivals into the system after $\ttime_l$.
Now, let
$\tau_{l+1} \doteq \tau_{l+1}^{(N)}$ be the first time after $\tau_l$ when
there is either an arrival into or departure from the system.
Since $\tau_l + \resaren (\tau_l)$ is the first time after
$\tau_l$ that there is an arrival into the system, we have \be
\label{def-ttime} \ttime_{l+1} \doteq  [\tau_l + \resaren
(\tau_l)] \wedge \inf\left\{t > \ttime_l: \tdn_{l} (t) >
\dn(\ttime_l) \right\}. \ee
By  assumption, we have  $\agen_j (0) < v_j$ for $j \in \{-\lan \measn_0, \f1 \ran + 1, \ldots,
0\}$ and $\en(0) = 0$, and therefore it follows that $\tdn_{0} (0) = 0 =
\dn(0)$. Hence for $t \in [\ttime_l,\ttime_{l+1}]$, we set
\begin{eqnarray}
\label{def2-dn}
\dn(t) & \doteq & \tdn_l (t)  \\
\label{def2-agen2}
    \agen_j(t) & \doteq &  0 \quad\quad \quad \quad \mbox{ for }  j > \kn (\ttime_l) \\
\label{def2-xn} \xn(t) &  \doteq &\xn(0)+ \en(t) - \dn(t).
\end{eqnarray}
In addition, for $t \in (\ttime_l, \ttime_{l+1}]$,  the quantity
$\kn (t)$ is defined in terms of $\xn(t)$ and  $\dn(t)$ via the
relation \be \label{kn}
 \kn(t) \doteq  \dn(t) + \xn(t) \wedge N-\xn(0) \wedge N
\ee
and, in turn, $\measn(t)$ is defined in terms of $\kn$ and $\{\agen_j\}$ by
\be
\label{def2-nun}
 \nun (t) \doteq \sum\limits_{j = -\lan \f1, \measn_0 \ran + 1}^{\kn(t)} \delta_{\agen_j(t)}
\ind_{\{\agen_j (t) < v_j\}}.
\ee
This defines the processes $\xn$ and $\measn$ on $[0,\tau_{l+1}]$.

To proceed with the iteration, first note that
 since $\en(\ttime_{l+1})$, $\xn(\ttime_{l+1})$ and  $\agen_j, j \in
\{-\lan \f1, \measn_0\ran + 1, \ldots,  \kn (\ttime_{l+1})\} $
are now well-defined, $\tdn_{l+1}$ and
 $\ttime_{l+2}$ are also  well-defined by equations (\ref{def-tdn}) and
 (\ref{def-ttime}).  Therefore, replacing $l$ by $l+1$, the  recursive relations above can be used
to extend the definition of the processes $\xn$ and $\measn$ to
$[0, \ttime_{l+2}]$.
In order to show that this procedure can be used to
 construct the process $(\ren, \xn, \measn)$ on the whole of
$[0,\infty)$, it suffices  to show that $\ttime_{l} \ra \infty$
a.s.\ as $l \ra \infty$. To see why this is true, note that
arrival and departure events are generated by either the
departures of one of the $\xn(0) $ customers already present at
time $0$ or by the arrival and/or departure of a customer that
arrived after time $0$. Thus for  $l$ departure or arrival events
to have occurred,
 there must have been at least $(l - \xn(0))/2$ new customer arrivals
after time $0$. In other words,
 $\ttime_{l} \geq U_{[(l- \xn(0))/2]\vee 0}$,
where  $U_j$ is the time of arrival of the $j$th
customer after time $0$.
Since $\xn(0)$ is a.s.\ finite  and
 $U_l \ra \infty$ as $l \ra \infty$ due to the assumption that $\en(t)$ is finite for all $t$
 finite, this implies that $\ttime_l \ra \infty$ a.s.\ as $l \ra \infty$.
 Therefore the initial conditions, along with the recursive
equations (\ref{def2-agen})--(\ref{def2-nun}), define a.s. every
sample path of  $\en$, $\xn$ and $\nun$ on $[0,\infty)$.

\begin{remark}
\label{rem-aux} {\em
The above construction shows that the auxiliary processes
satisfy relations (\ref{def-dn})--(\ref{def-nonidling}).
Indeed, equations (\ref{def-dn}), (\ref{def-agejn}) and (\ref{def-nun})
are an immediate consequence of (\ref{def2-xn}),
(\ref{def2-agen}), (\ref{def2-agen2}) and (\ref{def2-nun}).
Moreover,  relations (\ref{def-tdn}), (\ref{def2-dn}) and
(\ref{def2-nun}), when combined, yield  relation (\ref{def-kn}).
In turn, equations (\ref{def-kn}) and (\ref{kn})
show that the idle server process $\idlen \doteq N - \lan \f1, \measn\ran$
satisfies the non-idling condition (\ref{def-nonidling}).
}
\end{remark}

\begin{remark}
\label{rem-assign}
{\em  From the above construction, it is also clear that the
value of $(\ren,\xn,\nun)$ does not depend on the method of assignment of
customers to servers, as long as  the non-idling requirement -- that a server
never be idle when there is a customer that needs service -- holds.
In order to implement any specific assignment policy, one may need to keep
track of additional variables.  For example,
if one were to
consider a random assignment of an arriving customer to idle servers,
then one would have to enlarge the probability space to accommodate an
infinite sequence of i.i.d.\ random variables, uniformly distributed in $(0,1)$,
that will be used to implement this random choice.   In this case,
the filtration ${\cal F}_{\tau_l}$ would have to be enlarged to include the
uniform random variable that makes this choice, resulting in a corresponding enlargement
of the filtration ${\cal F}_t^{(N)}$.
On the other hand, if one were to assign a customer to the server that has been idle
the longest since it last completed the processing of a customer, then one would
have to keep track of the identities of individual servers and the time elapsed
since completion of their last completion of service.
Since these details do not play a role in our analysis, we omit them.
 }
\end{remark}

\beginsec

\section{An Elementary Lemma}
\label{app-elemma}

\begin{lemma}
\label{lem-applem}
If, for every $L \in (0,\infty)$,
 the set of discontinuities of a function $h$  on
$[0,L]$ is a closed set of zero Lebesgue measure,
then for every $C < \infty$, the boundary of the set
\[  \left\{x \in [0,L]: h(x) < C \right\}  \]
has zero Lebesgue measure.
\end{lemma}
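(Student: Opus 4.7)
The plan is to establish the containment
\[
  \partial \{x \in [0,L] : h(x) < C\} \;\subseteq\; D \cup \{x \in [0,L] : h(x) = C\},
\]
where $D$ denotes the discontinuity set of $h$ on $[0,L]$, and then bound the Lebesgue measure of each term on the right.

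First I would prove the containment by a direct case analysis. Write $A_C := \{h < C\}$ and fix $x \in \partial A_C$. Suppose $x \notin D$, so that $h$ is continuous at $x$. The assumption $h(x) < C$ yields an open neighborhood of $x$ on which $h < C$, placing $x$ in the interior of $A_C$, which contradicts $x \in \partial A_C$; the assumption $h(x) > C$ analogously places $x$ in the interior of the complement. Therefore $h(x) = C$, which gives the claimed containment. Since $|D| = 0$ by the standing hypothesis, the $D$-part of the boundary contributes zero Lebesgue measure automatically.

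The core of the argument is to bound $|\partial A_C \cap \{h = C\}|$, and I expect this to be the main obstacle. The plan at this point is to exploit the closedness of $D$: since $D^c$ is open, $U := A_C \cap D^c$ is relatively open in $[0,L]$, and $A_C = U \cup V$ with $V \subseteq D$ negligible, which yields $\partial A_C \subseteq \partial U \cup D$. This reduces the task to showing that the open set $U$ has boundary of Lebesgue measure zero. I would prove this by a component-wise analysis: on each maximal open interval $(a,b) \subseteq D^c$ the function $h$ is continuous, so $\partial U \cap (a,b) \subseteq \{h = C\} \cap (a,b)$. Combining this with a density argument (each such boundary point is accessed as a limit of points in $\{h < C\}$, forcing the one-sided lower density of $A_C$ at such a point to be positive even though $h \equiv C$ there) and finally a Lebesgue-differentiation-style step should force the set of such points on each component to be negligible.

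The expected main obstacle is precisely this last step. Continuous functions can, in principle, develop level sets of positive Lebesgue measure that lie in the topological boundary of a sublevel set --- a fat-Cantor-set construction is the canonical picture to rule out --- so the hypothesis that $D$ is closed with zero Lebesgue measure must be invoked in an essential way rather than as a mere ``throw away $D$'' reduction. The delicate point is to leverage continuity on $D^c$ combined with the closedness of $D$ to show that on each component $(a,b)$ of $D^c$, the set $\{h = C\} \cap \partial U \cap (a,b)$ cannot be a fat-Cantor-like configuration without forcing new discontinuities of $h$ in $\overline{(a,b)}$, contradicting the assumption that $D$ is exactly the (closed) discontinuity set. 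Making this tradeoff rigorous --- i.e., transferring the ``thickness'' of $\{h=C\}\cap\partial U$ into a contradiction with the measure-zero closedness hypothesis on $D$ --- is where I anticipate the bulk of the technical work to lie.
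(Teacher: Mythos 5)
Your reduction --- $\partial\{h<C\}\subseteq D\cup\bigl(\{h=C\}\cap\overline{\{h<C\}}\bigr)$, followed by a decomposition over the components of the open set $D^c$ --- is exactly the paper's route. The genuine gap is that the step you yourself single out as ``the main obstacle'' is never carried out: all of the content of the lemma lives in showing that $\{h=C\}\cap\partial\{h<C\}$ is Lebesgue-null on each component of continuity, and your proposal only sketches a hope that a ``fat-Cantor-like configuration'' there would force extra discontinuities of $h$. It would not. For comparison, the paper disposes of this step by asserting that each ${\cal O}_i=\{x\in(a_i,b_i):h(x)<C\}$, being open, is a countable union of intervals and therefore has countable boundary; that assertion is false (an open subset of an interval is indeed a countable disjoint union of open intervals, but its boundary may be uncountable and even of positive measure).

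In fact the lemma as stated is false, so the step you flagged cannot be completed by any argument. Let $K\subset[0,L]$ be a fat Cantor set (closed, nowhere dense, $|K|>0$), fix $C>0$ and set $h(x)=\left(C-\mathrm{dist}(x,K)\right)^+$. Then $h$ is Lipschitz, so its discontinuity set is empty --- closed and null --- yet $\{x\in[0,L]:h(x)<C\}=[0,L]\setminus K$ is open and dense in $[0,L]$, so its boundary contains $K$ and has positive Lebesgue measure. Your instinct that the fat Cantor set is ``the canonical picture to rule out'' was correct; the point is that continuity of $h$ does not rule it out. What is true, and what suffices for the only place the lemma is used (the proof of Theorem \ref{th-sublimit} via Lemma \ref{lem-depart}, where one only needs the conclusion for some sufficiently large $C$ and for an absolutely continuous measure $\fmeas_s$), is the statement for all but countably many $C$: the level sets $\{h=C\}$ are pairwise disjoint, so at most countably many of them can have positive Lebesgue measure, and away from the two endpoints of $[0,L]$ one has $\partial\{h<C\}\subseteq D\cup\{h=C\}$.
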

\begin{proof}
Let $J$ be the set of discontinuities of $h$ on $[0,L]$.
Since $J$ is closed, $[0,L] \sm J$ is open and  therefore
admits a representation as a countable union of open intervals $\cup_{i} (a_i, b_i)$.
Thus for any $C < \infty$, we can write
\[  \{x \in [0,L]: h(x) < C \} = [\cup_{i} {\cal O}_i] \cup \{ x \in J: h(x) < C \}\]
where
\[ {\cal O}_i \doteq \{ x \in (a_i,b_i): h (x) < C \}.
\]
This implies that
\[  \partial \{x \in [0,L]: h (x) < C \} \subseteq
\left[ \cup_{i} \partial {\cal O}_i \right]
\cup
\partial \{x \in J: h (x) < C \}  \subseteq \left[ \cup_{i} \partial {\cal O}_i \right]
\cup J,
\]
where the last inclusion follows due to the assumption that $J$ is closed.
Since $J$ has zero Lebesgue measure by assumption,
 to prove the lemma it suffices to show that each $\partial {\cal O}_i$
has zero Lebesgue measure.
For each $i$, since $h$ is continuous on $(a_i, b_i)$, the set
${\cal O}_i$  is open and can therefore be
written as a countable union of intervals.  Its boundary is
therefore at most countable and so has Lebesgue measure zero.
\end{proof}

\end{appendix}

\noi
{\bf Acknowledgments.}  We are grateful to Luc Tartar for several useful discussions on the
generalisation of Theorem 4.1 to general $h \in \lloc [0,M)$ (see Remark \ref{rem-gencont}).
In addition, we would  like to thank Avi Mandelbaum for bringing this open problem to our attention.
The second author would also like to thank Chris Burdzy and Z.-Q. Chen for their hospitality during her stay
at the University of Washington in the Fall of 2006,  when most of this work
was completed.

\bibliographystyle{amsplain}

\bibliography{refs4}
\providecommand{\bysame}{\leavevmode\hbox to3em{\hrulefill}\thinspace}
\providecommand{\MR}{\relax\ifhmode\unskip\space\fi MR }
\providecommand{\MRhref}[2]{%
  \href{http://www.ams.org/mathscinet-getitem?mr=#1}{#2}
}
\providecommand{\href}[2]{#2}

\end{document}